\DeclareMathOperator{\GL}{GL}
\DeclareMathOperator{\SL}{SL}
\DeclareMathOperator{\Sp}{Sp}
\DeclareMathOperator{\PSL}{PSL}
\DeclareMathOperator{\Orth}{O}
\DeclareMathOperator{\SOrth}{SO}
\DeclareMathOperator{\PSOrth}{PSO}
\DeclareMathOperator{\tr}{tr}
\DeclareMathOperator{\ad}{ad}
\DeclareMathOperator{\Ad}{Ad}
\DeclareMathOperator{\id}{id}
\DeclareMathOperator{\Span}{Span}
\DeclareMathOperator{\del}
{\partial}
\DeclareMathOperator{\Stab}{Stab}
\DeclareMathOperator{\sign}{sign}
\DeclareMathOperator{\Hom}{Hom}
\DeclareMathOperator{\Flag}{Flag}
\DeclareMathOperator{\IsoFlag}{IsoFlag}
\DeclareMathOperator{\Lag}{Lag}
\DeclareMathOperator{\Fix}{Fix}
\DeclareMathOperator{\pos}{pos}
\DeclareMathOperator{\Hir}{Hir}
\DeclareMathOperator{\Gr}{Gr}
\DeclareMathOperator{\SU}{SU}
\DeclareMathOperator{\diag}{diag}
\newcommand{\bldmth}[1]{\bm{\mathsf{#1}}}
\let\amsamp=&
\newtheorem{thm}{Theorem}[section]
\newtheorem{prop}[thm]{Proposition}
\newtheorem{lem}[thm]{Lemma}
\newtheorem{corr}[thm]{Corollary}
\newtheorem*{thma}{Theorem A}
\theoremstyle{definition}
\newtheorem{rem}{Remark}[section]
\newtheorem{defn}{Definition}[section]
\newtheorem{xmpl}{Example}[section]
\newtheorem*{prblm}{Problem}
\newtheorem*{qstn}{Question}
\newcounter{notes}
\let\oldtocsection=\tocsection
\let\oldtocsubsection=\tocsubsection
\let\oldtocsubsubsection=\tocsubsubsection
\renewcommand{\tocsection}[2]{\hspace{0em}\oldtocsection{#1}{#2}}
\renewcommand{\tocsubsection}[2]{\hspace{1em}\oldtocsubsection{#1}{#2}}
\renewcommand{\tocsubsubsection}[2]{\hspace{2em}\oldtocsubsubsection{#1}{#2}}
\title[Topology of domains of discontinuity]{Topology of domains of discontinuity for Anosov representations via circle actions}
\author{Mason Hart}
\date{}
\begin{document}

\maketitle

\tableofcontents

\section{Introduction}
For a rank $1$ semi-simple Lie group $G$ (with a maximal compact subgroup $K$), let $X=G/K$ be the symmetric space of $G$.  Given a finitely generated group $\Gamma$, a representation $\rho:\Gamma\to G$ is convex-cocompact if the orbit map 
\begin{align*}
    \tau_{\rho}:&\Gamma\to X
\intertext{given by} 
&\gamma \mapsto \rho(\gamma)K
\end{align*}
is a quasi-isometric embedding where the group $\Gamma$ is endowed with a word metric and the symmetric space $X$ is endowed with the $G$--invariant Riemannian metric coming from the Killing form. Because $G/K$ has strictly negative curvature, $\Gamma$ is necessarily a hyperbolic group. By properties of $\delta$--hyperbolic spaces, there is an associated embedding 
\[\del\tau_\rho:\del\Gamma\to\del X\]
where $\del \Gamma$ is the Gromov boundary of $\Gamma$ and $\del X$ is the visual boundary of $X$. The image of the embedding $\del\tau_\rho$ is known as the limit set $\Lambda_\rho$ of $\rho$ and is exactly the set of non-wandering points in $\del X$. A remarkable property of the limit set of a convex cocompact representation is that the complement $$\Omega_\rho=\del X\setminus \Lambda_\rho$$
is a cocompact domain of discontinuity for the action of $\Gamma$ through $\rho$.  \par 
For example, when $G=\PSL(2,\mathbb{C})$ and $\Gamma=\pi_1(S)$ is the fundamental group of a higher genus surface $S$, a convex-cocompact representation $\rho:\Gamma\to G$ is known as a quasi-Fuchsian representation. The limit set $\Lambda_\rho$ of a quasi-Fuchsian representation is a Jordan curve in $\del X=\del\mathbb{H}^3=\mathbb{CP}^1$. Thus, in this case, the domain of discontinuity $\Omega_\rho$ splits into two disjoint topological discs. When we quotient $\Omega_{\rho}$ by the action of $\Gamma$, we get a surface
\[\mathcal{W}_\rho=\rho(\Gamma)\setminus \Omega_\rho\]
which carries a complex projective structure (i.e.\ its covering space $\Omega_\rho$ is an open subset of $\mathbb{CP}^1$ with deck transformations in $\PSL(2,\mathbb{C})$). Topologically, $\mathcal{W}_{\rho}$ is diffeomorphic to $S\sqcup\bar{S}=\del(S\times[0,1])$ and can be thought of as the boundary of the compactification
\[\rho(\Gamma)\setminus (\mathbb{H}^3\cup\Omega_\rho)\cong S\times [0,1]\]
of the hyperbolic $3$--manifold $\rho(\Gamma)\setminus \mathbb{H}^3$. \par
In the past two decades, much progress has been made generalizing this picture and other aspects of Teichm\"uller theory to higher rank Lie groups. For a higher rank semi-simple Lie group $G$ and a hyperbolic group $\Gamma$, Guichard and Wienhard \cite{Guichard_2012} (based on the notion for $G=\SL(n,\mathbb{R})$, $\Gamma=\pi_1(S)$ defined by Labourie \cite{labourie2005anosovflowssurfacegroups}) defined the class of Anosov representations of $\Gamma$ into $G$. Anosov representations have become a focal point of the so-called higher Teichm\"uller theory, serving as the proper higher rank analog of convex cocompact representations. The particular levels of Anosov-ness are indexed by non-empty subsets $\theta$ of simple roots. Then, the set of $\theta$--Anosov representations $\rho:\Gamma\to G$ is an open subset of $\Hom(\Gamma,G)$. For any $\theta$, a $\theta$--Anosov representation always has discrete image and finite kernel. 

One important feature of convex cocompact representations shared by Anosov representations is their admittance of cocompact domains of discontinuity inside the visual boundary of the symmetric space $X$ of $G$.
One significant difficulty with the symmetric space $X$ of a higher rank Lie group $G$ is that the visual boundary is not homogeneous under the action of $G$. Instead, the visual boundary splits into $G$--orbits, each of which is isomorphic to a compact homogeneous space called a flag variety. The type of a flag variety $\mathcal{F}_\theta$ are also indexed by non-empty subsets of simple roots and an essential property of $\theta$--Anosov representation $\rho:\Gamma\to G$ is that it admits a $\rho$--equivariant embedding
\[\xi:\del\Gamma\to \mathcal{F}_\theta.\]
called the limit map of $\rho$. If $G$ is higher rank, the image of $\xi$ usually no longer contains every non-wandering point and the complement
\[\mathcal{F}_\theta\setminus \xi(\del\Gamma)\]
is not a domain of discontinuity \cite{kapovich2017dynamics}. However, \cite{Guichard_2012} showed that $\theta$--Anosov representations admit cocompact domains of discontinuity in a (possibly different) flag variety $\mathcal{F}_\eta$, using $G$--invariant bilinear forms on linear representations. Furthermore, Kapovich, Leeb and Porti \cite{kapovich2017dynamics} provided a systematic description of domains of discontinuity for $\theta$--Anosov representations in a flag variety $\mathcal{F}_\eta$ in terms of the combinatorics of the Weyl group $W$ of $G$. Both of these approaches ultimately realize the domain of discontinuity as the complement of a thickened version of the limit set. According to the method of \cite{kapovich2017dynamics}, the thickening procedure is characterized by a set of double cosets of $W$ known as a balanced ideal (denoted $I$). For an appropriate balanced ideal $I$, a  cocompact domain of discontinuity
\[\Omega_\rho^I\subseteq \mathcal{F}_\eta\]
is constructed. Then, if $\Gamma$ is torsion-free, we can consider the quotient manifold
\[\mathcal{W}_\rho^I=\rho(\Gamma)\setminus \Omega_\rho^I.\]
This quotient manifold $\mathcal{W}_\rho^I$ has natural $(G,\mathcal{F}_\eta)$--structure in the sense of Ehresmann and Thurston, generalizing the complex projective structure seen in the quasi-Fuchsian case. \par
\par
In general, the topology of $\mathcal{W}_\rho^I$ is harder to understand, compared to the quasi-Fuchian case. When $\Gamma$ is a surface group and the representation $\rho$ factors through a homomorphism 
\[\iota:\SL(2,\mathbb{R})\to G,\]
more can be said about the topology. Such a representation is called $\iota$--Fuchsian. For $\iota$-Fuchsian representations into complex Lie groups $G$, Dumas and Sanders calculated the homology of the domain $\Omega_\rho^I$ and $\mathcal{W}_\rho^I$. For $\iota$-Fuchsian representations,
Alessandrini, Maloni, Tholozan and Wienhard \cite{alessandrini2023fiberbundlesassociatedanosov} who proved that there is a smooth $\SL(2,\mathbb{R})$--equivariant fiber bundle projection
\[p:\Omega_\rho^I\to\mathbb{H}^2\]
where $\SL(2,\mathbb{R})$ acts on $\Omega_\rho^I$ through $\iota$.
Furthermore, they show that this implies that $\mathcal{W}_\rho^I$ is a smooth fiber bundle over $S$ with the fiber $M_\rho^I$ and the structure group of the bundle is $\SOrth(2)$. The fiber bundle projection $\mathcal{W}_\rho^I\to S$ is constructed non-explicitly and consequentially the fiber $M_\rho^I$ is in general difficult to determine. This theorem reduces the problem of  determining the topology of $\mathcal{W}_\rho^I$ when $\rho$ is $\iota$--Fuchsian to the following:
\begin{prblm}
    What is the topology of the fiber $M_\rho^I$? How does the structure group $\SOrth(2)$ act on $M_\rho^I$?
\end{prblm}
The main result of this paper is an answer for the case when the domain of discontinuity is inside a $3$--dimensional complex flag variety.
\begin{thma}
    Let $\rho:\Gamma\to G$ be an $\iota$--Fuchsian representation where $\iota:\SL(2,\mathbb{R})\to G$ is a homomorphism into a classical matrix group over $\mathbb{C}$. Assume that $\rho$ admits a cocompact domain of discontinuity $\Omega_\rho^I$, constructed from a balanced ideal $I$, inside a flag variety $\mathcal{F}_\eta$ with $\dim_{\mathbb{C}}\mathcal{F}_\eta=3$. Then, the fiber $M_\rho^I$ of $p:\Omega_\rho^I\to\mathbb{H}^2$ is as follows:
    \begin{enumerate}[label=\normalfont{(\roman*)}]
        \item If $\mathcal{F}_\eta=\Flag(\mathbb{C}^3)$, then $M_\rho^I$ is diffeomorphic to $(S^2\times S^2)\#(S^2\times S^2)$.
        \item If $\mathcal{F}_\eta=\mathbb{CP}^3$, then $M_\rho^I$ is diffeomorphic to $S^2\times S^2$.
        \item If $\mathcal{F}_\eta=\Lag(\mathbb{C}^4)$, then $M_\rho^I$ is diffeomorphic to $\mathbb{CP}{}^2\#\overline{\mathbb{CP}}{}^2$.
    \end{enumerate}
    The structure group action by $\SOrth(2)$ can be determined in each case, and each is equivalent to an algebraic group action on a Hirzebruch surface or a connected sum of such actions.
\end{thma}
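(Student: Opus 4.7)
The plan is to use the $\SOrth(2)$-action on $M_\rho^I$, induced by the stabilizer of a basepoint $x_0\in\mathbb{H}^2$ in $\iota(\SL(2,\mathbb{R}))$, as the central tool for both identifying the diffeomorphism type of the fiber and describing the structure-group action. Because $G$ is complex and $\iota$ is algebraic, the circle $\iota(\SOrth(2))$ is the maximal compact of a $\mathbb{C}^*$-subgroup of $G$ acting algebraically on $\mathcal{F}_\eta$, so its fixed loci, tangent weights, and one-parameter orbit structure are all computable from the weight decomposition of the standard representation of $G$ pulled back via $\iota_\mathbb{C}:\SL(2,\mathbb{C})\to G$.

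First, in each of the three cases I would fix a concrete model of $\iota$---which up to conjugation is essentially forced by the requirement that $\rho$ have a $\theta$--Anosov limit map into $\mathcal{F}_\theta$---and list the $\mathbb{C}^*$-fixed points of $\mathcal{F}_\eta$ together with their tangent weights. The limit map $\xi:\del\mathbb{H}^2\to\mathcal{F}_\theta$ is then the pair of attracting and repelling fixed points of the one-parameter subgroup of $\iota(\SL(2,\mathbb{R}))$ associated to a hyperbolic element. Next I would enumerate balanced ideals $I$ in $W_\theta\backslash W/W_\eta$ for each pair $(G,\mathcal{F}_\eta)$ and write out the Kapovich--Leeb--Porti thickening $K_I(\xi(\del\Gamma))\subset\mathcal{F}_\eta$ as an explicit union of Schubert subvarieties; its complement is $\Omega_\rho^I$.

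Using the $\SL(2,\mathbb{R})$-equivariance of $p$, the fiber $M_\rho^I=p^{-1}(x_0)$ is then assembled orbit by orbit: each $\iota(\SL(2,\mathbb{R}))$-orbit in $\Omega_\rho^I$ that projects onto $\mathbb{H}^2$ contributes its unique fiber over $x_0$, which is a single $\SOrth(2)$-orbit. Doing the bookkeeping in Bruhat coordinates should render $M_\rho^I$ as an explicit $\iota(\SOrth(2))$-invariant semi-algebraic open subset of $\mathcal{F}_\eta$. To recognize each $M_\rho^I$, together with its circle action, as a Hirzebruch surface or an equivariant connected sum of two such, I would count the $\iota(\SOrth(2))$-fixed points of $M_\rho^I$ (equal to $\chi(M_\rho^I)$), read off the isotropy weights at each, and apply the Orlik--Raymond classification of circle actions on closed simply-connected smooth $4$-manifolds. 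In cases (ii) and (iii) the fixed-point data should match a standard toric $S^1$-action on a single Hirzebruch surface; in case (i) a middle $\iota(\SOrth(2))$-fixed point whose invariant $2$-sphere separates $M_\rho^I$ should produce the equivariant tube-sum decomposition behind the connected sum.

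The main obstacle is the middle step: cleanly extracting $M_\rho^I$ from the Bruhat combinatorics of the balanced ideal together with the $\SL(2,\mathbb{R})$-orbit decomposition of $\mathcal{F}_\eta$. The Weyl combinatorics of $A_2$, $A_3$, and $C_2$ are tractable, but identifying which pieces of each Bruhat cell survive inside a single fiber of $p$, and doing so in a circle-equivariant way, is the real content. The subsequent $4$-manifold recognition is essentially forced, since all three targets are simply connected with small $b_2$ and the circle-action weight data pins them down uniquely.
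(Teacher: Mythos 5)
Your overall strategy --- fix the $\SOrth(2)$--action coming from the stabilizer of the basepoint, compute its fixed-point and isotropy-weight data on the fiber, and match it against standard circle actions on Hirzebruch surfaces via a classification theorem --- is indeed the strategy of the paper. But you have correctly located, and then not resolved, the step where the real difficulty lies. You propose to obtain $M_\rho^I$ ``orbit by orbit'' as an explicit semi-algebraic subset of $\mathcal{F}_\eta$ by intersecting Bruhat cells with fibers of $p$; this is exactly what the paper avoids, because the projection $p$ of \cite{alessandrini2023fiberbundlesassociatedanosov} is not given by any formula, so there is nothing to ``do the bookkeeping in Bruhat coordinates'' with. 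The missing idea is Lemma \ref{CheapTrick}: extend $p$ to an equivariant map $\bar p:\mathcal{F}_\eta\to\overline{\mathbb{H}^2}$; then any point of $\mathcal{F}_\eta$ whose $\SOrth(2)$--stabilizer has order greater than $2$ must map to the unique point of $\overline{\mathbb{H}^2}$ fixed by that stabilizer, namely $O$, and hence lies in $M_\rho^I=\bar p^{-1}(O)$ automatically. Thus the fixed and exceptional sets of $M_\rho^I$ (and, via the equivariant splitting $T_x\mathcal{F}_\eta\cong T_xM_\rho^I\oplus V_2^0$, the signs and weights) are read off from the linear $\SOrth(2)$--action on the whole flag variety, which is pure weight-space combinatorics, and one never needs to know where $M_\rho^I$ sits inside $\mathcal{F}_\eta$. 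Without this observation your plan stalls at your own ``main obstacle.''

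Two further points. First, the classification you invoke is only available off the shelf in the topological category; the smooth equivariant version needed here is folklore, and the paper spends Section \ref{sec:circleclass} proving it (Theorem \ref{SmoothClassification}), using among other things the Poincar\'e conjecture for the simply-connected orbit space. Second, your plan to ``count the $\iota(\SOrth(2))$--fixed points of $M_\rho^I$'' presupposes isolated fixed points, but in the reducible cases for $\mathbb{CP}^3$ and $\Lag(\mathbb{C}^4)$ the action on the fiber has two fixed \emph{surfaces}, and the classifying datum is the Euler number of the normal circle bundle, which the paper extracts from Chern classes; moreover, in the reducible $\Flag(\mathbb{C}^3)$ case the ambient variety has weight-$2$ exceptional spheres that do \emph{not} lie in the fiber, so the transfer of data from $\mathcal{F}_\eta$ to $M_\rho^I$ fails precisely for order-$2$ stabilizers and a separate transversality argument is required there.
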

It is shown that for each of the choices of $\mathcal{F}_\eta$ in 
Theorem A, there are two choices (up to conjugation) for the homomorphism $\iota:\SL(2,\mathbb{R})\to G$ so that $\rho=\iota\circ\phi$ admits a domain of discontinuity in $\mathcal{F}_\eta$. One option for $\iota$ is the irreducible (principal) representation $\iota_{\bldmth{irr}}$ into $G$ and thus this result described the topology of $\mathcal{W}_\rho^I$ for Hitchin representations in these cases. The other option for $\iota$ is a reducible representation $\iota_{\bldmth{red}}$, and its decomposition into irreducible $\SL(2,\mathbb{R})$--representations depends on $\mathcal{F}$. Specifically, if $\rho_n:\SL(2,\mathbb{R})\to\SL(n,\mathbb{R})$ is the $n$-dimensional unique irreducible representation, then the representations $\iota_{\bldmth{irr}}$ and $\iota_{\bldmth{red}}$ are as in the table below:
\[\begin{array}{c|c|c|c}
    G &\mathcal{F}_\eta & \iota_{\bldmth{irr}} & \iota_{\bldmth{red}}\\\hline
     \SL(3,\mathbb{C}) & \Flag(\mathbb{C}^3)&\rho_3 & \rho_2\oplus \rho_1 \\
     \SL(4,\mathbb{C}) &\mathbb{CP}^3&\rho_4 & \rho_2\oplus \rho_2 \\
     \Sp(4,\mathbb{C})&\Lag(\mathbb{C}^4)&\rho_4 & \rho_2\oplus\rho_1\oplus \rho_1. 
\end{array}\]
(See Proposition \ref{fullcases} below.) Let $M_{\bldmth{irr}}$ (resp. $M_{\bldmth{red}}$) be the fiber and $\mathcal{W}_{\bldmth{irr}}$ (resp. $\mathcal{W}_{\bldmth{red}}$) be the quotient manifold for the irreducible (resp. reducible) case. While for each flag variety $\mathcal{F}_\eta$ in Theorem A, the fibers $M_{\bldmth{irr}}$ and $M_{\bldmth{red}}$ are diffeomorphic, they have different structure group actions and thus, a priori the topology of the quotient manifolds $\mathcal{W}_{\bldmth{irr}}$ and $\mathcal{W}_{\bldmth{red}}$ could be different. This leads to the question:
\begin{qstn}
    For a flag variety $\mathcal{F}_\eta$ as in Theorem A, are $\mathcal{W}_{\bldmth{irr}}$ and $\mathcal{W}_{\bldmth{red}}$ diffeomorphic?
\end{qstn}
This question is connected to the broader problem of understanding the space of Anosov representations inside the character variety. Let $\operatorname{Anosov}_\theta(\Gamma,G)$ be the set of Anosov representations inside $\Hom(\Gamma,G)$. By \cite[Thm.\ 9.12]{Guichard_2012}, if $\rho,\rho':\Gamma\to G$ are representations that lie on the same connected component of $\operatorname{Anosov}_\theta(\Gamma,G)$, then for any balanced ideal $I$, the quotient manifolds $\mathcal{W}_\rho^I$ and $\mathcal{W}_{\rho'}^I$ are diffeomorphic. So the topology of the manifolds $\mathcal{W}_{\bldmth{irr}}$ and $\mathcal{W}_{\bldmth{red}}$ is related to the following question:
\begin{qstn} $\ $
    \begin{enumerate}
        \item Are $\rho_3$ and $\rho_2\oplus \rho_1$ in the same component of $\operatorname{Anosov}_{\{1,2\}}(\Gamma,\SL(3,\mathbb{C}))$? 
        \item Are $\rho_4$ and $\rho_2\oplus \rho_2$ in the same component of $\operatorname{Anosov}_{\{2\}}(\Gamma,\SL(4,\mathbb{C}))$? 
        \item Are $\rho_4$ and $\rho_2\oplus \rho_1\oplus \rho_1$ in the same component of $\operatorname{Anosov}_{\{1\}}(\Gamma,\Sp(4,\mathbb{C}))$?
    \end{enumerate}
\end{qstn}
\noindent
In the present work, we are not able to definitively answer either of the two questions posed above. \par
Prior to the result of \cite{alessandrini2023fiberbundlesassociatedanosov}, the fiber bundle structure of $\mathcal{W}_\rho^I$ had been noted and the topology of the fiber determined. For $G=\PSL(4,\mathbb{R})$, $\mathcal{F}_\eta=\mathbb{RP}^3$, \cite{guichard2007convexfoliatedprojectivestructures} identified the quotient manifold $\mathcal{W}_\rho^I$ with the unit tangent bundle of $S$ and showed that Hitchin representations are exactly the holonomies of convex foliated projective structures on the unit tangent bundle. For $G=\SOrth_0(2,n+1)$, $\mathcal{F}_{\eta}=\operatorname{Pho}(\mathbb{R}^{2,n+1})$, Collier, Tholozan, and Toulisse identified $\mathcal{W}_\rho^I$ as a fiber bundle over $S$ with fiber $\operatorname{Pho}(\mathbb{R}^{2,n})$ and characterized the maximal representations by the photon structure on $\mathcal{W}_\rho^I$. Finally for $G=\SL(2n,\mathbb{K})$, $\mathcal{F}_\eta=\mathbb{KP}^{2n-1}$, Alessandrini, Davalo and Li \cite{alessandrinidavaloli} characterized Hitchin representations identified (quasi-)Hithin representations as holonomies of projective structures on $\mathcal{W}_\rho^I$. \par
Some other results along these lines include the work of Nolte and Riestenberg ($G=\SL(3,\mathbb{R})$, $\mathcal{F}_\eta=\Flag(\mathbb{R}^3)$, see \cite{nolteriestenberg2024concavefoliatedflagstructures}), Davalo and Evans ($G=\SOrth_0(p,p+1),G_2'$, $\mathcal{F}_\eta=\operatorname{Ein}^{p-1,p},\operatorname{Pho}^\times$, see \cite{davalo2025geometricstructureseinsteinuniverse,davalo2025geometricstructuresg2surfacegroup}). Additionally, in a forthcoming work \cite{Hartexithcobordism}, the author studies the $h$-cobordism type of the fiber for any $(G,\mathcal{F}_\eta)$ by relating it to the thickening of a point inside $\mathcal{F}_\eta$. 

For $G=\Sp(4,\mathbb{C})$ and $\mathcal{F}_\eta=\Lag(\mathbb{C}^4)$, Alessandrini, Maloni, Tholozan and Wienhard \cite{alessandrini2023fiberbundlesassociatedanosov} use a non-smooth projection map and Freedman's classification of simply-connected $4$--manifolds to show the fiber $M_\rho^I$ is homeomorphic to $\mathbb{CP}{}^2\#\overline{\mathbb{CP}}{}^2$. Thus, part (iii) of Theorem A improves this identification to an equivariant diffeomorphism. Similarly, part (ii) also follows from the work of \cite{alessandrini2023fiberbundlesassociatedanosov}, but in this paper we determine the structure group explicitly. The final major distinction between the present paper and previous work is that it applies representations factoring through a reducible representation in addition to quasi-Hitchin representations. \par
The essential tool for the proof is Fintushel's classification of simply-connected closed $4$--manifolds with effective $S^1$--actions \cite{FintushelSimply4Circle}. Intuitively, the classification says that the circle action on a simply-connected $4$--manifold is determined by the action around points with non-trivial stabilizers. This information is encoded in a labeled graph which is embedded in the orbit space of the action. Fintushel's original paper works in the topological setting and thus only shows the classification up to $S^1$--equivariant homeomorphism, although folklorically the classification holds in the smooth setting as well. We confirm this folklore result by principally reworking Fintushel's original proof with explicitly smooth methods. In order to make the application of this classification to smooth actions more tractable, we use an alternative labeling of the graph due to Jang \cite{Jang_2018,jang2023circleactionsoriented4manifolds}. We call this classifying graph with the labeling due to Jang the tangential weight graph of the $S^1$--action.  \par
Once we have this classification theorem for smooth $S^1$--action on simply-connected $4$--manifolds, we develop a method to determine the tangential weight graph for the action of $\SOrth(2)$ on $M_\rho^I$ by relating it to the action on the ambient flag variety $\mathcal{F}$. We determine the tangential weight graphs for a family of $S^1$--actions on Hirzebruch surfaces. By showing that the tangential weight graph of each fiber $M_\rho^I$ agrees with that of a Hirzebruch surface (or connected sum of Hirzebruch surfaces), we conclude Theorem A.
\par
We would like to note that the methods of this paper can also be applied to $3$--dimensional complex flag varieties for product groups. Specifically, the fiber for $G=(\SL(2,\mathbb{C}))^3$ with domain of discontinuity in $\mathcal{F}=(\mathbb{CP}^1)^3$ is diffeomorphic $\#^3(S^2\times S^2)$, and the fiber for $G=\SL(2,\mathbb{C})\times\SL(3,\mathbb{C})$ and $\mathcal{F}=\mathbb{CP}^1\times\mathbb{CP}^2$ is diffeomorphic to $(S^2\times S^2)\#\mathbb{CP}{}^2\#\overline{\mathbb{CP}}{}^2$. For brevity, we do not discuss the details for these cases and instead focus on simple matrix groups. \\ \\
\textbf{Outline of the paper.\ }In Section \ref{sect:liethry}, the prerequisite Lie theory is explained. In Section \ref{sec:anosov}, we define Anosov representations and their domains of discontinuity, and we conclude with Proposition \ref{fullcases} which lists the cases for which our method can be applied. In Section \ref{sec:circleclass}, the classification of smooth $S^1$--actions on simply-connected closed $4$--manifolds in terms of tangential weight graphs is shown (Theorem \ref{SmoothClassification}). In Section \ref{sec:TWD4flags}, we describe how to calculate the tangential weight graph of a flag variety and relate it to the tangential weight graph of the fiber (Proposition \ref{SameTWG}). In Section \ref{sec:calculations4fibers}, we apply these results to determine the fiber in each case:
\begin{itemize}
    \item For $\mathcal{F}=\Flag(\mathbb{C}^3)$, see Corollary \ref{fiberflgirr} and Corollary \ref{fiberflgred}.
    \item For $\mathcal{F}=\mathbb{CP}^3$, see Corollary \ref{fiberprjirr} and Corollary \ref{fiberprjred}.
    \item For $\mathcal{F}=\Lag(\mathbb{C}^4)$, see Corollary \ref{fiberlagirr} and Corollary \ref{fiberlagred}.
\end{itemize}
\vspace{5pt}
\textbf{Ackowledgements.\ }
First, the author would like to express his gratitude for the endless help and guidance from his advisor, Sara Maloni. Her support has been indispensable to the author's progress as a graduate student and the completion of this paper as a part of his thesis. The author also wants to thank Filippo Mazzoli for his useful discussions in the different stages of this research project, as well as Nicolas Tholozan and Max Riestenberg for their insightful conversations. \par 
 The author was partially supported by the U.S. National Science Foundation grant DMS-1848346 (NSF CAREER). Finally, the author would also like to acknowlegde the support of the Institut Henri Poincar\'e (UAR 839 CNRS-Sorbonne Universit\'e) and LabEx CARMIN (ANR-10-LABX-59-01).

\section{Lie theory and flag varieties}\label{sect:liethry}
We will now explicate the necessary Lie theory that will be needed to properly define and describe Anosov representations and their domain of discontinuity.
In this paper, we will only consider representations into complex Lie groups. However, the relevant structure theory for real and complex Lie groups will be both be used. Which coefficients are being used when will always be explicitly stated. \par
A Lie algebra $\mathfrak{g}$ over $\mathbb{K}=\mathbb{R}$, $\mathbb{C}$ is simple if it has no proper ideals over $\mathbb{K}$. A connected Lie group $G$ over $\mathbb{K}$ is simple if its associated Lie algebra $\mathfrak{g}=T_eG$ is simple and it has finite center. A Lie algebra $\mathfrak{g}$ over $\mathbb{K}$ is semi-simple if it has no proper abelian ideals over $\mathbb{K}$. The semi-simplicity of $\mathfrak{g}$ is equivalent to the non-degeneracy of the Killing form $$\kappa(X,Y)=\tr(\ad_X\circ\ad_Y)$$ where $$\ad_X(Y)=[X,Y]$$
and trace is computed using coefficents in $\mathbb{K}$.
A Lie group over $\mathbb{K}$ will be called semi-simple if its Lie algebra $\mathfrak{g}$ is semi-simple. \par
Important examples of semi-simple complex Lie groups are the classical groups
\begin{align*}
    \SL(n,\mathbb{C})&=\left\{A\in M_{n}(\mathbb{C}):\det A=1\right\}\\
    \SOrth(n,\mathbb{C})&=\{A\in\SL(n,\mathbb{C}):A^tJA=J\}\\
    \Sp(2n,\mathbb{C})&=\{A\in \SL(n,\mathbb{C}):A^t J'A=J'\}
\end{align*}
where $$J=\begin{bmatrix}
    & & 1\\ & \iddots & \\ 1 & &
\end{bmatrix} \ \ \ \ \ \ \ \ J'=\begin{bmatrix}
    & J \\ -J &
\end{bmatrix}$$
are a choice of invertible symmetric and skew-symmetric matrices. Choosing a different symmetric (or skew-symmetric matrix) for $J$ ( or $J'$) will produce a conjugate subgroup of $\SL(n,\mathbb{C})$.\par
The corresponding classical complex Lie algebras are
\begin{align*}
    \mathfrak{sl}_n&=\left\{A\in M_{n}(\mathbb{C}):\tr A=0\right\}\\
    \mathfrak{so}_n&=\{A\in\mathfrak{sl}_n:A^tJ+JA=0\}\\
    \mathfrak{sp}_n&=\{A\in \mathfrak{sl}_{2n}:A^t J'+J'A=0\}
\end{align*}
each of which of simple. Additionally, there are also the exceptional simple Lie algebras $\mathfrak{g}_2$, $\mathfrak{f}_4$, $\mathfrak{e}_6$, $\mathfrak{e}_7$, $\mathfrak{e}_8$ which are the Lie algebras of the exceptional groups $G_2$, $F_4$, $E_6$, $E_7$, and $E_8$. For specific models of these exceptional groups, see \cite[Chapter 5]{Onishchik1993LieGA}. \par
Any finite dimensional simple Lie algebra over $\mathbb{C}$ is the direct sum of classical or exceptional simple Lie algebras. This means, for our purposes, it will be sufficient to consider products of classical groups and exceptional groups rather than the whole class of semi-simple Lie groups over $\mathbb{C}$. \par
For the remainder of the section, let $G$ be a product of classical and exceptional groups over $\mathbb{C}$ with Lie algebra $\mathfrak{g}$. A subalgebra $\mathfrak{h}$ of $\mathfrak{g}$, which is maximal among subalgebras whose $\ad$--action on $\mathfrak{g}$ is simultaneously diagonalizable, is called a Cartan subalgebra. Any two choices of Cartan subalgebra are conjugate under the adjoint action of $G$. \par
We can choose a maximal compact subgroup $K$ of $G$ with Lie alegbra $\mathfrak{k}$ so that $\mathfrak{t}=\mathfrak{k}\cap\mathfrak{h}$ is a maximal abelian subspace of $\mathfrak{k}$ and $\mathfrak{a}=\mathfrak{p}\cap\mathfrak{h}$ is a maximal abelian subspace of $\mathfrak{p}=\mathfrak{k}^{\perp_{\kappa_{\mathbb{R}}}}$, the orthogonal complement of $\mathfrak{k}$ with respect to the real Killing form of $\mathfrak{g}$. In this case, the dimension $\dim_{\mathbb{R}}\mathfrak{a}=\dim_{\mathbb{C}}\mathfrak{h}$ is called the rank of $G$. Then, $\mathfrak{g}$ has a restricted root space decomposition. A restricted root is a non-zero real linear functional $\alpha\in\mathfrak{a}^\ast$ such that its root space
\[\mathfrak{g}_{\alpha}=\{X\in\mathfrak{g}:\ad_{H}(X)=\lambda(H)X\text{ for }H\in\mathfrak{a}\}\]
is non-zero. Then, the set of restricted roots
\[\Sigma=\{\alpha\in\mathfrak{a}^\ast:\mathfrak{g}_{\alpha}\neq 0,\alpha\neq 0\}\]
is an abstract root system in $\mathfrak{a}^\ast$.
 We can choose a subset $\Delta\subseteq \Sigma$ which is a basis of $\mathfrak{a}^\ast$. We call elements of $\Delta$ simple roots. A root $\alpha\in \Sigma$ is called positive (resp. negative) if $\alpha$ is written with non-negative (resp. non-positive) coefficients using the basis $\Delta$. Let $\Sigma^+$ (resp. $\Sigma^-$) denote the set of positive (resp. negative) roots. Then, the properties of root systems imply that $\Sigma=\Sigma^+\sqcup\Sigma^-$. 
 We define the standard Borel subgroup $B$ (with respect to our choice of $K$, $\mathfrak{a}$, and $\Delta$) to be the analytic subgroup (over $\mathbb{R}$) with Lie algebra 
\begin{align*}
    \mathfrak{b}=\mathfrak{h}\oplus\bigoplus_{\alpha\in\Sigma^+}\mathfrak{g}_\alpha
\end{align*}
Note that our choices of $K$ and $\mathfrak{a}$ ensure that each root space $\mathfrak{g}_{\alpha}$ is a complex linear subspace of $\mathfrak{g}$ so $\mathfrak{g}$ is, in fact, a subalgebra of $\mathfrak{g}$ over $\mathbb{C}$ and $B$ is a complex Lie subgroup of $G$. \par 
A proper subgroup $P$ of $G$ containing $B$ is called a standard parabolic subgroup and is always of the form $P_{\theta}$ where $P_\theta$ is the analytic subgroup of $G$ with Lie algebra
\[\mathfrak{p}_{\theta}=\mathfrak{b}\oplus\bigoplus_{\alpha\in\Sigma^-\cap\operatorname{Span}(\Delta\setminus\theta)}\mathfrak{g}_{\alpha}\]
for some non-empty subset $\theta\subseteq\Delta$. The non-empty subset $\theta$ of $\Delta$ is called the type of $P$. In particular, $P_{\Delta}=B$ is the Borel subgroup.\par
For a standard parabolic subgroup $P_{\theta}$, we are interested in the associated $G$--homogeneous space 
\[\mathcal{F}_{\theta}=G/P_{\theta}\]
which we call the (generalized) flag variety for $G$ of type $\theta$. The homogeneous space $\mathcal{F}_{\Delta}=G/B$ is called the full flag variety of $G$. A stabilizer of a point in $\mathcal{F}_\theta$ (or equivalently a conjugate subgroup of $P_\theta$) is called a parabolic subgroup. \par
Note that the Killing form restricted to $\mathfrak{a}$ gives a positive definite inner product.
The Weyl group $W$ of $\Sigma$ is the subgroup of $\Orth(\mathfrak{a},\kappa_{\mathbb{R}})$ generated by the reflections $s_\alpha$ across the hyperplanes $\ker\alpha$ for $\alpha\in\Sigma$. In fact, the simple reflections $S=\{s_\alpha:\alpha\in\Delta\}$ is a generating set for $W$ which form a Coxeter system. We have an induced action of $W$ on $\mathfrak{a}^+$, preserving the root system $\Sigma$ given by
\[w\cdot \alpha=\alpha\circ w^{-1}\].
The an element $w_0$ is the longest element of $W$ with respect to the generating set $S$ if and $w_0\Delta=-\Delta$. We define the opposition involution $\nu:\Delta\to\Delta$ by
\[\nu(\alpha):=-w_0\alpha.\]
We note that $W$ is isomorphic to $N_{K}(\mathfrak{a})/Z_K(\mathfrak{a})$
where
\begin{align*}
    N_K(\mathfrak{a})&=\{k\in K:\Ad_k(\mathfrak{a)=\mathfrak{a}}\}\\
    Z_K(\mathfrak{a})&=\{k\in K:\Ad_k=\id_{\mathfrak{a}}\}
\end{align*}
which has a natural action on $\mathfrak{a}$ by isometries as follows:
\[\left(kZ_K(\mathfrak{a})\right)\cdot a=\Ad_k(a).\]
Note the Lie algebra of $Z_K(\mathfrak{a})$ is actually $\mathfrak{t}$ and
\[Z_K(\mathfrak{a})=B\cap K.\]
\par
A pair of parabolic subgroups $(P,P')$ are called opposite if they are $G$--conjugate, for some $\theta\subseteq \Delta$, a pair $(P_{\theta},P^{-}_{\theta})$ where
$P^-_{\theta}$ is the analytic subgroup of $G$ with Lie algebra
\[\mathfrak{p}_\theta^-=\mathfrak{h}\oplus\bigoplus_{\alpha\in\Sigma^-}\mathfrak{g}_{\alpha}\ \oplus\bigoplus_{\alpha\in\Sigma^+\cap\Span(\Delta\setminus \theta)}.\]
It should be noted that $P^-_\theta$ is a parabolic subgroup conjugate to $P_{\nu(\theta)}$. Thus, we consider $\mathcal{F}_{\theta}$ and $\mathcal{F}_{\nu(\theta)}$ opposite flag varieties. \par
Finally, we would like to recall the Cartan projection of $G$. Let $\overline{\mathfrak{a}}^+=\{X\in\mathfrak{a}:\alpha(X)\geq 0\text{ for }\alpha\in\Sigma^+\}$ denote the Weyl chamber. Then, for any $g\in G$, there are $k,\ell\in K$ and a unique $X\in\overline{\mathfrak{a}}^+$ such that
\[g=k\exp(X)\ell.\]
Then, the Cartan projection $\mu:G\to\overline{a}^+$ is the function which assigns $X$ to $g$. We will see that the Cartan projection plays a central role in the definition of an Anosov representation.
\subsection{Classical groups and flag varieties}\label{classgrpflg}
Now, we will discuss what the flag varieties for classical groups look like. We will only go into detail for $G=\SL(n,\mathbb{C})$ and $G=\Sp(2n,\mathbb{C})$ as we will see later these groups are the only cases where our method will be relevant.
\subsubsection{Flag varieties for special linear group}
We now restrict to $G=\SL(n,\mathbb{C})$ with $\mathfrak{g}=\mathfrak{sl}_n$. We will consider the Cartan subalgebra of (traceless) diagonal matrices with complex entries:
\[\mathfrak{h}=\{\operatorname{diag(\lambda_1,\hdots,\lambda_n)}\in\mathfrak{sl}_n\}.\]
The real Killing form of $\mathfrak{sl}_n$ is
\[\kappa_{\mathbb{R}}(X,Y)=4n\operatorname{Re}(\tr XY)\]
where $\tr XY$ is the trace of the matrix product of $X,Y\in\mathfrak{sl}_n$. We can then choose maximal compact subgroup
\[K=\SU(n)=\{A\in\SL(n,\mathbb{C}):A^\ast A=I_n\}\]
where $A^\ast$ is the complex transpose of $A$. Then, 
\begin{align*}
    \mathfrak{k}&=\mathfrak{su}_n=\{A\in\mathfrak{sl}_n:A^\ast+A=0\\
    \mathfrak{p}&=\mathfrak{k}^{\perp_{\mathbb{R}}}=\{A\in\mathfrak{sl}_n:A=A^\ast\}.
\end{align*}
Then, we have 
\begin{align*}
    \mathfrak{t}&=\mathfrak{p}\cap\mathfrak{h}=\{\operatorname{diag}(\lambda_1,\hdots,\lambda_n)\in\mathfrak{sl}_n:\lambda_1,\hdots,\lambda_n\in i\mathbb{R}\}\\
    \mathfrak{a}&=\mathfrak{p}\cap\mathfrak{h}=\{\operatorname{diag}(\lambda_1,\hdots,\lambda_n)\in\mathfrak{sl}_n:\lambda_1,\hdots,\lambda_n\in\mathbb{R}\}
\end{align*}
Then, the restricted roots of $G$ are 
\[\Sigma=\{\epsilon_j-\epsilon_k:j,k\text{ distinct integers in }[1,n]\}\]
where $\epsilon_j:\mathfrak{a}\to\mathbb{R}$ is the linear functional defined by
\[\epsilon_j(\operatorname{diag}(\lambda_1,\hdots,\lambda_n))=\lambda_j.\]
Then, each root space $\mathfrak{g}_{\epsilon_j-\epsilon_k}$ is $2$--dimensional with root vectors $E_{jk}$, $iE_{jk}$ where $E_{jk}$ is the $n\times n$ matrix with all entries zero except for a one in the entry in the $j$--th row and $k$--th column. The rank of $\SL(n,\mathbb{C})$ is $n-1$ and we will use the basis of simple roots
\[\Delta=\{\epsilon_j-\epsilon_{j+1}:j\in\{1,\hdots,n-1\}\}.\]
Then,
\[\Sigma^+=\{\epsilon_j-\epsilon_k:1\leq j<k\leq n\}\ \ \ \ \Sigma^-=\{\epsilon_j-\epsilon_k:1\leq k<j\leq n\}\]
are the sets of positive and negative (restricted) roots. \par
This means the Borel subalgebra
\[\mathfrak{b}=\mathfrak{h}\oplus\bigoplus_{1\leq j<k\leq n}\mathbb{C}E_{jk}\]
is exactly the set of traceless upper triangular matrices. Thus, the Borel subgroup $B$ for $G=\SL(n,\mathbb{C})$ is the subgroup of upper triangular matrices. \par 
Any type $\theta\subseteq\Delta$ is of the form $\{\epsilon_{d_1}-\epsilon_{d_1+1},\hdots,\epsilon_{d_\ell}-\epsilon_{d_{\ell}+1}\}$ for a unique subsequence $d_1,\hdots, d_{\ell}$ of $1,\hdots,n-1$. Let $d_0=0$ and $d_{\ell+1}=n$. Then, $P_{\theta}$ is the subgroup of block upper triangular matrices with blocks of sizes $d_1-d_{0},d_2-d_1,\hdots,d_{\ell+1}-d_{\ell}$ on the diagonal, or more formulaically:
\[P_{\theta}=\left\{\begin{bmatrix}
    A_{1,1} & \cdots & A_{1,\ell+1} \\
    & \ddots & \vdots\\ & & A_{\ell+1,\ell+1}
\end{bmatrix}\in\SL(n,\mathbb{C}):A_{j,k}\in M_{d_j-d_{j-1},d_{k}-d_{k-1}}(\mathbb{C})\right\}\]
\\ For $G=\SL(n,\mathbb{C})$, we will often identify the type of a parabolic subgroup by the sequence $d_1,\hdots,d_{\ell}$ as above rather than a subset of simple roots, e.g.\ $P_{1,2}=P_{\{\epsilon_1-\epsilon_2,\epsilon_2-\epsilon_3\}}$. If $n\leq 10$, then we can omit the delimiters and refer to $P_{1,2}$ as $P_{12}$ without ambiguity. \par
Let $e_1,\hdots,e_n$ be the standard basis for $\mathbb{C}^n$. Note that for $k\in\{1,\hdots, n-1\}$, $P_{k}$ is exactly the stabilizer of the $k$--plane $\langle e_1,\hdots,e_k\rangle$ inside $G$. Thus, we see that the flag variety $\mathcal{F}_k$ can be identified with the Grassmanian $\Gr_k(\mathbb{C}^n)$ of $k$--planes inside $\mathbb{C}^n$. \par
Likewise, the subgroup
\[P_{d_1,\hdots,d_\ell}=P_{d_1}\cap\cdots P_{d_\ell}\]
stabilizes the tuple of subspaces
\[E^{\bullet}_{d_1,\hdots,d_{\ell}}=(\langle e_1,\hdots,e_{d_1}\rangle,\hdots,\langle e_1,\hdots, e_{d_{\ell}}\rangle).\]
The $G$--orbit of $E^\bullet_{d_1,\hdots,d_{\ell}}$ inside $\Gr_{d_1}(\mathbb{C}^n)\times\cdots\times\Gr_{d_{\ell}}(\mathbb{C}^n)$ is the set of flags in $\mathbb{C}^n$ of signature $d_1,\hdots,d_n$. A flag $F^\bullet$ in an $n$--dimensional complex vector space $V$ of signature $d_1,\hdots d_{\ell}$ is a tuple
\[F^\bullet=(F^{d_1},\hdots, F^{d_{\ell}})\]
where $F^{d_j}$ is a $d_j$--dimensional subspace of $V$ and
\[F^{d_1}\subsetneq F^{d_2}\subsetneq\cdots \subsetneq F^{d_{\ell}}.\]
Thus, $E^\bullet_{d_1,\hdots,d_{\ell}}$ is a flag inside $\mathbb{C}^n$ with $E^{d_j}_{d_1,\hdots,d_{\ell}}=\langle e_1,\hdots,e_{d_{j}}\rangle$, which we call the standard flag of signature $d_1,\hdots,d_{\ell}$. More generally, we say a flag $F^{\bullet}$ of signature $d_1,\hdots, d_\ell$ has a flag basis $v_1,\hdots,v_n\in\mathbb{C}^{n}$ if $v_1,\hdots,v_n$ is a basis such that
\[F^{d_j}=\langle v_1,\hdots,v_{d_j}\rangle\text{ for }j\in\{1,\hdots,\ell\}.\]
In particular, the standard basis is a flag basis for the standard flags of various signatures. A flag of signature of $1,\hdots,n$ is called a full flag.
\par
Then, the flag variety $\mathcal{F}_{d_1,\hdots,d_{\ell}}$ can be identified with the set $\Flag_{d_1,\hdots,d_\ell}(\mathbb{C}^n)$ of flags in $\mathbb{C}^n$ of signature $d_1,\hdots,d_\ell$. These are the prototypical examples of flag varieties and explain the origin of the descriptor ``flag''.\par
Next, we consider the Weyl group of $G=\SL(n,\mathbb{C})$. Note that for each simple root $\alpha=\epsilon_j-\epsilon_{j+1}$,
\[\ker\alpha=\{\diag(\lambda_1,\hdots,\lambda_n)\in\mathfrak{sl}_n:\lambda_j=\lambda_{j+1}\}\]
and the simple reflection $s_{\alpha}$ across $\ker\alpha$ is the transposition
\[\diag(\lambda_1,\hdots,\lambda_{j-1},\lambda_j,\lambda_{j+1},\lambda_{j+2},\hdots,\lambda_n)\mapsto\diag(\lambda_1,\hdots,\lambda_{j-1}\lambda_{j+1},\lambda_j,\lambda_{j+2},\hdots,\lambda_n)\]
and thus the Weyl group $W$ which is generated by $\{s_\alpha:\alpha\in\Delta\}$ is isomorphic to the symmetric group $S_n$ acting on $\mathfrak{a}$ by permutation of diagonal entries. Furthermore, the longest element of $W=S_n$ (which should send $\Sigma^+$ to $\Sigma^-$) is the order-reversing permutation $j\mapsto n-j+1$. Then,
\[\nu(\epsilon_{j}-\epsilon_{j+1})=-(w_0\cdot(\epsilon_j-\epsilon_{j+1}))=-(\epsilon_{n-j+1}-\epsilon_{n-j})=\epsilon_{n-j}-\epsilon_{n-j+1}\]
is the opposition involution. This means that $\Gr_{k}(\mathbb{C}^n)$ and $\Gr_{n-k}(\mathbb{C}^n)$ are opposite flag varieties. More generally, $\Flag_{d_1,\hdots,d_\ell}(\mathbb{C}^n)$ and $\Flag_{d_1',\hdots,d'_\ell}(\mathbb{C}^n)$ are opposite flag varieties if and only if
\[d_j+d_{\ell+1-j}'=n\text{ for }j\in\{1,\hdots,\ell\}.\]
\par
Finally, we want to discuss the Cartan projection for $\SL(n,\mathbb{C})$ which derives from the singular value decomposition.
\begin{xmpl}\label{CartanHyperbolic}
    Any $g\in G$ has a singular value decomposition
    \[g=ka\ell\]
    where $k,\ell\in \SU(n)$ and $a$ is a diagonal matrix $\diag(\sigma_1,\hdots,\sigma_n)$ with real entries $\lambda_1\geq\lambda_2\geq\cdots\geq\lambda_n>0$ in descending order. The diagonal entry $\sigma_j$ is known as the $j$--th singular value of $g$. Then, we have
    \[g=k\exp(X)\ell\]
    where $X\in\overline{\mathfrak{a}^+}=\{\diag(\lambda_1,\hdots,\lambda_n)\in\mathfrak{sl}_n:\lambda_1\geq\lambda_2\geq\cdots\geq\lambda_n\}$ is the diagonal matrix
    \[X=\diag(\log\sigma_1,\hdots,\log\sigma_n).\]
    Thus, the Cartan projection $\mu:G\to\overline{\mathfrak{a}^+}$ outputs the logarithms of the singular values, i.e.
    \[g\mapsto\diag(\log\sigma_1(g),\hdots,\log\sigma_n(g)).\]
    \par
    We now specialize to $\SL(2,\mathbb{C})$.  For $g\in \SL(2,\mathbb{C})$, assume that
    \[g=k\begin{bmatrix}
        e^{\frac{\delta}{2}} & \\ & e^{-\frac{\delta}{2}}
    \end{bmatrix}\ell\]
    for some $\delta\in\mathbb{R}_{>0}$ and $k,l\in\SU(2)$. Let $O\in\mathbb{H}^3$ be the point whose stabilizer in $\SL(2,\mathbb{C})$ is $\SU(2)$. Then,
    \[d_{\mathbb{H}^3}(O,g\cdot O)=d_{\mathbb{H}^3}\left(O,\begin{bmatrix}
        e^{\frac{\delta}{2}} & \\ & e^{-\frac{\delta}{2}}
    \end{bmatrix}\cdot O\right)=\delta=\mu(g)\]
    Thus, we see that for $G=\SL(2,\mathbb{C})$, the Cartan projection of $g$ agrees with the displacement of the origin in $\mathbb{H}^3$ by $g$.
\end{xmpl}
Now that we have covered $G=\SL(n,\mathbb{C})$, we will discuss $G=\Sp(n,\mathbb{C})$. 
\subsubsection{Isotropic flag varieties for symplectic groups}
Consider $\mathbb{C}^{2n}$ as vectors written in a basis $e_{1},\hdots,e_n,e_{-n},\hdots,e_{-1}$. We treat
\[\Sp(2n,\mathbb{C})=\{A\in \SL(n,\mathbb{C}):A^t J'A=J'\},\]
where
$$J=\begin{bmatrix}
    & & 1\\ & \iddots & \\ 1 & &
\end{bmatrix} \ \ \ \ \ \ \ \ J'=\begin{bmatrix}
    & J \\ -J &
\end{bmatrix},$$
as the linear transformations of $\mathbb{C}^{2n}$ that preserve the non-degenerate bilinear form $\omega:\mathbb{C}^{2n}\to\mathbb{C}$ where
$$\omega(e_{j},e_{-k})=\delta_{jk}, \ \ \omega(e_{-j},e_{k})=-\delta_{jk}\ \ \text{ for }j\in\{1,\hdots,n\}, k\in\{-n,-n+1,\hdots,n\}.$$
Recall that the Lie algebra $\mathfrak{g}$ of $G$ is
\[\mathfrak{sp}_{2n}=\left\{A\in\mathfrak{sl}_{2n}:A^tJ'+J'A=0\right\}.\]
More explicitly, this is the set of matrices
\[\mathfrak{sp}_{2n}=\left\{\begin{bmatrix}
    a_{1,1} & \cdots & a_{1,n} & a_{1,-n} & \cdots & a_{1,-1}\\ \vdots & \ddots & \vdots & \vdots & \iddots & \vdots \\
    a_{n,1} & \cdots  & a_{n,n} & a_{n,-n} & \cdots  & a_{1,-n} \\ a_{-n,1} & \cdots & a_{-n,n}  & -a_{n,n} & \cdots & -a_{1,n} \\ \vdots & \iddots & \vdots & \vdots & \ddots & \vdots \\ a_{-1,1} & \cdots & a_{-n,1} & -a_{n,1} & \cdots & -a_{1,1}
\end{bmatrix}\in \mathfrak{sl}_{2n}\right\}.\]
In other words, elements of $\mathfrak{sp}_{2n}$ has the block form $\left[\begin{smallmatrix}
    A & B\\ C & D
\end{smallmatrix}\right]$ where $B$, $C$ are symmetric along the antidiagonal and $A$ and $--D$ are reflections across the antidiagonal, that is
\begin{align*}
    B&=JB^tJ ,& C&=JC^tJ, & D=-JA^tJ.
\end{align*}
We consider the the Cartan subalgebra $\mathfrak{h}$ of anti-symmetric (with respect to the anti-diagonal) diagonal matrices, that is
\[\mathfrak{h}=\{\diag(\lambda_1,\hdots,\lambda_n,-\lambda_n,\hdots,-\lambda_1):\lambda_1,\hdots,\lambda_n\in\mathbb{C}\}\]
We choose the maximal compact subgroup $K=\Sp(n)=\Sp(2n,\mathbb{C})\cap U(2n)$. The Lie algebra of this subgroup is
\[\mathfrak{k}=\left\{\begin{bmatrix}
    A & B \\ -B^\ast & -JA^tJ
\end{bmatrix}:A,B\in M_{n}(\mathbb{C}),\ A^\ast=-A,\ B=JB^tJ\right\}\]
which has the orthogonal complement
\[\mathfrak{p}=\mathfrak{k}^{\perp_\mathbb{R}}=\left\{\begin{bmatrix}
    A & B \\ B^\ast & -JA^tJ
\end{bmatrix}:A,B\in M_{n}(\mathbb{C}),\ A^\ast=A,\ B=JB^tJ\right\}.\]
We have that
\begin{align*}
    \mathfrak{t}&=\mathfrak{k}\cap\mathfrak{h}=\{\diag(\lambda_1,\hdots,\lambda_n,-\lambda_n,\hdots,-\lambda_{1}):\lambda_1,\hdots,\lambda_n\in i\mathbb{R}\}\\
    \mathfrak{a}&=\mathfrak{p}\cap\mathfrak{h}=\{\diag(\lambda_1,\hdots,\lambda_n,-\lambda_n,\hdots,-\lambda_{1}):\lambda_1,\hdots,\lambda_n\in \mathbb{R}\}.
\end{align*}
Then, the restricted roots of $G$ are
\[\{\pm(\epsilon_{j}+\epsilon_k),\pm(\epsilon_j-\epsilon_k):1\leq j<k\leq n\}\cup\{\pm2\epsilon_j:1\leq j\leq n\}\]
where $\epsilon_j:\mathfrak{a}\to\mathbb{R}$ is the linear functional
\[\epsilon_j(\diag(\lambda_1,\hdots,\lambda_n,-\lambda_n,\hdots,-\lambda_1))=\lambda_j\]
for $j\in\{1,\hdots,n\}$.\par 
Let $E_{j,k}$ be the $2n\times 2n$ matrix such that $E_{j,k}e_{\ell}=\delta_{\ell k}e_{j}$ for $j,k,\ell\in\{1,\hdots,n,-n,\hdots,-1\}$. For distinct $j,k\in\{1,\hdots,n\}$, the root space $\mathfrak{g}_{\epsilon_j+\epsilon_k}$ has basis $E_{-j,k}+E_{-k,j}$.
Each root space is $2$--dimensional over $\mathbb{R}$. For any $1\leq j<k\leq n$,
\begin{align*}
    &\mathfrak{g}_{\epsilon_{j}+\epsilon_k} & &\text{ has the basis } &  &E_{-j,k}+E_{-k,j},\ iE_{-j,k}+iE_{-k,j},\\
    &\mathfrak{g}_{-\epsilon_{j}-\epsilon_k} & &\text{ has the basis } & &E_{j,-k}+E_{k,-j},\ iE_{j,-k}+iE_{k,-j},\\
    &\mathfrak{g}_{\epsilon_{j}-\epsilon_k} & & \text{ has the basis } & &E_{j,k}-E_{-k,-j},\ iE_{j,k}-iE_{-k,-j},\\
    &\mathfrak{g}_{\epsilon_{k}-\epsilon_j} & & \text{ has the basis } & &E_{k,j}-E_{-j,-k},\ iE_{k,j}-iE_{-j,-k}.
\end{align*}
Furthermore, for $j\in\{1,\hdots,n\}$
\begin{align*}
    &\mathfrak{g}_{2\epsilon_j} & &\text{ has the basis } &  &E_{-j,j},\ iE_{-j,j},\\
    &\mathfrak{g}_{-2\epsilon_j} & &\text{ has the basis } &  &E_{j,-j},\ iE_{j,-j}.
\end{align*}
The rank of $\Sp(2n,\mathbb{C})$ is $n$ and we will choose the basis of simple roots to be
\[\Delta=\{\epsilon_{1}-\epsilon_2,\hdots,\epsilon_{n-1}-\epsilon_{n}\}\cup\{2\epsilon_n\}.\]
Then, 
\begin{align*}
    \Sigma^+&=\{\epsilon_j\pm\epsilon_k:1\leq j< k\leq n\}\cup \{2\epsilon_j:1\leq j\leq n\} \\
    \Sigma^-&=\{\epsilon_j\pm\epsilon_k:1\leq k< j\leq n\}\cup \{-2\epsilon_j:1\leq j\leq n\}
\end{align*}
are the sets of positive and negative roots. \par
We observe that the positive roots are the exactly the roots whose root spaces consist of strictly upper triangular matrices. It follows that the standard Borel subgroup of $\Sp(2n,\mathbb{C})$ is the intersection of $\Sp(2n,\mathbb{C})$ with the standard Borel subgroup of $\SL(2n,\mathbb{C})$. In other words,
\[B=\{A\in\Sp(2n,\mathbb{C}):A\text{ is upper triangular}\}.\]
\par
One consequence of this relationship between the Borel subgroups is that the full flag variety $\mathcal{F}_{\Delta}$ for $\Sp(2n,\mathbb{C})$ is isomorphic to the $\Sp(2n,\mathbb{C})$--orbit of the standard full flag $E^{\bullet}_{full}$ inside $\Flag_{full}(\mathbb{C}^{2n})$. Using our alternative indexing by $1,\hdots,n,-n,\hdots,-1$, the standard full flag looks like
\[E^\bullet_{full}=(E^{1}_{full},\hdots,E_{full}^n,E^{-n}_{full},\hdots,E^{-1}_{full})\]
so that, for $r\in\{1,\hdots,n\}$,
\begin{align*}
    E^r_{full}&=\langle e_1,\hdots,e_r\rangle\\
    E^{-r}_{full}&=\langle e_1,\hdots, e_n,e_{-n},\hdots,e_{-r}\rangle
\end{align*}
It should be noted that the bilinear form $\omega$ is constructed so that
$$E^{-r}=(E^{r-1})^{\perp_{\omega}}$$
for each $r\in\{1,\hdots,n\}$. This relationship would be preserved by the action of $\Sp(2n,\mathbb{C})$. In fact, this characterizes the $\Sp(2n,\mathbb{C})$--orbit of $E^\bullet_{full}$.\par
More explicitly, an element $F^\bullet$ of $\Flag_{full}(\mathbb{C}^{2n})$ is said to be an extended $\omega$--isotropic full flag if it satisfies
\[F^{-r}=(F^{r-1})^{\perp_{\omega}}.\]
Then, the full flag variety $\mathcal{F}_{\Delta}$ can be identified with the subset of extended $\omega$--isotropic flags inside $\Flag_{full}(\mathbb{C}^{2n})$. Note that this relation between $F^{r-1}$ and $F^{-r}$ for a $\omega$--isotropic full flag $F^{\bullet}$ means that the subspaces $F^{-n},\hdots, F^{-1}$ are redundant information. Therefore, we can more simply identify the $\omega$--isotropic full flag $F^\bullet$, by the half-full flag
\[(F^1,F^{2},\hdots, F^n)\in\Flag_{1,\hdots, n}(\mathbb{C}^{2n}).\]
For each $r\in\{1,\hdots,n\}$, $F^r$ is isotropic, i.e.\ $F^r\subseteq (F^{r})^{\perp_{\omega}}$.
Through this projection from $\Flag_{full}(\mathbb{C}^{2n})$ to $\Flag_{1,\hdots,n}(\mathbb{C}^{2n})$, we get the subset of flags of signature $1,\hdots, n$ consisting of isotropic subspaces. 
\begin{defn}
    A flag $F^\bullet$ in $\mathbb{C}^{2n}$ of signature $d_1,\hdots,d_\ell$ such that 
    $$F^{d_\ell}\subseteq (F^{d_\ell})^{\perp_\omega}$$ 
    is called an $\omega$--isotropic flag in $\mathbb{C}^{2n}$.  \par
    
    The full flag variety of $\Sp(2n,\mathbb{C})$ can be realized as the subset of $\omega$--isotropic flags of signature $1,\hdots,n$, which we denote by 
\[\IsoFlag_{full}(\mathbb{C}^{2n},\omega)=\left\{(F^1,\hdots,F^{n})\in\Flag_{1,\hdots,n}(\mathbb{C}^{2n}):F^n= (F^n)^{\perp_{\omega}}\right\}.\]
\end{defn}
\par
Then, as was the case for the partial flag variety of $\SL(n,\mathbb{C})$, the other (partial) flag varieties of $\Sp(2n,\mathbb{C})$ come from projecting an $\omega$--isotropic full flag to its constituent isotropic subspaces. Let $d_1,\hdots,d_{\ell}$ be a subsequence of $1,\hdots,n$. Then, we define the $\omega$--isotropic flag variety of signature $d_1,\hdots,d_\ell$ to be the subset
\[\IsoFlag_{d_1,\hdots,d_\ell}(\mathbb{C}^{2n},\omega)=\{(F^{d_1},\hdots,F^{d_{\ell}})\in\Flag_{d_1,\hdots,d_\ell}(\mathbb{C}^{2n}):F^{d_{\ell}}\subseteq (F^{d_\ell})^{\perp_{\omega}}\}.\]
If we label the simple roots in $\Delta$ by
\[\alpha_j=\begin{cases}
    \epsilon_j-\epsilon_{j+1} & 1\leq j\leq n-1\\
    2\epsilon_n & j=n
\end{cases}\]
then $\IsoFlag_{d_1,\hdots,d_{\ell}}(\mathbb{C}^{2n},\omega)$ is the (partial) flag variety for $\Sp(2n,\mathbb{C})$ of type $\{\alpha_{d_1},\hdots,\alpha_{d_\ell}\}$. Let $P$ be the parabolic subgroup of $\SL(2n,\mathbb{C})$ consisting of matrices of block triangular form with blocks of sizes
$$d_1,d_2-d_1,\hdots,d_\ell-d_{\ell-1},n-d_{\ell},n-d_{\ell},d_\ell-d_{\ell-1},\hdots,d_{2}-d_{1},d_1.$$
Let $Q$ be the parabolic subgroup of $\Sp(2n,\mathbb{C})$) which is the stabilizer of the standard ($\omega$--isotropic) flag $E_{d_1,\hdots,d_\ell}^\bullet$ of signature $d_1,\hdots,d_\ell$. Then, we have
\[Q=P\cap\Sp(2n,\mathbb{C}).\]
 \par
Noteworthy isotropic partial flag varieties include
\begin{align*}
    \mathbb{CP}^{2n-1}&=\IsoFlag_{1}(\mathbb{C}^{2n},\omega)\\
    \Lag(\mathbb{C}^{2n})&=\IsoFlag_n(\mathbb{C}^{2n},\omega).
\end{align*}
The Lagrangian Grassmanian $\Lag(\mathbb{C}^{2n})$ is the set of maximal isotropic subspaces in $\mathbb{C}^{2n}$. Later, we will consider the topology of domains of discontinuity inside $\Lag(\mathbb{C}^{4})$. \par
Finally, we discuss the Weyl group of $\Sp(2n,\mathbb{C})$. We introduce the shorthand, for $\lambda_1,\hdots,\lambda_n\in\mathbb{C}$,
\[\operatorname{asdiag}(\lambda_1,\hdots,\lambda_n)=\diag(\lambda_1,\hdots,\lambda_n,-\lambda_n,\hdots,-\lambda_1).\]
Then, for $j\in\{1,\hdots,n-1\}$,
\[\ker\alpha_j=\ker(\epsilon_{j}-\epsilon_{j+1})=\{\operatorname{asdiag}(\lambda_1,\hdots,\lambda_n)\in\mathfrak{a}:\lambda_j=\lambda_{j+1}\}\]
and so the simple reflection $s_{\alpha_j}$ acts on $\mathfrak{a}$ by the transposition
\[\operatorname{asdiag}(\lambda_1,\hdots,\lambda_n)\mapsto\operatorname{asdiag}(\lambda_1,\hdots,\lambda_{j-1},\lambda_{j+1},\lambda_j),\lambda_{j+2},\hdots,\lambda_n).\]
On the other hand, we have that
\[\ker\alpha_n=\ker\epsilon_n=\{\operatorname{asdiag}(\lambda_1,\hdots,\lambda_n)\in\mathfrak{a}:\lambda_n=0\}\]
and so the simple reflection act on $\mathfrak{a}$ by the coordinate negation
\[\operatorname{asdiag}(\lambda_1,\hdots,\lambda_n)\mapsto\operatorname{asdiag}(\lambda_1,\hdots,\lambda_{n-1},-\lambda_n).\]
\par
Then, the Weyl group $W$ generated by the simple reflections $s_{\alpha_1},\hdots,s_{\alpha_n}$ is isomorphic to the group of signed permutation matrices:
\[W\cong\left\{DP\in \Orth(n):D\text{ diagonal with entries }\pm 1,P\text{ permutation matrix} \right\}.\]
Given a signed permutation matrix $DP$, there are $c_1,\hdots,c_n\in\{\pm1\}$ and $\sigma\in S_n$ such that $D=\diag(c_1,\hdots, c_n)$ and $P$ is the permutation matrix $\sigma$. 
We denote the set of signed permutations as
\[S^{\pm}_n=\left\{\sigma^\pm: \sigma^\pm\text{ is a permutation of }\{1,\hdots,n,-n,\hdots,-1\}\text{ with }\sigma^\pm(-j)=-\sigma^\pm(j)\right\}.\]
Then, the sign permutation matrix $DP$ corresponds to the signed permutation $$\sigma^\pm(j)=\sign(j)c_{\sigma(j)}\sigma(|j|)\text{ for }j\in\{1,\hdots,n,-n,\hdots,-1\}$$.
We will signify the signed permutation $\sigma^\pm$ by
\[\begin{pmatrix} c_{\sigma(1)}\sigma(1) & \cdots & c_{\sigma(n)}\sigma(n)\end{pmatrix}.\]
For example, the matrix $\left[\begin{smallmatrix}
     &1 &  \\ -1 & & \\ & & 1
\end{smallmatrix}\right]$ corresponds to the signed permutation
\[\begin{pmatrix}
    -2 & 1 & 3
\end{pmatrix}.\]
Note that the group operation on $W=S_n^\pm$ is not simply composition of permutations. Specifically, for $c_1,\hdots,c_n,f_1,\hdots,f_n\in\{\pm1\}$ and $\sigma,\tau\in S_n$, we have if
\begin{align*}
    \sigma^\pm(j)&=\sign(j)c_{\sigma(|j|)}\sigma(|j|) & \tau^\pm(j)&=\sign(j)f_{\sigma(|j|)}\tau(|j|)
\end{align*}
then,
\[(\sigma^\pm\cdot \tau^\pm)(j)=\sign(j)c_{\sigma(\tau(|j|)}f_{\tau(|j|)}\sigma(\tau(|j|))\]
for $j\in\{1,\hdots,n,-n,\hdots,-1\}$.
\par
The longest element of $W=S_{n}^\pm$ is the signed permutation corresponding to negation. That is,
\[w_0=\begin{pmatrix}
    -1 & -2 & \cdots & -n+1 & -n
\end{pmatrix}.\]
This means also that the opposition involution $\nu(\alpha)=-\alpha\circ w_0$ acts trivially on $\Delta$. Thus, every flag variety is self-opposite. 
\subsubsection{Isotropic flag varieties for special orthogonal groups}
As stated before, the detailed Lie theory for special orthogonal groups will not be important for the rest of the paper, but we now give a brief outline.
For $G=\SOrth(n,\mathbb{C})$, the analysis would be mostly similar to $G=\Sp(2n,\mathbb{C})$. Give $\mathbb{C}^{n}$ the standard basis $e_1,\hdots,e_n$. The group $\SOrth(n,\mathbb{C})$ preserves the bilinear form $\omega:\mathbb{C}^n\times\mathbb{C}^n\to\mathbb{C}$ given by
\[\omega(e_{j},e_{k})=\delta_{j+k,n+1}.\]\par
We separate into the cases $n=2p$ and $n=2p+1$ for $p\geq 1$.
The ranks of $\SOrth(2p,\mathbb{C})$ and $\SOrth(2p+1,\mathbb{C})$ are both $p$. The flag varieties for $\SOrth(2p+1,\mathbb{C})$ can be identified with the sets of $\omega$--isotropic flags of various signatures; the full flag variety being 
\[\IsoFlag_{full}(\mathbb{C}^{2p+1},\omega)=\{F^\bullet\in\Flag_{1,\hdots,p}(\mathbb{C}^{2p+1}):F^p\subseteq (F^p)^{\perp_{\omega}}\}.\]
The flag variety of smallest dimension is the smooth quadric hypersurface in $\mathbb{CP}^{2p}$, that is
\[\operatorname{Quad}_{2p}=\{\ell\in\mathbb{CP}^{2p}:\ell\subseteq \ell^{\perp_{\omega}}\}\]
which has complex dimension $2p-1$.
For $G=\SOrth(2p,\mathbb{C})$, there is peculiar subtlety. Note that the $p$--planes
\[E^{p^+}=\langle e_1,\hdots,e_{p-1},e_{p}\rangle,\ \ \ E^{p^-}=\langle 1,\hdots,e_{p-1},e_{p+1}\rangle\]
are both maximal isotropic subspaces. However, the permutation matrix corresponding to the transposition of $p$ and $p+1$, which would send $E^{p+}$ to $E^{p-}$, has determinant $-1$. In fact, if one analyzes the block structure of the stabilizer of $E^{p^+}$ inside $\Orth(2p,\mathbb{C})$, you find that any $A\in\Orth(2p,\mathbb{C})$ that preserves $E^{p^+}$ is orientation-preserving, and thus any $A\in \Orth(2p,\mathbb{C})$ which sends $E^{p^+}$ to $E^{p^-}$ does not reside in $\SOrth(2p,\mathbb{C})$. The conclusion is that the set of $\omega$--isotropic $p$--planes in $\mathbb{C}^{2p}$ is not a $\SOrth(2p,\mathbb{C})$--homogeneous space and instead has two orbits corresponding to $E^{p^+}$ and $E^{p^-}$. We call elements in the orbit of $E^{p^\pm}$ isotropic $p^\pm$--planes. The full flag variety $\Flag_{full}(\mathbb{C}^{2p},\omega)$ for $\SOrth(2p,\mathbb{C})$ is then the set of tuples $(F^1,\hdots,F^{p-1},F^{p^+},F^{p-})$ where $(F^1,\hdots, F^{p-1})$ is a flag of signature $1,\hdots,p-1$, $F^{p^+}$ and $F^{p^-}$ are $\omega$--isotropic $p^+$--planes and $p^-$--planes respectively, and $F^{p-1}=F^{p^+}\cap F^{p^-}$. The partial flag varieties come from projecting onto the constituent subspaces, including the isotropic $p^+$--planes and $p^-$--planes. \par
Similar to $\SOrth(2p+1,\mathbb{C})$, the flag variety of smallest dimension for $\SOrth(2p,\mathbb{C})$ is the smooth quadric hypersurface in $\mathbb{CP}^{2p-1}$, that is
\[\operatorname{Quad}_{2p-1}=\{\ell\in\mathbb{CP}^{2p-1}:\ell\subseteq \ell^{\perp_{\omega}}\}\]
which has complex dimension $2p-2$. \par
This completes our discussion of flag varieties for classical groups.
\subsection{Bruhat decomposition and relative positions}\label{BruhatRelative}
Now, we will describe the Bruhat decomposition and how it provides a way to measure the relative positions of flags. Specifically, we will see that the relationship between elements of flag varieties for $G$ is a combinatorial one, captured by the Bruhat order on the Weyl group. Fix $G$ to be some product of classical and exceptional groups over $\mathbb{C}$. For more discussions of relative position functions for flag varieties, see \cite[Section 3]{dumas-sanders} and \cite[Section 3]{CarvajalesSteckerHomogeneous}. We will mostly follow the notation of \cite{Stecker2018BalancedIA}.
\begin{defn}
    Let $\theta,\eta\subseteq\Delta$ be flag types for $G$. Let $(\Pi,\leq)$ be a poset. A relative position function for $\mathcal{F}_\theta$  and $\mathcal{F}_\eta$ with values in $\Pi$ is a function $\pos:\mathcal{F}_\theta\times \mathcal{F}_{\eta}\to\Pi$ such that $$\pos(x_1,y_1)\leq \pos(x_2,y_2)$$
    if and only if there is the $G$--orbit of $(x_1,y_1)$ is contained in the closure of the $G$--orbit of $(x_2,y_2)$, that is
    \[G\cdot(x_1,y_1)\subseteq\overline{G\cdot(x_2,y_2)}.\]
\end{defn}
An immediate consequence of this definition is that $\pos(x_1,y_1)=\pos(x_2,y_2)$ if and only if $(x_1,y_1)$ and $(x_2,y_2)$ are in the same $G$--orbit inside $\mathcal{F}_{\theta}\times\mathcal{F}_{\eta}$. Thus, $\Pi$ parametrizes the $G$--congruence classes of pairs in $\mathcal{F}_\theta\times\mathcal{F}_{\eta}$, with the order $\leq$ encoding the degeneration of congruence classes into less generic congruence classes.\par
Let $B$ be the standard parabolic subgroup. Recall that for each $w\in W$, we can choose a $\tilde{w}\in N_K(\mathfrak{a})$ such that $w=\Ad_{\tilde{w}}|_{\mathfrak{a}}$. Then, the Bruhat decomposition of $G$ is
\[G=\bigsqcup_{w\in W}B\tilde{w}B\]
where $B\tilde{w}B$ denotes the double coset of $\tilde{w}$, which depends only on $w$. \par
\begin{defn}
    Let $f\in\mathcal{F}_{\Delta}$ be the standard full flag so that the stabilizer of $f$ in $G$ is $B$. Then, for any pairs of flags $x,y\in\mathcal{F}_{\Delta}$, there are $g_1,g_2$ such that 
\[x=g_1f, \ \ \ y=g_2f.\] 
Then, we define the function $\pos_{\Delta,\Delta}:\mathcal{F}_\Delta\times\mathcal{F}_\Delta\to W$ so that $\pos_{\Delta,\Delta}(x,y)$ is the unique $w\in W$ such that
\[Bg_1^{-1}g_2B=B\tilde{w}B.\]
\end{defn}
In order to make, $\pos_{\Delta,\Delta}:\mathcal{F}_{\Delta}\times\mathcal{F}_{\Delta}\to W$ a relative position function, we endow the Weyl group $W$ with the Bruhat order where $w_1\leq w_2$ if there are some reduced expression $s_1\cdots s_\ell$ in the generating set of simple reflections and subsequence $j_1,\hdots,j_k$ such that 
\begin{align*}
    w_1&=s_{j_1}\cdots s_{j_k} & &\text{and} & w_2&=s_1\cdots s_\ell.
\end{align*}
Note that the longest element $w_0\in W$ is necessarily the maximal element with respect to the Bruhat order. \par
For $G=\SL(n,\mathbb{C})$, where $\mathcal{F}_{\Delta}=\Flag_{full}(\mathbb{C}^{n})$, the relative position function takes values in $S_n$. The following proposition elucidates this further.
\begin{prop}\label{RelPosSLn}
    Given any two full flags $F^{\bullet},H^\bullet$ in $\mathbb{C}^{n}$, the following hold:
    \begin{enumerate}[label=\normalfont{(\roman*)}]
        \item There is a flag basis $v_1,\hdots,v_n$ for $F^\bullet$ and a permutation $\sigma\in S_{n}$ such that $v_{\sigma(1)},\hdots,v_{\sigma(n)}$ is a flag basis for $H^\bullet$.
        \item The relative position $\pos_{\Delta,\Delta}(F^{\bullet},H^{\bullet})$ is represented by the permutation $\sigma$.
    \end{enumerate}
\end{prop}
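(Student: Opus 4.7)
My plan is to deduce both parts simultaneously from the Bruhat decomposition of $G=\SL(n,\mathbb{C})$. First I translate the combinatorial content of (i) into matrix language. Fix $g_1,g_2\in G$ with $F^\bullet=g_1\cdot E^\bullet_{full}$ and $H^\bullet=g_2\cdot E^\bullet_{full}$, and for each $\sigma\in S_n$ let $\tilde{\sigma}\in N_K(\mathfrak{a})$ be the permutation matrix specified by $\tilde{\sigma}\,e_k=e_{\sigma(k)}$. Given vectors $v_1,\hdots,v_n$, assemble them into the column matrix $P=[v_1\mid\cdots\mid v_n]$; since $(P\tilde{\sigma})e_k=v_{\sigma(k)}$, the $k$-th column of $P\tilde{\sigma}$ is $v_{\sigma(k)}$. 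Because $B$ is precisely the stabilizer of $E^\bullet_{full}$, it follows that $v_1,\hdots,v_n$ is a flag basis for $F^\bullet$ exactly when $P\in g_1B$, and that $v_{\sigma(1)},\hdots,v_{\sigma(n)}$ is a flag basis for $H^\bullet$ exactly when $P\tilde{\sigma}\in g_2B$.

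Next I apply the Bruhat decomposition to $g_1^{-1}g_2\in G$: there are a unique $w\in W\cong S_n$ and (not necessarily unique) $b_1,b_2\in B$ with $g_1^{-1}g_2=b_1\tilde{w}b_2$. Setting $P:=g_1b_1$, one has $P\in g_1B$ and $P\tilde{w}=g_1b_1\tilde{w}=g_2b_2^{-1}\in g_2B$; letting $\sigma\in S_n$ be the permutation corresponding to $w$, the columns of $P$ then constitute the flag basis demanded by (i). Part (ii) falls out of the same computation: by definition $\pos_{\Delta,\Delta}(F^\bullet,H^\bullet)$ is the unique $w'\in W$ with $Bg_1^{-1}g_2B=B\tilde{w'}B$, and the uniqueness clause of the Bruhat decomposition forces $w'=w$, so the $\sigma$ built in the previous step is the permutation representing $\pos_{\Delta,\Delta}(F^\bullet,H^\bullet)$.

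The only real subtlety, which is bookkeeping rather than a serious difficulty, is pinning down the convention for $\tilde{\sigma}$ so that the two flag-basis conditions translate into the two-sided coset condition defining $\pos_{\Delta,\Delta}$; with the choice $\tilde{\sigma}\,e_k=e_{\sigma(k)}$ above, everything lines up. If a more explicit construction of $\sigma$ is preferred, one can alternatively define it combinatorially from the double intersection filtration: set $\sigma(k)$ to be the unique index $j$ with $\dim(F^k\cap H^j)>\dim(F^{k-1}\cap H^j)$ and $\dim(F^k\cap H^{j-1})=\dim(F^{k-1}\cap H^{j-1})$, pick $v_k\in F^k\cap H^{\sigma(k)}$ representing the new dimension, and then verify against the Bruhat picture to re-establish (ii).
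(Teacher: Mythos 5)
Your proof is correct, but it runs in the opposite direction from the paper's. You take the Bruhat decomposition $G=\bigsqcup_w B\tilde{w}B$ as given, factor $g_1^{-1}g_2=b_1\tilde{w}b_2$, and read off both the flag basis (the columns of $P=g_1b_1$) and the relative position at once; the paper instead builds $\sigma$ and the basis by hand from the intersection pattern, showing that $D_j(k)=\dim(F^k\cap H^j)$ increments by $0$ or $1$, that the jump sets $K_j$ form a chain $K_1\subsetneq\cdots\subsetneq K_n$, and then choosing $v_k\in(F^k\setminus F^{k-1})\cap H^j$. Your route is shorter and gets (ii) for free from the uniqueness clause of the Bruhat decomposition, but it leans on the existence part of that decomposition as a black box, whereas the paper's dimension-counting argument is self-contained linear algebra (and in effect reproves Bruhat existence for $\SL(n,\mathbb{C})$) and yields the explicit combinatorial description of $\sigma$ via intersection dimensions — the description you relegate to your closing remark, and essentially the one the paper later reuses (e.g.\ in Example \ref{ProjPositions}). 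Two bookkeeping points you wave at but should fix: a permutation matrix with $\tilde{\sigma}e_k=e_{\sigma(k)}$ may have determinant $-1$, so to stay inside $G=\SL(n,\mathbb{C})$ you need a signed permutation matrix as in the paper (harmless, since sign changes on columns do not alter the spanned flag and $B\tilde{w}B$ depends only on $w$); and the equivalence ``$v_1,\dots,v_n$ is a flag basis for $F^\bullet$ iff $P\in g_1B$'' requires $\det P=1$, which holds automatically for your $P=g_1b_1$ but should be imposed (by rescaling a column) in the converse direction.
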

\begin{proof}
    For part (i), we first note that for each fixed $j\in\{1,\hdots,n\}$, the function $D_j:\{0,1,\hdots,n\}\to\{0,1,\hdots,j\}$ given by
    \[D_j(k)=\dim(F^k\cap H^{j})\]
    is non-decreasing and more strongly
    \[0\leq D_j(k)-D_{j}(k-1)\leq 1\text{ for all }k\in\{1,\hdots,n\}.\]
    Along with the fact that
    \begin{align*}
        D_j(n)&=\dim(\mathbb{C}^n\cap H^j)=j & &\text{and} & D_j(0)=\dim(\{0\}\cap H^j)=0,
    \end{align*}
    we conclude that for each $j$, the set
    \[K_{j}=\{k:1\leq k\leq n,D_j(k)-D_j(k-1)=1\}\]
    has exactly $j$ elements. For any $k\in K_j$, the fact that
    \[D_j(k)-D_j(k-1)=\dim(F^k\cap H^j)-\dim(F^{k-1}\cap H^{j})=1\]
    means we can choose some $$v_k\in (F^k\setminus F^{k-1})\cap H^j.$$ In particular, this would imply that $$v_k\in (F^{k}\setminus F^{k-1})\cap H^{j+1}$$ and thus 
    \[D_{j+1}(k)-D_{j+1}(k-1)=\dim(F^{k}\cap H^{j+1})-\dim(F^{k-1}\cap H^{j+1})=1.\]
    In other words, $k\in K_{j+1}$ and generalizing, we deduce that there is a chain of containment
    \[\varnothing\neq K_1\subsetneq K_{2}\subsetneq \cdots\subsetneq K_{n-1}\subsetneq K_n=\{1,\hdots,n\}.\]
    Then, it must be that there is a permutation $\sigma\in S_n$ such that
    \[K_j=\{\sigma(1),\hdots,\sigma(j)\}\] for $j\in\{1,\hdots,n\}$. Then, we have that
    \begin{align*}
        v_1\in F^1\setminus F^0,\ v_2\in F^2\setminus F^1, \ \hdots, \ v_n\in F^n\setminus F^{n-1}
    \end{align*}
    and thus $v_1,\hdots,v_n$ is a flag basis for $F^\bullet$. \par
    If $k=\sigma(j)$, then $k\in K_j$ and thus 
    \[v_k\in (F^{k}\setminus F^{k-1})\cap H^j.\]
    On the other hand, $k\notin K_{j-1}$ and thus
    \[F^k\cap H^{j-1}=F^{k-1}\cap H^{j-1}\]
    which implies $v_k\notin H^{j-1}$. Thus, for $j\in\{1,\hdots,n\}$
    \[v_k=v_{\sigma(j)}\in H^j\setminus H^{j-1}.\]
    In other words,
    \begin{align*}
        v_{\sigma(1)}\in H^1\setminus H^0,\ v_{\sigma(2)}\in H^2\setminus H^1, \ \hdots, \ v_{\sigma(n)}\in H^n\setminus H^{n-1}
    \end{align*}
    and thus $v_{\sigma(1)},\hdots,v_{\sigma(n)}$ is a flag basis for $H^\bullet$. This completes the proof of (i). \par
    For (ii), we normalize $v_1,\hdots,v_n$ so that the matrix
    \[A=\begin{bmatrix}
        v_1 & \cdots & v_n
    \end{bmatrix}\]
    forms a matrix with determinant $1$. Then, if $P_\sigma$ is the signed permutation matrix so that
    \[P_\sigma e_j=\begin{cases}
        \sign(\sigma)e_{\sigma(1)} & j=1\\
        e_{\sigma(j)}
    \end{cases},\]
    Then, we have that
    \[F^{\bullet}=AE^\bullet_{full}, \ \ \ H^\bullet=AP_\sigma E^{\bullet}_{full}.\]
    This means the double coset corresponding to $\pos_{\Delta,\Delta}(F^\bullet,H^{\bullet})$ is
    \[BA^{-1}AP_{\sigma}B=BP_{\sigma}B.\]
    The signed permutation matrix $P_{\sigma}$ is constructed to be an element of $N_{K}(\mathfrak{a})$ representing $\sigma$. Therefore, $$\pos_{\Delta,\Delta}(F^{\bullet},H^{\bullet})=\sigma.$$
\end{proof}
Recall that we consider as $G=\Sp(2n,\mathbb{C})$ as the linear transformations of 
\[\mathbb{C}^{2n}=\langle e_1,\hdots,e_{n},e_{-n},\hdots,e_{-1}\rangle\]
preserving the anti-symmetric bilinear form
$$\omega(e_{j},e_{-k})=\delta_{jk}, \ \ \omega(e_{-j},e_{k})=-\delta_{jk}\ \ \text{ for }j\in\{1,\hdots,n\}, k\in\{-n,-n+1,\hdots,n\}.$$
The full flag variety for $\Sp(2n,\mathbb{C})$ is $\Flag_{full}(\mathbb{C}^{2n},\omega)$ and the Weyl group is the signed permutation group $S^{\pm}_{n}$. Then, we have an analogous version of Proposition \ref{RelPosSLn} for $\Sp(2n,\mathbb{C})$.
\begin{prop}\label{RelPosSp2n}
    Given two full $\omega$--isostropic flags $F^\bullet,H^\bullet$, extend them to full flags in $\mathbb{C}^{2n}$ so that
    \[F^{-r}=(F^{r-1})^{\perp_{\omega}},\ \ \ \ H^{-r}=(H^{r-1})^\perp\]
    for $r\in\{1,\hdots,n\}$. The following holds:
    \begin{enumerate}[label=\normalfont{(\roman*)}]
        \item There is a flag basis $v_1,\hdots,v_{n},v_{-n},\hdots,v_{-1}$ for $F^\bullet$ and a signed permutation $\sigma^\pm\in S^\pm_n$ such that $v_{\sigma^\pm(1)},\hdots,v_{\sigma^\pm(n)},v_{\sigma^\pm(-n)},\hdots,v_{\sigma^\pm(-1)}$ is a flag basis for $H^\bullet$.
        \item The relative position $\pos_{\Delta,\Delta}(F^\bullet,H^\bullet)$ is represented by $\sigma^\pm$.
    \end{enumerate}
\end{prop}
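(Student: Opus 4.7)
Plan: I would mirror the proof of Proposition \ref{RelPosSLn}, treating it as a black box and layering on the consequences of $\omega$-duality. Viewing $F^\bullet$ and $H^\bullet$ as full flags in $\mathbb{C}^{2n}$ after the prescribed extension, Proposition \ref{RelPosSLn} immediately produces a flag basis $w_1,\hdots,w_{2n}$ of $F^\bullet$ and a permutation $\tau\in S_{2n}$ such that $w_{\tau(1)},\hdots,w_{\tau(2n)}$ is a flag basis for $H^\bullet$. Under the order-preserving bijection $\{1,\hdots,2n\}\leftrightarrow \{1,\hdots,n,-n,\hdots,-1\}$, in which position $i\leq n$ carries label $i$ and position $i>n$ carries label $-(2n+1-i)$, this is the candidate sequence in (i). All that remains for part (i) is to check that $\tau$ descends to a signed permutation under this relabelling.

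For this, the key input is that the extension procedure forces $(F^{(i)})^{\perp_{\omega}}=F^{(2n-i)}$, where $F^{(i)}$ denotes the $i$-dimensional step of the extended flag, and similarly for $H^\bullet$. A routine calculation using $(U\cap V)^{\perp_{\omega}}=U^{\perp_{\omega}}+V^{\perp_{\omega}}$ then yields the symmetry
\[\dim\bigl(F^{(2n-k)}\cap H^{(2n-j)}\bigr)=\dim\bigl(F^{(k)}\cap H^{(j)}\bigr)+2n-k-j.\]
Substituting this into the sets $K_j=\{k:\dim(F^{(k)}\cap H^{(j)})-\dim(F^{(k-1)}\cap H^{(j)})=1\}$ from the proof of Proposition \ref{RelPosSLn} gives $K_{2n-j}=\{i:2n+1-i\notin K_j\}$. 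Combined with $K_j=\{\tau(1),\hdots,\tau(j)\}$, this forces $\tau$ to commute with the involution $i\mapsto 2n+1-i$ on $\{1,\hdots,2n\}$, which under the relabelling is exactly the condition $\sigma^\pm(-j)=-\sigma^\pm(j)$.

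For part (ii), I would reduce to the Bruhat decomposition step at the end of the proof of Proposition \ref{RelPosSLn}. Since the excerpt records that the symplectic Borel satisfies $B=B_{\SL}\cap\Sp(2n,\mathbb{C})$ and the signed permutation matrix $\widetilde{\sigma^\pm}$ already lies in $\Sp(2n,\mathbb{C})\cap N_K(\mathfrak{a})$, comparing the two disjoint Bruhat decompositions of $\Sp(2n,\mathbb{C})$ — the intrinsic one indexed by $S_n^\pm$, and the one inherited from $\SL(2n,\mathbb{C})$ — yields
\[\Sp(2n,\mathbb{C})\cap B_{\SL}\widetilde{w}B_{\SL}=B\widetilde{w}B\]
for every signed permutation $w\in S_n^\pm$. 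Applied to $g_1^{-1}g_2$ with $F^\bullet=g_1f$ and $H^\bullet=g_2f$, this identifies the symplectic relative position as $\sigma^\pm\in W=S_n^\pm$.

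The main obstacle is the dimension identity together with its correct translation into the combinatorics of the $K_j$: index bookkeeping across the involution $i\mapsto 2n+1-i$ is the only place where errors can naturally creep in. The remainder is either a direct citation of Proposition \ref{RelPosSLn} or a formal comparison of Bruhat decompositions enabled by $B=B_{\SL}\cap\Sp(2n,\mathbb{C})$.
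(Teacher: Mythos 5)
Your proposal is correct, but both halves run along genuinely different lines from the paper's proof. For (i), the paper also starts from Proposition \ref{RelPosSLn} applied to the extended flags, but then proves $\sigma(-j)=-\sigma(j)$ by contradiction: if $\sigma(j_1)\neq-\sigma(-j_1)$, the membership $v_{\sigma(-j_1)}\in H^{-j_1}\setminus H^{-j_1-1}$ forces $\omega(v_{\sigma(j_1)},v_{\sigma(-j_1)})\neq 0$, whence $\sigma(j_1)<-\sigma(-j_1)$, and iterating produces an infinite strictly increasing sequence of positive integers. Your route instead exploits the duality $(F^{(k)})^{\perp_\omega}=F^{(2n-k)}$ together with $\dim(U^{\perp_\omega}\cap V^{\perp_\omega})=2n-\dim U-\dim V+\dim(U\cap V)$ to get the symmetry $K_{2n-j}=\{k:2n+1-k\notin K_j\}$ of the jump sets; since $\dim(F^{(k)}\cap H^{(j)})=|\{1,\hdots,k\}\cap\{\tau(1),\hdots,\tau(j)\}|$ for any common flag basis, this identifies the prefix sets of $\tau$ and $\iota\tau\iota$ for the involution $\iota(i)=2n+1-i$ and hence forces $\tau\iota=\iota\tau$, which is exactly the signed-permutation condition. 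I checked the index bookkeeping you flagged as the danger point and it is consistent with the labelling $v_1,\hdots,v_n,v_{-n},\hdots,v_{-1}$. For (ii), the paper constructs the signed permutation matrix $P$ directly and verifies $P\in N_K(\mathfrak{a})\cap\Sp(2n,\mathbb{C})$; to actually conclude, one still needs to know the flag basis can be normalized to a symplectic basis so that $F^\bullet$ and $H^\bullet$ are moved to the standard flag and its $P$-translate by an element of $\Sp(2n,\mathbb{C})$, a point the paper leaves implicit. Your comparison of the two Bruhat decompositions, using $B=B_{\SL}\cap\Sp(2n,\mathbb{C})$ and disjointness of the $\SL$-cells to force $B\tilde{w}B=B_{\SL}\tilde{w}B_{\SL}\cap\Sp(2n,\mathbb{C})$ for $w\in S_n^\pm$, sidesteps that normalization entirely and only uses transitivity of $\Sp(2n,\mathbb{C})$ on isotropic full flags; it is the more robust of the two arguments.
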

\begin{proof}
    For part (i), we know from Proposition \ref{RelPosSLn}, there is a flag basis $v_1,\hdots,v_n,v_{-n},\hdots,v_{-1}$ for $F^\bullet$ and some permutation 
    \[\sigma^\pm:\{1,\hdots,n,-n,\hdots,-1\}\to\{1,\hdots,n,-n,\hdots,-1\}\]
    such that $v_{\sigma^\pm(1)},\hdots,v_{\sigma^\pm(n)},v_{\sigma^\pm(-1)},\hdots,v_{\sigma^\pm(-n)}$ is a flag basis for $H^\bullet$. Abusing notation, we will temporarily omit the superscript ($\pm$). We need to show that
    \[\sigma(-j)=-\sigma(j)\text{ for }j\in\{1,\hdots,n,-n,\hdots,-1\}.\]
    To do so, assume for the sake of contradiction that there is some $j_1\in\{1,\hdots,n,-n,\hdots,-1\}$ such that $\sigma(j_1)\neq-\sigma(-j_1)$. Because we can exchange $\sigma^{-1}(-\sigma(j_1))$ for $j_1$, we can assume without loss of generality that $\sigma(j_1)$ is positive. Then, we have
    \[v_{\sigma(-j_1)}\in H^{-j_1}\setminus H^{-j_1-1}=\langle v_{\sigma(1)},\hdots,v_{\sigma(j+1-1)}\rangle^{\perp_{\omega}}\setminus \langle v_{\sigma(1)},\hdots,v_{\sigma(j_1)}\rangle^{\perp_{\omega}}\]
    so that $\omega(v_{\sigma(j_1)},v_{\sigma(-j_1)})\neq 0$. From the $\omega$--isotropic condition on $H^\bullet$, it must be that
    $$\sigma(j_1)+\sigma(-j_1)\leq 0$$.
    But we also know that $$\sigma(j)\neq -\sigma(-j)$$ and thus
    \[\sigma(j_1)<-\sigma(-j_1).\]
    Then, then $j_2=\sigma^{-1}(-\sigma(-j_1))$ also satisfied $\sigma(j_2)>0$ and $\sigma(j_2)\neq -\sigma(-j_2)$ and thus the same argument would produce an infinite sequence $j_1,j_2,\hdots$ such that
    \[0<\sigma(j_1)<\sigma(j_2)<\cdots\]
    which contradicts finiteness of $\{1,\hdots,n\}$. Thus,
    \[\sigma(j)=-\sigma(-j)\text{ for all }j\in\{1,\hdots,n,-n,-\hdots,-1\},\]
    that is, the permutation
    \[\sigma^\pm:\{1,\hdots,n,-n,\hdots,-1\}\to\{1,\hdots,n,-n,\hdots,-1\}\]
    is a signed permutation. \par
    For part (ii), let $P$ be the $2n\times 2n$ matrix so that
    \[Pe_j=\begin{cases}
        \sign(\sigma(j))e_{\sigma(j)} & j\in\{1,\hdots,n\}\\ e_{\sigma(j)} & j\in\{-n,\hdots,-1\}
    \end{cases}\]
    Then, a direct computation shows that $P$ is in $\Sp(2n,\mathbb{C})$, specifically $N_K(\mathfrak{a})$. Furthermore, $PZ_{K}(\mathfrak{a})$ corresponds to the signed permutation $\sigma^\pm$.
\end{proof}
\par We have seen how Bruhat decomposition provides a relative position function for full flag varieties. For partial flag varieties, we we have
\[G=\bigsqcup_{w\in W_{\theta,\eta}}P_{\theta}\tilde{w}P_\eta\]
where $W_{\theta,\eta}$ is the double coset space
\[W_{\theta,\eta}=(P_\theta\cap N_K(\mathfrak{a}))\setminus N_K(\mathfrak{a})/(P_\eta\cap N_K(\mathfrak{a})).\]
Through the identification, $W=N_K(\mathfrak{a})/Z_K(\mathfrak{a})=\langle s_{\alpha}:\alpha\in\Delta\rangle$, we actually have
\begin{align*}
    (N_K(\mathfrak{a})\cap P_\theta)/Z_K(\mathfrak{a})&=\langle s_{\alpha}:\alpha\in\Delta\setminus \theta\rangle & (N_K(\mathfrak{a})\cap P_\eta)/Z_K(\mathfrak{a})&=\langle s_{\alpha}:\alpha\in\Delta\setminus \eta\rangle
\end{align*}
so that $W_{\theta,\eta}$ can also be identified with the double coset space
\[W_{\theta,\eta}=W_\theta\setminus W/W_\eta \]
where $W_\theta=\langle\{s_{\alpha}:\alpha\in\Delta\setminus\theta\}\rangle$ (and $W_\eta$ defined analogously).
This gives us a combinatorial description of the relative positions of partial flags as we had for full flags. To complete this description, we see in the next proposition how the Bruhat order on $W$ descends to $W_{\theta,\eta}$. \par
 Let $(x_1,y_1)$ and $(x_2,y_2)$ be pairs of flags in $\mathcal{F}_{\theta}\times\mathcal{F}_{\eta}$. Let $\pi_{\theta}:\mathcal{F}_{\Delta}\to\mathcal{F}_{\theta}$ and $\pi_\eta:\mathcal{F}_{\Delta}\to\mathcal{F}_{\eta}$ be the projection maps and choose lifts $\tilde{x}_1,\tilde{y}_1,\tilde{x}_2$ and $\tilde{y}_2$. Then, the relative positions
    \[w_1=\pos_{\Delta,\Delta}(\tilde{x}_1,\tilde{y}_1), \ \ \ w_2=\pos_{\Delta,\Delta}(\tilde{x}_2,\tilde{y}_2)\]
    determine the double cosets $[w_1],[w_2]\in W_{\theta,\eta}$, which are independent of the choice of lifts. For each double coset $[w]\in W_{\theta,\eta}$, there is a unique element $\min[w]\in[w]$ which is minimal with respect to the Bruhat order (See \cite{bjorner2006combinatorics}).
\begin{prop}
     Given $(x_1,y_1)$, $(x_2,y_2)$ and $[w_1],[w_2]$ as above, the following are equivalent
    \begin{enumerate}
        \item[\normalfont{(a)}] The $G$--orbits of $(x_1,y_1)$ and $(x_2,y_2)$ satisfy
        \[G\cdot (x_1,y_1)\subseteq \overline{G\cdot(x_2,y_2)}.\]
        \item[\normalfont{(b)}] The double cosets $[w_1],[w_2]$ satisfy
            \[\min[w_1]\leq \min[w_2]\]
            where $\leq$ is the Bruhat order for $W$.
    \end{enumerate}
\end{prop}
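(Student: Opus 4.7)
The plan is to lift everything to the full flag variety $\mathcal{F}_\Delta \times \mathcal{F}_\Delta$ and apply the classical Schubert-style closure formula for diagonal Bruhat cells. I will let $\pi := \pi_\theta \times \pi_\eta : \mathcal{F}_\Delta \times \mathcal{F}_\Delta \to \mathcal{F}_\theta \times \mathcal{F}_\eta$ denote the product projection, which is a $G$-equivariant surjective fiber bundle with compact fiber $P_\theta/B \times P_\eta/B$. For each $w \in W$, write $\mathcal{C}_w \subseteq \mathcal{F}_\Delta \times \mathcal{F}_\Delta$ for the $G$-orbit consisting of pairs of full flags in relative position $w$. A first preparatory observation will be that, as a lift $(\tilde{x}_i,\tilde{y}_i) \in \pi^{-1}(x_i,y_i)$ varies, the value $\pos_{\Delta,\Delta}(\tilde{x}_i,\tilde{y}_i)$ ranges over exactly the double coset $[w_i] = W_\theta w_i W_\eta$; this follows from the Bruhat decompositions of $P_\theta$ and $P_\eta$ into $B$-double cosets indexed by $W_\theta$ and $W_\eta$.

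The two main tools I will use are the classical closure formula
\[
\overline{\mathcal{C}_w} \;=\; \bigsqcup_{u \leq w} \mathcal{C}_u \qquad (\leq \text{ the Bruhat order on } W),
\]
which is the Schubert stratification applied to the diagonal action on $G/B \times G/B$, and the topological identity $\pi^{-1}(\overline{A}) = \overline{\pi^{-1}(A)}$ valid for any subset $A \subseteq \mathcal{F}_\theta \times \mathcal{F}_\eta$. Together these let one pass closure statements between the two levels.

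For the implication (b) $\Rightarrow$ (a), I would pick lifts $(\tilde{x}_i,\tilde{y}_i)$ with $\pos_{\Delta,\Delta}(\tilde{x}_i,\tilde{y}_i) = \min[w_i]$; the hypothesis and the closure formula then give $(\tilde{x}_1,\tilde{y}_1) \in \overline{G \cdot (\tilde{x}_2,\tilde{y}_2)}$, and applying the continuous map $\pi$ yields (a). For the converse (a) $\Rightarrow$ (b), I would take a single lift $(\tilde{x}_1,\tilde{y}_1)$ of position $\min[w_1]$ and compute
\[
(\tilde{x}_1,\tilde{y}_1) \;\in\; \pi^{-1}\bigl(\overline{G \cdot (x_2,y_2)}\bigr) \;=\; \overline{\pi^{-1}\bigl(G\cdot (x_2,y_2)\bigr)} \;=\; \overline{\bigsqcup_{v \in [w_2]} \mathcal{C}_v} \;=\; \bigcup_{v \in [w_2]} \overline{\mathcal{C}_v},
\]
which forces $\min[w_1] \leq v$ for some $v \in [w_2]$. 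To conclude, I would invoke the standard combinatorial fact on parabolic double cosets in Coxeter groups (see \cite{bjorner2006combinatorics}): if $u$ is the minimum-length representative of its $(W_\theta, W_\eta)$-double coset and $u \leq v$ for some $v \in [w]$, then $u \leq \min[w]$.

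The main obstacles I anticipate are (i) carefully verifying the bundle-theoretic identity $\pi^{-1}(\overline{A}) = \overline{\pi^{-1}(A)}$, which rests on local triviality and compactness of the fibers to lift convergent sequences, and (ii) the descent of the Bruhat order to parabolic double cosets via minimum-length representatives; both are available in standard references, but the latter is genuinely combinatorial and is the nontrivial input of the argument.
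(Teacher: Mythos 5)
Your direction (b) $\Rightarrow$ (a) is essentially the paper's: choose lifts realizing $\min[w_i]$, pass through the full-flag Bruhat closure formula, and push forward along $\pi_\theta\times\pi_\eta$. For (a) $\Rightarrow$ (b), however, you take a genuinely different route. The paper avoids the parabolic double-coset lemma altogether: it fixes a single lift $(\tilde{x}_2,\tilde{y}_2)$ of position $\min[w_2]$, lifts a degenerating sequence $g_n\cdot(x_2,y_2)\to(x_1,y_1)$ using compactness of $\mathcal{F}_\Delta\times\mathcal{F}_\Delta$ to obtain a convergent subsequence $g_n\cdot(\tilde{x}_2,\tilde{y}_2)\to(\tilde{x}_1,\tilde{y}_1)$, and reads off $\min[w_1]\leq w_1=\pos_{\Delta,\Delta}(\tilde{x}_1,\tilde{y}_1)\leq\min[w_2]$ directly, since the limiting lift automatically lands in the closure of the minimal cell. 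Your version instead uses the topological identity $\pi^{-1}(\overline{A})=\overline{\pi^{-1}(A)}$, decomposes $\pi^{-1}(G\cdot(x_2,y_2))=\bigsqcup_{v\in[w_2]}\mathcal{C}_v$, and then needs the nontrivial combinatorial fact that $\min[w_1]\leq v$ for some $v\in[w_2]$ forces $\min[w_1]\leq\min[w_2]$. That lemma is true (it follows by iterating the one-sided Deodhar-type statement that a minimal coset representative $u$ with $u\leq v$ satisfies $u\leq v^K$, and it is indeed in \cite{bjorner2006combinatorics}), but it is a genuine external input that the paper's sequence argument sidesteps by choosing the lift of $(x_2,y_2)$ rather than of $(x_1,y_1)$. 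One small correction to your anticipated obstacles: $\pi^{-1}(\overline{A})=\overline{\pi^{-1}(A)}$ holds because a fiber bundle projection is an \emph{open} map; compactness of fibers is not the relevant hypothesis for this inclusion (it would matter for $\pi$ being a closed map, which you do not need here). What your approach buys is a cleaner separation between the topological step (the open-map identity) and the combinatorial step (the double-coset descent), at the cost of leaning on that combinatorics; what the paper's approach buys is that the only input is the closure formula for full-flag Bruhat cells, with the parabolic descent handled automatically by the choice of lift.
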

\begin{proof}
    Assuming (a), there is a sequence $\{g_n\}$ in $G$ such that $g_n\cdot (x_2,y_2)\to (x_1,y_1)$. We can choose our lift $(\tilde{x}_2,\tilde{y}_2)\in\mathcal{F}_\Delta\times\mathcal{F}_\Delta$ of $(x_2,y_2))$ so that
    \[\pos_{\Delta,\Delta}(\tilde{x}_2,\tilde{y}_2)=\min[w_2].\]
    Then, using the compactness of $\mathcal{F}_\Delta\times\mathcal{F}_\Delta$ and passing to a subsequence, we can assume that $g_n\cdot(\tilde{x}_2,\tilde{y}_2)$ converges to a flag $(\tilde{x}_1,\tilde{y}_1)$ in $\mathcal{F}_\Delta\times\mathcal{F}_\Delta$. The $G$--equivariance and continuity of $\pi_\theta\times\pi_\eta$ implies that $(\tilde{x}_1,\tilde{y}_1)$ is, in fact, a lift of $(x_1,y_1)$. Then, we can conclude that
    \[G\cdot (\tilde{x}_1,\tilde{y}_1)\subseteq\overline{G\cdot(\tilde{x}_2,\tilde{y}_2)}.\]
    Then, we conclude that
    \[\min[w_1]\leq w_1\leq w_2=\min[w_2].\]
    Thus, we have shown (a)$\Rightarrow$(b). \par
    Assuming (b), choose lifts $(\tilde{x}_1,\tilde{y}_1)$ and $(\tilde{x}_2,\tilde{y}_2)$ so that
    \[w_1=\pos_{\Delta,\Delta}(\tilde{x}_1,\tilde{y}_1)=\min[w_1]\leq\min[w_2]=\pos_{\Delta}(\tilde{x}_2,\tilde{y}_2)=w_2.\]
    Then,
    \begin{align*}
        G\cdot(\tilde{x}_1,\tilde{y}_1)\subseteq \overline{G\cdot(\tilde{x}_2,\tilde{y}_2)}.
    \end{align*}
    Applying $\pi_\theta\times\pi_\eta$ to both sides and using the continuity of $\pi_\theta\times\pi_\eta$, we conclude
    \begin{align*}
        G\cdot(x_1,y_1)\subseteq \overline{G\cdot(x_2,y_2)}.
    \end{align*}
    This completes (b)$\Rightarrow$(a).
\end{proof}
The relation between double cosets $[w_1],[w_2]\in W_{\theta,\eta}$ is considered the Bruhat order on $W_{\theta,\eta}$. Putting these together, we can define a relative position function for partial flags.
\begin{defn}
    The relative position function $\pos_{\theta,\eta}:\mathcal{F}_\theta\times\mathcal{F}_\eta\to W_{\theta,\eta}$ is defined so that, for $x\in\mathcal{F}_\theta,y\in\mathcal{F}_\eta$, $\pos_{\theta,\eta}(x,y)$ is the double coset $[w]\in W_{\theta,\eta}$ where $w=\pos_{\Delta,\Delta}(\tilde{x},\tilde{y})$ for any lifts $\tilde{x},\tilde{y}\in\mathcal{F}_\Delta$.
\end{defn}
\par
We will now consider some specific cases of relative position functions for partial flag varieties.
\begin{xmpl}\label{ProjPositions}
    Let $G=\SL(4,\mathbb{C})$, $\theta=\{1,2,3\}$ and $\eta=\{1\}$ so that
    \begin{align*}
        \mathcal{F}_{\theta}&=\Flag_{full}(\mathbb{C}^4) & \mathcal{F}_{\eta}&=\mathbb{CP}^3 & W_{\theta,\eta}&=S_4/S_3
    \end{align*}
    where $S_3$ is identified with the subgroup of $S_4$ fixing $1$. \par
    This means that $[\sigma]\in S_4/S_3$ is determined by $\sigma(1)$. If $F^\bullet\in\Flag_{full}(\mathbb{C}^4)$ has flag basis $v_1,\hdots, v_4$ and $H^1=\langle v_{\sigma(1)}\rangle$, then
    \[\sigma(1)=\min\{j:H^1\subseteq F^j\}.\]
    
    Thus, the relative position function for $\Flag_{full}(\mathbb{C}^4)$ and $\mathbb{CP}^3$ can be more simply described with values in $\{1,2,3,4\}$, viz.
    \begin{align*}
        \pos(F^\bullet,H^1)&=\min\{j:H^1\subseteq F^j\}.
    \end{align*}
\end{xmpl}
\begin{xmpl}\label{LineLagRelPos}
    Let $G=\Sp(4,\mathbb{C})$, $\theta=\{1\}$ and $\eta=\{2\}$ so that
    \[\mathcal{F}_\theta=\mathbb{CP}^3 \ \ \ \ \mathcal{F}_{\eta}=\Lag(\mathbb{C}^4).\]
    The Weyl group $W$ is the groups of signed permutations of $\{1,2,-2,-1\}$. The left action of $W_{\{1\}}$ on a signed permutation $\sigma^\pm\in W$ flips the sign of $(\sigma^\pm)^{-1}(2)$, while the right action of $W_{\{2\}}$ permutes the outputs while preserving their signs. This means the only remaining information is the sign $\sign((\sigma^\pm)^{-1}(1))$. In other words, if $F^1\in \mathbb{CP}^3$ and $H^2\in\Lag(\mathbb{C}^4)$ are extended to extended full $\omega$--isotropic flags $F^\bullet$, $H^\bullet$ so that $F^\bullet$ has flag basis $v_1,v_2,v_{-2},v_{-1}$ and $H^\bullet$ has flag basis $v_{\sigma^\pm(1)},v_{\sigma^\pm(2)},v_{\sigma^\pm(-2)},v_{\sigma^\pm(-1)}$, then the relative position
    \[\pos_{\{1\},\{2\}}(F^1,H^2)=[\sigma^\pm]\] is determined by $\sign((\sigma^\pm)^{-1}(1))$. \\
\end{xmpl}
\section{Anosov representations and domains of discontinuity}\label{sec:anosov}
In this section, we will define Anosov representations and the construction of their domains of discontinuity in flag varieties. We will then focus on the topology of domains of discontinuity for $\iota$-Fuchsian representations. We finish section by producing a list of cases where our method will apply.
\subsection{Anosov representations}
 Anosov representations were first defined for $G=\SL(n,\mathbb{R})$ and $\Gamma$ a uniform lattice, by Labourie \cite{labourie2005anosovflowssurfacegroups}. Then, Guichard and Wienhard \cite{Guichard_2012} generalized Anosov representations to arbitrary semi-simple Lie groups $G$ and word-hyperbolic groups $\Gamma$. Since then, many equivalent characterizations of Anosov representations have been found. The definition we will use here is among the simplest to state and follows the work of \cite{kapovich2017dynamics} and \cite{Bochi_2019}. \\ 
\indent Let $\Gamma$ be a word-hyperbolic group with word metric $|\cdot|$ and $G$ a non-compact connected Lie group with semi-simple Lie algebra $\mathfrak{g}$ and finite center. For our purposes, $G$ will be a complex Lie group, but the same definition holds for real groups. Let $(K,\mathfrak{a},\Delta)$ be as described in Section \ref{sect:liethry}. Recall that the Cartan projection
\[\mu:G\to\overline{\mathfrak{a}^+}.\]
is a map that generalizes the displacement function of hyperbolic space. 
\begin{defn}\label{Anosov}
    For a type $\theta\subseteq\Delta$, a representation $\rho:\Gamma\to G$ is called $P_\theta$--Anosov (or simply $\theta$--Anosov) if there are constants $C,c>0$ such that 
    \[\alpha(\mu(\rho(\gamma)))\geq C|\gamma|-c\text{ for all $\alpha\in\theta$, $\gamma\in\Gamma$}.\]
\end{defn}
Recall the opposition involution $\nu:\Delta\to\Delta$ given by $\nu(\alpha)=-(w_0\cdot \alpha)$. Note that being $\theta$--Anosov is equivalent to being $(\theta\cup\nu(\theta))$--Anosov by \cite[Lemma 3.18]{Guichard_2012}. Going forward, we will always assume $\theta=\nu(\eta)$, i.e.\ $P_\theta$ is a self-opposite parabolic subgroup. \par
Let $\del\Gamma$ be the Gromov boundary of $\Gamma$.
One of the most important features of $\theta$--Anosov representation is that they admit a special embedding of $\del\Gamma$ into $\mathcal{F}_\theta$.
\begin{prop}[\text{\cite[Theorem 1.3]{Gueritaud_2017}}]
    For a $\theta$--Anosov representation $$\rho:\Gamma\to G,$$ 
    there is a unique continuous map $\xi:\del\Gamma\to\mathcal{F}_\theta$ such that
    \begin{enumerate}[label=\normalfont{(\roman*)}]
        \item The map $\xi$ is $\rho$--equivariant: $\rho(\gamma)\cdot \xi(x)=\xi(\gamma\cdot x)$ for $x\in\del\Gamma$,  $\gamma\in \Gamma$.
        \item The map $\xi$ is transverse: $\pos_{\theta,\theta}(\xi(x),\xi(y))=[w_0]$ for any distinct $x,y\in\del \Gamma$.
        \item The map $\xi$ is dynamics preserving: for any non-torsion element $\gamma$ in $\Gamma$ with attracting fixed point $\gamma^+\in\del\Gamma$, $\xi(\gamma^+)$ is an attracting fixed point for the action of $\rho(\gamma)$ on $\mathcal{F}_\theta$.
    \end{enumerate}
    Such a map is called the limit map of $\rho$.
\end{prop}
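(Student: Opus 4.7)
The plan is to construct $\xi$ in two stages: first on the dense set of attracting fixed points of non-torsion elements, then by continuity, using the fact that the Anosov condition forces strong contraction in $\mathcal{F}_\theta$.

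First, I would define $\xi$ on attracting fixed points. For non-torsion $\gamma\in\Gamma$, the Anosov inequality gives $\alpha(\mu(\rho(\gamma^n)))\geq Cn|\gamma|-c\to\infty$ for every $\alpha\in\theta$. Writing the Cartan decompositions $\rho(\gamma^n)=k_n\exp(\mu(\rho(\gamma^n)))\ell_n$ with $k_n,\ell_n\in K$ and passing to a subsequence so that $k_n\to k$ and $\ell_n\to\ell$, the divergence of the $\theta$-gaps forces $\rho(\gamma^n)$ to exhibit strong north--south dynamics on $\mathcal{F}_\theta$: the attracting flag is $kP_\theta$, and since $\theta=\nu(\theta)$ there is a uniquely determined repelling flag transverse to it. Set $\xi(\gamma^+):=kP_\theta$. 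This is well defined on the set of attracting fixed points because two non-torsion elements of $\Gamma$ with a common attracting boundary fixed point lie in a common virtually cyclic subgroup, and hence share the same attracting flag.

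Next, I would extend $\xi$ to all of $\del\Gamma$ by continuity. Given $x\in\del\Gamma$ and any sequence $\gamma_n\to x$ in $\Gamma\cup\del\Gamma$, the Anosov condition forces $\alpha(\mu(\rho(\gamma_n)))\to\infty$ for every $\alpha\in\theta$; by compactness of $K$ and extraction, the Cartan decompositions $\rho(\gamma_n)=k_na_n\ell_n$ satisfy $k_n\to k$ and $\ell_n\to\ell$, and the $\theta$-gap divergence implies $\rho(\gamma_n)\cdot p\to kP_\theta$ for every $p$ transverse to a flag determined by $\ell$. The main step is showing this limit $kP_\theta$ depends only on $x$, not on the approximating sequence, which I would do by comparing an arbitrary $\gamma_n\to x$ with a sequence $\eta^{m(n)}$ of high powers of a non-torsion $\eta$ whose attracting fixed point is close to $x$, writing $\gamma_n=\eta^{m(n)}w_n$ with $w_n$ of bounded Gromov product with $\eta^{-\infty}$ via the Morse lemma in the $\delta$-hyperbolic Cayley graph, and invoking the Anosov gap estimate to show that the drift contributed by $w_n$ collapses in $\mathcal{F}_\theta$. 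Setting $\xi(x):=kP_\theta$ then produces a well-defined continuous map. This step is the main obstacle of the proof: it is where the interplay between the Cartan gap growth supplied by the Anosov hypothesis and the $\delta$-hyperbolic geometry of $\Gamma$ is most delicate, and it is the content of the contraction and proximality estimates of \cite{kapovich2017dynamics} and \cite{Bochi_2019}.

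Verifying the three properties is then routine. Equivariance (i) is built in, since $\eta\gamma_n\to\eta\cdot x$ while $\rho(\eta\gamma_n)\cdot p=\rho(\eta)\rho(\gamma_n)\cdot p\to\rho(\eta)\xi(x)$. Dynamics preservation (iii) holds for $\xi(\gamma^+)$ by construction. For transversality (ii), the attractor--repeller pair of a single non-torsion $\gamma$ is transverse by the north--south picture; for general distinct $x\neq y$ I would use density of attractor--repeller pairs in $\del^2\Gamma$ together with a closed-orbit argument on the non-maximal Bruhat strata and $\rho$-equivariance to rule out $(\xi(x),\xi(y))$ lying in the closed non-maximal locus. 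Uniqueness of $\xi$ follows from (iii): any other map $\xi'$ satisfying the three conditions must agree with $\xi$ at each attracting fixed point (transversality with the corresponding repelling flag forces the attracting flag to be unique), so by density and continuity $\xi'=\xi$.
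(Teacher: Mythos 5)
This proposition is not proved in the paper at all: it is quoted verbatim, with citation, from Gu\'eritaud--Guichard--Kassel--Wienhard, so there is no in-paper argument to compare yours against. Judged on its own, your outline follows the standard route from that literature (Cartan decomposition, divergence of the $\theta$-gaps, north--south dynamics, extension by uniform contraction), but as written it is a programme rather than a proof: the entire analytic core --- that the flag $kP_\theta$ obtained from the Cartan decomposition of $\rho(\gamma_n)$ is independent of the sequence $\gamma_n\to x$, and that the convergence is locally uniform so that $\xi$ is continuous --- is exactly the content of the theorem being cited, and you defer it back to \cite{kapovich2017dynamics} and \cite{Bochi_2019} rather than carrying it out. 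That is circular if the goal is to reprove the result, and redundant if the goal is to use it.

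There is also a concrete gap in your transversality step. Transversality ($\pos_{\theta,\theta}=[w_0]$) is an \emph{open} condition on pairs of flags, so its failure is closed; density of transverse attractor--repeller pairs in $\del^2\Gamma$ therefore cannot by itself rule out non-transverse pairs in the limit --- a limit of transverse pairs can perfectly well degenerate into a lower Bruhat stratum. The orbit-closure argument you gesture at (pushing a putative non-transverse pair $(x,y)$ onto an attractor--repeller pair by the dynamics) needs the hypothesis that $(\gamma^+,\gamma^-)$ lies in the closure of $\Gamma\cdot(x,y)$, which requires care, and the standard proofs instead obtain transversality quantitatively from the uniform singular-value-gap estimates along the geodesic from $y$ to $x$ (the Morse property). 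Your uniqueness argument via dynamics-preservation and density of attracting fixed points is fine.
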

\begin{xmpl}
    Let $\Gamma=\pi_1(S_g)$ for some $g\geq 2$ and $G=\PSL(2,\mathbb{C})$. Because $\PSL(2,\mathbb{C})$ has rank $1$, Borel subgroups are the only parabolic subgroups and $\mathbb{CP}^1$ is the only flag variety. Then, $\rho:\Gamma\to\PSL(2,\mathbb{C})$ is Anosov if and only if the orbit map
    \[\tau_{\rho,O}:\Gamma\to\mathbb{H}^3\ \ \ \ \ \ \ \ \gamma\mapsto\rho(\gamma)\cdot O\]
    is a quasi-isometric embedding for some (or any) $O\in\mathbb{H}^3$ (See \cite[Theorem 5.15]{Guichard_2012}). These are known as quasi-Fuchsian representations. In this case, the limit map $\xi$ is exactly the boundary map $\del\tau_{\rho,O}:\del\Gamma\to\del\mathbb{H}^3=\mathbb{CP}^1$ induced by the quasi-isometric embedding $\tau_{\rho,O}$. \par
    The image of $\xi$ is the limit set of $\rho(\Gamma)$. The complement of the limit set
    \[\Omega=\mathbb{CP}^1\setminus \xi(\del\Gamma)\]
    has a properly discontinuous action by $\rho(\Gamma)$. The quotient of $\Omega$ by $\rho(\Gamma)$ is a closed manifold diffeomorphic to $S_g\sqcup \overline{S_g}$.
\end{xmpl}
We will see how to construct domains of discontinuity for Anosov representation into higher rank Lie groups, using a thickened version of $\xi(\del\Gamma)$. Before that, we must discuss the extra piece of data that determined ``how'' to thicken the limit curve.
\subsubsection{Balanced ideals and domains of discontinuity}\label{DomDisc}
Now, we will describe the method of Kapovich-Leeb-Porti to construct domains of discontinuity for Anosov representations. For more discussion of these domains, see \cite{kapovich2017dynamics}, \cite{Stecker2018BalancedIA}. \par
We continue with the assumption that $\theta$ is self-opposite, i.e.\ $\theta=\nu(\theta)$.
Recall from \ref{BruhatRelative} that for types $\theta,\eta$, the double coset space
\[W_{\theta,\eta}=W_\theta\setminus W/W_\eta\]
with the Bruhat order is the set of values for the relative position function $\pos_{\theta,\eta}:\mathcal{F}_\theta\times\mathcal{F}_{\eta}\to W_{\theta,\eta}$. \par
A consequence of the assumption that $\theta=\nu(\theta)$ and $w_0s_{\alpha}w_0=s_{\nu(\alpha)}$ is $w_0$ normalizes $W_\theta$. This means for $[w]\in W_{\theta,\eta}$, $w_0[w]=[w_0w]$ is a well-defined double coset in $W_{\theta,\eta}$. This defines a order reversing action on $W_{\theta,\eta}$ by $w_0$.
\begin{defn}
    An ideal of $W_{\theta,\eta}$ is a subset $I\subseteq W_{\theta,\eta}$ such that
    \[[w_1]\leq[w_2],\ [w_2]\in I\ \Rightarrow [w_1]\in I\]
    There are three properties of an ideal we can consider:
    \begin{itemize}
        \item An ideal $I$ is \textit{fat} if $W_{\theta,\eta}\setminus I\subseteq w_0I$.
        \item An ideal $I$ is \textit{slim} if $w_0I\subseteq W_{\theta,\eta}\setminus I$.
        \item An ideal $I$ is \textit{balanced} if $w_0I=W_{\theta,\eta}\setminus I$.
    \end{itemize}
\end{defn}
Given an ideal $I\subseteq W_{\theta,\eta}$ and a flag $x\in\mathcal{F}_\theta$, the thickening of $x$ according to $I$ is the subset
\[\Phi^I_x=\{y\in\mathcal{F}_\eta:\pos_{\theta,\eta}(x,y)\in I\}\]
which is a closed subset of $\mathcal{F}_\eta$.
Because the relative position function $\pos_{\theta,\eta}$ is $G$--invariant, thickening according to an ideal $I$ satisfies the equivariance relation
\[g\cdot \Phi^I_x=\Phi^I_{g\cdot x}\text{ for }x\in\mathcal{F}_\theta,\  g\in G.\]
\begin{prop}[\text{\cite[Theorem 7.13]{kapovich2017anosovsubgroupsdynamicalgeometric}}]\label{KLPDomainThm}
    Let $\rho:\Gamma\to G$ be a $\theta$--Anosov representation with limit map $\xi:\del\Gamma\to\mathcal{F}_\theta$. If $I\subseteq W_{\theta,\eta}$ is a fat ideal and $K_\rho^I$ is the thickening of $\xi(\del\Gamma)$ according to $I$, i.e.\
    \[K_\rho^I=\bigcup_{t\in\del\Gamma}\Phi^I_{\xi(t)},\]
    then the complement
    \[\Omega_\rho^I=\mathcal{F}_\eta\setminus K_\rho^I\]
    is a domain of discontinuity for $\rho$. \par
    If $I$ is balanced, then the quotient
    \[\mathcal{W}_\rho^I=\rho(\Gamma)\setminus \Omega_\rho^I\]
    is a compact.
\end{prop}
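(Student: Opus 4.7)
The plan is to first verify that $\Omega_\rho^I$ is open and $\rho(\Gamma)$-invariant, then establish proper discontinuity from the convergence dynamics forced by the Anosov condition, and finally upgrade to cocompactness when $I$ is balanced. Openness of $\Omega_\rho^I$ amounts to closedness of each thickening $\Phi_x^I$ (which follows from $I$ being downward closed, since its complement in $W_{\theta,\eta}$ consists of strictly greater cosets whose associated $G$-orbits on $\mathcal{F}_\theta\times\mathcal{F}_\eta$ are open) together with closedness of the union $K_\rho^I$, obtained from compactness of $\partial\Gamma$ and continuity of $\xi$. Equivariance is immediate from the $G$-invariance of $\pos_{\theta,\eta}$ and $\rho$-equivariance of $\xi$.

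The central dynamical input I would prove is the following attraction lemma. Let $\gamma_n\to\infty$ in $\Gamma$ with $\gamma_n\to t_+$ and $\gamma_n^{-1}\to t_-$ in $\partial\Gamma$. Writing Cartan decompositions $\rho(\gamma_n)=k_n\exp(X_n)\ell_n$ with $X_n\in\overline{\mathfrak{a}^+}$, the $\theta$-Anosov hypothesis forces $\alpha(X_n)\to\infty$ for every $\alpha\in\theta$, and after subsequencing $k_n$, $\ell_n^{-1}$ converge in $K$ to elements whose images in $\mathcal{F}_\theta$ must agree with $\xi(t_+)$ and $\xi(t_-)$ by the dynamics-preserving property of $\xi$. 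The expanding behavior of $\exp(X_n)$ along the positive root directions then yields the contraction: for any $y\in\mathcal{F}_\eta$ with $\pos_{\theta,\eta}(\xi(t_-),y)\in w_0 I$, every accumulation point of $\rho(\gamma_n)y$ lies in $\Phi^I_{\xi(t_+)}\subseteq K_\rho^I$. The fat hypothesis $W_{\theta,\eta}\setminus I\subseteq w_0 I$ is precisely what is needed so that \emph{every} relative position not in $I$ falls within the hypothesis of the lemma.

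With this in hand, proper discontinuity follows by contradiction: if some compact $C\subset\Omega_\rho^I$ satisfied $\rho(\gamma_n)C\cap C\neq\emptyset$ for infinitely many distinct $\gamma_n$, extract $y_n\to y_\infty\in C$ and $\rho(\gamma_n)y_n\to z_\infty\in C$. Since $y_\infty\in\Omega_\rho^I$ we have $\pos_{\theta,\eta}(\xi(t_-),y_\infty)\notin I$, hence lies in $w_0 I$ by fatness; because generic Bruhat relative positions are locally constant on open strata, the same holds for $y_n$ when $n$ is large, and the attraction lemma places $z_\infty$ in $K_\rho^I$, contradicting $z_\infty\in C$.

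For cocompactness when $I$ is balanced, the inclusion becomes the equality $W_{\theta,\eta}\setminus I=w_0 I$, which symmetrizes the attraction lemma into full North-South dynamics between $\Phi^I_{\xi(t_+)}$ and $\Phi^I_{\xi(t_-)}$. I would use this to show that around every $y_\infty\in K_\rho^I$ there is a neighborhood $U$ and a sequence $\rho(\gamma_n)^{-1}$ whose iterates expand $U\cap\Omega_\rho^I$ away from any prescribed compact of $K_\rho^I$. Arguing by contradiction with a sequence $[y_n]\in\mathcal{W}_\rho^I$ having no convergent subsequence, lift to $y_n\in\Omega_\rho^I$ and extract a limit $y_\infty\in\mathcal{F}_\eta$; the failure of convergence in $\mathcal{W}_\rho^I$ forces $y_\infty\in K_\rho^I$, and applying the expansion at $y_\infty$ produces elements $\rho(\gamma_n)$ pulling the $y_n$ back into a compact subset of $\Omega_\rho^I$ whose $\rho(\Gamma)$-saturation covers $\Omega_\rho^I$, yielding a convergent subsequence of $[y_n]$ and the desired contradiction. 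The main obstacle is establishing the attraction lemma with enough uniformity to handle the combinatorics of Bruhat cells paired against the ideal $I$; this is where the Kapovich-Leeb-Porti machinery is nontrivial, and where the fat/balanced conditions on $I$ do their real work.
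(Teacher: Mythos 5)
The paper does not prove this proposition; it cites it from \cite{kapovich2017anosovsubgroupsdynamicalgeometric} and moves on, so there is no internal proof to compare your attempt against. Your sketch does reproduce the Kapovich--Leeb--Porti strategy in outline --- Cartan-decomposition dynamics, an attraction lemma phrased in Bruhat positions, and the respective roles of fatness and balancedness --- but there is a genuine gap at the central step. You formulate the attraction lemma pointwise: for a \emph{fixed} $y$ with $\pos_{\theta,\eta}(\xi(t_-),y)\in w_0 I$, accumulation points of $\rho(\gamma_n)y$ lie in $\Phi^I_{\xi(t_+)}$. You then apply it to a moving sequence $y_n\to y_\infty$ with $\rho(\gamma_n)y_n\to z_\infty$, which is a diagonal situation: to conclude $z_\infty\in K_\rho^I$ you need the attraction to be \emph{locally uniform} in $y$, and the pointwise version does not yield that by itself. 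Establishing this uniformity is exactly where the quantitative content of the Anosov condition --- the linear growth of $\alpha(\mu(\rho(\gamma)))$ along $\theta$ --- enters and where Kapovich--Leeb--Porti expend most of their effort; it is not a routine upgrade. A secondary point: the passage of the position condition from $y_\infty$ to nearby $y_n$ should be justified by closedness of $\Phi^I_{\xi(t_-)}$ (hence openness of its complement, which eventually contains $y_n$), not by ``local constancy on open strata,'' which is not how the relative-position stratification behaves away from the open cell.

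The cocompactness paragraph has a larger gap of the same type. The assertion that around every point of $K_\rho^I$ there is a neighborhood whose intersection with $\Omega_\rho^I$ can be pushed uniformly away from any prescribed compact subset of $K_\rho^I$ is essentially the statement to be proved, not a consequence of north--south dynamics between the two thickenings. Kapovich--Leeb--Porti obtain it from a careful analysis of the expansion factors supplied by the Cartan projection (their ``expansivity at infinity''); without that analysis the final compactness contradiction is not available. So the overall plan is right, but the two hard lemmas are asserted rather than proved, and both are where the balanced/fat hypotheses and the Anosov estimates carry the real load.
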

Thus, a $\theta$--Anosov representation admits a cocompact domain of discontinuity in $\mathcal{F}_\eta$ if $W_{\theta,\eta}$ contains a balanced ideal. In fact, Stecker in \cite{Stecker2018BalancedIA} shows that this is essentially the only way to construct domains of discontinuity in a flag variety for $\Delta$--Anosov representations. \par
We note that if $\theta'\subseteq\theta$, then there is the natural projection $\pi_{W}:W_{\theta,\eta}\to W_{\theta',\eta}$ defined by
\[W_\theta wW_{\eta}\mapsto W_{\theta'}wW_{\eta}.\]
We also have the projection $\pi_{\mathcal{F}}:\mathcal{F}_{\theta}\to\mathcal{F}_{\theta'}$ defined by
\[gP_\theta\mapsto g P_{\theta'}.\]
Importantly, these projections are compatible with relative positions in the sense that
\[\pos_{\theta',\eta}(\pi_{\mathcal{F}}(x),y)=\pi_W(\pos_{\theta,\eta}(x,y))\]
for $x\in\mathcal{F}_\theta$ and $y\in\mathcal{F}_\eta$ (which follows from the fact that $W_\theta=(N_K(\mathfrak{a})\cap P_\theta)/Z_K(\mathfrak{a})$). A consequence from this is that if $I\subseteq W_{\theta,\eta}$ is the balanced ideal in Proposition \ref{KLPDomainThm} and there is a balanced ideal $I'\subseteq W_{\theta',\eta}$  such that
\[I=\pi_W^{-1}(I'),\]
then the domains inside $\mathcal{F}_\eta$ defined by these ideals are the same, i.e.\ 
\[\Omega_\rho^I=\Omega_\rho^{I'}.\]
The conclusion of this is that domain $\Omega_\rho^I$ does not actually require $\rho$ to be $\theta$--Anosov, but the weaker condition of $\theta'$--Anosov. \par
\begin{rem}\label{minAnosovDom}
    The observation above gives a procedure for finding the correct minimal Anosov condition for a representation to admit a domain of discontinuity in a flag variety $\mathcal{F}_\eta$. Let $\theta=\Delta$. First, we look for balanced ideals $I$ inside $W_{\Delta,\eta}=W/W_{\eta}$. Then, there exists a balanced ideal $I'\subseteq W_{\theta',\eta}$ such that $I=\pi_W^{-1}(I')$ if and only if $I$ is left $W_{\theta'}$--invariant. Thus, 
    \[\theta'=\{\alpha\in\Delta:s_\alpha\cdot I=I\}\]
    is the minimal $\theta'\subseteq \Delta$ so that $W_{\theta',\eta}$ admits a balanced ideal.
\end{rem}
Many of the domains of discontinuity constructed using the Kapovich-Leeb-Porti method were also described by Guichard \& Wienhard in \cite{Guichard_2012}. We will now describe some noteworthy domains that can be contructed with either method. \par

\begin{xmpl}\label{FlagC3RelPos}
    Let $G=\SL(3,\mathbb{C})$. Consider the full types $\theta=\eta=\Delta=\{1,2\}$. The set of relative positions between flags in $\mathcal{F}_\theta=\Flag(\mathbb{C}^3)$ and $\mathcal{F}_\eta=\Flag(\mathbb{C}^3)$ is then indexed by $W=S_3$. If we denote a permutation $\sigma\in S_3$ by the string $\sigma(1)\sigma(2)\sigma(3)$, then the Hasse diagram for $W$ looks as in Figure \ref{fig:BruhS3} and thus we can determine that the only ideal in $W=S_3$ which is balanced is the subset $I=\{123,213,132\}$. 
    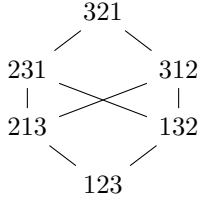
\begin{figure}
        \centering
        \begin{tikzcd}[column sep = 5pt, row sep = 8pt]
	& 321 \\
	231 && 312 \\
	213 && 132 \\
	& 123
	\arrow[no head, from=1-2, to=2-1]
	\arrow[no head, from=1-2, to=2-3]
	\arrow[no head, from=2-1, to=3-1]
	\arrow[no head, from=2-1, to=3-3]
	\arrow[no head, from=2-3, to=3-1]
	\arrow[no head, from=2-3, to=3-3]
	\arrow[no head, from=3-1, to=4-2]
	\arrow[no head, from=3-3, to=4-2]
\end{tikzcd}
        \caption{Hasse diagram of Bruhat order on $W=S_3$}
        \label{fig:BruhS3}
    \end{figure}
    Note that this ideal can be described more formally as 
$$I=\{\sigma\in S_3:\sigma(1)=1\text{ or }\{\sigma(1),\sigma(2)\}=\{1,2\}\}$$. If $F^\bullet,H^\bullet$ are flags in $\Flag(\mathbb{C}^3)$ with $$\pos_{\Delta,\Delta}(F^\bullet,H^\bullet)=\sigma,$$
then there is a flag basis $v_1,v_2,v_3$ for $F^\bullet$ such that $v_{\sigma(1)},v_{\sigma(2)},v_{\sigma(3)}$ is a flag basis for $H^\bullet$. The condition that either $\sigma(1)=1$ or $\{\sigma(1),\sigma(2)\}=\{1,2\}$ means that either $F^1=H^1=\langle v_1\rangle$ or $F^2=H^2=\langle v_1,v_2\rangle$. This gives a description of the balanced ideal without referring to the Weyl group $S_3$, viz.
\[\pos_{\Delta,\Delta}(F^\bullet, H^\bullet)\in I\ \text{ if and only if}\ \text{ either }F^1=H^1\text{ or }F^2=H^2.\]
Then, for any Borel Anosov representation $\rho:\Gamma\to\SL(3,\mathbb{C})$ with limit curve $\xi:\del\Gamma\to\Flag(\mathbb{C}^3)$, then the open subset
\[\Omega_\rho^I=\{H^\bullet\in\Flag(\mathbb{C}^3):H^1\neq \xi^1(t), H^2\neq\xi^2(t)\text{ for all }t\in\del\Gamma\}\]
is a cocompact domain of discontinuity for the action of $\rho(\Gamma)$.
\end{xmpl}

\begin{xmpl}\label{CP3RelPos}
    Now let $G=\SL(4,\mathbb{C})$ and consider the types $\theta=\Delta$, $\eta=\{1\}$ so that $\mathcal{F}_\theta=\Flag(\mathbb{C}^4)$ and $\mathcal{F}_\eta=\mathbb{CP}^3$. Then, as in Example \ref{ProjPositions}, a relative position $[\sigma]\in W_{\theta,\eta}$ is determined by $\sigma(1)$ and we can consider the relative position function
    \[\pos(F^\bullet,H^1)=\min\{j:H^1\subseteq F^j\}.\]
    This takes values in $\{1,2,3,4\}$ with the usual order giving the Bruhat order. Furthermore, the action by $w_0$ on $\{1,2,3,4\}$ is given by the usual reversal of order $j\mapsto 5-j$. \par
    Then, it can be determined that the only balanced ideal in $\{1,2,3,4\}$ is the set $\tilde{I}=\{1,2\}$. Note that this ideal is invariant under the left action by $W_{\{2\}}=\{\sigma\in S_4:\sigma(\{1,2\})=\{1,2\}\}$ and thus only depends on $F^2$ in particular
    \[\pos(F^\bullet,H^1)\in \tilde{{I}}\iff H^1\subseteq F^2.\]
    This means that $\tilde{I}$ descends to a balanced ideal $I$ in $W_{\{2\},\{1\}}$ with
    \[\pos_{\{2\},\{1\}}(F^2,H^1)\in I\iff H^1\subseteq F^2.\]
    This means by Proposition \ref{KLPDomainThm}, for any $\{2\}$--Anosov representation $\rho:\Gamma\to \SL(4,\mathbb{C})$ with limit map $\xi^2:\del\Gamma\to\Gr_2(\mathbb{C}^4)$, then
    \[\Omega_\rho^I=\mathbb{CP}^3\setminus \bigcup_{t\in\del\Gamma}\mathbb{P}(\xi^2(t))\]
    is a cocompact domain of discontinuity.
\end{xmpl}
Finally, we will discuss how to construct a domain of discontinuity inside the Lagrangian Grassmanian for $1$--Anosov symplectic representations. This is one instance of the Guichard-Weinhard method for constructed domains of discontinuity. See Section 7 of \cite{Guichard_2012} for more details. For the sake of uniformity, we will instead describe the balanced ideal corresponding to this domain.
\begin{xmpl}
    Let $G=\Sp(4,\mathbb{C})$ and consider the types $\theta=\Delta$ and $\eta=\{2\}$ so that
    \[\mathcal{F}_\theta=\IsoFlag_{full}(\mathbb{C}^4,\omega), \ \ \ \ \mathcal{F}_\eta=\Lag(\mathbb{C}^4)\]
Recall from the Weyl Group $W$ of $G$ is the group of signed permutations of $\{1,2,-2,-1\}$. The subgroup $W_\eta$ is generated by the transposition $s_1=(2\ 1)$. When $s_1$ acts on the right, it permutes the absolute values of the outputs without changing their sign, e.g.\ 
\[(1\ -2)\cdot s_1=  (2\ -1) \ \ \ (-1\ 2)\cdot s_1=(-2\ 1).\]
This means the set of relative positions $W_{\theta,\eta}=W/W_{\eta}$ can be thought of as the set of pairs of signs
\[W_{\theta,\eta}=\{(+,+),(-,+),(+,-),(-,-)\}\]
so that $(-,+)$ represents the coset $\{(-1\  2),(-2\ 1)\}$. The Bruhat order on $W_{\theta,\eta}$ looks like Figure \ref{fig:BruhSp4} where $w_0=(-2\ -1)$ acts by flipping both signs in each ordered pair. 
\begin{figure}
    \centering
    \begin{tikzcd}[row sep = 10pt]
	{(-, -)} \\
	{(-, +)} \\
	{(+, -)} \\
	{(+, +)}
	\arrow[no head, from=1-1, to=2-1]
	\arrow[no head, from=2-1, to=3-1]
	\arrow[no head, from=3-1, to=4-1]
\end{tikzcd}
    \caption{Bruhat order on $W_{\Delta,\{2\}}$ for $G=\Sp(4,\mathbb{C})$}
    \label{fig:BruhSp4}
\end{figure}
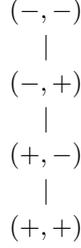
We conclude that $I=\{(+,+),(+,-)\}$ is the only balanced ideal in $W_{\theta,\eta}$. \par
The only simple reflection, with respect to $I$ is left invariant, is $s_{2}=(1\ -2)$. So, by Remark \ref{minAnosovDom}, a representation $\Gamma\to \Sp(4,\mathbb{C})$ merely needs to be $\{1\}$--Anosov in order to admit a domain of discontinuity in $\Lag(\mathbb{C}^4)$. \par

More explicitly, whether an order pair of signs lies in $I$ depends only on the first sign. Then, we can consider $W_{\{1\},\eta}$ as the set of signs $\{+,-\}$ with the projection $W_{\theta,\eta}\to W_{\{1\},\eta}$ represented by projection onto the first factor. Thus, $I$ projects onto the balanced ideal $I'=\{+\}$. Then, for any $\{1\}$--Anosov representation $\Gamma\to\Sp(4,\mathbb{C})$ with limit map $\xi^1:\del \Gamma\to \mathbb{CP}^3$,
\[\Omega_\rho^{I'}=\{L\in\Lag(\mathbb{C}^4):\pos_{\{1\},\{2\}}(\xi^{1}(t),L)\notin I'\text{ for all }t\in\del\Gamma\}\]
is a domain of discontinuity for $\rho$ in $\Lag(\mathbb{C}^4)$.\par
We recall from Example \ref{LineLagRelPos} that an element $[\sigma^\pm]$ of $W_{\{1\},\{2\}}$ is determined by the sign of $(\sigma^{\pm})^{-1}(1)$. \par
Let $F^\bullet,H^\bullet$ be extended $\omega$--isotropic flags. Let $v_1,v_{2},v_{-2},v_{-1}$ be a flag basis for $F^\bullet$ and $\sigma^\pm\in W$ be a signed permutation so that $v_{\sigma^\pm(1)},v_{\sigma^\pm(2)},v_{\sigma^{\pm}(-2)},v_{\sigma^\pm(-1)}$ is a flag basis for $H^\bullet$. Then by Proposition \ref{RelPosSp2n}, $\pos_{\{1\},\{2\}}(F^1,H^2)=[\sigma^\pm]$.
   Note that $(\sigma^\pm)^{-1}(1)$ is positive if and only if $(\sigma^{\pm})^{-1}(1)=1,2$ which happens exactly when 
    \[F^1=\langle v_{1}\rangle\subseteq \langle v_{\sigma^\pm(1)},v_{\sigma^{\pm}(2)}\rangle =H^2. \]
    This means that the domain $\Omega_\rho^{I'}$ can be more simply presented as
    \[\Omega_\rho^{I'}=\{L\in\Lag(\mathbb{C}^4):\xi^1(t)\nsubseteq L\text{ for all }t\in \del\Gamma\}.\]
    It is worth noting that the balanced ideal $I'=\{+\}$ generalizes to any $W_{\{1\},\{n\}}$ for  $G=\Sp(2n,\mathbb{C})$ so that for any $\{1\}$--Anosov representation $\rho:\Gamma\to\Sp(2n,\mathbb{C})$, the subset
    \[\Omega_\rho^{I'}=\{L\in\Lag(\mathbb{C}^{2n}):\xi^1(t)\nsubseteq L \text{ for all }t\in\del\Gamma\}.\]
    is a cocompact domain of discontinuity for $\rho$ in $\Lag(\mathbb{C}^{2n})$. \par
    Furthermore, $I'$ has an inverse ideal $I''$ in $W_{\{n\},\{1\}}$ so that
    \[[w]\in I''\iff [w^{-1}]\in I'\text{ for }w\in W_{\{n\},\{1\}}\]
    This has the effect of swapping the roles of $\mathcal{F}_\theta$ and $\mathcal{F}_\eta$. To be specific, if $\rho:\Gamma\to \Sp(2n,\mathbb{C})$ is an $\{n\}$--Anosov representation, then the subset
    \[\Omega_\rho^{I''}=\{\ell\in\mathbb{CP}^{2n-1}:\ell\nsubseteq \xi^n(t)\text{ for all }t\in\del\Gamma\}.\]
    These two domains (and their orthogonal group counterparts) play a key role in Guichard and Wienhard's construction of domains of discontinuity, see \cite[Sec.\ 2.7]{Guichard_2012}.
\end{xmpl}

\subsection{Fuchsian representations into higher rank Lie groups}
We will now discuss the main subclass of Anosov representations which will be the focus of this paper, viz. $\iota$--Fuchsian representations. Our goal is to understand the topology of the domain of discontinuity described in Section \ref{DomDisc} for these representations.
\subsubsection{$\iota$--Fuchsian representations}
 \par
Recall from Example \ref{CartanHyperbolic}, for $\SL(2,\mathbb{C})$ with the choice of $(K,\mathfrak{a},\Delta)$ as
\[K=\SU(2), \ \ \ \mathfrak{a}=\left\{\begin{bmatrix}
    \lambda & \\ & -\lambda
\end{bmatrix}:\lambda\in \mathbb{R}\right\}, \ \ \ \Delta=\{\alpha_1\}=\{\epsilon_1-\epsilon_2\}\]
then, the Cartan projection composed with the simple root $\alpha_1$ gives the displacement of the origin $O$ in $\mathbb{H}^3$:
\[\alpha_1(\mu(\gamma))=d_{\mathbb{H}^3}(O,\gamma\cdot O)\text{ for }\gamma\in \SL(2,\mathbb{C)}.\]
The same arguments holds for $\SL(2,\mathbb{R})$ with $(K,\mathfrak{a},\Delta)$ as
\[K=\SOrth(2), \ \ \ \mathfrak{a}=\left\{\begin{bmatrix}
    \lambda & \\ & -\lambda
\end{bmatrix}:\lambda\in \mathbb{R}\right\}, \ \ \ \Delta=\{\alpha_1\}=\{\epsilon_1-\epsilon_2\}\]
to get
\[\alpha_1(\mu(\gamma))=d_{\mathbb{H}^2}(O,\gamma\cdot O)\text{ for all }\gamma\in\SL(2,\mathbb{R)}.\]
This means that a representation $\rho:\Gamma\to \SL(2,\mathbb{R})$ is $\alpha_1$--Anosov if and only if there are constants $C,c>0$ such that
\[d_{\mathbb{H}^2}(O,\rho(\gamma)\cdot O)\geq C|\gamma|-c\]
which is equivalent to saying that the map $\tau_{\rho,O}:\Gamma\to \mathbb{H}^2$ defined by $\gamma\mapsto\rho(\gamma)\cdot O$ is a quasi-isometric embedding. This is the definition for a convex cocompact representation in $\SL(2,\mathbb{R})$. The behavior of convex cocompact representation in rank one is one of the main motivations for defining Anosov representations as a higher rank generalization. \par
Fix $g\geq 2$ and let $S$ be a closed orientable surface of genus $g$. Let $\Gamma=\pi_1(S)$. If $S$ is equipped with a Riemannian metric with constant sectional curvature $-1$, then the universal cover $\tilde{S}$ can be identified with $\mathbb{H}^2$ so that the deck transformations of $\tilde{S}$ correspond to isometries of $\mathbb{H}^2$. This gives a representation
\[\bar{\phi}:\Gamma\to\PSL(2,\mathbb{R})=\operatorname{Isom}^+(\mathbb{H}^2).\]
From \cite{CULLER198664}, we know there is a lift $\phi:\Gamma\to \SL(2,\mathbb{R})$ so that the diagram in Figure \ref{fig:fuchscommdiag} commutes.
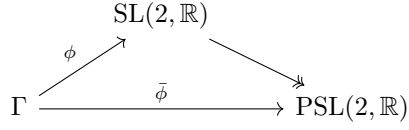
\begin{figure}
    \centering
    \begin{tikzcd}
	& {\SL(2,\mathbb{R})} \\
	\Gamma && {\PSL(2,\mathbb{R})}
	\arrow[two heads, from=1-2, to=2-3]
	\arrow["\phi", from=2-1, to=1-2]
	\arrow["\bar{\phi}", from=2-1, to=2-3]
\end{tikzcd}
    \caption{Lift of Fuchsian representation to $\SL(2,\mathbb{R})$}
    \label{fig:fuchscommdiag}
\end{figure}
Such a representation $\phi:\Gamma\to \SL(2,\mathbb{R})$ is called a Fuchsian representation.
Then, because $\Gamma$ acts on $\mathbb{H}^2$ through $\phi$ as the deck transformations of $\tilde{S}$, we conclude by the \v Svarc-Milnor Lemma that the orbit map $\tau_{\eta,O}:\Gamma\to \mathbb{H}^2$ is a quasi-isometric embedding. Thus, $\theta:\Gamma\to\SL(2,\mathbb{R})$ is $\alpha_1$--Anosov. This argument works to show that the inclusion map of any uniform lattice in a rank one Lie group is Anosov. \par
Continuing the notation $\Gamma=\pi_1(S)$ with $\theta:\Gamma\to\SL(2,\mathbb{R})$ being a Fuchsian representation. Let $G$ be a (possibly higher rank) Lie group with a choice of $(K,\mathfrak{a},\Delta)$. To produce an Anosov representation $\rho:\Gamma\to G$, we need a representation $\iota:\SL(2,\mathbb{R})\to G$. 
\begin{defn}
    For a Lie group $G$ and a non-trivial homomorphism $\iota:\SL(2,\mathbb{R})\to G$, a representation $\rho:\Gamma\to G$ is called \textit{$\iota$--Fuchsian} if there is some Fuchsian representation $\phi:\Gamma\to\SL(2,\mathbb{R})$ such that $\rho=\iota\circ\phi$.
\end{defn}
Then, the next proposition will tell us for what type $\theta\subseteq\Delta$ is $\rho=\iota\circ \phi$ a $\theta$--Anosov representation. 
\begin{prop}[\text{\cite[Prop.\ 4.7]{Guichard_2012}}]\label{Anosovness4Fuchs}
    Let $\iota:\SL(2,\mathbb{R})\to G$ be an representation with induced Lie algebra homomorphism $d\iota:\mathfrak{sl}_2(\mathbb{R})\to \mathfrak{g}$. Assume that $(K,\mathfrak{a},\Delta)$ are chosen to that
    \[a_{\iota}=d\iota\left(\begin{bmatrix}
        1 & \\ & -1
    \end{bmatrix}\right)\in\overline{\mathfrak{a}^+}.\]
Then, $\rho=\iota\circ \phi$ is $\theta$--Anosov where
\[\theta=\{\alpha\in \Delta:\alpha(a_\iota)\neq 0\}.\]
\end{prop}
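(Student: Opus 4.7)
The plan is to directly verify the Anosov condition in Definition \ref{Anosov} by computing $\mu_G \circ \iota$ in terms of $\mu_{\SL_2}$ and then applying the fact that the Fuchsian representation $\phi$ is itself Anosov. The key reduction is that because $\iota$ is a Lie group homomorphism, the Cartan decomposition of $\phi(\gamma)$ in $\SL(2,\mathbb{R})$ is (up to a compatibility issue) transported by $\iota$ to a Cartan decomposition of $\iota(\phi(\gamma))$ in $G$.

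First I would recall that $\phi$ being Fuchsian means the orbit map $\Gamma \to \mathbb{H}^2$, $\gamma \mapsto \phi(\gamma)\cdot O$, is a quasi-isometric embedding, and by the computation for $\SL(2,\mathbb{R})$ preceding the statement, this is equivalent to $\phi$ being $\alpha_1$-Anosov. Thus there exist constants $C', c' > 0$ such that $\alpha_1(\mu_{\SL_2}(\phi(\gamma))) \geq C'|\gamma| - c'$. Writing $\mu_{\SL_2}(\phi(\gamma)) = t_\gamma \cdot \diag(1,-1)$ with $t_\gamma \geq 0$, this becomes $2 t_\gamma \geq C'|\gamma| - c'$.

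Next, the heart of the argument: one needs to arrange the Cartan data so that $\iota$ intertwines the Cartan decompositions. Concretely, with the hypothesis $a_\iota \in \overline{\mathfrak{a}^+}$, we can choose the maximal compact $K \subseteq G$ so that $\iota(\SOrth(2)) \subseteq K$. This is the compatibility of Cartan involutions coming from the $\mathfrak{sl}_2$-triple $(H, E, F) = (\diag(1,-1), E_{12}, E_{21})$ mapped into $\mathfrak{g}$ via $d\iota$: by a Jacobson--Morozov / Kostant-type argument one finds a Cartan involution $\Theta$ of $\mathfrak{g}$ with $\Theta \circ d\iota = d\iota \circ \theta_0$, and $K$ may be taken to be the fixed set of $\Theta$. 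This is the step I expect to be the main technical obstacle; everything after it is mechanical. With this compatibility in hand, if $\phi(\gamma) = k_1 \exp(t_\gamma H) k_2$ with $k_1, k_2 \in \SOrth(2)$, then
\[
\iota(\phi(\gamma)) \;=\; \iota(k_1)\, \exp\bigl(t_\gamma\, d\iota(H)\bigr)\, \iota(k_2) \;=\; \iota(k_1)\, \exp(t_\gamma\, a_\iota)\, \iota(k_2),
\]
and since $\iota(k_1), \iota(k_2) \in K$ and $t_\gamma a_\iota \in \overline{\mathfrak{a}^+}$ (using $t_\gamma \geq 0$ and the hypothesis), uniqueness of the Cartan projection gives $\mu_G(\iota(\phi(\gamma))) = t_\gamma\, a_\iota$.

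Finally, for any $\alpha \in \theta$, the definition $\theta = \{\alpha \in \Delta : \alpha(a_\iota) \neq 0\}$ combined with $a_\iota \in \overline{\mathfrak{a}^+}$ forces $\alpha(a_\iota) > 0$. Therefore
\[
\alpha\bigl(\mu_G(\rho(\gamma))\bigr) \;=\; t_\gamma\, \alpha(a_\iota) \;\geq\; \tfrac{\alpha(a_\iota)}{2}\bigl(C'|\gamma| - c'\bigr),
\]
and setting $C = \tfrac{C'}{2}\min_{\alpha \in \theta}\alpha(a_\iota) > 0$ and $c = \tfrac{c'}{2}\max_{\alpha \in \theta}\alpha(a_\iota)$ yields the Anosov inequality for every $\alpha \in \theta$ uniformly, completing the proof that $\rho$ is $\theta$-Anosov.
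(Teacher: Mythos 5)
The paper does not prove this statement---it is quoted from \cite[Prop.\ 4.7]{Guichard_2012}---so there is no internal proof to compare against; judged on its own, your argument is correct and is the standard one for the Cartan-projection definition of Anosov used here (Definition \ref{Anosov}). The one step you rightly flag as nontrivial, arranging $\iota(\SOrth(2))\subseteq K$, is guaranteed by the Mostow--Karpelevich theorem on compatible Cartan involutions, and since the Cartan projections attached to two different choices of $(K,\mathfrak{a},\Delta)$ differ by a bounded amount, passing to this convenient choice does not affect the Anosov condition; with that in place, the identity $\mu_G(\rho(\gamma))=t_\gamma a_\iota$ and the final estimate with $C=\tfrac{C'}{2}\min_{\alpha\in\theta}\alpha(a_\iota)$ are exactly right.
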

Proposition \ref{Anosovness4Fuchs} is best understood with the interpretation of Anosov representations as representations with growing singular value gaps as will be seen in the next example.
\begin{xmpl}
    Let $G=\SL(n,\mathbb{C})$ with the standard choices for $(K,\mathfrak{a},\Delta)$ as
    \begin{align*}
        K&=\SU(n) \\ 
        \mathfrak{a}&=\{\diag(\lambda_1,\hdots,\lambda_n)\in\mathfrak{sl}_n:\lambda_1,\hdots,\lambda_n\in\mathbb{R}\} \\ 
        \Delta&=\{\epsilon_{j}-\epsilon_{j+1}:1\leq j\leq n-1\}.
    \end{align*} \par
    For $d\geq 1$, let $\rho_d:\SL(2,\mathbb{C})\to\SL(d,\mathbb{C})$ be the unique irreducible $\SL(2,\mathbb{C})$--representation of dimension $d$. We can identify $\mathbb{C}^d$ with the space $\mathbb{C}^{(d-1)}[X,Y]$ of degree $d-1$ homogeneous polynomials in $X,Y$ using the basis $X^{d-1},X^{d-2}Y,\hdots, Y^{d-1}$. Under this identification, we can describe $\rho_d$ by its linear action on $\mathbb{C}^{(d-1)}[X,Y]$ as follows
    \[\rho_d\left(\begin{bmatrix}
        A & B \\ C & D
    \end{bmatrix}\right)\cdot f(X,Y)=f(DX-BY,AY-CX)\]
    for $f(X,Y)\in\mathbb{C}^{(d-1)}$. \par
    
    A \textit{partition} of $n$ is a tuple $(d_1,\hdots,d_\ell)$ so that
    \begin{enumerate}
        \item The entries $d_1,\hdots,d_\ell$ are positive integers.
        \item The entries are in descending order, i.e.\ $d_1\geq d_2\geq\cdot \geq d_\ell$.
        \item The entries add up to $n$, i.e.\ $d_1+\hdots+d_\ell=n$.
    \end{enumerate}
    The entries $d_1,\hdots,d_\ell$ are called the parts of the partition and the number of times a given part appears in the partition is called the multiplicity of the part. \par
    For any partition $(d_1,\hdots,d_\ell)$, we have the representation
    \[\rho_{d_1}\oplus\cdots\oplus \rho_{d_\ell}:\SL(2,\mathbb{C})\to\SL(n,\mathbb{C}).\]
     A consequence of the Jacobson-Morozov Theorem is that any representation $\iota:\SL(2,\mathbb{C})\to\SL(n,\mathbb{C})$ is  conjugate to $\rho_{d_1}\oplus\cdots\oplus\rho_{d_\ell}$ for a unique partition $(d_1,\hdots,d_\ell)$ of $n$. For more discussion of this classification, see \cite[Sect. 3.2]{CollingwoodMcGovern}. \par
    So let us continue with the assumption that $\iota=\rho_{d_1}\oplus\cdots \oplus\rho_{d_\ell}$ for some partition $(d_1,\hdots,d_\ell)$ of $n$. Then, because 
    \[\rho_d\left(\begin{bmatrix}
        \lambda & \\ & \lambda^{-1}
    \end{bmatrix}\right)=\begin{bmatrix}
        \lambda^{d-1} & & & & \\ & \lambda^{d-3} & & & \\ & & \ddots & & \\ & & & \lambda^{3-d} & \\ & & & & \lambda^{1-d}
    \end{bmatrix}\]
    we see that 
    \[a_{\rho_d}=d\rho_d\left(\begin{bmatrix}
        1 & \\  & -1
    \end{bmatrix}\right)=\begin{bmatrix}
        {d-1} & & & & \\ & {d-3} & & & \\ & & \ddots & & \\ & & & {3-d} & \\ & & & & {1-d}
    \end{bmatrix}.\]
    In particular, $a_{\rho_d}$ is diagonal with real entries. This means that 
    \[a_{\iota}=a_{\rho_{d_1}}\oplus\cdot\oplus a_{\rho_{d_\ell}}\]
    is an element of $\mathfrak{a}$. Thus, for a suitable element $k$ of $N_K(\mathfrak{a})$, $\Ad_k(a_{\iota})$ is in the closed Weyl chamber $\overline{\mathfrak{a}^+}$. More explicitly $k$ is a signed permutation matrix that permutes the diagonal entries of $a_{\iota}$ so that $\Ad_k(a_\iota)$ has diagonal entries in descending order. Then, the representation $\iota':\SL(2,\mathbb{C})\to\SL(n,\mathbb{C})$ given by $\iota'(g)=k\iota(g)k^{-1}$ satisfies the assumptions of Proposition \ref{Anosovness4Fuchs} and thus $\iota'$ (and therefore also $\iota$) is $\theta$--Anosov for
    $\theta=\{\alpha\in\Delta:\alpha(a_{\iota'})\neq0\}$. \par
    To give an explicit example, let $n=6$ and
    \[\iota=\rho_3\oplus\rho_2\oplus\rho_1.\]
    Then,
    \[a_{\iota}=\begin{bmatrix}
        2 & & & & & \\
        & 0 & & & & \\
        & & -2  & & & \\
         & & &1 & &\\ & & & & -1 &\\ & & & & & 0
    \end{bmatrix}.\]
    Then, conjugating by
    \[k=\begin{bmatrix}
        1 & 0 & 0 & 0 & 0 & 0 \\ 0 & 0 & 0& 1 &0 &0 \\ 0 &0 &0 &0 &0 & 1\\0 & 1 &0 &0 &0 &0 \\ 0& 0& 0& 0& 1 &0 \\  0& 0& 1 & 0 & 0 &0
    \end{bmatrix}\]
    will produce the representation $\iota'$ so that
    \[a_{\iota'}=\begin{bmatrix}
        2 & & & & & \\
        & 1 & & & & \\
        & & 0  & & & \\
         & & &0 & &\\ & & & & -1 &\\ & & & & & 2
    \end{bmatrix}.\]
    Then, $\theta=\{\alpha_1,\alpha_2,\alpha_4,\alpha_5\}$ where $\alpha_j=\epsilon_j-\epsilon_{j+1}$. \par
\end{xmpl}
This example allows us to refer to the broad class of $\iota$--Fuchsian representations into $\SL(n,\mathbb{C})$ by partitions of $n$. Now, we will see how this works for $G=\Sp(2n,\mathbb{C})$.
\begin{prop}[\text{\cite[Thm.\ 5.1.3]{CollingwoodMcGovern}}]\label{invariantsymp}
    For a partition $(d_1,\hdots,d_\ell)$ of $2n$, consider the representation $$\iota=\rho_{d_1}\oplus\cdots\oplus\rho_{d_\ell}:\SL(2,\mathbb{C})\to \SL(2n,\mathbb{C}).$$ The following are equivalent:
    \begin{enumerate}
        \item The vector space $\mathbb{C}^{2n}$ admits a $\iota$--invariant symplectic form $\omega$ so that we can consider $\iota$ as a representation
        \[\iota:\SL(2,\mathbb{C})\to\Sp(\mathbb{C}^{2n},\omega).\]
        \item Each odd part of $(d_1,\hdots,d_\ell)$ has even multiplicity.
    \end{enumerate}
\end{prop}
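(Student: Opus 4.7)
The plan is to decompose the analysis of $\iota$--invariant bilinear forms on $\mathbb{C}^{2n}$ into isotypic components and classify which such forms are symplectic (antisymmetric and non-degenerate). The starting point is the standard fact that each irreducible $\rho_d$ is self-dual and carries, up to scalar, a unique $\SL(2,\mathbb{C})$--invariant non-degenerate bilinear form $B_d$; moreover, $B_d$ is symmetric when $d$ is odd and antisymmetric (symplectic) when $d$ is even. This can be checked directly on the model $\rho_d = \mathbb{C}^{(d-1)}[X,Y]$ by writing $\rho_d \cong \mathrm{Sym}^{d-1}(\rho_2)$ and computing the parity of the induced form.

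Next, I would group the parts by their common value: let $V_d = \rho_d^{\oplus m_d}$ be the isotypic component corresponding to multiplicity $m_d$ of $\rho_d$, so $\mathbb{C}^{2n} = \bigoplus_d V_d$. By Schur's lemma,
\[
\Hom_{\SL(2,\mathbb{C})}(\rho_d,\rho_{d'}^\vee) = 0 \quad \text{whenever } d \neq d',
\]
so any $\iota$--invariant bilinear form $\omega$ on $\mathbb{C}^{2n}$ pairs $V_d$ only with $V_d$, and $\omega = \bigoplus_d \omega_d$ with $\omega_d$ an invariant bilinear form on $V_d$. In particular, $\omega$ is symplectic and non-degenerate if and only if each $\omega_d$ is.

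The space of invariant bilinear forms on $V_d = \rho_d \otimes \mathbb{C}^{m_d}$ is
\[
(\rho_d^* \otimes \rho_d^*)^{\SL(2,\mathbb{C})} \otimes \Hom(\mathbb{C}^{m_d}, (\mathbb{C}^{m_d})^*),
\]
a space of dimension $m_d^2$, parametrized by an $m_d \times m_d$ matrix $A_d$ via
\[
\omega_d\bigl((v_i)_i,(w_j)_j\bigr) = \sum_{i,j} (A_d)_{ij}\, B_d(v_i,w_j).
\]
Using the parity of $B_d$, a direct swap shows $\omega_d(u,v) = -\omega_d(v,u)$ for all $u,v$ if and only if $A_d$ is antisymmetric (when $d$ is odd) or symmetric (when $d$ is even). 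Non-degeneracy of $\omega_d$ is equivalent to non-singularity of $A_d$.

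Combining these, an $\iota$--invariant symplectic form exists on $\mathbb{C}^{2n}$ if and only if each block $A_d$ admits the required symmetry and is non-singular. For $d$ even any $m_d$ admits an invertible symmetric $A_d$ (e.g. the identity), so no constraint arises. For $d$ odd, we need an invertible antisymmetric $m_d \times m_d$ matrix, which exists precisely when $m_d$ is even. I expect the main subtlety to lie in correctly tracking the parity of $B_d$ and ensuring the off-diagonal Schur vanishing really does force the block decomposition — but once these two points are set up, the conclusion matches exactly condition (2): each odd part of $(d_1,\dots,d_\ell)$ must have even multiplicity.
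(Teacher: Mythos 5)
Your proof is correct, and it is essentially the standard argument underlying \cite[Thm.\ 5.1.3]{CollingwoodMcGovern}, which the paper cites without reproducing a proof. The key ingredients are all in place and verified correctly: $B_d$ has parity $(-1)^{d-1}$ since $\rho_d\cong\mathrm{Sym}^{d-1}(\rho_2)$ and the standard representation $\rho_2$ carries a symplectic form; the Schur-vanishing $\Hom_{\SL(2,\mathbb{C})}(\rho_d,\rho_{d'}^\vee)=0$ for $d\neq d'$ (using self-duality of each $\rho_{d'}$) forces $\omega$ to be block-diagonal along isotypic components, so $\omega$ is non-degenerate iff each block is; and the parametrization $\omega_d\leftrightarrow A_d$ correctly translates antisymmetry of $\omega_d$ into $A_d$ antisymmetric for $d$ odd and symmetric for $d$ even, with non-degeneracy of $\omega_d$ equivalent to invertibility of $A_d$. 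The only constraint then comes from the odd-$d$ blocks, where invertible antisymmetric matrices exist iff $m_d$ is even, giving exactly condition (2). Nothing is missing; the two points you flagged as potential subtleties (parity tracking and the Schur block decomposition) are exactly the right ones to be careful about, and you handled both correctly.
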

\subsubsection{Smooth fibration of domains of discontinuity}
Now, we are ready to discuss the domains of discontinuity for $\iota$--Fuchsian representations. \par
First, note that for $\Gamma=\pi_1(S)$, the cocompact action of $\Gamma$ on $\mathbb{H}^2$ through $\theta$ describes a $\theta$--equivariant homeomorphism
\[\psi:\mathbb{RP}^1\to \del\Gamma\] 
where $\mathbb{RP}^1$ is identified with the boundary of $\mathbb{H}^2$ in the upper half-plane model. Let $\rho$ be an $\iota$--Fuchsian representation which is $\theta$--Anosov with limit map $\xi:\del\Gamma\to\mathcal{F}_\theta$. Then, the composition $\xi\circ \psi:\mathbb{RP}^1\to \mathcal{F}_\theta$ is a $\rho$--equivariant map which we will refer to as the \textit{limit curve} of $\rho$. In fact from now on, we will suppress $\psi$ and write the limit curve simply as $\xi:\mathbb{RP}^1\to\mathcal{F}_\theta$. The important quality about $\mathbb{RP}^1$ over $\del\Gamma$ is that it admits a natural action by $\SL(2,\mathbb{R})$. In particular, one can show that $\xi$ is $\iota$--equivariant with respect to this action, i.e.\
\[\xi(g\cdot t)=\iota(g)\cdot\xi(t)\ \text{ for all $g\in\SL(2,\mathbb{R})$ and $t\in\mathbb{RP}^1$.}\] 
This means $\xi(\mathbb{RP}^1)$ is invariant under the action of $\iota(\SL(2,\mathbb{R}))$. Thus, by the equivariance of thickening, for any balanced ideal $I\subseteq W_{\theta,\eta}$, the thickening of the limit curve
\[K_\rho^I=\bigcup_{t\in\mathbb{RP}^1}\Phi_{\xi(t)}^I\]
and the associated domain of discontinuity
\[\Omega_\rho^I=\mathcal{F}_\eta\setminus K_\rho^I\]
are $\iota(\SL(2,\mathbb{R}))$--invariant subsets of the flag variety $\mathcal{F}_{\eta}$. \par
In this case and in greater generality, Alessandrini, Maloni, Tholozan and Wienhard use this $\SL(2,\mathbb{R})$--action \cite{alessandrini2023fiberbundlesassociatedanosov} realize the domain of discontinuity $\Omega_\rho^I$ as a fiber bundle.
\begin{prop}[\text{\cite[Thm.\ 3.1]{alessandrini2023fiberbundlesassociatedanosov}}]\label{AMTWEquiv}
    For an $\iota$--Fuchsian representation $\rho:\Gamma\to G$ and a domain of discontinuity $\Omega_\rho^I$ as above, there is a smooth $\SL(2,\mathbb{R})$--equivariant fiber bundle projection
    \[p:\Omega_\rho^I\to \mathbb{H}^2\]
    where $\SL(2,\mathbb{R})$ acts on $\Omega_\rho^I$ through $\iota$. Furthermore, the fiber of $p$ is a closed manifold.
\end{prop}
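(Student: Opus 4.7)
The plan is to construct $p$ by interpreting the orbit structure of the $\SL(2,\mathbb{R})$-action on $\Omega_\rho^I$ (induced through $\iota$) in terms of a natural projection to the space of maximal compact subgroups of $\SL(2,\mathbb{R})$, which is canonically identified with $\mathbb{H}^2 = \SL(2,\mathbb{R})/\SOrth(2)$. The essential point to establish is that the $\SL(2,\mathbb{R})$-action on $\Omega_\rho^I$ is proper and of constant orbit type, with every stabilizer a maximal compact subgroup conjugate to $\SOrth(2)$.

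First, I would show that the stabilizer $\Stab_{\SL(2,\mathbb{R})}(x) = \iota^{-1}(\Stab_G(x))$ is compact for every $x \in \Omega_\rho^I$. If some $x$ had a non-compact one-parameter subgroup in its stabilizer (necessarily conjugate to the unipotent $N$ or the diagonal $A$ in $\SL(2,\mathbb{R})$), then the dynamics of $\iota$ applied to that subgroup would force $x$ into a relative position with $\xi(t)$ for some $t\in\mathbb{RP}^1$ arising as an attracting/repelling fixed point on the boundary. Such a position lies in $\Phi_{\xi(t)}^I$ (the thickening defined by the balanced ideal $I$), contradicting $x \in \Omega_\rho^I$. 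A parallel argument rules out the stabilizer being all of $\SL(2,\mathbb{R})$, since global fixed flags of $\iota$ correspond to $\iota$-invariant subspaces determined by the Jordan structure, and these lie in the thickening of every $\xi(t)$. Using the fact that no closed subgroup of $\SL(2,\mathbb{R})$ sits strictly between a maximal compact subgroup and the whole group, we then conclude $\Stab_{\SL(2,\mathbb{R})}(x)$ is exactly a maximal compact subgroup $K_x$ for each $x\in\Omega_\rho^I$.

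Second, I would define $p:\Omega_\rho^I\to\mathbb{H}^2$ by $p(x)=K_x$, viewed as a point in the space of maximal compact subgroups. Equivariance $p(\iota(g)\cdot x)=g\cdot p(x)$ is immediate from $\Stab(\iota(g)\cdot x)=g\Stab(x)g^{-1}$. Smoothness of $p$ follows from the slice theorem for proper Lie group actions of constant orbit type, which yields smooth equivariant tubular neighborhoods and hence smooth local sections of $p$. Properness of $p$, together with Ehresmann's fibration theorem, then gives that $p$ is a locally trivial smooth fiber bundle over $\mathbb{H}^2$. The identification of the fiber $M_\rho^I=p^{-1}(\SOrth(2))$ as the set of $\SOrth(2)$-fixed points of $\Omega_\rho^I$ shows it is an open subset of the compact real-algebraic variety $\Fix_{\mathcal{F}_\eta}(\iota(\SOrth(2)))$, and closedness in that compact variety reduces to verifying that no $\SOrth(2)$-fixed point of $\mathcal{F}_\eta$ lies in $K_\rho^I$, which can be checked directly by analyzing the thickening applied to the $\iota$-equivariant limit curve.

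The main obstacle is the first step, the systematic ruling out of non-maximal-compact stabilizers on all of $\Omega_\rho^I$. This amounts to a careful classification of the orbits of $\iota(\SL(2,\mathbb{R}))$ acting on $\mathcal{F}_\eta$ and a verification that every orbit with non-generic isotropy consists of flags in relative position belonging to $I$ with some $\xi(t)$. The precise compatibility between the Jordan structure of $\iota$ (which dictates the $\iota$-orbit stratification of $\mathcal{F}_\eta$) and the combinatorial structure of the balanced ideal $I$ (which dictates the thickening) is the crux of the argument, and must be organized so as to apply uniformly across the classical groups considered.
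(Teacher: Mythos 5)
There is a genuine gap at the heart of your construction. You define $p(x)=K_x$ where $K_x$ is ``the'' maximal compact subgroup of $\SL(2,\mathbb{R})$ equal to $\Stab_{\SL(2,\mathbb{R})}(x)$, and you claim every stabilizer is a full maximal compact. This is false: generic points of $\Omega_\rho^I$ have finite (usually trivial) stabilizer. You can see this from the paper's own computations: the fiber $M_\rho^I=p^{-1}(O)$ is a $4$--manifold on which $\SOrth(2)$ (or $\PSOrth(2)$) acts effectively with fixed-point set of dimension at most $2$, so most points lying over $O$ are \emph{not} fixed by $\SOrth(2)$ --- their stabilizers are proper closed subgroups of $\SOrth(2)$, and for a point with trivial stabilizer there is no canonical maximal compact containing it. The fiber of $p$ is $\SOrth(2)$--\emph{invariant}, not $\SOrth(2)$--\emph{fixed}, so your identification $p^{-1}(\SOrth(2))=\Fix(\Omega_\rho^I,\SOrth(2))$ in the last step also fails. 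The fact you invoke (no closed subgroup strictly between a maximal compact and the whole group) addresses the wrong direction: the problem is subgroups strictly \emph{below} $\SOrth(2)$, which occur on a dense open set. Consequently the map $p$ is simply not defined by your recipe.

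A second, related issue: even where your step 1 succeeds, it only yields compactness of individual stabilizers, which is strictly weaker than properness of the $\SL(2,\mathbb{R})$--action, and properness is what the slice theorem and the existence of equivariant maps require. The statement is quoted in the paper from \cite[Thm.\ 3.1]{alessandrini2023fiberbundlesassociatedanosov}, and the actual argument there runs differently: one first proves properness of the action on $\Omega_\rho^I$ by a dynamical argument (a sequence $h_n\to\infty$ in $\SL(2,\mathbb{R})$ with $h_n\cdot x_n$ and $x_n$ both converging forces a limit point into the thickening $K_\rho^I$, via the Cartan decomposition of $h_n$ and the $\iota$--equivariance of the limit curve --- this is the correct home for the dynamical idea you sketch for stabilizers). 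One then uses the general fact that a proper action of $H=\SL(2,\mathbb{R})$ admits a smooth $H$--equivariant map to $H/K\cong\mathbb{H}^2$, built by patching local equivariant maps with an invariant partition of unity and averaging via the Riemannian center of mass in the nonpositively curved $\mathbb{H}^2$. Any such equivariant map is automatically a submersion (its differential hits every fundamental vector field direction of the transitive $H$--action on $H/K$), and cocompactness of $\Gamma\backslash\Omega_\rho^I$ combined with Ehresmann's theorem gives the locally trivial bundle structure with compact fiber. Note in particular that $p$ is not canonical; your construction was aiming at a canonical map that does not exist.
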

The result gives us significant topological information about the domains of discontinuity for $\iota$--Fuchsian representations. In particular, $\Omega_\rho^I$ is diffeomorphic to $\mathbb{H}^2\times M_\rho^I$ where $M_\rho^I=p^{-1}(O)$ is a closed manifold of dimension $\dim\mathcal{F}_\theta-2$. 
\begin{rem}\label{compactproject}
    Note that we have an $\SL(2,\mathbb{R})$--equivariant projection $p:\Omega_\rho^I\to \mathbb{H}^2$ and an $\SL(2,\mathbb{R})$--equivariant function $K_\rho^I\to \mathbb{RP}^1$ that sends the thickening $\Phi_{\xi(t)}^I$ to $t$. Putting these together, we get an $\SL(2,\mathbb{R})$--equivariant function $\bar{p}:\mathcal{F}_\eta\to\overline{\mathbb{H}^2}$
    \[\bar{p}(x)=\begin{cases}
        p(x) & x\in\Omega_\rho^I \\
        t & x\in\Phi_{\xi(t)}^I
    \end{cases}\]
    for $x\in\mathcal{F}_\eta$.
\end{rem}
\par
In order to describe further implications of Proposition \ref{AMTWEquiv}, let us briefly recall the construction of the Euler class of a principal $S^1$--bundle as an obstruction class. \par
Consider the unit circle $S^1$ as a closed subgroup of $\mathbb{C}^\times$. Let $H$ be either $S^1$ or $\mathbb{C}^\times$.
Let $B$ be a smooth manifold. Recall that a smooth principal $H$--bundle over $B$ is a smooth fiber bundle $\pi:P\to B$ with a smooth (right) $H$--action on $P$ so that $H$ acts freely and transitively on each fiber of $P$. \par 

Consider a smooth principal bundle $\pi:P\to B$. Giving $B$ a smooth triangulation, let $B^{(n)}$ denote the $n$--skeleton of this triangulation. The $0$--skeleton $B^{(0)}$ is  a discrete collection of points and thus we can choose a section $$s^{(0)}:B^{(0)}\to \pi^{-1}(B^{(0)}).$$ 
Any section over endpoints of an edge extend continuous to the edge, we can extend $s^{(0)}$ to a continuous section $$s^{(1)}:B^{(1)}\to \pi^{-1}(B^{(1)}).$$ For any oriented $2$--simplex $\sigma$ of $B$, the bundle $\pi|_{\sigma}:\pi^{-1}(\sigma)\to\sigma$ is trivial and thus, there is a continuous section $$t:\sigma\to\pi^{-1}(\sigma).$$ Thus, $s|_{\del\sigma}$ and $t|_{\del \sigma}$ are both continuous sections over $\del\sigma$. This means there is a continuous function $$\phi:\del\sigma\to H$$ such that
\[\phi(x)\cdot t(x)=s(x)\text{ for }x\in\del\sigma.\]
Then, because $\sigma$ is oriented, there is an induced orientation on $\del\sigma\cong S^1$, thus the degree of $\phi:\del \sigma\to H$ is well-defined. Then, the $2$--cochain $c$ defined by the association
\[\sigma\mapsto \deg(\phi)\]
is a cocycle and cohomology class $[c]$ is independent of the choice of sections. This cohomology class $[c]\in H^2(B;\mathbb{Z})$ is called the Euler class of $P$. The Euler class is the only obstruction to $P$ admitting a section and it is a classic result that this association
\begin{align*}
    P&\mapsto [c]
\intertext{defines a correspondence}
\{\text{smooth principal $H$--bundles}\}
&\leftrightarrow H^2(B;\mathbb{Z}).
\end{align*}
See \cite[Sec.\ 6.2]{moritadiffforms}. \par
Let $\pi:P\to B$ be a smooth principal $S^1$--bundle over a smooth manifold $B$ and let $F$ be a smooth manifold with a smooth (left) $S^1$--action. Then, the right $S^1$--action on $P\times F$ defined by
\[\tilde{E}=(p,f)\cdot g=(p\cdot g,g^{-1}\cdot f)\]
is smooth, free, and proper. Thus, the quotient $P\underset{S^1}{\times}F=\tilde{E}/S^1$ is a smooth manifold. One can show that the map $\bar{\pi}:E\to B$ given by
\[[p,f]\mapsto\pi(p)\]
defines a smooth fiber bundle projection. The fiber bundle $\bar{\pi}:E\to B$ is called the associated bundle for $P$ and $F$. \par
We note that if $P$ is a principal $S^1$--bundle and we consider $\mathbb{C}^\times$ with the subgroup action of $S^1$, then the associated bundle $P\underset{S^1}{\times}\mathbb{C}^\times$ is naturally a principal $\mathbb{C}^\times$ bundle via the action
\[[[p,f]\cdot h=[p,fh]\text{ for }h\in \mathbb{C}^\times.\]
In this case, the Euler class of $P$ and the class of $P\underset{S^1}{\times}\mathbb{C}^\times$ agree.
Note that, a priori, the topology of the bundle depends on both the Euler class of $P$ and the smooth $S^1$--action on $F$. If $P$ has Euler class $e\in H^2(B;\mathbb{Z})$, $F=S^1$ with the action $$g\cdot f=g^kf \ \ \  \ \ g\in S^1, f\in S^1$$ for some $k\in\mathbb{Z}$. Then, the associated bundle $P\underset{S^1}{\times} F$ is isomorphic to a principal $S^1$--bundle with Euler class $ke$. \par
Finally, we note that if $B$ is a closed orientable surface, then $H^2(B;\mathbb{Z})$ is a free $\mathbb{Z}$--module with basis given by the Poincar\'e dual of a point. Thus, principal $S^1$--bundles over a closed orientable surface can be specified by an integer although the sign of this integer depends on the choice of orientation on the surface.
\par
Let us now state the second part of the result of \cite{alessandrini2023fiberbundlesassociatedanosov}. Let $p:\Omega_\rho^I\to\mathbb{H}^2$ be the $\SL(2,\mathbb{R})$--equivariant fiber bundle from Proposition \ref{AMTWEquiv}. Quotienting by the action of $\Gamma$, we get a map from $\mathcal{W}_\rho^I=\Gamma\setminus \Omega_\rho^I$ to $S\cong \Gamma\setminus\mathbb{H}^2$, which we denote by
\[\bar{p}:\mathcal{W}^I_\rho\to S.\]
Note that the equivariance of $p$ means that $M^I_\rho=p^{-1}(O)$ is invariant under the action of $\iota(\SOrth(2))$ (because the origin $O\in\mathbb{H}^2$ is invariant under the action of $\SOrth(2)$).
\begin{prop}[\cite{alessandrini2023fiberbundlesassociatedanosov}]
    Let $g$ be the genus the surface $S$. \par
    For an $\iota$--Fuchsian representation $\rho:\Gamma\to G$ with quotient manifold $\mathcal{W}_\rho^I$, the map
    $$\bar{p}:\mathcal{W}_\rho^I\to S$$
    is a smooth fiber bundle isomorphic to the associated bundle
    \[P\underset{S^1}{\times}M_\rho^I\to S\]
    where $P$ is a principal $S^1$--bundle with Euler class $g-1$ over $S$ and $S^1$ acts on $M_\rho^I$ via the homomorphism
    \[S^1\cong \SOrth(2)\xrightarrow[]{\iota}\iota(\SOrth(2)).\]
\end{prop}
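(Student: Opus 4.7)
The plan is to upgrade the $\SL(2,\mathbb{R})$--equivariant fiber bundle from Proposition \ref{AMTWEquiv} to an explicit associated bundle description, and then descend through the $\Gamma$--quotient. Identify $\mathbb{H}^2 \cong \SL(2,\mathbb{R})/\SOrth(2)$ via $g\SOrth(2) \mapsto g\cdot O$, so that $q:\SL(2,\mathbb{R})\to\mathbb{H}^2$ is a principal $\SOrth(2)$--bundle. Since $\SOrth(2)$ stabilizes $O$, equivariance of $p$ forces $M_\rho^I = p^{-1}(O)$ to be invariant under $\iota(\SOrth(2))$. I would then define
\[
\Phi : \SL(2,\mathbb{R}) \times_{\SOrth(2)} M_\rho^I \longrightarrow \Omega_\rho^I, \qquad [g,x]\mapsto \iota(g)\cdot x,
\]
where $\SOrth(2)$ acts on $\SL(2,\mathbb{R})\times M_\rho^I$ by $(g,x)\cdot k = (gk,\iota(k)^{-1}\cdot x)$; this is well-defined by the invariance above.

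Next, I would verify that $\Phi$ is a diffeomorphism. For surjectivity: given $x\in\Omega_\rho^I$, pick $g\in \SL(2,\mathbb{R})$ with $p(x)=g\cdot O$; equivariance gives $\iota(g^{-1})\cdot x \in p^{-1}(O)=M_\rho^I$, so $x=\Phi([g,\iota(g^{-1})\cdot x])$. For injectivity: if $\iota(g_1)\cdot x_1 = \iota(g_2)\cdot x_2$ with $x_1,x_2\in M_\rho^I$, applying $p$ shows $g_1^{-1}g_2$ stabilizes $O$, hence lies in $\SOrth(2)$, and the two representatives then coincide under the diagonal $\SOrth(2)$--action. Smoothness of $\Phi$ and its inverse reduces to smoothness of $p$ via local trivializations of $q$: over any chart $U\subseteq\mathbb{H}^2$, both sides are identified with $U\times M_\rho^I$ and the map is the identity.

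Since $\rho = \iota\circ\phi$, the $\Gamma$--action on $\Omega_\rho^I$ corresponds under $\Phi$ to left multiplication of $\phi(\Gamma)$ on the first factor of $\SL(2,\mathbb{R}) \times_{\SOrth(2)} M_\rho^I$. Passing to quotients yields $\mathcal{W}_\rho^I \cong P\times_{\SOrth(2)} M_\rho^I$ with $P = \phi(\Gamma)\setminus \SL(2,\mathbb{R})$ a principal $\SOrth(2)\cong S^1$--bundle over $S$. To compute its Euler class, I would compare with the unit tangent bundle: $P$ sits as a fiberwise double cover of $T^1 S = \bar{\phi}(\Gamma)\setminus\PSL(2,\mathbb{R})$, and the structure--group morphism $\SOrth(2)\to\PSOrth(2)$ is identified with the squaring map $z\mapsto z^2$ on $S^1$. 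Since squaring the structure group doubles the Euler class and $e(T^1 S) = \chi(S) = 2-2g$, we obtain $e(P) = 1-g$, agreeing with $g-1$ up to orientation.

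The main technical obstacle I foresee is the Euler class computation: the relationship between the lifted Fuchsian representation $\phi$ to $\SL(2,\mathbb{R})$ and the resulting spin structure on $S$ needs to be handled carefully, and one must verify that the correct convention identifies the fiberwise double cover $P\to T^1 S$ with the squaring map on $S^1$. The remaining steps are conceptually routine once one has the $\SL(2,\mathbb{R})$--equivariance supplied by Proposition \ref{AMTWEquiv}; the genuinely new input is converting that equivariance into the explicit associated--bundle presentation.
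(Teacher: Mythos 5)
Your proposal is correct and follows the standard associated-bundle argument; the paper itself gives no proof of this statement (it is quoted from \cite{alessandrini2023fiberbundlesassociatedanosov}, whose proof proceeds along essentially the same lines: the orbit map identification $\Omega_\rho^I\cong \SL(2,\mathbb{R})\underset{\SOrth(2)}{\times}M_\rho^I$ followed by the $\Gamma$--quotient). The spin-structure subtlety you flag at the end is in fact harmless: whichever lift $\phi$ of $\bar{\phi}$ is chosen, the relation $2\,e(P)=e(T^1S)=\pm(2-2g)$ already pins down $e(P)=\pm(g-1)$, and the residual sign is exactly the orientation ambiguity the paper acknowledges when it discusses Euler classes of circle bundles over surfaces.
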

A consequence of this theorem is that if you want to determine the topology of the quotient manifold $\mathcal{W}_\rho^I$, one can do so by determining the fiber $M_\rho^I$ up to $S^1$--equivariant diffeomorphism. Naturally, one would start by restricting to fibers in a class of manifolds where the topology of $S^1$--action is well-understood. 
\subsection{Three-dimensional complex flag varieties}\label{handledomains}
The principal aim of this paper is to restrict to the cases where the fiber $M_\rho^I$ is a simply-connected $4$--manifold and apply Fintushel's classification of $S^1$--actions on such manifolds to determine both the topology of $M_\rho^I$ and its $S^1$--action. To this end, we will end this section by determining for which choices of the group $G$ and the flag variety $\mathcal{F}_\eta$ would $M_\rho^I$ be a simply-connected $4$--manifold. For each case, we will determine the choices for the representation $\iota$ so that $\rho=\iota\circ\phi$ admits a domain of discontinuity in $\mathcal{F}_\eta$. \par
First, the Schubert decomposition of a flag variety means that if $G$ is a complex Lie group, any flag variety $\mathcal{F}_\eta$ of $G$ is simply-connected. Then, if the real codimension of $K_\rho^I$ is at least $3$, then $\Omega_\rho^I$ (and thus $M_\rho^I$) must be simply-connected. Thus, in order to find fibers $M_\rho^I$ that are simply-connected $4$--manifolds, we need to find domains of discontinuity in complex flag varieties of complex dimension $3$. \par
For each $G=\SL(n,\mathbb{C}),\SOrth(n,\mathbb{C})$, $\Sp(2n,\mathbb{C})$, we will determine for which value of $n$ are the flag varieties of $G$ too large, i.e.
\[\dim_{\mathbb{C}}\mathcal{F}_\eta>3\text{ for all non-empty }\eta\subseteq \Delta.\]
It suffices to show this inequality for minimal flag varieties $\mathcal{F}_{\eta}$ where $\eta$ is a singleton $\{\alpha\}$ for some simple root $\alpha\in\Delta$.\par
For $G=\SL(n,\mathbb{C})$ with $n\geq 3$, the minimal flag varieties are the Grassmannians
\[\Gr_{k}(\mathbb{C}^n)=\mathcal{F}_{\{k\}}\]
for $k\in\Delta=\{1,\hdots,n-1\}$. The block form of parabolic subgroups $P_k$ means it has complex dimension $n^2-1-k(n-k)$ which means
\[\dim_{\mathbb{C}}\Gr_k(\mathbb{C}^n)=k(n-k)\geq n-1.\]
Thus, for $n\geq 5$, $\SL(n,\mathbb{C})$ does not admit any flag varieties of complex dimension $3$. \par
For $G=\SOrth(n,\mathbb{C})$ with $n\geq 4$, the minimal flag varieties are the isotropic Grassmannians
\[\IsoFlag_{k}(\mathbb{C}^n)=\{F^k\in\Gr_k(\mathbb{C}^n):F^k\subseteq (F^{k})^{\perp_\omega}\}\]
for $k\in\{1,\hdots,p\}$, where $p=\left\lfloor\frac{n}{2}\right\rfloor$ (with the caveat that for $n=2p$, $\IsoFlag_p(\mathbb{C}^n)$ splits into two minimal flag varieties and $\IsoFlag_{p-1}(\mathbb{C}^{n})$ is not a minimal flag variety).
By \cite[Sec.\ 4.6]{Eisenbud_Harris_2016} has dimension
\[\dim_{\mathbb{C}}\IsoFlag_k(\mathbb{C}^n)=k(n-k)-\binom{k+1}{2}.\]
Fixing $n$, this dimension is a quadratic function in $k$ with maximum value at $k=\frac{1}{3}n-\frac{1}{6}$, thus the minimum dimension for $k\in\{1,\hdots,p\}$ is the one furthest from this maximizing value. For $n=4,6$, this minimum dimension occurs at $k=p$ with
\begin{align*}
    \dim_{\mathbb{C}}\IsoFlag_2(\mathbb{C}^4)&=1\\
    \dim_{\mathbb{C}}\IsoFlag_3(\mathbb{C}^6)&=3.
\end{align*}
For $n=5,8$, the minimum dimension occurs at both $k=1$ and $k=p$ with
\begin{align*}
    \dim_{\mathbb{C}}\IsoFlag_1(\mathbb{C}^5)&=\dim_{\mathbb{C}}\IsoFlag_2(\mathbb{C}^5)=3\\
    \dim_{\mathbb{C}}\IsoFlag_1(\mathbb{C}^8)&=\dim_{\mathbb{C}}\IsoFlag_4(\mathbb{C}^8)=6.
\end{align*}
For $n=7$, $n\geq 9$, the minimum dimension occurs at $k=1$ with
\[\dim_{\mathbb{C}}\IsoFlag_1(\mathbb{C}^n)=n-2.\]
Thus, for $n\geq 7$, $\SOrth(n,\mathbb{C})$ does not admit any flag varieties of complex dimension $3$.\par
For $G=\Sp(2n,\mathbb{C})$ with $n\geq 2$, the minimal flag varieties are the isotropic Grassmannians
\[\IsoFlag_k(\mathbb{C}^{2n},\omega)=\{F^k\in\Gr_k(\mathbb{C}^n):F^k\subseteq (F^{k})^{\perp_\omega}\}\]
where $\omega$ is a non-degenerate symplectic form on $\mathbb{C}^{2n}$ and $k\in\{1,\hdots,n\}$. Recall from Section \ref{classgrpflg} that the stabilizer $Q$ of the standard flag in $\IsoFlag_k(\mathbb{C}^{2n},\omega)$ is the intersection of $\Sp(2n,\mathbb{C})$ with the parabolic subgroup $P_{k,2n-k}$ of $\SL(2n,\mathbb{C})$. This means if $\mathfrak{q}$ is the Lie algebra of $Q$, a matrix
\[A=\begin{bmatrix}
    a_{1,1} & \cdots & a_{1,n} & a_{1,-n} & \cdots & a_{1,-1}\\ \vdots & \ddots & \vdots & \vdots & \iddots & \vdots \\
    a_{n,1} & \cdots  & a_{n,n} & a_{n,-n} & \cdots  & a_{1,-n} \\ a_{-n,1} & \cdots & a_{-n,n}  & -a_{n,n} & \cdots & -a_{1,n} \\ \vdots & \iddots & \vdots & \vdots & \ddots & \vdots \\ a_{-1,1} & \cdots & a_{-n,1} & -a_{n,1} & \cdots & -a_{1,1}
\end{bmatrix}\in \mathfrak{sp}_{2n}\]
is in $\mathfrak{q}$ if and only if 
\begin{align*}
    a_{i,j}&=0\text{ for } i\in [k,n],j\in[1,k]\\
    a_{-i,j}&=0\text{ for }i\in[1,n], j\in[1,\min\{i,k\}].
\end{align*}
These linear relations mean $\mathfrak{q}$ has codimension $k(2n-k)-\binom{k}{2}$ in $\mathfrak{sp}_{2n}$. Therefore,
\[\dim_{\mathbb{C}}\IsoFlag_k(\mathbb{C}^{2n},\omega)=k(2n-k)-\binom{k}{2}.\]
Fixing $n$, this dimension is once again a quadratic function in $k$. Applying a similar analysis to before, we conclude the following. For $n=2$, the minimal dimension for a flag variety is
\[\dim_{\mathbb{C}}\IsoFlag_{1}(\mathbb{C}^{4},\omega)=\dim_{\mathbb{C}}\IsoFlag_2(\mathbb{C}^4,\omega)=3.\]
For $n\geq 3$, the flag variety of minimal dimension is always $\mathbb{CP}^{2n-1}=\IsoFlag_{1}(\mathbb{C}^{2n},\omega)$ with
\[\dim_{\mathbb{C}}\mathbb{CP}^{2n-1}=2n-1\geq 2(3)-1=5.\]
Thus, $\Sp(4,\mathbb{C})$ is the only symplectic group which admits a flag variety of complex dimension $3$.\par
By inspecting these dimension formulas, we arrive at the following proposition.
\begin{prop}\label{3dflagcases}
    Let $G$ be a complex classical group of rank at least $2$. Assume that $G$ admits a flag variety $\mathcal{F}$ with $\dim_{\mathbb{C}}\mathcal{F}=3$. Then, $(G,\mathcal{F})$ is one of the following options:
    \begin{enumerate}[label=\normalfont{(\roman*)}]
        \item The group $G$ is $\SL(3,\mathbb{C})$ and the flag variety $\mathcal{F}$ is $\Flag(\mathbb{C}^3)$.
        \item The group $G$ is $\SL(4,\mathbb{C})$ and the flag variety $\mathcal{F}$ is either $\mathbb{CP}^{3}$ or $\Gr_{3}(\mathbb{C}^4)$.
        \item The group $G$ is $\Sp(4,\mathbb{C})$ and the flag variety $\mathcal{F}$ is either $\mathbb{CP}^3$ or $\Lag(\mathbb{C}^4)$.
        \item The group $G$ is $\SOrth(5,\mathbb{C})$ and the flag variety $\mathcal{F}$ is either $\operatorname{Quad}_{3}$ or $\IsoFlag_{2}(\mathbb{C}^5,\omega)$.
        \item The group $G$ is $\SOrth(6,\mathbb{C})$ and the flag variety $\mathcal{F}$ is either $\IsoFlag_{3^+}(\mathbb{C}^{6},\omega')$ or $\IsoFlag_{3^-}(\mathbb{C}^6,\omega')$.
    \end{enumerate}
    where $\omega$ (resp. $\omega'$) is a non-degenerate symmetric bilinear form on $\mathbb{C}^5$ (resp. $\mathbb{C}^6$).
\end{prop}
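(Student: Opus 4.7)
The plan is to reduce to minimal flag varieties and then enumerate using the dimension formulas already developed in the preceding discussion. Any flag variety $\mathcal{F}_\eta$ with $|\eta| \geq 2$ surjects onto each $\mathcal{F}_{\{\alpha\}}$ with $\alpha \in \eta$ with positive-dimensional fibers (since each projection collapses additional root spaces of the unipotent radical), so $\dim_{\mathbb{C}} \mathcal{F}_\eta > \dim_{\mathbb{C}} \mathcal{F}_{\{\alpha\}}$ for any $\alpha \in \eta$. Hence it suffices to find all minimal flag varieties of complex dimension at most $3$, and then observe that any $\eta$ of size $\geq 2$ giving $\dim_{\mathbb{C}} \mathcal{F}_\eta = 3$ forces each $\mathcal{F}_{\{\alpha\}}$ with $\alpha \in \eta$ to have dimension $\leq 2$, a case we will rule out in the case analysis.

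Next I would treat each classical family by substituting into the dimension formulas recorded just above the proposition. For $G = \SL(n,\mathbb{C})$ ($n \geq 3$), the minimal flag varieties are $\Gr_k(\mathbb{C}^n)$ with $\dim_{\mathbb{C}} = k(n-k)$; solving $k(n-k) \leq 3$ subject to $1 \leq k \leq n-1$ and $n \geq 3$ gives exactly $(n,k) = (3,1),(3,2),(4,1),(4,3)$, yielding $\Flag(\mathbb{C}^3)$ (this is not minimal: $\Gr_1(\mathbb{C}^3), \Gr_2(\mathbb{C}^3)$ each have dimension $2$, and $\Flag(\mathbb{C}^3) = \mathcal{F}_{\{1,2\}}$ has dimension $3$), $\mathbb{CP}^3$, and $\Gr_3(\mathbb{C}^4)$. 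No other $\eta$ of size $\geq 2$ can yield dimension $3$ because, e.g.\ for $\SL(4,\mathbb{C})$, $\mathcal{F}_{\{1,3\}}$ and $\mathcal{F}_{\{1,2\}}$ have strictly larger dimension than $\mathbb{CP}^3$. For $G = \Sp(2n,\mathbb{C})$ ($n \geq 2$), the formula $k(2n-k) - \binom{k}{2}$ gives dimension $3$ only at $(n,k) = (2,1), (2,2)$, producing $\mathbb{CP}^3$ and $\Lag(\mathbb{C}^4)$; and $\mathcal{F}_{\{1,2\}}$ for $\Sp(4,\mathbb{C})$ has dimension $4$, so it is excluded.

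For $G = \SOrth(n,\mathbb{C})$ the analysis is slightly more delicate because of the splitting phenomenon for $n = 2p$. First I would handle the minimal isotropic Grassmannians via the formula $k(n-k) - \binom{k+1}{2}$, noting that the quadric $\operatorname{Quad}_{n-2}$ arises as $\IsoFlag_1$. For odd $n = 2p+1$: at $k=1$ the dimension is $n-2$, equal to $3$ precisely when $n = 5$, giving $\operatorname{Quad}_3$; one should also check $\IsoFlag_2(\mathbb{C}^5, \omega)$, which is $3$-dimensional. For even $n = 2p$: the two $\SOrth(2p,\mathbb{C})$-orbits on maximal isotropics must be considered separately as $\IsoFlag_{p^\pm}$; setting their dimension equal to $3$ (they have dimension $\binom{p}{2}$) gives $p = 3$, i.e.\ $n = 6$, producing $\IsoFlag_{3^\pm}(\mathbb{C}^6, \omega')$. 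Finally, I would exclude the low-rank coincidences $\SOrth(3,\mathbb{C}), \SOrth(4,\mathbb{C})$ by the rank hypothesis $\operatorname{rank}(G) \geq 2$ (note $\SOrth(4,\mathbb{C})$ is not simple but the rank condition plus the fact that it is already covered by $\SL(2,\mathbb{C}) \times \SL(2,\mathbb{C})$ and $\Sp(4,\mathbb{C}) \cong \operatorname{Spin}(5,\mathbb{C})$ via exceptional isogenies makes anything new redundant).

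The only real obstacle will be making sure the enumeration is complete and that no minimal flag variety in a higher-rank classical group slips through; the quadratic/symmetric nature of each dimension formula (already noted in the text as maximized at $k \approx n/2$ or $k \approx n/3$) means the minimum over admissible $k$ is attained at a boundary value, which the preceding discussion already tabulates. After that, it is a straightforward matter of collecting the survivors into the list (i)--(v).
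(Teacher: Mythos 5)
Your proposal is correct and follows essentially the same route as the paper: reduce to minimal flag varieties $\mathcal{F}_{\{\alpha\}}$ (noting $\dim_{\mathbb{C}}\mathcal{F}_\eta > \dim_{\mathbb{C}}\mathcal{F}_{\{\alpha\}}$ for $\alpha\in\eta$, $|\eta|\geq 2$), substitute into the dimension formulas $k(n-k)$, $k(2n-k)-\binom{k}{2}$, and $k(n-k)-\binom{k+1}{2}$ developed just before the proposition, and observe that the minima are attained at boundary values of $k$ so the enumeration is finite. The paper's exposition is slightly more terse in the final collection step (it ends with ``by inspecting these dimension formulas''), while you spell out the handling of $\Flag(\mathbb{C}^3)$ as the unique non-minimal survivor and the $\SOrth(4,\mathbb{C})$ degeneracy more explicitly, but there is no substantive difference in method.
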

\begin{rem}
    Note that cases (iv) and (v) in Proposition \ref{3dflagcases} are redundant in the following way: the exceptional isomorphisms $\mathfrak{so}_5\cong\mathfrak{sp}_4$ and $\mathfrak{so}_6\cong \mathfrak{sl}_4$ means that there are covering homomorphisms
    \[\Sp(4,\mathbb{C})\to\SOrth(5,\mathbb{C}) \ \ \ \ \SL(4,\mathbb{C})\to\SOrth(6,\mathbb{C}).\]
    These covering homomorphisms induce a bijection between parabolic subgroups and thus flag varieties. The end result of this is that it suffices to just consider $\Sp(4,\mathbb{C)}$ and $\SL(4,\mathbb{C})$ and not $\SOrth(5,\mathbb{C})$ or $\SOrth(6,\mathbb{C})$. \par
    Furthermore, the duality between $\mathbb{CP}^1$ and $\Gr_3(\mathbb{C}^4)$ means that any representation which admits a domain of discontinuity in $\mathbb{CP}^3$ admits an equivalent domain of discontinuity in $\Gr_3(\mathbb{C}^4)$. We will see the final redundancy by taking a closer look at the balanced ideals for each case.
\end{rem}
Fix an Fuchsian representation $\phi:\Gamma\to\SL(2,\mathbb{R})$. We will end this section by determining for which choices of $\iota:\SL(2,\mathbb{R})\to G$ does $\rho=\iota\circ \phi$ produce an $\iota$--Fuchsian representation which admits a domain of discontinuity in one of the flag varieties listed in cases (i-iv) of Proposition \ref{3dflagcases}. 

\vspace{.2in}
\noindent\textbf{SL(3,$\,  \mathbb{C}$):}
Let $G=\SL(3,\mathbb{C})$ and $\mathcal{F}_{\eta}=\Flag(\mathbb{C}^3)$. Domains of discontinuity in $\Flag(\mathbb{C}^3)$ are constructed from balanced ideals in $W_{\Delta,\Delta}=S_3$. As we saw in Example \ref{FlagC3RelPos}, there is only one balanced ideal, and it describes a domain of discontinuity for Borel Anosov representations. By \ref{Anosovness4Fuchs}, letting $\iota$ be either the irreducible representation $$\rho_3:\SL(2,\mathbb{R})\to\SL(3,\mathbb{C})$$ or the reducible representation $$\rho_2\oplus \rho_1:\SL(2,\mathbb{R})\to\SL(3,\mathbb{C})$$ will produce a Borel Anosov representation $\rho=\iota\circ\phi$. \par
For $\iota_{\bldmth{f},\bldmth{i}}=\rho_3$, let $$\xi_{\bldmth{f},\bldmth{i}}:\mathbb{RP}^1\to \Flag(\mathbb{C}^3)$$ be the limit curve for the $\iota_{\bldmth{f},\bldmth{i}}$--Fuchsian representation $\rho_{\bldmth{f,i}}=\iota_{\bldmth{f},\bldmth{i}}\circ\phi$. Then,
\[\Omega_{\bldmth{f},\bldmth{i}}=\left\{F^\bullet\in\Flag\left(\mathbb{C}^3\right):F^{1}\neq \xi^1_{\bldmth{f},\bldmth{i}}(t),\ F^2\neq \xi^2_{\bldmth{f},\bldmth{i}}(t)\ \text{ for all }t\in\mathbb{RP}^1\right\}\]
is a domain of discontinuity for $\rho_{\bldmth{f},\bldmth{i}}$.
By Proposition \ref{AMTWEquiv}, there is an $\SL(2,\mathbb{R})$--equivariant projection
\[p_{\bldmth{f},\bldmth{i}}:\Omega_{\bldmth{f},\bldmth{i}}\to\mathbb{H}^2.\]
Quotienting $\Omega_{\bldmth{f},\bldmth{i}}$ by the action of $\Gamma$ produces the quotient manifold
\[\mathcal{W}_{\bldmth{f},\bldmth{i}}=\Gamma\setminus\Omega_{\bldmth{f},\bldmth{i}}.\]
\par
For $\iota_{\bldmth{f},\bldmth{r}}=\rho_2\oplus\rho_1$, let $$\xi_{\bldmth{f},\bldmth{r}}:\mathbb{RP}^1\to \Flag(\mathbb{C}^3)$$ be the limit curve for the $\iota_{\bldmth{f},\bldmth{r}}$--Fuchsian representation $\rho_{\bldmth{f},\bldmth{r}}=\iota_{\bldmth{f},\bldmth{r}}\circ\phi$. Then,
\[\Omega_{\bldmth{f},\bldmth{r}}=\left\{F^\bullet\in\Flag\left(\mathbb{C}^3\right):F^{1}\neq \xi^1_{\bldmth{f},\bldmth{r}}(t),\ F^2\neq \xi^2_{\bldmth{f},\bldmth{r}}(t)\ \text{ for all }t\in\mathbb{RP}^1\right\}\]
is a domain of discontinuity for $\rho_{\bldmth{f},\bldmth{r}}$.
By Proposition \ref{AMTWEquiv}, there is an $\SL(2,\mathbb{R})$--equivariant projection
\[p_{\bldmth{f},\bldmth{r}}:\Omega_{\bldmth{f},\bldmth{r}}\to\mathbb{H}^2.\]
Quotienting $\Omega_{\bldmth{f},\bldmth{r}}$ by the action of $\Gamma$ produces the quotient manifold
\[\mathcal{W}_{\bldmth{f},\bldmth{r}}=\Gamma\setminus\Omega_{\bldmth{f},\bldmth{r}}.\]
\par
Let $G=\SL(4,\mathbb{C})$ and $\mathcal{F}_{\eta}=\mathbb{CP}^{3}$. From Example \ref{CP3RelPos}, we know that there is only one balanced ideal which describes a domain of discontinuity in $\mathbb{CP}^3$ and it requires the representation to be $2$--Anosov. By Proposition \ref{Anosovness4Fuchs}, an $\iota$--Fuchsian representation $\Gamma\to\SL(4,\mathbb{C})$ is $2$--Anosov if and only if it has a middle singular value gap. This is true if and only if $\iota=\rho_{d_1}\oplus\cdots\oplus\rho_{d_\ell}$ for a partition of $4$ with only even parts. Thus, the options are $\iota_{\bldmth{p},\bldmth{i}}=\rho_4$ and $\iota_{\bldmth{p},\bldmth{r}}=\rho_2\oplus\rho_2$. \par
For $\iota_{\bldmth{p},\bldmth{i}}=\rho_4$, let $$\xi_{\bldmth{p},\bldmth{i}}:\mathbb{RP}^1\to \Gr_2(\mathbb{C}^4)$$ be the limit curve for the $\iota_{\bldmth{p},\bldmth{i}}$--Fuchsian representation $\rho_{\bldmth{p},\bldmth{i}}=\iota_{\bldmth{p},\bldmth{i}}\circ\phi$. Then,
\[\Omega_{\bldmth{p},\bldmth{i}}=\left\{F^1\in\mathbb{CP}^3:\ F^1\nsubseteq \xi^2_{\bldmth{p},\bldmth{i}}(t)\ \text{ for all }t\in\mathbb{RP}^1\right\}\]
is a domain of discontinuity for $\rho_{\bldmth{p},\bldmth{i}}$.
By Proposition \ref{AMTWEquiv}, there is an $\SL(2,\mathbb{R})$--equivariant projection
\[p_{\bldmth{p},\bldmth{i}}:\Omega_{\bldmth{p},\bldmth{i}}\to\mathbb{H}^2.\]
Quotienting $\Omega_{\bldmth{p},\bldmth{i}}$ by the action of $\Gamma$ produces the quotient manifold
\[\mathcal{W}_{\bldmth{p},\bldmth{i}}=\Gamma\setminus\Omega_{\bldmth{p},\bldmth{i}}.\]
For $\iota_{\bldmth{p},\bldmth{r}}=\rho_2\oplus \rho_2$, let $$\xi_{\bldmth{p},\bldmth{r}}:\mathbb{RP}^1\to \Gr_2(\mathbb{C}^4)$$ be the limit curve for the $\iota_{\bldmth{p},\bldmth{r}}$--Fuchsian representation $\rho_{\bldmth{p},\bldmth{r}}=\iota_{\bldmth{p},\bldmth{r}}\circ\phi$. Then,
\[\Omega_{\bldmth{p},\bldmth{r}}=\left\{F^1\in\mathbb{CP}^3:\ F^1\nsubseteq \xi^2_{\bldmth{p},\bldmth{r}}(t)\ \text{ for all }t\in\mathbb{RP}^1\right\}\]
is a domain of discontinuity for $\rho_{\bldmth{p},\bldmth{r}}$.
By Proposition \ref{AMTWEquiv}, there is an $\SL(2,\mathbb{R})$--equivariant projection
\[p_{\bldmth{p},\bldmth{r}}:\Omega_{\bldmth{p},\bldmth{r}}\to\mathbb{H}^2.\]
Quotienting $\Omega_{\bldmth{p},\bldmth{r}}$ by the action of $\Gamma$ produces the quotient manifold
\[\mathcal{W}_{\bldmth{p},\bldmth{r}}=\Gamma\setminus\Omega_{\bldmth{p},\bldmth{r}}.\]
\par
If $G=\Sp(4,\mathbb{C})$ but the domain flag variety is still $\mathcal{F}_\eta=\mathbb{CP}^3$, then the only balanced ideal (which we saw in Example \ref{LineLagRelPos}) will still correspond to the domains $\Omega_{\bldmth{p},\bldmth{i}}$ and $\Omega_{\bldmth{p},\bldmth{r}}$. This is because both $\iota_{\bldmth{p},\bldmth{i}}$ and $\iota_{\bldmth{p},\bldmth{r}}$ admit invariant symplectic forms by Proposition \ref{invariantsymp} and thus $\rho_{\bldmth{p},\bldmth{i}}$ and $\rho_{\bldmth{p},\bldmth{i}}$ can be reinterpreted as $\{2\}$--Anosov representations into $\Sp(4,\mathbb{C})$ with limit curves
\[\xi_{\bldmth{p},\bldmth{i}},\xi_{\bldmth{p},\bldmth{r}}:\mathbb{RP}^1\to \Lag(\mathbb{C}^4).\]
To simplify our arguments, we will focus on the interpretation of $\rho_{\bldmth{p},\bldmth{i}}$ and $\rho_{\bldmth{p},\bldmth{r}}$ as representations into $\SL(4,\mathbb{C})$. \par
Let $G=\Sp(4,\mathbb{C})$ and $\mathcal{F}_{\eta}=\Lag(\mathbb{C}^4)$. By Example \ref{LineLagRelPos}, there is only one balanced ideal which describes a domain of discontinuity in $\Lag(\mathbb{C}^4)$ and it requires the representation to be $\{1\}$--Anosov. In order for $$\iota=\rho_{d_1}\oplus\cdots\oplus\rho_{d_\ell}:\SL(2,\mathbb{R})\to\SL(4,\mathbb{C})$$ 
to admit an invariant symplectic form (so that $\iota$ can be interpreted as representation into $\Sp(4,\mathbb{C})$), it must be that each odd part of the partition $(d_1,\cdots,d_\ell)$ has even multiplicty. The only such partition of $4$ are
\[(4), \ \ \ (2,2), \ \ \ (2,1,1), \ \ \ (1,1,1,1).\]
Amongst these partitions, only $(4)$ and $(2,1,1)$ will produce an $\{1\}$--Anosov $\iota$--Fuchsian representation. Thus, we get the representations
\begin{align*}
    \iota_{\bldmth{l},\bldmth{i}}=\rho_4&:\SL(2,\mathbb{R})\to\Sp(4,\mathbb{C})\\
    \iota_{\bldmth{l},\bldmth{r}}=\rho_2\oplus\rho_1\oplus\rho_1&:\SL(2,\mathbb{R})\to\Sp(4,\mathbb{C}).
\end{align*}
\par
For $\iota_{\bldmth{l},\bldmth{i}}=\rho_4$, let $$\xi_{\bldmth{l},\bldmth{i}}:\mathbb{RP}^1\to \mathbb{CP}^3$$ be the limit curve for the $\iota_{\bldmth{l},\bldmth{i}}$--Fuchsian representation $\rho_{\bldmth{l},\bldmth{i}}=\iota_{\bldmth{l},\bldmth{i}}\circ\phi$. Then,
\[\Omega_{\bldmth{l},\bldmth{i}}=\left\{L\in\Lag(\mathbb{C}^4):\ \xi^1_{\bldmth{l},\bldmth{i}}(t)\nsubseteq L\ \text{ for all }t\in\mathbb{RP}^1\right\}\]
is a domain of discontinuity for $\rho_{\bldmth{l},\bldmth{i}}$.
By Proposition \ref{AMTWEquiv}, there is an $\SL(2,\mathbb{R})$--equivariant projection
\[p_{\bldmth{l},\bldmth{i}}:\Omega_{\bldmth{l},\bldmth{i}}\to\mathbb{H}^2.\]
Quotienting $\Omega_{\bldmth{l},\bldmth{i}}$ by the action of $\Gamma$ produces the quotient manifold
\[\mathcal{W}_{\bldmth{l},\bldmth{i}}=\Gamma\setminus\Omega_{\bldmth{l},\bldmth{i}}.\]
For $\iota_{\bldmth{l},\bldmth{r}}=\rho_2\oplus\rho_1\oplus\rho_1$, let $$\xi_{\bldmth{l},\bldmth{r}}:\mathbb{RP}^1\to \mathbb{CP}^3$$ be the limit curve for the $\iota_{\bldmth{l},\bldmth{r}}$--Fuchsian representation $\rho_{\bldmth{l},\bldmth{r}}=\iota_{\bldmth{l},\bldmth{r}}\circ\phi$. Then,
\[\Omega_{\bldmth{l},\bldmth{r}}=\left\{L\in\Lag(\mathbb{C}^4):\ \xi^1_{\bldmth{l},\bldmth{r}}(t)\nsubseteq L\ \text{ for all }t\in\mathbb{RP}^1\right\}\]
is a domain of discontinuity for $\rho_{\bldmth{l},\bldmth{r}}$.
By Proposition \ref{AMTWEquiv}, there is an $\SL(2,\mathbb{R})$--equivariant projection
\[p_{\bldmth{l},\bldmth{r}}:\Omega_{\bldmth{l},\bldmth{r}}\to\mathbb{H}^2.\]
Quotienting $\Omega_{\bldmth{l},\bldmth{r}}$ by the action of $\Gamma$ produces the quotient manifold
\[\mathcal{W}_{\bldmth{l},\bldmth{r}}=\Gamma\setminus\Omega_{\bldmth{l},\bldmth{r}}.\]
\par
The preceding analysis produces a list of cases which will direct the rest of the paper. We summarize in the following proposition.
\begin{prop}\label{fullcases}
    Let $G$ be a simply-connected complex classical group with a $3$--dimensional flag variety $\mathcal{F}_{\eta}$. Let $\rho:\Gamma\to G$ be an $\iota$--Fuchsian representation. Assume that $\rho$ admits a cocompact domain of discontinuity in $\mathcal{F}_\eta$ constructed via a balanced ideal according to Kapovich-Leeb-Porti. Then, the options for $G$, $\mathcal{F}_\eta$, and $\iota$ are in a row of the following table:
    \[\begin{array}[row sep = 20pt]{c|c|c}
            G & \mathcal{F}_\eta  & \iota \\
             \hline \SL(3,\mathbb{C}) & \Flag\left(\mathbb{C}^3\right) & \rho_3 \text{ and } \rho_2\oplus\rho_1 \\
             \SL(4,\mathbb{C}), \Sp(4,\mathbb{C})& \mathbb{CP}^3 & \rho_4 \text{ and } \rho_2\oplus\rho_2\\
             \Sp(4,\mathbb{C}) & \Lag\left(\mathbb{C}^4\right) & \rho_4 \text{ and } \rho_2\oplus\rho_1\oplus \rho_1
        \end{array}\]
\end{prop}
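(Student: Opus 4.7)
My plan is to perform a finite case analysis that combines Proposition \ref{3dflagcases}, the enumeration of balanced ideals, and the Jacobson--Morozov classification of $\mathfrak{sl}_2$-triples.

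I would start by collapsing the five candidate pairs $(G, \mathcal{F}_\eta)$ from Proposition \ref{3dflagcases} into the three rows of the table. The exceptional isomorphisms $\mathfrak{so}_5 \cong \mathfrak{sp}_4$ and $\mathfrak{so}_6 \cong \mathfrak{sl}_4$, combined with the simply-connectedness hypothesis on $G$, match parabolic subgroups with parabolic subgroups and therefore identify the special orthogonal cases with $\Sp(4,\mathbb{C})$ and $\SL(4,\mathbb{C})$ in a way that preserves Anosov types and balanced ideals. Furthermore, since $\Gr_3(\mathbb{C}^4)$ is opposite to $\mathbb{CP}^3$, the opposition involution intertwines the two analyses and removes $\Gr_3(\mathbb{C}^4)$ from the list, leaving exactly the three pairs appearing in the statement.

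Next, for each remaining pair I would enumerate the balanced ideals in $W_{\Delta,\eta}$ directly from the Hasse diagram of the Bruhat order, using the defining condition $w_0 I = W_{\Delta,\eta} \setminus I$; this is already carried out in Examples \ref{FlagC3RelPos} and \ref{CP3RelPos} and the subsequent example on $\Lag(\mathbb{C}^4)$, and in each case the balanced ideal is unique. Remark \ref{minAnosovDom} then produces the minimal self-opposite subset $\theta \subseteq \Delta$ over which the ideal descends, yielding $\theta = \Delta$, $\theta = \{2\}$, and $\theta = \{1\}$ for the three pairs.

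Finally, I would classify candidate $\iota : \SL(2,\mathbb{R}) \to G$ up to conjugation. Jacobson--Morozov sends nontrivial $\iota$ into $\SL(n,\mathbb{C})$ to nontrivial partitions $(d_1,\dots,d_\ell)$ of $n$ via $\iota \mapsto \rho_{d_1} \oplus \cdots \oplus \rho_{d_\ell}$, while Proposition \ref{invariantsymp} restricts $\iota$ into $\Sp(2n,\mathbb{C})$ to those partitions in which every odd part has even multiplicity. For each candidate I compute $a_\iota$ by concatenating the diagonal $\mathfrak{sl}_2$-weights and apply Proposition \ref{Anosovness4Fuchs}, keeping $\iota$ exactly when the resulting Anosov type contains $\theta$. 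A short inspection of the partitions of $3$ and $4$ then reproduces the six entries of the table. The main obstacle is essentially bookkeeping: in the $\mathbb{CP}^3$ row both $\rho_4$ and $\rho_2 \oplus \rho_2$ preserve a symplectic form and can be viewed as landing in either $\SL(4,\mathbb{C})$ or $\Sp(4,\mathbb{C})$, and one must check that both viewpoints yield the same domain of discontinuity in $\mathbb{CP}^3$; this reduces to noting that the unique balanced ideal for the $\Sp(4,\mathbb{C})$ action on $\mathbb{CP}^3$ pulls back along the inclusion $\Sp(4,\mathbb{C}) \hookrightarrow \SL(4,\mathbb{C})$ to the unique one for the $\SL(4,\mathbb{C})$ action.
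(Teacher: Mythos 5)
Your proposal is correct and follows essentially the same route as the paper: reduce the list of $(G,\mathcal{F}_\eta)$ via Proposition \ref{3dflagcases}, the exceptional isomorphisms, and duality with $\Gr_3(\mathbb{C}^4)$; use the uniqueness of the balanced ideal in each case to extract the minimal Anosov type $\theta$; and then filter the Jacobson--Morozov partitions (with the symplectic parity constraint from Proposition \ref{invariantsymp}) through Proposition \ref{Anosovness4Fuchs}. The paper's Section \ref{handledomains} carries out exactly this case analysis, including the observation that the $\SL(4,\mathbb{C})$ and $\Sp(4,\mathbb{C})$ viewpoints on $\mathbb{CP}^3$ give the same domain.
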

\section{Classification of $S^1$--actions on $4$--manifolds}\label{sec:circleclass}
The main result of this section is a smooth version of Fintushel's classification of $S^1$--actions on closed simply-connected $4$--manifolds. This classification, originally proven in \cite{FintushelSimply4Circle}, states that such circle actions are determined by a weighted graph embedded in the orbit space of the $4$--manifold. Fintushel's classification is done in the topological category and the weight data attached to the graph encodes the $S^1$--action surrounding the exceptional orbits inside the $4$--manifold. For smooth actions, this weight data is more easily expressed in terms of the induced $S^1$--action on the tangent bundle. This is the approach used by Jang and Musin to analyze $S^1$--actions on (not necessarily simply-connected) $4$--manifolds in \cite{jang2023circleactionsoriented4manifolds}. We adapt their labeled graph to rework Fintushel's proof in order to classify smooth $S^1$--actions up to equivariant diffeomorphism. 

\subsection{Circle actions and orbit spaces}
\indent
For the remainder of Section \ref{sec:circleclass}, $M$ will be a simply-connected closed $4$--manifold with a smooth $S^1$--action. Because $S^1=\{g\in\mathbb{C}:|g|=1\}$ is a compact Lie group, we can choose an invariant Riemannian metric on $M$ so that $S^1$ acts by isometries. Any closed subgroup of $S^1$ looks like
\[H_m=\{z\in S^1:z^m=1\}\]
for some $m\geq 0$. \par
It will be useful to classify the (oriented) irreducible real representations of each $H_m$ (see \cite[Section 8]{Bröcker1985}). For this next proposition, we will specify an $H_m$--representation as an ordered pair $(V,\rho)$ where $V$ is a real vector space and $\rho:H_m\to \GL(V)$ is a homomorphism. The vector space $V$ will either be $\mathbb{R}$ or $\mathbb{C}$ with the standard orientations.

\begin{prop}\label{U(1)Class} For any integer $m\geq 0$,
    \begin{itemize}
        \item If $m=1$, then $H_1=\{1\}$ and the only irreducible $H_1$--representation is $(V^1_0,\rho^1_0)=(V=\mathbb{R}$, $\rho:H_1\to\mathbb{R}^\times)$ with $\rho(g)=1$.
    \item If $m\neq 0,1$, then $H_m\cong\mathbb{Z}_m$ and the irreducible $H_m$--representations are
    \begin{align*}
        (V_d^m,\rho_d^m)=\begin{cases}
        (V=\mathbb{R},\ \rho:H_m\to \mathbb{R}^\times)\text{ with }\rho(g)=g^d & d=0,\frac{m}{2}\\
        (V=\mathbb{C},\ \rho:H_m\to\mathbb{C}^\times)\text{ with }\rho(g)=g^d & \text{otherwise}
        \end{cases}
    \end{align*}
    for $d\in\{0,1,\hdots,m-1\}$.
    \item If $m=0$, then $H_m=S^1$ and the irreducible $S^1$--representations are
    \[(V^0_w,\rho_w^0)=\begin{cases}
        (V=\mathbb{R},\ \rho:H_m\to\mathbb{R}^\times)\text{ with }\rho(g)=1 & w=0\\
        (V=\mathbb{C},\ \rho:H_m\to\mathbb{C}^\times)\text{ with }\rho(g)=g^w & w\neq 0
    \end{cases}\]
    for $w\in\mathbb{Z}$. The integer $w$ is called the weight of the oriented representation $V_w^0$.
    \end{itemize}
\end{prop}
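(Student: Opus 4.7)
The plan is to use the standard representation theory of compact abelian groups. First, I would observe that for any compact abelian group $A$, Schur's lemma combined with complete reducibility (Peter--Weyl in the continuous case, or Maschke for the finite cyclic case) shows every continuous complex representation of $A$ decomposes into a direct sum of one-dimensional characters. Thus the starting point is to enumerate the continuous characters $\chi : H_m \to \mathbb{C}^\times$. For $m=0$, continuous homomorphisms $S^1 \to \mathbb{C}^\times$ necessarily land in $S^1$ and are of the form $\chi_w(z)=z^w$ for $w \in \mathbb{Z}$; for $m \geq 2$, the $m$-torsion of $\mathbb{C}^\times$ gives characters $\chi_d(g)=g^d$ for $d \in \{0,1,\ldots,m-1\}$, exhausting $\widehat{H_m}$ by a counting argument. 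The case $m=1$ is immediate.

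Next, I would convert from complex to (oriented) real irreducibles. Any real irreducible representation $V$ of $H_m$ gives a complex representation $V \otimes_{\mathbb{R}} \mathbb{C}$ that decomposes into characters; since $V$ is real-irreducible, its complexification is either a single character $\chi_d$ (forcing $V$ to be one-dimensional and $\chi_d = \overline{\chi_d}$) or a conjugate pair $\chi_d \oplus \overline{\chi_d}$ with $\chi_d \neq \overline{\chi_d}$. The condition $\chi_d = \overline{\chi_d}$ amounts to $w=0$ when $m=0$, and to $d \in \{0, m/2\}$ when $H_m$ is finite (the latter only when $m$ is even), producing the one-dimensional real irreducibles listed.

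For the remaining characters I would realize $\chi_d$ as the action of $H_m$ on $\mathbb{C}$ by multiplication, giving a two-dimensional real representation; checking that $e^{2\pi i d/m}$ preserves no real line whenever $d \not\equiv 0, m/2 \pmod{m}$ verifies irreducibility. The main subtlety is orientation bookkeeping. Complex conjugation provides a real-linear isomorphism between $\rho^m_d$ and $\rho^m_{-d}$ on the underlying real plane, but this isomorphism is orientation-reversing. Hence the oriented real representations $V^m_d$ indexed by distinct $d \in \{0,1,\ldots,m-1\}$ (or $w \in \mathbb{Z}$ when $m=0$) are pairwise non-isomorphic, yielding precisely the list. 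I would make the orientation distinction explicit by noting that multiplication by $e^{i\theta}$ on $\mathbb{C}$ with its standard orientation acts as counterclockwise rotation by angle $\theta$, so the sign of $d$ is a genuine invariant of the oriented representation and is what the classification ultimately records.
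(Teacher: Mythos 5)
Your argument is correct and is exactly the standard classification the paper relies on: the paper offers no proof of its own, simply citing Br\"ocker--tom Dieck (Section 8), where the argument proceeds as you describe — enumerate the characters of the compact abelian group $H_m$, recover the real irreducibles from whether the complexification is a self-conjugate character or a conjugate pair, and then track orientation. Your observation that the sign of $d$ (equivalently, the rotation direction) is an invariant of the \emph{oriented} representation correctly accounts for why $V^m_d$ and $V^m_{-d}$ are listed separately, since every real intertwiner between them is conjugate-linear and hence orientation-reversing.
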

For the remainder of the paper, we will refer to the irreducible representations described in the above proposition by the vector space $V$ with the action implicit in our labeling, e.g.\ $V_1^2$ is shorthand for the representation $(V_1^2,\rho_1^2)$. Occasionally, when complex coefficients are explicitly used, $V^m_0$ will denote the trivial complex $H_m$--representation instead of the real representation. \par
Note that if we forget the orientation of $V_w^0$, then $V_w^0\cong V_{-w}^0$. In a direct sum of $S^1$--representations, negating an even number of weights will not change the orientation. This principle is summarized in the following proposition.
\begin{prop}\label{isoofU(1)reps}
    Let $w_1,\hdots,w_k$ and $w_1',\hdots,w'_k$ be non-zero integers. Then, there is an isomorphism of $S^1$--representations between $V^0_{w_1}\oplus\cdots\oplus V_{w_k}^0$ and $V^0_{w'_1}\oplus\cdots V_{w_k'}^0$ if and only if there is some permutation $\sigma$ of $\{1,\hdots, k\}$ such that $w_i=\pm w'_{\sigma(i)}$ for each $i\in \{1,\hdots, k\}$. This isomorphism is orientation-preserving if and only if $\sign(w_1\cdots w_k)=\sign(w_1'\cdots w_k')$.
\end{prop}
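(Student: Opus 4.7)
The plan is to carry out a two-step argument: first reduce the abstract isomorphism question to combinatorial data about the weights, then compute the orientation of an explicit isomorphism constructed from that data.

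For the existence of an isomorphism, I will complexify. For a nonzero integer $w$, the real $S^1$-representation $V^0_w$ complexifies to $\mathbb{C}_w \oplus \mathbb{C}_{-w}$, where $\mathbb{C}_m$ denotes the one-dimensional complex representation of weight $m$. Therefore $\bigoplus_i V^0_{w_i}$ and $\bigoplus_i V^0_{w'_i}$ become isomorphic complex $S^1$-representations iff they have the same multiset of weights $\{\!\{\pm w_i\}\!\}=\{\!\{\pm w'_i\}\!\}$. Since these multisets come in $\{\pm w\}$-pairs, matching them is equivalent to $\{\!\{|w_i|\}\!\}=\{\!\{|w'_i|\}\!\}$, which is in turn equivalent to the existence of a permutation $\sigma$ with $w_i=\pm w'_{\sigma(i)}$ for every $i$.

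Given such a $\sigma$, write $w_i=\epsilon_i w'_{\sigma(i)}$ with $\epsilon_i\in\{\pm1\}$ and define $\Phi:\bigoplus_i V^0_{w_i}\to\bigoplus_j V^0_{w'_j}$ by sending the $i$-th summand into the $\sigma(i)$-th summand via the identity when $\epsilon_i=+1$ and via complex conjugation when $\epsilon_i=-1$. Equivariance is immediate from $\overline{z^w v}=z^{-w}\bar v$ for $z\in S^1$. To compute its orientation I factor $\Phi$ into (a) the block permutation by $\sigma$, which acts on $k$ blocks each of real dimension $2$ and hence has determinant $+1$, and (b) a product of conjugations on individual $\mathbb{C}$-factors, each contributing $-1$. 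Thus $\det\Phi=\prod_i\epsilon_i$. Because $|\prod_i w_i|=|\prod_i w'_i|$, the identity $\prod_i w_i=(\prod_i\epsilon_i)\prod_i w'_i$ forces $\prod_i\epsilon_i=\sign(w_1\cdots w_k)\cdot\sign(w'_1\cdots w'_k)$, which equals $+1$ precisely when the two sign products agree. This also shows $\prod_i\epsilon_i$ is independent of the particular choice of $(\sigma,\{\epsilon_i\})$.

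The one subtle point is that the orientation of an abstract isomorphism could, a priori, depend on which isomorphism one chooses. To rule this out I will observe that any two equivariant isomorphisms between the same pair of representations differ by an equivariant automorphism of one side, and within each isotypic component $V^0_w{}^{\oplus a}\oplus V^0_{-w}{}^{\oplus b}$ (with $w>0$) the automorphism group identifies, after applying complex conjugation on the $V^0_{-w}$ factors, with $\GL_{a+b}(\mathbb{C})$ acting on $(V^0_w)^{\oplus(a+b)}$. Since $\GL_{a+b}(\mathbb{C})$ is connected and every element has positive real determinant $|\det_{\mathbb{C}}|^2$, every equivariant automorphism is orientation-preserving. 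Consequently all equivariant isomorphisms have the same orientation sign as the concrete $\Phi$ above, completing the proof. The main (modest) obstacle is this last bookkeeping step verifying that the computed orientation is representative; the existence direction is essentially an isotypic-decomposition calculation.
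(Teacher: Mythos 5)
The paper does not actually supply a proof of Proposition~\ref{isoofU(1)reps}; it is stated as a direct consequence of the classification in Proposition~\ref{U(1)Class} and is treated as folklore. Your argument is correct and complete. The complexification step for the existence direction is a clean way to reduce the real isomorphism question to the multiset of weights $\{\!\{\pm w_i\}\!\}$, and the explicit map $\Phi$ (block permutation composed with conjugations) together with the determinant bookkeeping $\det\Phi = \prod_i \epsilon_i$ correctly yields the orientation criterion. Your final step is actually a necessary addition that the paper's terse wording sweeps under the rug: the proposition speaks of \emph{the} isomorphism being orientation-preserving, which only makes sense if the orientation sign is an invariant of the pair of representations rather than of the particular isomorphism chosen. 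You resolve this by noting that the equivariant automorphism group of $\bigoplus_i V^0_{w_i}$ (with all $w_i \neq 0$) decomposes, by Schur's lemma, over isotypic components as a product of groups isomorphic to $\GL_m(\mathbb{C})$, each of which acts on the underlying real space with determinant $|\det_\mathbb{C}|^2 > 0$; hence every equivariant automorphism is orientation-preserving and the sign is well defined. That observation is the one genuinely nontrivial input, and you handled it correctly.
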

Now, we will examine the structure of the $S^1$--action $M$ based on the stabilizer of points. For $m\geq 0$, let $$\operatorname{Fix}(M,m)=\{x\in M:h\cdot x=x\text{ for }h\in H_m\}.$$
\begin{prop}\label{fixedappear}
     For each $m\geq 0$,
    \begin{enumerate}[label=\normalfont{(\roman*)}]
        \item The subset $\operatorname{Fix}(M,m)$ is $S^1$--invariant.
        \item Each connected component of $\operatorname{Fix}(M,m)$ is a totally geodesic submanifold of $M$.
        \item Each connected component of $\operatorname{Fix}(M,m)$ is orientable and of even codimension in $M$.
        \item The Euler characteristic of $\operatorname{Fix}(M,m)$ is the same as $M$, and  $\chi(M)=\chi(\operatorname{Fix}(M,m))$.
    \end{enumerate}
\end{prop}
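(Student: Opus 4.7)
The plan is to establish (i)--(iv) in order, fixing throughout an $S^1$--invariant Riemannian metric on $M$ (obtained by averaging), so that $H_m$ acts by isometries.

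For (i), I would use that $S^1$ is abelian: for $x\in\Fix(M,m)$, $g\in S^1$, and $h\in H_m$, $h\cdot(g\cdot x)=g\cdot(h\cdot x)=g\cdot x$, so $g\cdot x\in\Fix(M,m)$. For (ii), I would invoke the classical fact that the fixed set of a compact group of isometries is a disjoint union of closed totally geodesic submanifolds: near $x\in\Fix(M,m)$ the exponential map intertwines the isotropy representation of $H_m$ on $T_xM$ with the local $H_m$--action, so a small neighborhood of $x$ in $\Fix(M,m)$ is the image under $\exp_x$ of $(T_xM)^{H_m}$ intersected with a neighborhood of $0$; any geodesic with initial velocity in $(T_xM)^{H_m}$ is pointwise fixed by $H_m$ (by uniqueness of geodesics), giving the totally-geodesic property.

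The main technical content, and the main obstacle, is (iii). At $x\in\Fix(M,m)$ the normal space $T_xM/(T_xM)^{H_m}$ is an $H_m$--representation with no fixed vector, and by Proposition~\ref{U(1)Class} it decomposes as a sum of two-real-dimensional complex irreducibles $V_d^m$ ($d\neq 0,m/2$) together with some number $a$ of copies of the one-dimensional sign representation $V_{m/2}^m$ (which can occur only when $m$ is even; in particular $a=0$ when $m=0$ or $1$). Since $M$ is simply connected and hence orientable, and $S^1$ is connected, the $S^1$--action preserves orientation, so the generator of $H_m$ acts on $T_xM$ with determinant $+1$; it acts trivially on $(T_xM)^{H_m}$ and by $-1$ on each copy of $V_{m/2}^m$, so its determinant on the normal space is $(-1)^a=+1$, forcing $a$ to be even. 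Hence the codimension is even. For orientability of a component $F$ of $\Fix(M,m)$, I would use that the normal bundle $N\to F$ carries a fiberwise $H_m$--action which splits it into isotypic subbundles: the complex isotypic subbundles acquire natural orientations from their complex structures, and the rank--$a$ sign-representation isotypic subbundle (with $a$ locally constant on $F$ and even) is orientable since its rank is even and $H_m$ acts by $\pm I$, preserving any chosen orientation on each paired two-plane. Thus $N$ is orientable, and since $TM|_F=TF\oplus N$ with $TM|_F$ orientable, $F$ is orientable.

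For (iv), I would invoke the standard fact that for any smooth action of a compact connected Lie group $G$ on a closed manifold $Y$ one has $\chi(Y)=\chi(Y^{G})$ (provable via a Morse--Bott argument on a moment-type function, or via the Lefschetz fixed-point theorem applied to a generic element of $G$). Applied to $S^1$ acting on $M$, this gives $\chi(M)=\chi(\Fix(M,0))$. Since $\Fix(M,0)\subseteq\Fix(M,m)$, and by (i)--(ii) the $S^1$--action on $M$ restricts to a smooth action on the closed submanifold $\Fix(M,m)$ whose $S^1$--fixed set is exactly $\Fix(M,0)$, the same result applied to $Y=\Fix(M,m)$ yields $\chi(\Fix(M,m))=\chi(\Fix(M,0))=\chi(M)$.
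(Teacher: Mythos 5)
Your proof follows essentially the same route as the paper: (i) from commutativity of $S^1$, (ii) from the standard totally-geodesic-fixed-set fact (the paper cites Kobayashi where you sketch the exponential-map argument), (iii) by decomposing the normal representation into irreducibles from Proposition \ref{U(1)Class} and using that the $S^1$--action preserves orientation, and (iv) from $\chi(Y)=\chi(Y^{S^1})$ applied first to $M$ and then to $\Fix(M,m)$ (the paper does this component by component, which is the same computation). In fact your treatment of the even-codimension claim is \emph{more} careful than the paper's: the determinant argument handles the one-dimensional sign representation $V^m_{m/2}$ uniformly for all even $m$, whereas the paper's proof asserts that for $m\neq 2$ every nontrivial irreducible of $H_m$ is two-dimensional (overlooking the sign representation of $\mathbb{Z}_m$ for even $m>2$) and only treats $m=2$ by the orientation argument.

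The one step that does not hold up as written is the orientability of the sign-isotypic summand of the normal bundle: "its rank is even and $H_m$ acts by $\pm I$" is not a valid general criterion, since an even-rank real bundle admitting the fiberwise $-I$ automorphism can still be non-orientable (e.g.\ $L_1\oplus L_2$ with exactly one $L_i$ non-orientable), and there is no canonical consistent pairing into two-planes. To be fair, this is precisely where the paper is also terse ("the same considerations imply that the normal bundle of $C$ has an $S^1$--invariant orientation"). The case can be closed in this setting: if the sign-isotypic summand is nonzero on a positive-dimensional component $C$, then an index-two subgroup of $H_m$ acts trivially on a neighborhood of $C$, forcing $m=2$ and the normal bundle to have rank $2$; if $C\subseteq\Fix(M,0)$ the full $S^1$--action puts a complex structure on that normal bundle, and otherwise $C$ carries a nontrivial $S^1$--action, so $C$ is a closed surface admitting a circle action with nonempty fixed set and is a sphere, hence orientable, which in turn orients the normal bundle. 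Some such supplement is needed for either version of the proof; everything else in your argument is sound.
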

\begin{proof}
    Statement (i) follows from the observation that points in the same orbit have the conjugate stabilizer subgroups and $S^1$ is abelian. Statement (ii) is a consequence of \cite[Theorem 5.1]{KobayashiTransform}. \par
    
    For (iii), let $C$ be a connected component of $\operatorname{Fix}(M,m)$ and choose a point $x\in C$. We consider the linear representation $H_m\to \Orth(T_xM)$. Because $H_m$ is a reductive group, we can decompose $T_xM$ into irreducible subrepresentations. By considering geodesics emanating from $x$, we conclude that $T_xC=\{v\in T_xM:h\cdot v=v\text{ for all }h\in H_m\}$. Thus, we just need to determine that the sum of the dimensions of the non-trivial irreducible subrepresentations of $T_xM$ is even. If $m\neq 2$, then $H_m\cong S^1$ or $\mathbb{Z}_m$, whose irreducible representations are either trivial or 2-dimensional. Thus, $T_xC=\{v\in T_xM:h\cdot v=v\text{ for all }h\in H_m\}$ must have even codimension in $T_xM$. If $m=2$, then $H_m=\mathbb{Z}_2$ has two irreducible representations, both $1$--dimensional. In this case, $H_m\cong\mathbb{Z}_2$ and so if $g\in H_m$ is the element of order $2$, then $T_xM$ decomposes into the eigenspaces for $g:T_xM\to T_xM$.
    \[T_xM=E_1\oplus E_{-1}\]
    where $E_{\pm1}=\{v\in T_xM:g\cdot v=\pm v\}$ and so $E_1=T_xC$. To conclude that $E_{-1}$ is even-dimensional, we note that $g$ is an element of the one-parameter family $S^1$ acting on $M$. In particular, $g:M\to M$ is isotopic to the identity and thus orientation-preserving. The linear transformation $g:T_xM\to T_xM$ preserving orientation forces $E_{-1}$ to be even-dimensional. The same considerations imply that normal bundle of $C$ has an $S^1$--invariant orientation. Because $M$ is oriented, this induces an orientation on $C$.  This completes the proof of (iii). \par
    For (iv), the case when $m=0$ is a direct consequence of \cite[Theorem 5.5]{KobayashiTransform}, i.e. $\chi(M)=\chi(F)$ where $F=\Fix(M,0)$.
    For $m>0$, we note that (i) \& (ii) means $\operatorname{Fix}(M,m)=\bigsqcup_{i=1}^{\ell}C_i$ where each $C_i$ is an $S^1$--invariant submanifold of $M$. Then, \cite[Theorem 5.5]{KobayashiTransform} implies $\chi(C_i)=\chi(C_i\cap F)$    \[\chi(\operatorname{Fix}(M,m))=\sum_{i=1}^{\ell}\chi(C_i)=\sum_{i=1}^{\ell}\chi(C_i\cap F)=\chi\left(\bigsqcup_{i=1}^{\ell}C_i\cap F\right)=\chi(F)=\chi(M).\]
\end{proof}
Note that the above proposition is true for any circle action on a connected orientable closed manifold $M$ (without assuming simply-connectedness or $\dim M=4$). Everything to follow will somehow use either simply-connectedness or $\dim M=4$. Most of the arguments are based on those used in \cite{FintushelSimply4Circle}.

\indent For the remainder of this section, we fix the notation (borrowed from \cite{FintushelSimply4Circle}) for fixed points and exceptional points:
\begin{align*}
    F&=\operatorname{Fix}(M,0) & E&=\bigcup_{m>0}\operatorname{Fix}(M,m)\setminus F.
\end{align*}
Let $M^\ast$ denote the orbit space of $M$ and $p:M\to M^\ast$ the orbit projection. For $A\subseteq M$, $A^\ast$ will be shorthand for $p(A)$. If, on the other hand, we first describe a subset $A^\ast\subseteq M^\ast$, then $A$ will be shorthand for $p^{-1}(A^\ast)$.
\begin{prop}\label{spherearc}
    For $m\geq 2$, if $C$ is a connected component of $\operatorname{Fix}(M,m)$ not contained in $\operatorname{Fix}(M,m')$ for any $m'\in m\mathbb{Z}_{\geq 0}$ other than $m$, then $C$ is equivariantly diffeomorphic to $\mathbb{CP}^1$ with the $S^1$--action $g\cdot [z_0:z_1]=[z_0:g^m\cdot z_1]$. In particular, $C^\ast$ is a closed arc, with endpoints $(C\cap F)^\ast$ and interior points $(C\setminus F)^\ast$. The interior points $C\setminus F$ have stabilizer $H_m$.
\end{prop}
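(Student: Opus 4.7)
The plan is to first reduce to the case where $C$ is a $2$-dimensional closed orientable surface carrying an effective induced action of $\overline{S^1} := S^1/H_m$, then use simple-connectedness of $M$ to show $C$ must contain $S^1$-fixed points, forcing $C \cong S^2$ and pinning down the action.

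By Proposition \ref{fixedappear}, $C$ is a closed orientable totally geodesic submanifold of even codimension in $M^4$, so $\dim C \in \{0, 2, 4\}$. The case $\dim C = 4$ forces $C = M$ and hence $H_m$ to act trivially on $M$, contradicting $m \geq 2$ once we assume (without loss of generality) that the $S^1$-action is effective. The case $\dim C = 0$ is ruled out because the hypothesis forces the $S^1$-stabilizer of any $x \in C$ to equal $H_m$ exactly, whereupon the $1$-dimensional orbit $S^1 \cdot x$ sits inside $C$. Thus $\dim C = 2$. Since $H_m$ fixes $C$ pointwise, the action factors through $\overline{S^1} \cong S^1$, and the hypothesis that $C$ meets no larger fixed set $\Fix(M, km)$ with $k \geq 2$ is precisely the statement that this induced $\overline{S^1}$-action is effective.

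Apply Proposition \ref{fixedappear}(iv) to the $\overline{S^1}$-action on $C$: one obtains $\chi(C) = \chi(C \cap F)$. By the argument of Proposition \ref{fixedappear}(iii), the components of $C \cap F$ have even codimension in $C$, so (using $C \not\subseteq F$) they are isolated points and $\chi(C) = |C \cap F|$. The crucial step, and I expect the main obstacle, is to show $C \cap F \neq \emptyset$, which is where $\pi_1(M) = 0$ enters. If $C \cap F$ were empty, the effective $\overline{S^1}$-action on $C$ would be free and $C \cong T^2$. An $S^1$-equivariant tubular neighborhood $U$ of $C$ is then a $2$-disc bundle over $T^2$ with $H_m$ rotating the fibers faithfully, and $\partial U$ inherits a Seifert fibration. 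A van~Kampen argument for $M = U \cup_{\partial U} \overline{M \setminus U}$ using the structure of $\partial U$ shows that the $\overline{S^1}$-base direction in $C$ survives as a non-trivial class in $\pi_1(M)$, contradicting simple-connectedness; alternatively, once Fintushel's orbit-space description $M^* \cong S^3$ is available, $C^*$ would be an embedded circle of $H_m$-exceptional orbits inside a simply-connected $3$-manifold, and the same conclusion follows from a direct homological computation.

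Granted $C \cap F \neq \emptyset$, $\chi(C) > 0$ forces $C \cong S^2$ with exactly two $S^1$-fixed points. Any effective smooth $S^1$-action on $S^2$ with fixed points is equivariantly diffeomorphic to the rotation action on $\mathbb{CP}^1$ given by $h \cdot [z_0:z_1] = [z_0 : h z_1]$: pick a fixed point, use the exponential map of an invariant metric to identify a neighborhood equivariantly with the isotropy representation (a weight-$\pm 1$ real rep of $\overline{S^1}$ by Proposition \ref{U(1)Class}), and extend across the free part to the antipodal fixed point. Pulling back along the $m$-fold covering $S^1 \to \overline{S^1}$, $g \mapsto g^m$, converts this into the stated action $g \cdot [z_0 : z_1] = [z_0 : g^m z_1]$. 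The orbit space $C^* = C/S^1$ is then the closed interval, with endpoints $(C \cap F)^*$ and interior points corresponding to $\overline{S^1}$-free orbits whose $S^1$-stabilizer is exactly $H_m$, completing the proof.
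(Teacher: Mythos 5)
Your outline is essentially the paper's: reduce to $\dim C = 2$, compute $\chi(C) = \chi(C\cap F) = |C\cap F|$, rule out the torus case, and then classify the resulting circle action on $S^2$. The one place the paper and you genuinely diverge is the torus exclusion. The paper simply cites Montgomery--Yang \cite[Lemma 2.3]{Montgomery1960GroupsO}; you try to prove it from scratch, and that is where your argument has a gap.

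Both of your proposed routes to $C \cap F \neq \emptyset$ are incomplete. The second one is circular in the context of this paper: the orbit-space description (and in particular that $M^\ast$ is a simply-connected $3$--manifold, let alone $S^3$) is established in Proposition \ref{localtopoM*} and Corollary \ref{simplyM*}, both of which are proved \emph{after} and \emph{using} the present proposition. The van Kampen sketch is the right instinct but is not actually an argument: writing $M = U \cup_{\partial U} (M\setminus \mathring U)$ gives $\pi_1(M) \cong \pi_1(T^2) \ast_{\pi_1(\partial U)} \pi_1(M\setminus \mathring U)$, and the inclusion $\partial U \hookrightarrow M\setminus \mathring U$ could a priori kill the $\overline{S^1}$--direction; you need to say why it does not, which requires exactly the orbit/Seifert-fibration analysis that Montgomery--Yang carry out. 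As written, the claim that ``the $\overline{S^1}$--base direction in $C$ survives as a non-trivial class in $\pi_1(M)$'' is asserted rather than proved. Either cite the lemma as the paper does, or actually supply the Seifert-fibered-neighborhood argument.

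A minor point in your favor: you are more careful than the paper in explicitly ruling out $\dim C = 4$ via effectiveness, a case the paper silently omits (it asserts from Proposition \ref{fixedappear}(i,ii,iii) that $\dim C \in \{0,2\}$). Your treatment of the final classification step (standard classification of effective $S^1$--actions on $S^2$ with fixed points, followed by pulling back through the $m$--fold cover $S^1 \to \overline{S^1}$) is also slightly more explicit than the paper's ``these properties characterize the action.''
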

\begin{proof}
    By Proposition \ref{fixedappear}(i,ii,iii), $C$ is an $S^1$--invariant submanifold of dimension $0$ or $2$. However, $C$ is not contained in $F$ and contains a proper $S^1$--orbit, hence is positive-dimensional. Thus, $C$ is a connected orientable surface. Furthermore, $C\cap F$ must have even codimension in $C$ and hence is a finite collection of points. Thus by Proposition \ref{fixedappear}(iv), we conclude $\chi(C)=\chi(C\cap F)\geq 0$. This means $C$ is either a sphere or a torus. By \cite[Lemma 2.3]{Montgomery1960GroupsO}, $C$ cannot be a torus. Therefore, $C$ is a sphere and $|C\cap F|=2$. The condition that $C$ is not contained in any smaller $\operatorname{Fix}(M,m')$ implies that the stabilizer of a point in $C\setminus F$ must be $H_m$. These properties characterize the action on $\mathbb{CP}^1\cong S^2$ described in the proposition.
\end{proof}
\begin{defn}
    A connected component of $\Fix(M,m)$ satisfying the assumptions of Proposition \ref{spherearc} will be called an exceptional sphere of weight $m$.
\end{defn}
 The following result is a first step to decomposing a manifold using the $S^1$--action on the tangent bundle.
\begin{prop}\label{BredonNeighbor}
    Let $N$ be a closed $S^1$--invariant submanifold of $M$ and $T^\perp N$ its normal bundle with respect to our chosen $S^1$--invariant metric on $M$. Let $\exp:TM\to M$ be the exponential map. Then, there is some $\epsilon>0$ such that $\exp$ restricted to
    \[B_\epsilon(T^\perp N)=\{(x,v)\in T^{\perp}N:||v||\leq\epsilon\}\]
    is an $S^1$--equivariant diffeomorphism onto a compact neighborhood of $N$. In particular, $\exp$ restricted to
    \[\mathring{B}_\epsilon(T^\perp N)=\{(x,v)\in T^{\perp}N:||v||<\epsilon\}\]
    is also an $S^1$--equivariant diffeomorphism onto an open neighborhood of $N$.
\end{prop}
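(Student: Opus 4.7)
The plan is to invoke the standard (non-equivariant) tubular neighborhood theorem for $N\subseteq M$ and then upgrade it to an $S^1$--equivariant statement using the fact that our Riemannian metric on $M$ is $S^1$--invariant. Since $N$ is $S^1$--invariant, the action of $S^1$ preserves the tangent bundle $TN$, and because the metric is $S^1$--invariant, the orthogonal complement $T^\perp N$ is also preserved. Thus $S^1$ acts fiberwise linearly on $T^\perp N$, and this action is compatible with the induced action on $TM$.

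First I would recall that for any closed submanifold $N$ of a complete Riemannian manifold $M$, the normal exponential map $\exp\colon T^\perp N\to M$ is a local diffeomorphism on a neighborhood of the zero section; this is the content of the classical tubular neighborhood theorem (see e.g.\ Spivak, Vol.\ I). Moreover, since $N$ is compact (because $M$ is closed and $N$ is a closed subset of $M$), one can choose a uniform $\epsilon>0$ such that $\exp$ is a diffeomorphism from $\mathring{B}_\epsilon(T^\perp N)$ onto an open neighborhood of $N$ in $M$. Shrinking $\epsilon$ slightly, the restriction to the closed disk bundle $B_\epsilon(T^\perp N)$ is a diffeomorphism onto a compact neighborhood.

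Next I would verify equivariance. Because the chosen metric on $M$ is $S^1$--invariant, each element $g\in S^1$ acts as an isometry, hence sends geodesics to geodesics. Concretely, if $\gamma(t)=\exp_x(tv)$ is the geodesic with initial data $(x,v)\in TM$, then $g\cdot\gamma(t)$ is the geodesic with initial data $(g\cdot x, dg_x(v))$, so
\[
\exp_{g\cdot x}(dg_x(v)) \;=\; g\cdot \exp_x(v).
\]
Since the $S^1$--action preserves $N$ and the metric, it preserves $T^\perp N$, so the restricted normal exponential map is $S^1$--equivariant. The $S^1$--invariance of the metric also guarantees that the disk bundles $B_\epsilon(T^\perp N)$ and $\mathring{B}_\epsilon(T^\perp N)$ are themselves $S^1$--invariant subsets of $T^\perp N$.

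The only mild subtlety, and what I would treat as the main obstacle, is choosing the uniform radius $\epsilon$ so that $\exp$ is injective on all of $B_\epsilon(T^\perp N)$, not merely a local diffeomorphism. This follows from compactness of $N$ by a standard contradiction argument: if no such $\epsilon$ existed, one could extract sequences $(x_n,v_n),(y_n,w_n)\in T^\perp N$ with $\|v_n\|,\|w_n\|\to 0$ and $\exp(x_n,v_n)=\exp(y_n,w_n)$ but $(x_n,v_n)\neq(y_n,w_n)$; compactness lets us pass to a subsequence with both $x_n,y_n$ converging to a common limit $x\in N$, contradicting the local diffeomorphism property of $\exp$ near $(x,0)$. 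With $\epsilon$ so chosen, the restriction of $\exp$ to $B_\epsilon(T^\perp N)$ is an injective local diffeomorphism from a compact space into $M$, hence a diffeomorphism onto its image, and by the preceding paragraph this diffeomorphism is $S^1$--equivariant, completing the proof.
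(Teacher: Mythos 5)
Your proof is correct. The paper does not actually prove this proposition — it simply cites Section 2 of Bredon's book on compact transformation groups — and your argument is precisely the standard one that reference contains: the non-equivariant tubular neighborhood theorem with a uniform radius obtained from compactness of $N$, upgraded to an equivariant statement via the identity $g\cdot\exp_x(v)=\exp_{g\cdot x}(dg_x(v))$, which holds because $S^1$ acts by isometries of the invariant metric and isometries carry geodesics to geodesics. All the steps, including the compactness/contradiction argument for uniform injectivity, are sound.
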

\begin{proof}
    See Section 2 of \cite{BredonVI}.
\end{proof}
\begin{lem}\label{orbitneigh} For each $x\in M$, let $O=S^1\cdot x$ denote its orbit and $H_m$ be its stabilizer. Then, the map $\nu:H_m\setminus B_\epsilon(T^{\perp}_xO)\to M^{\ast}$ defined by
\[[(x,v)]\mapsto p(\exp(x,v))\]
is a homeomorphism onto a compact neighborhood of $x^\ast$.
\end{lem}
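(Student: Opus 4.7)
The plan is to apply Proposition \ref{BredonNeighbor} to the closed $S^1$-invariant submanifold $O = S^1 \cdot x$ and then unpack the tubular neighborhood in terms of the isotropy representation of $H_m$ on $T^\perp_x O$.

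First, I would invoke Proposition \ref{BredonNeighbor} to pick $\epsilon > 0$ such that $\exp$ restricts to an $S^1$-equivariant diffeomorphism from the closed disk bundle $B_\epsilon(T^\perp O)$ onto a compact $S^1$-invariant neighborhood $U$ of $O$ in $M$, with $\exp$ also a diffeomorphism on the open disk bundle $\mathring B_\epsilon(T^\perp O)$ onto an open neighborhood $\mathring U$ of $O$. Next, I would identify the normal disk bundle over $O \cong S^1/H_m$ as the associated bundle
\[
B_\epsilon(T^\perp O) \;\cong\; S^1 \times_{H_m} B_\epsilon(T^\perp_x O),
\]
where $H_m$ acts on the fiber via the restriction of the isotropy representation of $S^1$ on $T_xM$ (this uses only that $O$ is a homogeneous $S^1$-space with stabilizer $H_m$ at $x$).

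I would then observe that the $S^1$-quotient of $S^1 \times_{H_m} B_\epsilon(T^\perp_x O)$ is canonically homeomorphic to $H_m \setminus B_\epsilon(T^\perp_x O)$, via the map induced by $v \mapsto [1, v]$. Combining this with the $S^1$-equivariant diffeomorphism provided by $\exp$, the composition
\[
H_m \setminus B_\epsilon(T^\perp_x O) \;\longrightarrow\; S^1 \setminus U \;\hookrightarrow\; M^\ast
\]
is exactly the map $\nu$. It is therefore continuous and injective; concretely, $p(\exp(x,v)) = p(\exp(x,v'))$ forces some $g\in S^1$ with $g\cdot\exp(x,v) = \exp(x,v')$, and equivariance plus injectivity of $\exp$ on $B_\epsilon(T^\perp O)$ then yields $g \in H_m$ and $g\cdot v = v'$. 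Surjectivity onto $U^\ast = p(U)$ follows because any point of $U$ is of the form $\exp(g\cdot x, w)$ for some $g\in S^1$ and $w \in T^\perp_{g\cdot x} O$ of norm $\le \epsilon$, and equivariance of $\exp$ writes this as $g \cdot \exp(x, g^{-1} \cdot w)$, which is $S^1$-equivalent to a point in the image of $\nu$.

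Finally, the domain $H_m \setminus B_\epsilon(T^\perp_x O)$ is compact (it is the quotient of a compact ball by a compact group action), $M^\ast$ is Hausdorff, and a continuous bijection from a compact space to a Hausdorff space is a homeomorphism onto its image. The image $U^\ast$ is compact for the same reason, and it is a neighborhood of $x^\ast$ because $\mathring U$ is an open $S^1$-saturated neighborhood of $O$ in $M$, so $p(\mathring U) \subseteq U^\ast$ is open in $M^\ast$ and contains $x^\ast$. The only point needing care is checking the identification of the normal bundle with the associated bundle $S^1 \times_{H_m} T^\perp_x O$ respects both the $S^1$-action and the fiber metric so that disk sub-bundles correspond; this is the step I expect to need the most attention, though it is a standard consequence of the slice theorem for compact group actions.
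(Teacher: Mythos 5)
Your proof is correct and follows essentially the same route as the paper: descend $p\circ\exp$ to the quotient $H_m\setminus B_\epsilon(T^\perp_x O)$, verify injectivity using the $S^1$--equivariance and injectivity of $\exp$ on the tube, conclude via compactness of the domain and Hausdorffness of $M^\ast$, and obtain the neighborhood property from the open saturated tube $\exp(\mathring B_\epsilon(T^\perp O))$. The associated-bundle identification $B_\epsilon(T^\perp O)\cong S^1\times_{H_m}B_\epsilon(T^\perp_x O)$ that you flag as the delicate step is actually dispensable, since your direct injectivity and surjectivity arguments (which match the paper's) never use it.
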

\begin{proof}
    Because the sets involved are compact and Hausdorff, it suffices to show that $\nu$ is a continuous and injective. First, we note the map $p\circ\exp:B_{\epsilon}(T_x^\perp O)\to M^\ast$ is a composition of continuous functions and thus continuous. By Proposition \ref{BredonNeighbor}, $\exp$ is $H_m$--equivariant. Therefore, $p\circ\exp$ sends $H_m$--orbits to the same point in $M^\ast$. Thus, $p\circ \exp$ descends to the \textit{continuous} map $\nu:H_m\setminus B_{\epsilon}(T^\perp_x O)\to M^\ast$. Now, we show injectivity. Take an arbitrary $(x,v_1)$, $(x,v_2)$ such that
\[\nu([(x,v_1])=p(\exp(x,v_1))=p(\exp(x,v_2))=\nu([(x,v_2)]).\]
Then, there must be some $g\in S^1$ such that $g\exp(x,v_1)=\exp(x,v_2)$. Because $\exp$ is $S^1$--equivariant and injective, this means $g\in H_m$ and $g\cdot v_1=v_2$. Of course, this implies $[(x,v_1)]=[(x,v_2)]$ in $H_m\setminus B_{\epsilon}(T^\perp_xO)$. Therefore, $\nu$ is a homeomorphism onto its image. To see that the image of $\nu$ is a neighborhood of $x^\ast$, we note that
\[\nu(H_m\setminus\mathring{B}_\epsilon(T_x^{\perp}O))=p(\exp(\mathring{B}_\epsilon(T^\perp O)))\]
and $\exp(\mathring{B}_\epsilon(T^\perp O))$ is an $S^1$--invariant open neighborhood of $x$.
\end{proof}
This gives us a rather direct way to understand the local topology of $M^\ast$ around $x^\ast$ in terms of the action on the $T_x^\perp O$. We now explicitly write this out.
\begin{prop}\label{localtopoM*}
    For each $x\in M$, let $O=S^1\cdot x$ denote its orbit and $H_m$ its stabilizer. For the different cases, we describe the class of $T_x^\perp O$ as a $H_m$--representation.
    \begin{enumerate}[label=\normalfont{(\alph*)}]
        \item If $x\notin E\cup F$, then $H_m=\{1\}$ and $T^\perp_x O$ is isomorphic to $V^1_0\oplus V^1_0\oplus V^1_0$. In this case, $M^\ast$ is locally homeomorphic to $\mathbb{R}^3$ around $x^\ast$.
        \item If $x\in E\setminus F$, then $H_m$ is a non-trivial cyclic group and $T^{\perp}_xO$ is isomorphic to $V^m_0\oplus V_d^m$ for some $d\in\{1,\hdots,m-1\}$. In this case, $M^\ast$ is locally homeomorphic to $\mathbb{R}^3$ around $x^\ast$.
        \item If $x\in F$ is an isolated fixed point, then $H_m=S^1$ and $T_x^\perp O\cong V_{w_1}^0\oplus V_{w_2}^0$ for some coprime integers $w_1,w_2\neq0$. In this case, $M^\ast$ is locally homeomorphic to $\mathbb{R}^3$ around $x^\ast$.
        \item If $x\in F$ and lies on a fixed surface, then $H_m=S^1$ and $T_x^{\perp}O\cong V_0^0\oplus V_0^0\oplus V_1^0$. In this case, $M^\ast$ is locally homeomorphic to $\mathbb{C}\times\mathbb{R}_{\geq0}$ around $x^\ast$.
    \end{enumerate}
\end{prop}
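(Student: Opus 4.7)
The overall strategy is to invoke Lemma \ref{orbitneigh}, which already identifies a compact neighborhood of $x^\ast$ in $M^\ast$ with $H_m \setminus B_\epsilon(T^\perp_x O)$. This reduces the problem to a purely linear one: classify the $H_m$-representation $T^\perp_x O$ in each case and then compute the topological quotient of a small ball. Accordingly, the plan is in two steps: first pin down the representation type using Proposition \ref{U(1)Class} together with dimension and fixed-set constraints from Proposition \ref{fixedappear}; second, identify the quotient space.

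For case (a), $H_m = \{1\}$ acts trivially on the $3$-dimensional $T^\perp_xO$, so the quotient is just $\mathbb{R}^3$. For case (b), because $x$ lies on a $2$-dimensional component of $\Fix(M,m)$ (its dimension forced to be even by Proposition \ref{fixedappear}(iii) and nonzero since it contains the $1$-dim orbit $O$), the $H_m$-fixed subspace of $T^\perp_xO$ is exactly $1$-dimensional. The remaining $2$-dimensional subrepresentation must then be an irreducible faithful real representation of the cyclic group $H_m$, that is $V^m_d$ for some $d \in \{1, \dots, m-1\}$ with $\gcd(d,m)=1$ (effectivity of the ambient $S^1$-action). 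The classical fact that $\mathbb{C}/H_m \to \mathbb{C}$, $z \mapsto z^m$, is a homeomorphism then gives a local model $\mathbb{R}^3$.

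For cases (c) and (d), $H_m = S^1$ so $O = \{x\}$ and $T^\perp_xO = T_xM$ is a $4$-dimensional real $S^1$-representation. In (c), $x$ being isolated in $F$ forces the trivial isotypic component to vanish, so by Proposition \ref{U(1)Class} we get $T_xM \cong V^0_{w_1} \oplus V^0_{w_2}$ with $w_1, w_2 \neq 0$; effectivity of the $S^1$-action near $x$ forces $\gcd(w_1, w_2) = 1$. In (d), the $2$-dimensional fixed surface through $x$ makes the trivial isotypic part $2$-dimensional, and the remaining $2$-dimensional faithful $S^1$-summand must have weight $\pm 1$, i.e.\ $V^0_1$ up to orientation. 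For (d) the quotient $(\mathbb{R}^2 \oplus \mathbb{C})/S^1 \cong \mathbb{R}^2 \times [0,\infty)$ follows immediately from $\mathbb{C}/S^1 \cong [0,\infty)$.

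The main obstacle is case (c): showing $(V^0_{w_1} \oplus V^0_{w_2})/S^1 \cong \mathbb{R}^3$ when $\gcd(w_1,w_2)=1$ requires more than a trivial quotient computation. The approach I would use is to exhibit an explicit continuous $S^1$-invariant map $(z_1,z_2) \mapsto (|z_1|^2-|z_2|^2,\, \Real(z_1^{|w_2|} \bar z_2^{|w_1|}),\, \Ima(z_1^{|w_2|} \bar z_2^{|w_1|}))$ (adjusted for signs of $w_1,w_2$), check that it descends to a continuous bijection onto $\mathbb{R}^3$, and invoke compactness of unit spheres modulo $S^1$ to conclude it is a homeomorphism on closed balls. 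Equivalently, one may identify the unit sphere in $V^0_{w_1}\oplus V^0_{w_2}$ with the Seifert fibration of $S^3$ with two exceptional fibers of multiplicities $|w_1|, |w_2|$, whose base (for coprime weights) is $S^2$, so that the quotient of the ball is the cone on $S^2$, i.e.\ $\mathbb{R}^3$. Either route dispatches (c), and combining the four cases completes the proof.
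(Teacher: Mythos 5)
Your proposal is correct and follows essentially the same route as the paper: reduce to the linear model via Lemma \ref{orbitneigh}, pin down the $H_m$--representation $T_x^\perp O$ in each case using the dimension of the local fixed set together with effectiveness, and then compute the quotient of a small ball -- and for the one nontrivial quotient, case (c), your second suggested argument (the Seifert fibration of $\del B_\epsilon$ with base $S^2$ for coprime weights, followed by coning) is exactly the one the paper uses. The explicit invariant map $(z_1,z_2)\mapsto(|z_1|^2-|z_2|^2,\, z_1^{|w_2|}\bar z_2^{|w_1|})$ is a valid alternative for that step, and your observation that $\gcd(d,m)=1$ in case (b) is true (by effectiveness) though stronger than what the statement requires.
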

\begin{proof}
    \textbf{Case (a).} If $x\notin E\cup F$, then $x$ is not fixed by any trivial element and thus its stabilizer $H_m$ is trivial. Then, $O=S^1\cdot x$ is an embedded circle which $S^1$ acts freely on. Then, 
    $T_x^\perp O$ is a $3$--dimensional vector space which $H_m=\{1\}$ acts trivial on. By Lemma \ref{orbitneigh}, $x^\ast$ has a neighborhood homeomorphic to
    \[H_m\setminus B_\epsilon(T_x^\perp O)\cong B_\epsilon(\mathbb{R}^3)\]
    where $x^\ast$ corresponds to the origin. \par 
    \textbf{Case (b).} If $x\in E\setminus F$, then $H_m$ must be a non-trivial proper subgroup of $S^1$, i.e.\ $m\neq 0,1$. Therefore, the orbit $O=S^1\cdot x$ is an embedded circle. By Proposition \ref{spherearc}, $O$ is a latitudinal line of an exceptional sphere $C$ of weight $m$. Because $C$ is fixed by $H_m$, $T_xC$ is isomorphic to $V^m_0\oplus V_0^m$ as an $H_m$--representation. Then, the tangent space $T_xM$ has the $S^1$--invariant decomposition
    $$T_xM=T_xC\oplus T_x^\perp C\cong (V_0^m\oplus V_0^m)\oplus V_d^m$$
    for some $d\in\{0,1,\hdots,m-1\}$. If $d=0$, then $H_m$ acts trivially on $T_xM$ and this would mean the action on $C$ would not be the entire component of $\Fix(M,m)$. Therefore, $d\in\{1,\hdots,m-1\}$. Note that $T_xO\subseteq T_xC$ and thus
    \[T_x^\perp O=(T_x^\perp O\cap T_x C)\oplus T^\perp_x C\cong V_0^m\oplus V_d^m.\]
    By Lemma \ref{orbitneigh}, there is a neighborhood of $x^\ast$ homeomorphic to
    \[H_m\setminus B_\epsilon(V_0^m\oplus V_d^m).\]
    Note that the action of $H_m$ on $V_0^m\oplus V_d^m$ looks like $\mathbb{R}\times\mathbb{C}$ with the action
    \[g\cdot (t,z)=(t,g^dz)\text{ for }g\in H_m, \ (t,z)\in\mathbb{R}\times\mathbb{C}.\]
    Let $k=\frac{m}{\gcd(m,d)}$. Then,
    \[D=\left\{(t,r\cos\theta+ir\sin\theta):t\in\mathbb{R},r\geq 0, \theta\in\left[0,\frac{2\pi}{k}\right]\right\}\]
    is a wedge-shaped fundamental domain for the action of $H_m$. The quotient $H_m\setminus D$ is homeomorphic to $\mathbb{R}^3$. Thus, $M^\ast$ is locally homeomorphic to $\mathbb{R}^3$ at $x^\ast$. \par
    \textbf{Case (c).} If $x\in F$ is an isolated fixed point, then $H_m=S^1$ and $$O=S^1\cdot x=\{x\}.$$
    Therefore, $T_x^\perp O=T_xM$ which decomposes into non-trivial $S^1$--representations:
    $T_xM\cong V_{w_1}^0\oplus V_{w_2}^0$. Note that $H_{w_1}\cap H_{w_2}$ acts trivial on $T_xM$ and so because the action on $M$ is effective, it must be that $H_{w_1}\cap H_{w_2}=\{1\}$. In other words, $w_1$ and $w_2$ are coprime. Because $x$ is isolated fixed point, neither $w_1$ nor $w_2$ can be $0$. By Lemma \ref{orbitneigh}, $x^\ast$ has a neighborhood homeomorphic to
    \[S^1\setminus B_\epsilon(V_{w_1}^0\oplus V_{w_2}^0)\]
    Note that by linearity of the $S^1$--action, $B_\epsilon(V_{w_1}^0\oplus V_{w_2}^0)$ is the $S^1$--equivariant cone of 
    \[\del B_\epsilon(V_{w_1}^0\oplus V_{w_2}^0)\]
    which is a $3$--manifold whose orbits give a Seifert fibering. By \cite[Section 3]{SeifertFiberOriginal}, the orbit space 
    \begin{align*}
    S^1\setminus\del B_\epsilon(V_{w_1}^0\oplus V_{w_2}^0)\cong S^2
        \intertext{ and thus by the cone}
        S^1\setminus B_\epsilon(V_{w_1}^0\oplus V_{w_2}^0)\cong B^3.
    \end{align*}
    Thus, $x^\ast$ has a Euclidean neighborhood in this case. \par
    \textbf{Case (d).} If $x$ lies on a fixed surface, i.e. a $2$--dimensional component $S$ of $\Fix(M,0)$, then $H_m=S^1$. Note that $T_x^\perp O=T_xM$ has the $S^1$--invariant decomposition
    \[T_xM=T_xS\oplus T_x^\perp S=(V_0^0\oplus V_0^0)\oplus V_{1}^0.\]
    The $S^1$--action on $V_0^0\oplus V_0^0\oplus V_1^0$ looks like $\mathbb{C}^2$ with the action
    \[g\cdot (z_1,z_2)=(z_1,gz_2)\text{ for }g\in S^1,\ (z_1,z_2)\in\mathbb{C}^2.\]
    Then, the closed subset $\mathbb{C}\times\mathbb{R}_{\geq0}$ intersects each $S^1$--orbit exactly once. One can thus show that the orbit space
    \[S^1\setminus (V_0^0\oplus V_0^0\oplus V_1^0)\]
    is homemorphic to $\mathbb{C}\times\mathbb{R}_{\geq0}$ and $x^\ast$ corresponds to the origin $(0,0)$.
\end{proof}
The above proposition implies the following corollary which puts strong restrictions on the topology of the orbit space (Compare \cite[Prop.\ 3.1]{FintushelSimply4Circle}.
\begin{corr}\label{simplyM*}
    The orbit space $M^\ast$ is a compact simply-connected $3$--manifold with boundary points corresponding to points on fixed surfaces in $M$.
\end{corr}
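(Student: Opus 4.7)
The plan is to extract the manifold-with-boundary structure and the boundary identification directly from Proposition \ref{localtopoM*}, and then to establish simple-connectedness via a lifting argument that uses the $S^1$-action.

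For the topological structure: cases (a)--(c) of Proposition \ref{localtopoM*} produce interior chart points (each with a neighborhood homeomorphic to $\mathbb{R}^3$) and case (d) produces boundary chart points (each with a neighborhood homeomorphic to $\mathbb{C}\times\mathbb{R}_{\geq 0}$), with case (d) occurring precisely at the images of points lying on fixed surfaces. This immediately gives that $M^\ast$ is a topological $3$-manifold with $\partial M^\ast=p(\{\text{fixed surfaces}\})$. Compactness of $M^\ast$ follows from compactness of $M$ together with continuity and surjectivity of $p$.

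For simple-connectedness, let $\gamma^\ast\colon[0,1]\to M^\ast$ be a loop. First, the collar of $\partial M^\ast$ lets me homotope $\gamma^\ast$ into $\mathrm{Int}(M^\ast)$. The images in $\mathrm{Int}(M^\ast)$ of the isolated fixed points (finitely many points) and of the exceptional spheres (finitely many arcs, by Proposition \ref{spherearc}) form a closed set of topological dimension at most $1$, so by general position in the $3$-manifold $\mathrm{Int}(M^\ast)$ I can further homotope $\gamma^\ast$ to lie entirely in $U^\ast:=p(M\setminus(F\cup E))$. On $M\setminus(F\cup E)$ the $S^1$-action is free, so $p$ restricts to a principal $S^1$-bundle onto $U^\ast$, and I lift $\gamma^\ast$ to a path $\gamma\colon[0,1]\to M\setminus(F\cup E)$. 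Since $\gamma(0)$ and $\gamma(1)$ lie in a common $S^1$-orbit, there is $g\in S^1$ with $\gamma(1)=g\cdot\gamma(0)$; choosing a path $\alpha\colon[0,1]\to S^1$ from $1$ to $g^{-1}$, the path
\[\tilde\gamma(t):=\alpha(t)\cdot\gamma(t)\]
is a loop in $M$ based at $\gamma(0)$ which still satisfies $p\circ\tilde\gamma=\gamma^\ast$ by $S^1$-invariance of $p$. Since $M$ is simply connected, $\tilde\gamma$ bounds a disk $H\colon[0,1]^2\to M$, and $p\circ H$ is a null-homotopy of $\gamma^\ast$ in $M^\ast$.

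The step I expect to be the main obstacle is the general-position argument, since at this stage $M^\ast$ is only known to be a topological manifold. However, away from the isolated-fixed-point images and the exceptional arcs, the orbit projection $p$ is a smooth submersion onto an open smooth submanifold of $\mathrm{Int}(M^\ast)$, so the required perturbation of $\gamma^\ast$ off of the $1$-dimensional singular locus reduces to standard transversality in this smooth locus and presents no genuine difficulty.
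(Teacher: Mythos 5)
Your proof is correct and follows essentially the same route as the paper: read off the manifold-with-boundary structure from Proposition \ref{localtopoM*}, push a loop off the boundary and the one-dimensional image of $E\cup F$, and then use the principal $S^1$--bundle over the free part together with simple-connectedness of $M$. The only cosmetic difference is that you realize the lifted loop explicitly by correcting the endpoint with a path $\alpha(t)$ in the fiber, whereas the paper invokes surjectivity of $p_\ast$ on $\pi_1$ from path-connectedness of the fiber — these are the same argument.
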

\begin{proof}
    All that remains to prove is that $\pi_1(M^\ast)$ is trivial. Pick some basepoint $x_0\notin E\cup F$. Then, take an arbitrary loop $\gamma:[0,1]\to M^\ast$ based at $x_0$. First, we can homotopy $\gamma$ away from the boundary. By \ref{spherearc}, $(E\cup F)^\ast\setminus\del M^\ast$ is a union of a finite collection of points and arcs with at most two arcs adjacent to each point. By taking into account Proposition \ref{localtopoM*}, we see $(E\cup F)^\ast\setminus\del M^\ast$ is actually an embedded graph in $M^\ast$. If we homotopy $\gamma$ to be transverse to this graph, the low codimensions make their intersection empty. Thus after taking homotopy, we can assume $\gamma$ is a loop in $(M\setminus (E\cup F))^\ast$. However, note that this is the base space of the principal $S^1$--bundle.
    \[p:M\setminus (E\cup F)\to (M\setminus E\cup F)^\ast.\]
    Because the fiber $S^1\cdot x_0$ is path-connected, $p_\ast:\pi_1(M\setminus (E\cup F),x_0)\to\pi_1((M\setminus(E\cup F))^\ast,x_0^\ast)$ is surjective. Thus after taking homotopy, we can assume that $\gamma:[0,1]\to (M\setminus(E\cup F))^\ast$ lifts to a loop $\tilde{\gamma}:[0,1]\to M\setminus(E\cup F)$ based at $x_0$. We know $\tilde{\gamma}$ is null-homotopic in $M$ and thus $\gamma$ is null-homotopic in $M^\ast$.
\end{proof}
\subsection{Tangential weight graph}
The goal of this section is describe a new labeling scheme, based on the multigraphs used in \cite{Jang_2018}, for the vertices and edges of the orbit data graph of smooth actions in terms of the action of the tangent space of fixed points. We then use this new labeling scheme to prove a smooth analog of Fintushel's classification in a special case.

 \begin{defn}
    Let $x\in F$ be an isolated fixed point. By Proposition \ref{localtopoM*}, $T_xM$ is isomorphic to $V^1_{w_1}\oplus V^1_{w_2}$ for some coprime integers. By Proposition \ref{isoofU(1)reps}, $\sign(w_1 w_2)\in\{\pm1\}$ is determined by the orientation and circle action on $T_xM$. We call $\sign(w_1 w_2)$ the sign of $x$. Abusing notation, we will also use the $(\pm)$ symbols to specify signs.\\
    Choose non-zero $v_1\in V_{w_1}^0$, $v_2\in V_{w_2}^0$, and non-trivial $g\in S^1$ so that $$v_1,\ g\cdot v_1, \ v_2,\ g\cdot v_2$$ 
    is an ordered basis for $T_xM$ and defines an orientation which we call the rotation orientation of $T_xM$. This orientation is independent of the choices made. The sign of $x$ is $(+)$ if the rotation orientation agrees with the natural orientation on $T_xM$ and $(-)$ if they disagree.
\end{defn}
Recall that from Proposition \ref{spherearc}, any point $x$ in $M$ with stabilizer $H_w$ lies on an exceptional sphere of weight $w$, which is a submanifold equivarianyly diffeomorphic to $\mathbb{CP}^1$ with the $S^1$--action $$g\cdot [z_0:z_1]=[z_0:g^m\cdot z_1]$$.
These data give us the labels we will use to define the tangential weight graph of $M$ as follows:
 \begin{defn}
     Suppose $M$ is a simply-connected $4$--manifold with an $S^1$--action such that $(E\cup F)^\ast\setminus \del M^\ast$ contains no polyhedral cycles (equivalenly $H_1(E\cup F)=0$). Then, we say $M$ has an $S^1$--action without cycles. In this case, we can consider $(E\cup F)^\ast$ as an abstract labeled graph independent of its embedding in $M^\ast$. The tangential weight graph $\mathcal{G}$ of $M$ is a labeled graph as follows: The vertices of $\mathcal{G}$ are divided into two classes, round vertices which correspond to isolated fixed points and square vertices that correspond to fixed surfaces. An edge between two round vertices corresponds to an exceptional sphere containing the corresponding isolated fixed points. There are no edges connected to square vertices.
     \begin{enumerate}
         \item A round vertex corresponding to an isolated fixed point $x$ is labeled by the sign of $x$.
         \item A square vertex corresponding to a fixed surface component $S$ is labeled with the Euler number $e\in\mathbb{Z}$ of the principal $S^1$--bundle $U^\perp S$ where $S\cong S^\ast$ is oriented as a boundary component of $\del M^\ast$.
         \item An edge correspond to an exceptional sphere $C$ is labeled with the weight $w\geq 2$ of $C$.
     \end{enumerate}
 \end{defn}
\begin{xmpl}\label{ConnSumExample}
    Consider the $S^1$--action on $\mathbb{CP}^1\times\mathbb{CP}^1$ given by
    \[g\cdot ([z_0:z_1],[w_0:w_1])=([z_0:gz_1],[w_0:g^2w_1]).\]
     To see that its tangential weight graph is the one given in Figure \ref{fig:21actionTWG}, note that $\{[1:0]\}\times\mathbb{CP}^1$ and $\{[0:1]\}\times\mathbb{CP}^1$ are two exceptional spheres of weight $2$. 
     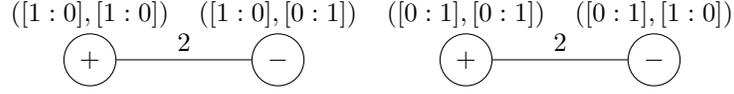
\begin{figure}
         \centering
         \begin{tikzpicture}[node distance={30mm}, main/.style = {draw, circle}]
\node[main] (1) at (0,0) {$+$}; 
\node[main] (2) at (2.5,0)  {$-$}; 
\node[main] (3) at (5,0) {$+$}; 
\node[main] (4) at (7.5,0)  {$-$}; 
\draw[] (1) -- node[midway, above,] {$2$} (2);
\draw[-] (3) -- node[midway, above,] {$2$} (4);

\node[] at (0,0.6) {$([1:0],[1:0])$}; 
\node[] at (2.5,0.6) {$([1:0],[0:1])$}; 
\node[] at (5,0.6) {$([0:1],[0:1])$}; 
\node[] at (7.5,0.6) {$([0:1],[1:0])$};
\end{tikzpicture}
         \caption{Tangential weight graph for $S^1$--action on $\mathbb{CP}^1\times\mathbb{CP}^1$}
         \label{fig:21actionTWG}
     \end{figure}
     To calculate the sign at, e.g. $p=([1:0],[0:1])$, we can decompose the tangent space into each factor
\[T_{p}\mathbb{CP}^1\times\mathbb{CP}^1=T_{[1:0]}\mathbb{CP}^1\times T_{[0:1]}\mathbb{CP}^1\cong V^0_1\oplus V_{-2}^0\]
and thus the sign at $p$ is $(-)$. \par
Let $M^+$ and $M^-$ be two copies of $\mathbb{CP}^1\times\mathbb{CP}^1$ with the above $S^1$--action. Let $x^+$ be the point corresponding to $([1:0],[1:0])$ in $M^+$ and $x^-$ be the point corresponding to $([1:0],[0:1])$ in $M^-$. By Proposition \ref{BredonNeighbor}, there is a ball neighborhood $B^+$ of $x^+$ whose boundary is equivariantly diffeomorphic to the unit sphere in $T_{x^+}M^+\cong V_1^0\oplus V_2^0$. Similarly, there is a ball neighborhood $B^-$ of $x^-$ whose boundary is equivariantly diffeomorphic to the unit sphere in $T_{x^-}M^-\cong V_{1}^0\oplus V_{-2}^0$. Then, the orientation-reversing isomorphism $V_{2}^0\cong V_{-2}^0$ determines an orientation-reversing $S^1$--equivariant diffeomorphism $\varphi:\del B^+\to \del B^-$. Gluing $M^+\setminus \mathring{B}^+$ and $M^-\setminus \mathring{B}^-$ gives an $S^1$--action on $M^+\# M^-\cong (S^2\times S^2)\# (S^2\times S^2)$. In this construction, we take the connected sum of the weight $2$ $\mathbb{CP}^1$ containing $x^+$ and the weight $2$ $\mathbb{CP}^1$ containing $x^-$. Thus, we are left with three weight $2$ $\mathbb{CP}^1$'s. See Figure \ref{fig:equiconnsum}.
\begin{figure}
    \centering
    \begin{center}
    \begin{tikzpicture}[node distance={30mm}, main/.style = {draw, circle}] 
\node[main] (-1) at (-4.5,0) {$+$}; 
\node[main] (1) at (-3,0)  {$-$}; 
\node[main] (2) at (-1.5,0) {$+$};
\node[main] (3) at (0,0)  {$-$};
\draw[] (-1) -- node[midway, above,] {$2$} (1);
\draw[-] (2) -- node[midway, above,] {$2$} (3);
\node[] at (0,0.7) {$x^-$};

\draw[rounded corners] (-5,1.5) rectangle (0.75,-0.75);
\draw[rounded corners] (7,1.5) rectangle (1.25,-0.75);
\draw[rounded corners] (5.5,-1.75) rectangle (-3.5,-4);

\node[main] (4) at (2,0) {$+$}; 
\node[main] (5) at (3.5,0)  {$-$};
\node[main] (6) at (5,0) {$+$};
\node[main] (7) at (6.5,0)  {$-$};
\draw[] (4) -- node[midway, above,] {$2$} (5);
\draw[-] (6) -- node[midway, above,] {$2$} (7);
\node[] at (2,0.6) {$x^+$};
\draw[dashed] (0,0) circle [radius = 0.45cm];
\draw[dashed] (2,0) circle [radius = 0.45cm];
\node[] at (-2.25,1) {$S^2\times S^2$};
\textbf{\node[] at (4.25,1) {$S^2\times S^2$};}

\draw[->] (1,-0.5) -- (1,-1.5);

\node[main] (1) at (-2.75,-2.5) {$+$}; 
\node[main] (2) at (-1.25,-2.5)  {$-$}; 
\node[main] (3) at (0.25,-2.5) {$+$}; 
\node[main] (4) at (1.75,-2.5)  {$-$}; 
\node[main] (5) at (3.25,-2.5) {$+$}; 
\node[main] (6) at (4.75,-2.5)  {$-$}; 
\draw[] (1) -- node[midway, above,] {$2$} (2);
\draw[-] (3) -- node[midway, above,] {$2$} (4);
\draw[-] (5) -- node[midway, above,] {$2$} (6);

\node[] at (1,-3.5) {$(S^2\times S^2)\#(S^2\times S^2)$}; 

\end{tikzpicture}
\end{center}
    \caption{Equivariant connected sum of $S^1$--actions on $S^2\times S^2$}
    \label{fig:equiconnsum}
\end{figure}
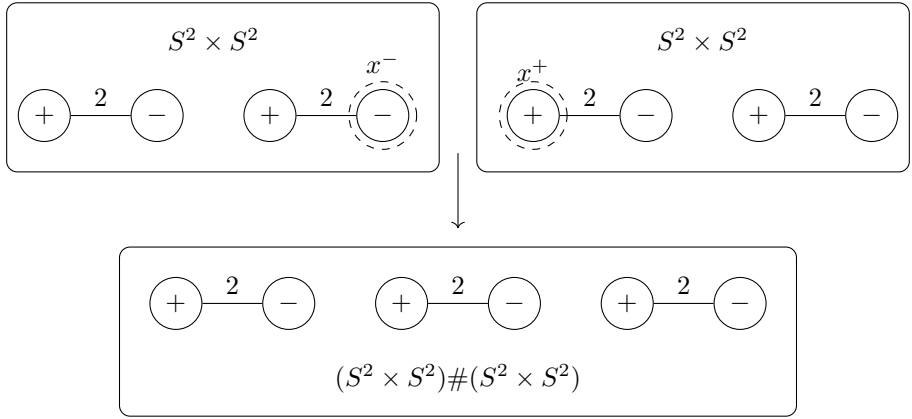
\end{xmpl}
 \subsection{Classification Theorem}
 Our goal is to prove the smooth classification of circle actions on $4$--manifolds in this special case. We will, for the most part, follow the strategy of \cite{FintushelSimply4Circle}, but using smooth techniques.
 \begin{thm}\label{SmoothClassification}
     Let $M^\sigma$ and $M^\tau$ be closed simply-connected $4$--manifolds with effective smooth $S^1$--actions without cycles and respective tangential weight graphs $\mathcal{G}^\sigma$ and $\mathcal{G}^\tau$. Suppose there is a graph isomorphism between $\mathcal{G}^\sigma$ and $\mathcal{G}^\tau$ that preserves the labeling of vertices and edges. Then, there is an orientation-preserving $S^1$--equivariant diffeomorphism $\varphi:M^\sigma\to M^\tau$.
 \end{thm}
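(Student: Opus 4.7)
The plan is to follow Fintushel's topological reconstruction strategy while systematically using the smooth equivariant tubular neighborhood theorem (Proposition \ref{BredonNeighbor}) in place of topological cone constructions. I would decompose each $M^\diamond$ (for $\diamond = \sigma, \tau$) as $N^\diamond \cup W^\diamond$, where $N^\diamond$ is an $S^1$-invariant open tubular neighborhood of the singular set $E^\diamond \cup F^\diamond$ and $W^\diamond = M^\diamond \setminus \mathring{N}^\diamond$ is the complement, on which $S^1$ acts freely. The equivariant diffeomorphism type of $N^\diamond$ is determined entirely by the labels of $\mathcal{G}$: by Propositions \ref{U(1)Class} and \ref{isoofU(1)reps} the sign label at a round vertex together with the weight labels on its incident edges recovers the $S^1$-representation $V^0_{w_1} \oplus V^0_{w_2}$ on the tangent space of the corresponding isolated fixed point up to oriented isomorphism; the Euler number label at a square vertex determines the normal disc bundle of a fixed $2$-sphere; and the weight label on an edge determines the equivariant tubular neighborhood of the corresponding exceptional sphere as a standard weighted disc bundle over $\mathbb{CP}^1$. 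Applying Proposition \ref{BredonNeighbor} in $M^\sigma$ and $M^\tau$ and composing with the graph isomorphism produces equivariant diffeomorphisms between the model neighborhoods and the actual neighborhoods; the no-cycles hypothesis ensures that these pieces can be patched consistently, since the only overlap between a fixed-point neighborhood and an exceptional-sphere neighborhood occurs in a standard polar disc. The output is an $S^1$-equivariant diffeomorphism $\varphi_1 : N^\sigma \to N^\tau$.

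Next I need to extend $\varphi_1$ across $W^\sigma$. The orbit spaces $B^\diamond = W^\diamond/S^1$ are compact smooth $3$-manifolds with boundary. From Corollary \ref{simplyM*} together with the no-cycles hypothesis (which makes $(E \cup F)^\ast$ a forest, so that its regular-neighborhood complement is simply connected in the simply connected space $M^\ast$) I can show $B^\diamond$ is simply connected. A long-exact-sequence argument using $H_1(B^\diamond) = 0$ and Poincaré–Lefschetz duality then forces every component of $\partial B^\diamond$ to be a $2$-sphere; in particular any fixed surface in $M$ must be an $S^2$, which is consistent with the square-vertex labelling. By the smooth Poincaré conjecture in dimension three and the uniqueness of smooth structures on $3$-manifolds, $B^\diamond$ is diffeomorphic to $S^3$ with a finite collection of open balls removed, and an inductive ball-by-ball argument based on Smale's theorem $\mathrm{Diff}(S^2) \simeq O(3)$ extends the boundary diffeomorphism induced by $\varphi_1$ to a diffeomorphism $\bar\varphi_2 : B^\sigma \to B^\tau$.

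Finally, I lift $\bar\varphi_2$ to a principal $S^1$-bundle isomorphism $\varphi_2 : W^\sigma \to W^\tau$ agreeing with $\varphi_1$ on $\partial N^\sigma$ as follows. Principal $S^1$-bundles over $B^\diamond$ are classified by $H^2(B^\diamond; \mathbb{Z})$, and the long exact sequence of the pair $(B^\diamond, \partial B^\diamond)$ together with $H_1(B^\diamond) = 0$ and Poincaré–Lefschetz duality yields an injection $H^2(B^\diamond;\mathbb{Z}) \hookrightarrow H^2(\partial B^\diamond;\mathbb{Z})$, so each bundle is determined by its boundary restriction. Gluing $\varphi_1$ with $\varphi_2$ along $\partial N^\sigma$ then produces the desired orientation-preserving $S^1$-equivariant diffeomorphism $M^\sigma \to M^\tau$; orientation-preservation is automatic because the sign labels at round vertices and the signed Euler numbers at square vertices encode precisely the orientation data on each equivariant tangent model. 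I expect the main obstacle to be not merely the existence of a bundle lift $\varphi_2$ but forcing it to extend the prescribed boundary identification $\varphi_1|_{\partial N^\sigma}$; this requires careful simultaneous control over both the inductive extension of $\bar\varphi_2$ and the gauge of the bundle, using the vanishing of the relative obstruction classes that arises from the simple-connectivity of $B^\diamond$.
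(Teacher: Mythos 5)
Your decomposition into a tubular neighborhood $N^\diamond$ of the singular set and its free complement $W^\diamond$ is the same skeleton as the paper's argument, but you have two real gaps, and they are exactly the points that the paper isolates as Lemmas \ref{NeighborhoodData} and \ref{BundleIsotopyLemma}. First, the claim that the local equivariant tubular-neighborhood models ``can be patched consistently'' because they ``overlap in a standard polar disc'' does not produce a smooth map: the tubular neighborhood of an isolated fixed point and the tubular neighborhood of an adjacent exceptional sphere come from two independent applications of Proposition \ref{BredonNeighbor}, and nothing forces the resulting exponential parametrizations to agree on the overlap. The paper's Lemma \ref{NeighborhoodData} addresses this by first engineering an $S^1$-invariant metric that is flat and Euclidean near each isolated fixed point, and then constructing an auxiliary $S^1\times S^1$-action (combining the given action with the fiberwise rotation of the normal circle bundles of the exceptional spheres) so that the identifications built on one sphere propagate compatibly to the next. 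Without this you can construct $\varphi_1$ on each local piece but not as a single smooth map on all of $N^\sigma$, which is where the ``no cycles'' hypothesis actually does its real work.

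Second, you flag the matching of $\varphi_2$ with $\varphi_1|_{\partial N^\sigma}$ as ``the main obstacle'' but then gesture at ``vanishing of relative obstruction classes'' without resolving it. The $H^2(B^\diamond)\hookrightarrow H^2(\partial B^\diamond)$ injection only gets you a bundle isomorphism $W^\sigma\to W^\tau$ covering $\bar\varphi_2$; it does not make this isomorphism agree with $\varphi_1$ over $\partial N^\sigma$, since those differ by a gauge transformation, i.e.\ a fiber-preserving bundle automorphism of $\partial N^\sigma$. The paper closes this gap with Lemma \ref{BundleIsotopyLemma}, which asserts that any such automorphism of a principal $S^1$-bundle over $S^2$ is smoothly $S^1$-equivariantly isotopic to the identity; the isotopy is then absorbed into a collar of $\partial N^\sigma$. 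Finally, the parenthetical assertion that orientation-preservation is ``automatic'' should not be left implicit: it holds because each $f_i$ from Lemma \ref{NeighborhoodData} is already orientation-preserving at each isolated fixed point (where the sign labels agree) and the construction is orientation-coherent, but this needs to be said, not assumed.
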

 Before we prove Theorem \ref{SmoothClassification}, we will need the following lemmas.
 \begin{lem}\label{NeighborhoodData}
 	Let $M^{\sigma}$ and $M^\tau$ be closed simply-connected $4$--manifolds with effective $S^1$--actions. Let $E^\sigma$, $E^\tau$, $F^\sigma$, $F^\tau$ be the corresponding exceptional and fixed sets. Assume there are connected components $C^\sigma=C_1^\sigma\cup\cdots C_k^\sigma$ and $C^\tau=C_1^\tau\cup\cdots C_k^\tau$ of $E^\sigma\cup F^\sigma$ and $E^\tau\cup F^\tau$ that each correspond to a polyhedral arcs with the labeling
     \begin{center}
    \begin{tikzpicture}[node distance={17mm}, main/.style = {draw, circle}] 
\node[main] (1) {$\epsilon_0$}; 
\node[main] (2) [right of=1] {$\epsilon_1$}; 
\node[] (3) [right of=2] {$\cdots$};
\node[main] (4) [right of=3] {$\epsilon_k$};
\draw[-] (1) -- node[midway, above] {$w_1$} (2);
\draw[-] (2) -- node[midway, above] {$w_2$} (3);
\draw[-] (3) -- node[midway, above] {$w_k$} (4);
\end{tikzpicture} 
\end{center}
so that $C_i^\sigma$ (resp. $C_i^\tau$) is  an exceptional sphere of weight $w_i$ which contains isolated fixed points $x_{i-1}^{\sigma}$, $x_{i}^\sigma$ (resp. $x_{i-1}^\tau$, $x_i^\tau$) with signs $\epsilon_{i-1}$, $\epsilon_i$.  Then, there are $S^1$--invariant open neighborhoods $U^\sigma$ and $U^\tau$ of $C^\sigma$ and $C^\tau$ with a orientation-preserving $S^1$--equivariant diffeomorphism $f:U^\sigma\to U^\tau$. 
 \end{lem}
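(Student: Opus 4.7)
The plan is to build the diffeomorphism $f$ piece by piece along the chain, using the fact that the labels on $\mathcal{G}$ determine, up to $S^1$-equivariant oriented isomorphism, both the tangent representation at each isolated fixed point and the equivariant normal bundle of each exceptional sphere.

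The first step is to verify that at every vertex $x_i^\sigma$ of the arc, the oriented $S^1$-representation $T_{x_i^\sigma}M^\sigma$ is determined by the local labelling. For an interior vertex ($0 < i < k$) the decomposition $T_{x_i^\sigma}M^\sigma = T_{x_i^\sigma}C_i^\sigma \oplus T_{x_i^\sigma}C_{i+1}^\sigma \cong V^0_{a_i}\oplus V^0_{b_i}$ satisfies $|a_i|=w_i$, $|b_i|=w_{i+1}$, $\gcd(a_i,b_i)=1$ and $\sign(a_ib_i)=\epsilon_i$, so Proposition \ref{isoofU(1)reps} gives uniqueness. At an endpoint such as $x_0^\sigma$, the transverse summand has some weight $b$ with $\gcd(w_1, b) = 1$; if $|b|\geq 2$, the $V^0_b$ direction would exponentiate via Proposition \ref{BredonNeighbor} to a disk of points with stabilizer $H_{|b|}$, producing a second exceptional sphere through $x_0^\sigma$ and contradicting the hypothesis that only $C_1^\sigma$ meets $x_0^\sigma$ in the arc. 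Hence $|b|=1$, and Proposition \ref{isoofU(1)reps} pins down $T_{x_0^\sigma}M^\sigma$ from $(w_1,\epsilon_0)$; the same argument applies at $x_k^\sigma$ and, identically, to $M^\tau$.

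For each exceptional sphere $C_i^\sigma$, Proposition \ref{BredonNeighbor} identifies an $S^1$-invariant tubular neighborhood $V_i^\sigma$ with the unit disk bundle of the normal line bundle $N_i^\sigma \to C_i^\sigma$. Since $C_i^\sigma \cong \mathbb{CP}^1$ carries the standard weight-$w_i$ action, $N_i^\sigma$ is an $S^1$-equivariant complex line bundle over $(\mathbb{CP}^1, w_i)$; such bundles are classified up to oriented equivariant isomorphism by the pair of fiber weights at the two base-fixed-points. These fiber weights coincide with the transverse weights at $x_{i-1}^\sigma$ and $x_i^\sigma$ inside $TM^\sigma$, and by the previous paragraph agree with those of $N_i^\tau$. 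Hence there is an oriented equivariant bundle isomorphism $\Psi_i: N_i^\sigma \to N_i^\tau$, which via the exponential map yields an $S^1$-equivariant diffeomorphism $f_i: V_i^\sigma \to V_i^\tau$.

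Finally, the $f_i$ must be reconciled on the overlaps near each shared fixed point $x_i^\sigma$. There, both $f_i$ and $f_{i+1}$ induce oriented $S^1$-equivariant identifications $T_{x_i^\sigma}M^\sigma \to T_{x_i^\tau}M^\tau$, differing by an element of the group of oriented $S^1$-equivariant automorphisms of $V^0_{a_i}\oplus V^0_{b_i}$ preserving the two weight summands; this group is a two-torus, hence connected. Multiplying $\Psi_{i+1}$ by a fiberwise phase that is constant near $x_i^\sigma$ and trivial outside a small collar in $C_{i+1}^\sigma$ provides an equivariant isotopy adjusting $f_{i+1}$ to agree with $f_i$ in a neighborhood of $x_i^\sigma$. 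Iterating this adjustment along the chain glues the $f_i$ into the desired $f: U^\sigma \to U^\tau$ with $U^\sigma = \bigcup_i V_i^\sigma$. The main obstacle is exactly this patching, as one must carry the orientation and equivariance through each successive overlap; the connectedness of the oriented equivariant automorphism group at each interior fixed point is what makes the inductive correction go through.
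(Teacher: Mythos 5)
Your overall strategy is the same as the paper's: work tube by tube along the arc, observe that the labels determine the tangent representation at each fixed point (your endpoint argument that the transverse weight there must be $\pm 1$ is correct and is used implicitly in the paper) and the equivariant normal bundle of each exceptional sphere, and then glue. The gap is in the gluing step, which you correctly identify as the crux but do not actually carry out. First, arranging that $df_i$ and $df_{i+1}$ agree at $x_i^\sigma$ does not make the maps $f_i$ and $f_{i+1}$ agree on an open neighborhood of $x_i^\sigma$: $f_i$ is linear in the exponential coordinates of $T^\perp C_i^\sigma$ while $f_{i+1}$ is linear in those of $T^\perp C_{i+1}^\sigma$, and for a generic invariant metric these two parametrizations of a neighborhood of $x_i^\sigma$ are incompatible. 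The paper's proof spends its entire first step normalizing the metric so that it is flat (pushed forward from the linear model via Proposition \ref{BredonNeighbor}) on a whole ball about each $x_i^\sigma$, and then \emph{defines} both $f_i$ and $f_{i+1}$ to restrict to one and the same linear isometry $\psi_i^\tau\circ(\psi_i^\sigma)^{-1}$ on that ball, so that agreement on the overlap holds by construction rather than by an a posteriori correction. Some such normalization (or an appeal to equivariant tubular-neighborhood uniqueness on an annular overlap) is needed and is missing from your argument.

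Second, even granting that only the torus discrepancy at $x_i^\sigma$ needs to be absorbed, your proposed correction does not absorb it. A fiberwise phase on $N_{i+1}^\sigma$ rotates only the directions normal to $C_{i+1}^\sigma$, i.e.\ it can adjust the $T_{x_i}C_i$--summand of $df_{i+1}\circ df_i^{-1}$ but leaves the $T_{x_i}C_{i+1}$--summand untouched; to fix that summand you must also precompose with an equivariant rotation of the base sphere $C_{i+1}^\sigma$, damped along the orbit-space (latitude) coordinate so as to be trivial at the far end. One then has to check that such a damped twist is smooth and $S^1$--equivariant across the exceptional orbits of stabilizer $H_{w_{i+1}}$; this is exactly the point at which the paper introduces the auxiliary free $S^1\times S^1$--action on $\del N_{i+1}^\sigma$ and extends equivariant maps over its one-dimensional orbit space. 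Both defects are repairable and your outline would then converge to the paper's proof, but as written the key step fails.
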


\begin{lem}\label{BundleIsotopyLemma}
    Let $A$ be a smooth principal $S^1$--bundle over $S^2$. Let $f:A\to A$ be a bundle morphism that preserves each fiber. Then, there is a smooth $S^1$--equivariant isotopy from $f$ to $\id$.
\end{lem}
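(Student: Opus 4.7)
The plan is to identify $f$ with a gauge transformation of the principal $S^1$-bundle $A\to S^2$, and then use the simply-connectedness of $S^2$ to interpolate this gauge transformation to the identity through a smooth path. The key translation is to convert the geometric problem about equivariant diffeomorphisms of $A$ into a topological problem about maps $S^2\to S^1$.

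First I would show that a fiber-preserving, $S^1$-equivariant map $f:A\to A$ is determined by a smooth function $\phi:S^2\to S^1$. Since $f$ preserves fibers, for each $a\in A$ the point $f(a)$ lies in the same $S^1$-torsor as $a$, so there is a unique $\phi(a)\in S^1$ with $f(a)=a\cdot\phi(a)$. The $S^1$-equivariance of $f$ together with the commutativity of $S^1$ gives $\phi(a\cdot g)=\phi(a)$, so $\phi$ descends to a smooth map $\phi:S^2\to S^1$. Conversely, any such $\phi$ defines a fiber-preserving bundle morphism.

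Next I would use the fact that $S^2$ is simply connected to lift $\phi$ through the universal covering $\exp(2\pi i\,\cdot\,):\mathbb{R}\to S^1$: by the lifting criterion (and smoothness of $\exp$), there is a smooth function $h:S^2\to\mathbb{R}$ with $\phi(b)=\exp(2\pi i\,h(b))$ for all $b\in S^2$. I would then set $\phi_t(b)=\exp(2\pi i\,t\,h(b))$ for $t\in[0,1]$, giving a smooth family with $\phi_0\equiv 1$ and $\phi_1=\phi$. Defining $f_t:A\to A$ by $f_t(a)=a\cdot\phi_t(\pi(a))$ produces a smooth family of fiber-preserving bundle morphisms, which are automatically diffeomorphisms (since $\phi_t$ takes values in $S^1$), with $f_0=\id$ and $f_1=f$. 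Each $f_t$ commutes with the right $S^1$-action because $S^1$ is abelian, so $\{f_t\}$ is the desired smooth $S^1$-equivariant isotopy.

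There is no real obstacle here: the statement is essentially the vanishing of $[S^2,S^1]=H^1(S^2;\mathbb{Z})=0$, repackaged as a statement about bundle automorphisms. The only subtlety is the translation from equivariant fiber-preserving maps to the gauge group $C^\infty(S^2,S^1)$, which rests on the abelianness of $S^1$; once that is recorded, smooth lifting through the universal cover delivers the isotopy essentially for free.
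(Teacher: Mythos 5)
Your proof is correct, and it is essentially the argument the paper intends: the paper simply cites Fintushel's Lemma 6.1 and asserts the smooth version has "the same proof," which is exactly the gauge-group argument you give (fiber-preserving equivariant maps correspond to $C^\infty(S^2,S^1)$, and $\pi_0\bigl(C^\infty(S^2,S^1)\bigr)=[S^2,S^1]=H^1(S^2;\mathbb{Z})=0$ via lifting through $\exp:\mathbb{R}\to S^1$).
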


\begin{proof}[Proof of Theorem \ref{SmoothClassification} (assuming Lemmas \ref{NeighborhoodData}, \ref{BundleIsotopyLemma})]
    Our strategy as in \cite{FintushelSimply4Circle} is to construct our equivariant diffeomorphism $\varphi:M^\sigma\to M^\tau$ around the fixed and exceptional points using Lemma \ref{NeighborhoodData} and extend to all points using Lemma \ref{BundleIsotopyLemma}. \par
    Let $E^\sigma\cup F^\sigma$ and $E^\tau\cup F^\tau$ be the set of exceptional and fixed points in $M^\sigma$ and $M^\tau$ respectively. Let $Q_1^\sigma,\cdots Q_k^\sigma$ be the connected components of $E^\sigma \cup F^\sigma$ corresponding to polyhedral arcs (including isolated points), and with corresponding connected components $Q_1^\tau,\cdots Q_k^\tau$ for $E^\tau\cup F^\tau$. By Lemma \ref{NeighborhoodData}, we know that, for each $i\in\{1,\hdots,k\}$, there are neighborhoods $U_i^\sigma$ of $Q_i^\sigma$ and $U_i^\tau$ of $Q_i^\tau$ with an $S^1$--equivariant diffeomorphism $f_i:U_i^\sigma\to U_i^\tau$. For each $i\in\{1,\hdots,k\}$, choose a closed ball neighborhood $(B_i^\sigma)^\ast$ of $(Q_i^\sigma)^\ast$ contained in $(U_i^\sigma)^\ast$ such that its boundary is a smooth sphere inside $(M^\sigma\setminus E^\sigma\cup F^\sigma)^\ast$. Let $B_i^\sigma$ be the union of the orbits inside $(B_i^\sigma)^\ast$. We define $B_i^\tau=f_i(B_i^\sigma)$. \par
    Let $S_1^\sigma,\hdots, S_\ell^\sigma$ be the connected components of $E^\sigma\cup F^\sigma$ corresponding to fixed surfaces with the corresponding connected components $S_1^\tau,\hdots, S_\ell^\tau$ of $E^\tau\cup F^\tau$. For a sufficiently small $\epsilon'''>0$, the closed $\epsilon'''$--neighborhoods of $S_1^\sigma,\hdots, S_\ell^\sigma$ (resp. $S_1^\tau,\hdots,S_\ell^\tau$) are disjoint from $\bigcup_{i=1}^k B_i^\sigma$ (resp, $\bigcup_{i=1}^k B_i^\sigma$). Let $Z_1^\sigma,\hdots,Z_\ell^\sigma$ and $Z_1^\tau,\hdots,Z_\ell ^\tau$ be these $\epsilon'''$--neighborhoods. Note that each $\del Z_i^\sigma$ and $\del Z_i^\tau$ is a principal $S^1$--bundle over a sphere with the structure group coming from our original $S^1$--action. Then, $Z_i^\sigma$ and $Z_i^\tau$ are produced from $\del Z_i^\sigma$ and $\del Z_i^\tau$ by a change of fiber. Because each pair $Z_i^\sigma$ and $Z_i^\tau$ have the same Euler number, there is an orientation-preserving $S^1$--equivariant bundle equivalence $\alpha_i:Z_i^\sigma\to Z_i^\tau$. This finishes the construction of the equivariant map $\varphi$ on an neighborhood around the fixed and exceptional points.\\
    
    Then,
    \begin{align*}
    	P^\sigma&=M^\sigma\setminus\left(\bigcup_{i=1}^{k} \mathring{B}_i^\sigma\cup \bigcup_{i=1}^\ell \mathring{Z}_i^\sigma\right) \\
	P^\tau&=M^\tau\setminus\left(\bigcup_{i=1}^{k} \mathring{B}_i^\tau\cup \bigcup_{i=1}^\ell \mathring{Z}_i^\tau\right)
    \end{align*}
    are $4$--manifolds with free $S^1$ action and boundaries
     \begin{align*}
    	\del P^\sigma&=\bigcup_{i=1}^{k} \del{B}_i^\sigma\cup \bigcup_{i=1}^\ell \del{Z}_i^\sigma\\
	\del P^\tau&=\bigcup_{i=1}^{k} \del{B}_i^\tau\cup \bigcup_{i=1}^\ell \del{Z}_i^\tau
    \end{align*}
Then, we can think of $P^\sigma,P^\tau$ and their $S^1$--invariant submanifolds can be thought of as smooth principal $S^1$--bundles over their orbit spaces. Let $f^\ast_i:(B_i^\sigma)^\ast\to (B_i^\sigma)^\ast$ and $\alpha_i^\ast:(Z_i^\sigma)^\ast\to (Z_i^\tau)^\ast$ denote the induced maps on the orbit space. Note, as in Corollary \ref{simplyM*}, that $(P^\sigma)^\ast$ and $(P^\tau)^\ast$ are each compact simply-connected $3$--manifolds with diffeomorphic boundaries. By the Poincar\'e conjecture (proven in \cite{perelman2002entropyformularicciflow}), there is some diffeomorphism $\varphi^\ast|_{(P^\sigma)^\ast}:(P^\sigma)^\ast\to (P^\tau)^\ast$ extending the $f_i^\ast$ and $\alpha_i$ from $(\del P^\sigma)^\ast$ to $(P^\sigma)^\ast$. Then, $\varphi^\ast|_{(\del P^\sigma)^\ast}:(\del P^\sigma)^\ast \to (\del P^\tau)^\ast$ is constructed specifically so that the Euler class of the principal bundle $\del P^\tau$ is pulled back to the Euler class of the principal bundle $\del P^\sigma$. If $P$ is a simply-connected $3$--manifold with boundary $\del P$, then from Poincare duality $H^2(P,\del P)\cong H_1(P)=0$ and the exact sequence
\[H^2(P,\del P;\mathbb{Z})\to H^2(P;\mathbb{Z})\to H^2(\del P;\mathbb{Z})\]
we conclude that the map on cohomology map $H^2(P;\mathbb{Z})\to H^2(\del P;\mathbb{Z})$ is injective. Applying this to $(P^\sigma)^\ast$ and $(P^\tau)^\ast$, we conclude that the Euler class of the principal bundle $P^\tau$ is pulled back to $P^\sigma$ through $\varphi^\ast|_{(P^\sigma)^\ast}$. Therefore, there is some smooth ($S^1$--equivariant) bundle equivalence $\psi:P^\sigma\to P^\tau$ that induces the map $\varphi|_{(P^\sigma)^\ast}$ on the orbit spaces. By Lemma \ref{BundleIsotopyLemma}, there is a smooth $S^1$--equivariant isotopy from $\psi|_{\del P^\sigma}:\del P^\sigma\to \del P^\tau$ to $\varphi|_{\del P^\sigma}:\del P^\sigma\to P^\tau$. Using a collared neighborhood, we can extend this to a smooth isotopy from $\psi:P^\sigma\to P^\tau$ to an extension of $\varphi|_{\del P^\sigma}$ to an $S^1$--equivariant diffeomorphism $\varphi|_{P^\sigma}:P^\sigma\to P^\tau$. Then, combining $\varphi|_{P^\sigma}$ with $f_i$ and $\alpha_i$ gives our $S^1$--equivariant diffeomorphism between $M^\sigma$ and $M^\tau$.
\end{proof}

Lemma \ref{BundleIsotopyLemma} is a smooth version of \cite[Lem 6.1]{FintushelSimply4Circle} with the same proof. Thus, all that remains is to prove Lemma \ref{NeighborhoodData}.

\begin{proof}[Proof of Lemma \ref{NeighborhoodData}]
The goal is to find a suitable $S^1$--invariant metric on $M^\sigma$ and $M^\tau$ and an $\epsilon''>0$ so that we can construct $S^1$--equivariant diffeomorphisms 
$$f_i:N_i^\sigma\to N_i^\tau$$
between the $\epsilon''$--neighborhoods $N_i^\sigma$ and $N_i^\tau$ of $C_i^\sigma$ and $C_i^\tau$ for each $i$. Note that if $\epsilon''$ is small enough, then the nearest point projections 
$$\pi_i^\sigma:N_i^\sigma\to C_i^\sigma$$
are smooth fiber bundle projections. In order to ensure that these diffeomorphisms agree on the intersection of the neighborhoods, special care is needed when we choose our metric and $\epsilon''$. We want the maps $\pi_i^\sigma$ and $\pi_{i+1}^\sigma$, when restricted to $X_i^\sigma=N_i^\sigma\cap N_{i+1}^\sigma$, to be isometric to the projection maps $(x,y)\mapsto x$ and $(x,y)\mapsto y$ where $(x,y)\in\mathbb{R}^4$ with $||x||,||y||<\epsilon''$. Now, we will see how $\epsilon''$ can be chosen.

\vspace{.1in}\noindent\textbf{Step 1 -- Choosing $\epsilon''$:}

\noindent By Proposition \ref{BredonNeighbor}, there is an $\epsilon>0$ such that $\exp:TM^{\sigma}\to M^\sigma$ restricted to $B_\epsilon(T^\perp\{x_0^\sigma,\hdots,x_k^\sigma\})$ is an $S^1$--equivariant diffeomorphism onto a compact neighborhood of $\{x_0^\sigma,\hdots, x_k^\sigma\}$. Now, place a new $S^1$--invariant metric on $M^\sigma$ by pushing forward the Euclidean metric of $B_\epsilon(T^\perp\{x_0^\sigma,\hdots,x_k^\sigma\})$ through $\exp$ and extending it to the rest of $M^\sigma$. Call this metric $m$. Then, for tangent vectors $v_1,v_2\in T_xM^\sigma$, we define a new Riemannian metric given by
    \[\frac{1}{2\pi}\int m(e^{i\theta}\cdot v_1,e^{i\theta}\cdot v_2) d\theta\]
    where $S^1$ acts on $TM^\sigma$ by the differential of the action on $M^\sigma$. This will give a new $S^1$--invariant metric on $M^\sigma$ which agrees with the Euclidean metric near $x_0^\sigma,\hdots,x_k^\sigma$.
    \par
    Choose an $\epsilon'<\epsilon/2$ and consider $R_i^\sigma=C_i^\sigma \setminus \mathring{B}_{\epsilon'}(x_i^\sigma)$ for $i\in\{1,\hdots,k-1\}$. Then each $R_i^\sigma$ and $C_{i+1}^\sigma$ are disjoint compact subsets of $M^\sigma$. Similarly, if $L_i^\sigma=C_i^\sigma\setminus \mathring{B}_{\epsilon'}(x_{i-1}^\sigma)$ for $i\in\{1,\hdots,k-1\}$, then $L_i^\sigma$ and $C_{i-1}^\sigma$ are disjoint compact subsets of $M^\sigma$. For a submanifold (possibly with boundary) $A\subseteq M^\sigma$, let $N_{r}(A)=\{x\in M^\sigma:d(x,A)<r\}$ be the $r$--neighborhood of $A$. Choose some $\epsilon''<\epsilon'$ such that for each $i\in\{1,\hdots,k-1\}$, $N_{\epsilon''}(R_i^\sigma)\cap N_{\epsilon''}(C_{i+1}^\sigma)=N_{\epsilon''}(L_i^\sigma)\cap N_{\epsilon''}(C_{i-1}^\sigma)=\varnothing$. \par
    Let $N^\sigma_i=N_{\epsilon''}(C_i^\sigma)$ for $i\in\{1,\hdots,k\}$. Now we consider the intersection $X_i^\sigma=N_i^\sigma\cap N_{i+1}^\sigma$ for $i\in\{1,\hdots,k-1\}$. For $z\in X_i^\sigma$, $z\notin N_{\epsilon''}(R_{i}^\sigma)$ and $z\notin N_{\epsilon''}(L_{i+1}^\sigma)$ and thus there must be some $x\in \mathring{B}_{\epsilon'}(x_i^\sigma)\cap C_i^\sigma$ and $y\in \mathring{B}_{\epsilon'}(x_i^\sigma)\cap C_{i+1}^\sigma$ such that
    \begin{align*}
        d(z,x)&=d(z,C_i^\sigma)<\epsilon'' & d(z,y)=d(z,C_{i+1}^\sigma)<\epsilon''.
    \end{align*}
    Furthermore, the geodesics $[z,x]$ and $[z,y]$ are orthogonal to $C_i^\sigma$ and $C_{i+1}^\sigma$ respectively and contained in $B_\epsilon(x_i^\sigma)$. Therefore, the points $x_i^\sigma$, $x$, $z$, $y$ form a Euclidean rectangle. This means the opposing pairs of sides in the rectangle have the same length and so 
    \[d(x,x_i^\sigma)=d(z,y)<\epsilon'', \ \ \ d(y,x_i^\sigma)=d(x,z)<\epsilon''.\]
    Therefore, each $z\in X_i^\sigma$ is less $\epsilon''$ away from a point on $C_i^\sigma$ (resp. $C_{i+1}^\sigma$) which is less than $\epsilon''$ away from $x_i^\sigma$.\\
    Repeating the same process for $M^\tau$, we can assume there is some $\epsilon,\epsilon''$ with $\epsilon''<\epsilon/2$ and  $S^1$--invariant metrics on $M^\sigma$ and $M^\tau$ so that $N_{\epsilon}(\{x_1^\sigma,\hdots,x_k^\sigma\})$ and $N_{\epsilon}(\{x_1^\tau,\hdots, x_k^\tau\})$ are Euclidean and every $z\in X_i^\sigma$ (resp. $X_i^\tau=N_{\epsilon''}(C_i^\tau)\cap N_{\epsilon''}(C_{i+1}^\tau))$ is less than $\epsilon''$ away from a point in $C_i^\tau$ (resp. $C_{i+1}\sigma$, $C_i^\tau$, $C_{i+1}^\tau$) which is less than $\epsilon''$ away from $x_i^\sigma$ (resp. $x_i^\tau$). \par
\vspace{.1in}\noindent\textbf{Step 2 -- Building the equivariant diffeomorphism:}

\noindent Let $N_i^\tau=N_{\epsilon''}(C_i^\tau)$. The next step is to construct equivariant diffeomorphisms $$f_i:N_i^\sigma\to N_i^\tau$$ such that $f=f_i(x)$ for $x\in N_i^\sigma$ is well-defined. Then, $$f:N_1^\sigma\cup\cdots \cup N_k^\sigma\to N_1^\tau\cup\cdots \cup N_k^\tau$$ will be an $S^1$--equivariant diffeomorphism between open neighborhoods of $C^\sigma$ and $C^\tau$ and we are done. \par
Note that the nearest point projection $\pi_i^\sigma:\bar{N}_i^\sigma\to C_i^\sigma$ is a fiber bundle projection with the fiber above $x\in C_i^\sigma$ being all points $y$ with a geodesic $[x,y]$ in $M^\sigma$ such that $d(x,y)=\epsilon''$ and $[x,y]$ being orthogonal to $C_i^\sigma$. Then, restricting $\pi_i^\sigma$ to $\del N_i^\sigma$, we get a fiber bundle projection $$\del \pi_i^\sigma:\del N_i^\sigma\to C_i^\sigma$$

with which makes $\del N_i^\sigma$ a circle bundle over a sphere. \par
The next part of the proof is to construct a $S^1\times S^1$--action on $\del N_i^\sigma$ that is free except for points in the fibers above $x_{i-1}^\sigma$ and $x_i^\sigma$. This will allow us to use the technology of smooth principal bundles to build our diffeomorphisms.
\par
\vspace{.1in}\noindent\textbf{Step 3 -- Constructing a free $S^1\times S^1$--action:}

\noindent Endow $C_i^\sigma$ with the orientation that induces the rotation orientation on $T_{x_{i-1}^\sigma}C_i^\sigma$. We then orient each fiber of $\del N_i^\sigma$ so that combination of this orientation with the orientation on $C_i^\sigma$ gives the natural orientation on $\del N_i^\sigma$. Because each fiber is an oriented circle, we have an additional $S^1$--action (which denote by $\odot$) given so that $e^{i\theta}$ acts by rotating each circle by $\theta$ according to the induced orientation. This makes $\del N_i^\sigma$ a principal $S^1$--bundle over $C_i^\sigma$. These bundles can also be thought of as the image of subbundles through the exponential map $\exp:T^\perp C_i^\sigma\to M^\sigma$, i.e.
\begin{align*}
    \bar{N}_i^\sigma&=\exp\left(\{(x,v)\in T^\perp C_i^\sigma:||v||\leq \epsilon''\}\right)\\
    N_i^\sigma&=\exp\left(\{(x,v)\in T^\perp C_i^\sigma:||v||< \epsilon''\}\right)\\
    \del{N}_i^\sigma&=\exp\left(\{(x,v)\in T^\perp C_i^\sigma:||v||= \epsilon''\}\right)
\end{align*}
\par
Because our original $S^1$--action (which we will continue to denote by $\cdot$) acts by isometries, it commutes with this new $S^1$--action. Let $T=S^1\times S^1$ and define a $T$--action on $\del N_i^\sigma$ (denoted by $\wasylozenge$) that combines these actions. In particular,
\[(g,h)\wasylozenge x=g\cdot (h\odot x)\text{ for }g,h\in S^1,x\in \del N_i^\sigma\]

Let $\eta_i$ be a generator of $H_{w_i}\cong\mathbb{Z}/w_i\mathbb{Z}$. Then, with our $(\wasylozenge)$--action $(\eta_i,1)$ acts by applying our original $(\cdot)$--action by $\eta_i$. This fixes each point of $C_i^\sigma$ and thus acts by rotating each fiber by a rotation of order $w_i$. Because the set of such rotation angles is discrete, but the action by $\eta_i$ is continuous, there must be some constant rotation angle that is applied to each fiber. In other words, there is some $h_i\in S^1$ such that
\[\eta_i\cdot  x=h_i^{-1}\odot x\text{, so} \ \ (\eta_i,h_i)\wasylozenge\ x=x\text{  for all }x\in \del N_i^\sigma.\]
In fact, we note that
\begin{align*}
    (\del\pi_i^\sigma)^{-1}(x_{\ell}^\sigma)&=\exp(\{(x_{\ell}^\sigma,v)\in T_{x_{\ell}^\sigma}C_i^\sigma:||v||=\epsilon''\})\text{ for }\ell\in\{i-1,i\}\\
\end{align*}
Because we use the rotation orientation on $T_{x_{i-1}^\sigma}C_i^\sigma$, but use the natural orientation on $T_{x_{i-1}}M^\sigma$, we have that for the $(\cdot)$--action on $T_{x_{i-1}^\sigma}M^\sigma$ that
\[T_{x_{i-1}^\sigma}M^\sigma=T_{x_{i-1}}C_{i}^\sigma\oplus T^\perp_{x_{i-1}}C_i^\sigma\cong V^0_{w_{i}}\oplus V^0_{\epsilon_{i-1}w_{i-1}}\]
while for $T_{x_{i}^\sigma}M^\sigma$, the $(\cdot)$--action rotates $T_{x_i^\sigma}C_i^\sigma$ in the opposite orientation and thus
\[T_{x_{i}^\sigma}M^\sigma=T_{x_{i}}C_{i}^\sigma\oplus T^\perp_{x_{i}}C_i^\sigma\cong V^0_{-w_{i}}\oplus V^0_{-\epsilon_{i}w_{i+1}}.\]
Therefore, for $(x_{i}^\sigma,v)\in T_{x_{i}^\sigma}^\perp C_{i}^\sigma$ with $||v||<\epsilon''$ and $x=\exp(x_{i}^\sigma,v)$, then
\begin{align*}
    \eta_i\cdot x&=\eta_i\cdot \exp(x_{i}^\sigma,v)\\
    &=\exp(x_{i}^\sigma,\eta_i^{-\epsilon_{i}w_{i+1}}\cdot v)\\
    &=\eta_i^{-\epsilon_{i}w_{i+1}}\odot\exp(x_i^\sigma,v).
\end{align*}
We conclude that $h_i=\eta_i^{\epsilon_{i}w_{i+1}}$. By an analogous argument, we can conclude that $h_i=\eta_i^{-\epsilon_{i-1}w_{i-1}}$. 
\par
Using this observation, we can define a new $T$--action on $\del N_i^\sigma$ denoted by $(\diamond)$ so that
\[(g,h)\diamond x=(\tilde{g},\tilde{g}^{\epsilon_iw_{i+1}}h)\wasylozenge x.\]
where $\tilde{g}\in S^1$ such that $\tilde{g}^{w_i}=g$. The fact that $(h,h^{\epsilon_iw_{i+1}})\wasylozenge x=x$ for $h\in H_{w_i}$ implies that the $(\diamond)$--action is well-defined and independent of the choice of $\tilde{g}$.
Observe the $(\diamond)$--action is free on $\del N_i^\sigma\setminus (\del \pi_i^{\sigma})^{-1}(\{x_{i-1}^\sigma,x_i^\sigma\})$ as follows:
\par
Take an arbitrary $x\in \del N_i^\sigma\setminus (\del \pi_i^\sigma)^{-1}(\{x_{i-1}^\sigma,x_i^\sigma\})$ and $(g,h)\in T$ such that $(g,h)\ast x=x$. Then, \[\del\pi_i^\sigma((g,h)\diamond x)=\tilde{g}\cdot \del \pi_i^\sigma(x)=\del\pi_i^\sigma(x)\]
So $\tilde{g}$ stabilizes $\del\pi_i^\sigma(x)\in C_i^\sigma\setminus \{x_{i-1}^\sigma,x_i^\sigma\}$. Therefore, $\tilde{g}\in H_{w_i}$ and $g=1$. Then, $(g,h)\diamond x=(1,h)\diamond x=(1,h)\wasylozenge x=h\odot x=x$. But the $(\odot)$--action of $S^1$ on $\del N_i^\sigma$ is free so $h=1$. \par
We also have a bundle structure on $\del N_i^\tau$ with a free $(\diamond)$--action by $T$ on $\del N_i^\tau\setminus (\pi_i^{\tau})^{-1}(\{x_{i-1}^\tau,x_i^\tau\})$. 
\par
Now, we have our $(T,\diamond)$--action we can construct our diffeomorphisms $f_i:N_i^\sigma\to N_i^\tau$. We first define $f_i$ on the intersections $N_i^{\sigma}\cap N_{i+1}^\tau$ and then find ($T,\diamond$)-equivariant diffeomorphisms $\del f_i:\del N_i^\sigma\to \del N_i^\tau$. Because of the careful constructions of the metric and $(T,\diamond)$--action, applying a change of fiber to $\del f_i$ will produce our desired $(S^1,\cdot)$--equivariant diffeomorphisms $f_i: N_i^\sigma\to N_i^\tau$.
\par
Now, we choose $(S^1,\cdot)$--equivariant isometries $\psi_i^\sigma:B_{\epsilon}(V_{w_i}^0\oplus V_{\epsilon_{i}w_{i+1}})^0\to B_{\epsilon}(x_i^\sigma)$ and $\psi_i^\tau:B_{\epsilon}(V_{w_i}^0\oplus V_{\epsilon_{i}w_{i+1}}^0)\to B_{\epsilon}(x_i^\tau)$, then, each $\psi_i^{\tau}\circ(\psi^{\sigma}_i)^{-1}$ restricts to a diffeomorphism between $(\pi_i^{\sigma})^{-1}(C_i^\sigma\cap B_{\epsilon''}(x_{i}^\sigma))$ and $(\pi_i^{\tau})^{-1}(C_i^\tau\cap B_{\epsilon''}(x_{i}^\tau))$ and to a diffeomorphism between $(\pi_{i+1}^{\sigma})^{-1}(C_{i+1}^\sigma\cap B_{\epsilon''}(x_{i}^\sigma))$ and $(\pi_{i+1}^{\tau})^{-1}(C_{i+1}^\tau\cap B_{\epsilon''}(x_{i}^\tau))$. These maps, when restricts to $\del N_i^\sigma$, are $(T,\ast)$--equivariant because they are $(S^1,\cdot)$--equivariant isometries that map points in $C_i^\sigma$ to points in $C_i^\tau$ and thus preserve both of the $S^1$--actions used to define the $(\diamond)$--action. We consider these restrictions as $\del f_i|_{\del \pi_i^{\sigma\ -1}(C_i\cap B_{\epsilon''}(\{x_{i-1}^\sigma,x_{i}^\sigma\}))
}$. We note that the $(T,\diamond)$--orbit space of each $\del N_i^\sigma\setminus (\pi_i^{\sigma})^{-1}(\{x_{i-1}^\sigma,x_i^\sigma\})$ and $\del N_i^\tau\setminus (\pi_i^{\tau})^{-1}(\{x_{i-1}^\tau,x_i^\tau\})$ is diffeomorphic to $\mathbb{R}$. It follows that we can extend $\del f_i|_{(\del \pi_i^\sigma)^{-1}(C_i^\sigma\cap B_{\epsilon''}(x_i^\sigma))}$ to $(T,\diamond)$--equivariant diffeomorphisms $\del f_i:\del N_i^\sigma\to \del N_i^\tau$. It should be noted because the $(T,\diamond)$--action contains both our original $S^1$--action and the free $S^1$--action which makes $\del N_i^\sigma$ and $\del N_i^\tau$ principal $S^1$--bundles, we see that $\del f_i$ is, in fact, an $S^1$--equivariant isomorphism of bundles. Therefore, by a change of fiber, $\del f_i$ induces to an $S^1$--equivariant diffeomorphism $f_i:N_i^\sigma\to N_i^\tau$. Explicitly, this change of fiber goes as follows. Take an arbitrary $z\in N_i^\sigma$ and assume that $z$ lies on the geodesic $[x,y]$ where $x\in C_i^\sigma$ and $y\in(\del\pi_i^\sigma)^{-1}(x)$. Then, then because $\del f_i:\del N_i^\sigma\to \del N_i^\tau$ is an isomorphism of bundles, $(\pi_i^\sigma)^{-1}(x)$ is sent diffeomorphically, by $\del f_i$, onto a fiber $(\del \pi_i^\tau)^{-1}(f_i(x))$ for a unique $f_i(x)\in C_i^\tau$. Then, $\del f_i(y)\in (\del \pi_i^\tau)^{-1}(f_i(x))$ and there is a unique point $f_i(z)$ lying on the geodesic $[f_i(x),\del f_i(y)]$ such that $d(z,y)=d(f_i(z),\del f_i(y))$. Because $\del f_i$ is $(S^1,\cdot)$--equivariant and $(S^1,\cdot)$ acts by isometries, the configuration used to define $f_i(z)$ is preserved by this action. Thus, $f_i:N_i^\sigma\to N_i^\tau$ is $(S^1,\cdot)$--equivariant for each $i\in\{1,\hdots,k\}$.\\ 
All that remains is to verify that $f_i$ and $f_{i+1}$ agree on $X_i^\sigma=N_i^\sigma\cap N_{i+1}^\sigma$. Take an arbitrary $z\in X_i$. Then we can assume $z$ lies on a geodesic $[x,y]$ where $x\in C_i^\sigma\cap B_{\epsilon''}(x_i^\sigma)$ and $y\in (\del \pi_i^\sigma)^{-1}(x)$. Then, $f_i(z)$ is the unique point lying on the geodesic between $f_i(x)$ and $\del f_i(y)$ such that $d(f_i(z),\del f_i(y))=d(z,y)$. However, we know that
\[\del\pi_i(\del f_i(y))=\psi_i^{\tau}(\del\pi_i^\tau((\psi^{\sigma}_i)^{-1}(y))))=\psi^{\tau}_i((\psi^\sigma_i)^{-1}(x))\]
so that $f_i(x)=\psi_i^\tau((\psi_i^\sigma)^{-1}(x))$. Then, because $\psi^\tau\circ(\psi_i^\sigma)^{-1}:B_{\epsilon}(x_i^\sigma)\to B_{\epsilon}(x_i^\tau)$ is also an isometry, the point $z'=\psi^\tau_i((\psi^\sigma_i)^{-1}(z))$ is a point lying on the geodesic $[f_i(x),\del f_i(y)]$ so that $d(\psi_i^\tau((\psi_i^\sigma)^{-1}(z)),\del f_i(y))=d(z,y)$. Therefore, $f_i(z)=\psi_i^\tau((\psi_i^\sigma)^{-1}(z))$
Finally, we note that $f$ is orientation-preserving at each $x_i^\sigma$.
    \end{proof}

 \subsection{Actions on Hirzebruch Surfaces}
 Hirzebruch surfaces are algebraic $\mathbb{CP}^1$ bundles over $\mathbb{CP}^1$. They represent an important class of $4$--manifolds with circle actions. For instance, Jang \cite{Jang_Hirzebruch} showed that any almost complex $4$--manifold $M$ with a circle action with $4$ fixed points has the same tangential weight data as an algebraic action on a Hirzebruch surface. In light of Corollary \ref{SmoothClassification}, this means that if $M$ is simply connected, then $M$ is diffeomorphic to either $S^2\times S^2$ or $\mathbb{CP}{}^2\#\overline{\mathbb{CP}}{}^2$. We will give an explicit of description of Hirzebruch surfaces and their circle actions. 
 
 \begin{defn}
    Let $\mathbb{P}^{q+3}$ be the projectivization of the subspace of $  M_{2, q+2}(\mathbb{C}) \cong \mathbb{C}^{2(q+2)}$ consisting of matrices of the form
    \[\begin{bmatrix}
         x_1 & x_3 & x_4 & \cdots & x_{q+3}\\
         x_2 & x_4 & x_5 & \cdots & x_{q+4}
     \end{bmatrix}\text{ for }x_1,\hdots,x_{q+4}\in\mathbb{C}.\]
     We will denote the equivalence class of the above matrix in $\mathbb{P}^{q+3}$ by
     \[\begin{Bmatrix}
         x_1 & x_3 & x_4 & \cdots & x_{q+3}\\
         x_2 & x_4 & x_5 & \cdots & x_{q+4}
     \end{Bmatrix}.\]
     For $q\geq 0$, the Hirzebruch surface $\Hir(q)$ is the subvariety of $\mathbb{P}^{q+3}$ defined by rank inequality
     \[\operatorname{rank}\begin{bmatrix}
         x_1 & x_3 & x_4 & \cdots & x_{q+3}\\
         x_2 & x_4 & x_5 & \cdots & x_{q+4}
     \end{bmatrix}\leq 1.\]
     Note that $\Hir(q)$ has the defining system of homogeneous equations being the vanishing of the determinants
     \begin{align*}
         x_1x_{k+1}-x_2x_k&=0\text{ for }3\leq k\leq q+3 \\
         x_{j}x_{k+1}-x_{j+1}x_k&=0\text{ for }3\leq j<k\leq q+3
     \end{align*}
 \end{defn}
  Hirzebruch surfaces are examples of rational scrolls. For more discussion of rational scrolls and the above realization of $\Hir(q)$ as a projective variety, see Reid \cite{reid1996chaptersalgebraicsurfaces}.
  \par
  The essential feature of Hirzebruch surface is that they are algebraic $\mathbb{CP}^1$--bundles over $\mathbb{CP}^1$. This means their topology can be determined clutching functions for this bundle.
\begin{prop}\label{HirzeBundle}
    The map
    \[\begin{Bmatrix}
         x_1 & x_3 & x_4 & \cdots & x_{q+3}\\
         x_2 & x_4 & x_5 & \cdots & x_{q+4}
     \end{Bmatrix}\mapsto \operatorname{colsp}\begin{bmatrix}
         x_1 & x_3 & x_4 &\cdots & x_{q+3} \\
         x_2 & x_4 & x_5 & \cdots & x_{q+4}
     \end{bmatrix}\] 
    defines a fiber bundle projection $\pi:\Hir(q)\to \mathbb{CP}^1$ with fiber $\mathbb{CP}^1$. If $q$ is even, this bundle is trivial and $\Hir(q)$ is diffeomorphic to $S^2\times S^2$. If $q$ is odd, this bundle is non-trivial and $\Hir(q)$ is diffeomorphic to $\mathbb{CP}{}^2\#\overline{\mathbb{CP}}{}^2$.
\end{prop}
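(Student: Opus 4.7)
The plan is to verify that $\pi$ is a locally trivial fiber bundle by writing down explicit trivializations over the two standard charts of the base $\mathbb{CP}^1$, compute the transition function between them, and then invoke the classification of $S^2$-bundles over $S^2$ to identify the total space.

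First I would check that $\pi$ is well-defined. A non-zero matrix of the displayed form with rank $\leq 1$ has a well-defined $1$-dimensional column space, independent of the choice of scalar representative; non-vanishing of the matrix is automatic because the equivalence class lies in projective space. Second, I would identify the fiber. Over $[a:b] \in \mathbb{CP}^1$, each column of a preimage matrix must be a scalar multiple of $(a,b)^T$. Writing column $k$ (for $k \geq 2$) as $(x_{k+1},x_{k+2})^T = \lambda_k (a,b)^T$ and comparing overlapping entries, consistency forces $\lambda_k = \lambda_2 (b/a)^{k-2}$ on the chart $a \neq 0$. Thus a preimage is determined by the free pair $(\lambda_1,\lambda_2)$ (encoding column $1$ and column $2$), up to simultaneous rescaling, giving a $\mathbb{CP}^1$ fiber.

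Next I would construct explicit trivializations over the charts $U_0 = \{[a:b] : a \neq 0\}$ and $U_1 = \{[a:b] : b \neq 0\}$. On $U_0$, with $t = b/a$, set
\[\phi_0(t,[\lambda_1 : \lambda_2]) = \begin{Bmatrix} \lambda_1 & \lambda_2 & \lambda_2 t & \cdots & \lambda_2 t^q \\ \lambda_1 t & \lambda_2 t & \lambda_2 t^2 & \cdots & \lambda_2 t^{q+1} \end{Bmatrix},\]
which is a smooth section of $\pi$-preimages. On $U_1$, with $s = a/b$, the analogous ansatz $\lambda_k (s,1)^T$ is singular at $s=0$, so one reparametrizes by setting $\mu_2 = \lambda_2 s^{-q}$ to define
\[\phi_1(s,[\mu_1 : \mu_2]) = \begin{Bmatrix} \mu_1 s & \mu_2 s^{q+1} & \mu_2 s^{q} & \cdots & \mu_2 s \\ \mu_1 & \mu_2 s^{q} & \mu_2 s^{q-1} & \cdots & \mu_2 \end{Bmatrix}.\]
Both are diffeomorphisms onto $\pi^{-1}(U_j)$ by construction, so $\pi$ is a smooth $\mathbb{CP}^1$-bundle.

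Fourth, I would compute the transition function on $U_0 \cap U_1$, where $s = 1/t$. Comparing the two parametrizations in the first-column entries $x_1$ and $x_3$ shows $[\lambda_1 : \lambda_2] = [\mu_1/t : \mu_2/t^{q+1}] = [\mu_1 t^q : \mu_2]$, so
\[\phi_1^{-1}\!\circ\!\phi_0 \colon (t,[\lambda_1 : \lambda_2]) \longmapsto (1/t,\,[\lambda_1 : t^q \lambda_2]).\]
This realizes $\Hir(q)$ as the projectivization $\mathbb{P}(\mathcal{O} \oplus \mathcal{O}(q))$ of a rank-$2$ holomorphic bundle over $\mathbb{CP}^1$, with structure group loop given in $PGL(2,\mathbb{C})$ by $t \mapsto \operatorname{diag}(1,t^q)$.

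Finally, to determine the diffeomorphism type, I would invoke the classification of oriented $S^2$-bundles over $S^2$ by $\pi_1(\mathrm{Diff}^+(S^2)) = \pi_1(SO(3)) = \mathbb{Z}/2$. The loop above represents $q \pmod 2$ under the surjection $\pi_1(PGL(2,\mathbb{C})) \to \mathbb{Z}/2$, so the bundle is trivial precisely when $q$ is even (giving $S^2 \times S^2$) and is the unique non-trivial $S^2$-bundle over $S^2$ when $q$ is odd (giving $\mathbb{CP}{}^2 \# \overline{\mathbb{CP}}{}^2$). The main obstacle is the algebraic bookkeeping in writing $\phi_1$ correctly so that the singularity at $s=0$ is resolved and the transition function reads off cleanly as $t \mapsto t^q$; once that is done, the topological identification is purely a citation of the standard $S^2$-bundle classification.
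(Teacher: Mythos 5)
Your proposal is correct and follows essentially the same route as the paper: construct the same two local trivializations over the affine charts of $\mathbb{CP}^1$, extract the transition function $[\lambda_1:\lambda_2]\mapsto[\lambda_1:t^q\lambda_2]$, and read off the diffeomorphism type from $\pi_1$ of the structure group being $\mathbb{Z}/2$. The only cosmetic difference is that the paper lifts the transition loop to $\SU(2)$ to see whether it closes, while you cite the surjection $\pi_1(\PGL(2,\mathbb{C}))\to\mathbb{Z}/2$ directly.
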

\begin{proof}
    First, we note that $\pi$ is well-defined because $(x_1,x_2,\hdots,x_{q+4})$ not being the zero vector means that the rank of 
    \[\begin{bmatrix}
         x_1 & x_3 & x_4 &\cdots & x_{q+3} \\
         x_2 & x_4 & x_5 & \cdots & x_{q+4}
     \end{bmatrix}\]
     is non-zero.
     Together with the rank inequality defining $\Hir(q)$ means that the column span
     \[\operatorname{colsp}\begin{bmatrix}
         x_1 & x_3 & x_4 &\cdots & x_{q+3} \\
         x_2 & x_4 & x_5 & \cdots & x_{q+4}
     \end{bmatrix}\]
     is a $1$--dimensional subspace of $\mathbb{C}^2$ and thus defines an element of $\mathbb{CP}^1$.
    Let $U_1=\mathbb{CP}^1\setminus\{[0:1]\}$ and $U_2=\mathbb{CP}^1\setminus\{[1:0]\}$. We will first find trivializations of $\pi$ on the open cover $\mathbb{CP}^1=U_1\cup U_2$.
    For $z\in\mathbb{C}$,
    \begin{align*}
        [\alpha:\beta]\mapsto \begin{Bmatrix}
            \alpha  & \beta  & \beta z & \cdots & \beta z^{q}\\
            \alpha z & \beta z & \beta z^2 & \cdots & \beta z^{q+1}
        \end{Bmatrix}
    \end{align*}
    is a diffeomorphism between $\mathbb{CP}^1$ and $\pi^{-1}([1:z])$. Therefore, $\varphi_1:\pi^{-1}\left(U_1\right)\to U_1\times \mathbb{CP}^1$ defined by 
    \[\varphi_1\left(\begin{Bmatrix}
         x_1 & x_3 & x_4 & \cdots & x_{q+3}\\
         x_2 & x_4 & x_5 & \cdots & x_{q+4}
     \end{Bmatrix}\right)\mapsto \left(\left[x_1:x_2\right],[x_1:x_3]\right)\]
    is a trivialization of $\pi$ on $U_1$.
    \\
    On the other hand, for $z\in\mathbb{C}$
    \begin{align*}
        [\alpha:\beta]\mapsto \begin{Bmatrix}
            \alpha z  & \beta z^{q+1}  & \beta z^{q} & \cdots & \beta z\\
            \alpha  & \beta z^{q} & \beta z^{q-1} & \cdots & \beta
        \end{Bmatrix}
    \end{align*}
    is a diffeomorphism between $\mathbb{CP}^1$ and $\pi^{-1}([z:1])$. Thus, $\varphi_2:\pi^{-1}\left(U_2\right)\to U_2\times\mathbb{CP}^1$ defined by 
    \[\varphi_2\left(\begin{Bmatrix}
         x_1 & x_3 & x_4 & \cdots & x_{q+3}\\
         x_2 & x_4 & x_5 & \cdots & x_{q+4}
     \end{Bmatrix}\right)\mapsto \left(\left[x_1:x_2\right],[x_2:x_{q+4}]\right)\]
    is a trivialization of $\pi$ on $U_2$. Therefore, $\pi:\Hir(q)\to \mathbb{CP}^1$ is a smooth fiber bundle with fiber $\mathbb{CP}^1$. 
    
    We now calculate the transition function $\psi_z:\mathbb{CP}^1\to\mathbb{CP}^1$ above $[1:z]\in U_1\cap U_2$ for $z\in\mathbb{C}^\times$. That is
    \begin{align*}
        ([1:z],\psi_z([\alpha:\beta]))&=\varphi_2(\varphi_1^{-1}([1:z],[\alpha:\beta]))\\
        &=\varphi_2\left(\begin{Bmatrix}
            \alpha  & \beta  & \beta z & \cdots & \beta z^{q}\\
            \alpha z & \beta z & \beta z^2 & \cdots & \beta z^{q+1}
        \end{Bmatrix}\right)\\
        &=([\alpha:\alpha z],[\alpha z:\beta z^{q+1}])\\
        &=([1:z],[\alpha:\beta z^q])
    \end{align*}
    and thus $\psi_z:\mathbb{CP}^1\to\mathbb{CP}^1$ is given by $\psi_z([\alpha:\beta])=[\alpha:\beta z^q]$. Therefore, the bundle has structure group $\PSL(2,\mathbb{C})$ and the isomorphism class of this bundle is determined by the homotopy class of the map $U_1\cap U_2\to\PSL(2,\mathbb{C})$ given by $z\mapsto\psi_z$. Because $\PSL(2,\mathbb{C})$ is connected, we can identify this homotopy class with the element of $\pi_1(\PSL(2,\mathbb{C))}\cong\pi_1(\operatorname{PSU}(2))\cong \mathbb{Z}_2$ represented by the loop $\gamma:[0,1]\to \operatorname{PSU}(2)$ given by $\gamma(\theta)=\psi_{e^{2\pi i\theta}}$.  Then, $\gamma$ lifts to the path $\tilde{\gamma}:[0,1]\to \operatorname{SU}(2)$ in the universal covering space of $\operatorname{PSU}(2)$ given by
    \[\tilde{\gamma}(\theta)=\begin{bmatrix}
        e^{-q\pi i\theta} & \\ & e^{q\pi i \theta}
    \end{bmatrix}\]
    which forms a loop in $\operatorname{SU}(2)$ if and only if $q$ is even. This means there are two homotopy classes and thus two isomorphism classes of bundles. If $q$ is even, $\Hir(q)$ is diffeomorphic to the trivial bundle $\Hir(0)=\mathbb{CP}^1\times\mathbb{CP}^1$ if $q$ is even. If $q$ is odd, then $\Hir(q)$ diffeomorphic to the non-trivial bundle $\Hir(1)\cong\mathbb{CP}{}^2\#\overline{\mathbb{CP}}{}^2$
\end{proof}
Now, we will define a class of $S^1$--actions on each Hirzebruch surface $\Hir(q)$. These will serve as building blocks we need for the $S^1$--actions on the fibers for Anosov representation.
\begin{defn}
    Let $a,b$ be integers.
    Let $\Hir(q;a,b)$ be the Hirzebruch surface $\Hir(q)$ with the $S^1$--action given by
    \[g\cdot\begin{Bmatrix}
         x_1 & x_3 & x_4 & \cdots & x_{q+3}\\
         x_2 & x_4 & x_5 & \cdots & x_{q+4}
     \end{Bmatrix}=\begin{Bmatrix}
         x_1 & g^{a}x_3 & g^{a+b}x_4 & \cdots & g^{a+qb}x_{q+3}\\
         g^{b}x_2 & g^{a+b}x_4 & g^{a+2b}x_5 & \cdots & g^{a+(q+1)b}x_{q+4}
     \end{Bmatrix}.\]
\end{defn}
Since we have this $S^1$--action defined on the simply-connected $4$--manifold $\Hir(q)$, let us now find the tangential weight graph of $\Hir(q;a,b)$. We observe the following:
\begin{prop}\label{HirzeWeights}
    Let $a,b\in\mathbb{Z}$.
    \begin{enumerate}[label=\normalfont{(\roman*)}]
        \item The points
        \begin{align*}
        p_1&=\begin{Bmatrix}
         1 & 0 & 0 & \cdots & 0\\
         0 & 0 & 0 & \cdots & 0
     \end{Bmatrix} & p_2&=\begin{Bmatrix}
         0 & 1 & 0 & \cdots & 0\\
         0 & 0 & 0 &\cdots & 0
     \end{Bmatrix} \\
     p_3&=\begin{Bmatrix}
         0 & 0 & 0 & \cdots & 0\\
         1 & 0 & 0 & \cdots & 0
     \end{Bmatrix} & p_4&=\begin{Bmatrix}
         0 & 0 & \cdots & 0 & 0\\
         0 & 0 &\cdots & 0& 1
     \end{Bmatrix}
    \end{align*}
    are fixed points in $\Hir(q;a,b)$. 
    \item The spheres
    \begin{align*}
        S_{12}&=\left\{\begin{Bmatrix} \alpha & \beta & 0 & \cdots & 0 \\ 0 & 0 & 0 & \cdots & 0
            \end{Bmatrix}:[\alpha:\beta]\in\mathbb{CP}^1\right\}\\
        S_{13}&=\left\{\begin{Bmatrix} \alpha & 0 & 0 & \cdots & 0 \\ \beta & 0 & 0 & \cdots & 0
            \end{Bmatrix}:[\alpha:\beta]\in\mathbb{CP}^1\right\}\\
        S_{24}&=\left\{\begin{Bmatrix} 0 & \alpha^{q+1} & \alpha^{q}\beta & \cdots & \alpha\beta^q \\ 0 & \alpha^{q}\beta & \alpha^{q-1}\beta^2 & \cdots & \beta^{q+1}
            \end{Bmatrix}:[\alpha:\beta]\in\mathbb{CP}^1\right\}\\
        S_{34}&=\left\{\begin{Bmatrix} 0 & 0 & \cdots & 0 & 0\\ \alpha & 0 & \cdots & 0 & \beta
            \end{Bmatrix}:[\alpha:\beta]\in\mathbb{CP}^1\right\}
    \end{align*}
    are $S^1$--invariant submanifolds of $\Hir(q;a,b)$.
    \item As $S^1$--representations,
    \begin{align*}
        T_{p_1}S_{12}&\cong V^0_{a} & T_{p_1}S_{13}&\cong V^0_b\\
        T_{p_2}S_{12}&\cong V^0_{-a} & T_{p_2}S_{24}&\cong V_b^{0}\\
        T_{p_3}S_{13}&\cong V_{-b}^{0} & T_{p_3}S_{34}&\cong V^0_{a+qb}\\
        T_{p_4}S_{24}&\cong V^0_{-b} & T_{p_4}S_{34}&\cong V^0_{-a-qb} 
    \end{align*}
    \end{enumerate}
\end{prop}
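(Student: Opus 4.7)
The plan is to verify each of the three statements by direct computation with the action formula, proceeding in the order (i), (ii), (iii).

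For (i), I would simply substitute each point into the action. Each of $p_1, p_2, p_3, p_4$ has exactly one nonzero entry among the $x_j$, so the action multiplies that entry by a single power of $g$, leaving the projective class unchanged. For instance, at $p_2$ only $x_3 = 1$ is nonzero, and $g \cdot p_2$ has $x_3 = g^a$, which represents the same point of $\mathbb{P}^{q+3}$.

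For (ii), I would check invariance of each $S_{ij}$ by computing $g \cdot x$ for a general $x \in S_{ij}$ and exhibiting new parameters $[\alpha':\beta']$ such that $g \cdot x$ is the point of $S_{ij}$ corresponding to $[\alpha':\beta']$. The cases $S_{12}$, $S_{13}$, $S_{34}$ are immediate: the action scales the two nonzero entries, and one reads off $[\alpha':\beta'] = [\alpha : g^a\beta]$, $[\alpha : g^b \beta]$, $[g^b\alpha : g^{a+(q+1)b}\beta]$ respectively. The only slightly subtle case is $S_{24}$, whose parametrization is Veronese-like. Here I would compute
\[
g \cdot \begin{Bmatrix} 0 & \alpha^{q+1} & \alpha^q\beta & \cdots & \alpha\beta^q \\ 0 & \alpha^q\beta & \alpha^{q-1}\beta^2 & \cdots & \beta^{q+1}\end{Bmatrix} = g^a \begin{Bmatrix} 0 & \alpha^{q+1} & g^b \alpha^q\beta & \cdots & g^{qb}\alpha\beta^q \\ 0 & g^b \alpha^q\beta & g^{2b}\alpha^{q-1}\beta^2 & \cdots & g^{(q+1)b}\beta^{q+1}\end{Bmatrix},
\]
so that after factoring out $g^a$ the image is the point of $S_{24}$ associated with $[\alpha':\beta'] = [\alpha : g^b\beta]$.

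For (iii), I would compute each weight by choosing an affine chart on the invariant sphere and tracking how the coordinate transforms. Identifying each $S_{ij}$ with $\mathbb{CP}^1$ via its $[\alpha:\beta]$ parametrization, the calculations in (ii) already identify the induced $S^1$-action on $\mathbb{CP}^1$:
\[
S_{12}:\ [\alpha:\beta]\mapsto [\alpha:g^a\beta],\quad S_{13}:\ [\alpha:\beta]\mapsto [\alpha:g^b\beta],
\]
\[
S_{24}:\ [\alpha:\beta]\mapsto [\alpha:g^b\beta],\quad S_{34}:\ [\alpha:\beta]\mapsto [g^b\alpha:g^{a+(q+1)b}\beta]\sim [\alpha:g^{a+qb}\beta].
\]
At the fixed point $[1:0]$, the chart $z=\beta/\alpha$ transforms by the weight appearing on the $\beta$-entry, and at $[0:1]$ the chart $z=\alpha/\beta$ picks up the negative. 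Applying this to each of the eight (point, sphere) pairs reads off exactly the weights listed in (iii).

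The argument is a direct verification, so no step is a genuine obstacle; the only place that requires a moment's care is the factorization $g^a$ used to recognize $g \cdot S_{24} \subseteq S_{24}$ in the Veronese parametrization, which also supplies the correct reparametrization $[\alpha:\beta]\mapsto [\alpha:g^b\beta]$ needed for the weight calculations at $p_2$ and $p_4$.
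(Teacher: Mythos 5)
Your proposal is correct and is exactly the approach the paper takes: the paper's proof simply says ``Parts (i,ii) follow from direct inspection of the defined action. By restricting the $S^1$--action to each of the spheres in (ii), you can find the weights in (iii).'' You have spelled out the same direct verification, including the one genuinely non-obvious step, namely factoring out $g^a$ in the Veronese parametrization of $S_{24}$ to read off the induced action $[\alpha:\beta]\mapsto[\alpha:g^b\beta]$.
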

\begin{proof}
    Parts (i,ii) follow from direct inspection of the defined action. By restricting the $S^1$--action to each of the spheres in (ii), you can find the weights in (iii).
\end{proof}
This gives us all the information to determine the signs and weights for the tangential weight graph.
\begin{corr}\label{ExceptionalHirTWG}
    For $a,b$ non-zero coprime integers with $a+qb\neq 0$, the tangential weight graph for $\Hir(q;a,b)$ is the one given in Figure \ref{fig:HirzabTWG}
\begin{figure}
    \centering
    \begin{tikzpicture}[node distance=4cm and 7cm, main/.style = {draw, circle}]
\node[main] (1) at (0,0) {$s_1$}; 
\node[main] (2) at (1.85,0) {$s_2$}; 
\node[main] (3) at (0,-1.5) {$s_3$}; 
\node[main] (4) at (1.85,-1.5) {$s_4$}; 
\draw[] (1) -- node[midway, above,] {$|a|$} (2);
\draw[-] (1) -- node[midway, left,] {$|b|$} (3);
\draw[-] (2) -- node[midway, right,] {$|b|$} (4);
\draw[-] (3) -- node[midway, above,] {$|a+qb|$} (4);
\end{tikzpicture}
    \caption{Tangential weight graph for $\Hir(q;a,b)$}
    \label{fig:HirzabTWG}
\end{figure}
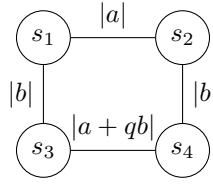
with $s_1=\sign(ab)$, $s_2=-s_1$, $s_3=-s_4$, $s_4=\sign(ab+qb^2)$.
\end{corr}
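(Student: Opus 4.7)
The plan is to read off the tangential weight graph directly from Proposition \ref{HirzeWeights}. First I would verify that $p_1,p_2,p_3,p_4$ are the only fixed points and that no fixed surfaces arise. A point $[x_1:\cdots:x_{q+4}]\in\Hir(q)$ is $S^1$-fixed iff all nonzero coordinates lie in a single weight class of the coordinate weights $0,\,b,\,a,\,a+b,\,\ldots,\,a+(q+1)b$. For $j\in\{4,\ldots,q+3\}$, the coordinate $x_j$ occupies two off-diagonal positions in the $2\times(q+2)$ matrix, so a rank-$\le 1$ matrix whose support consists only of a single such $x_j$ does not exist; thus the only single-support fixed points are $p_1,p_2,p_3,p_4$. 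Multi-support fixed points require weight coincidences among distinct coordinates; the coprimality of $a$ and $b$, together with $a+qb\neq 0$, either forbids such coincidences or shows that any rank-one matrix with such mixed support already lies in one of the four $S_{ij}$, so no new fixed loci appear.

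Next, for each of the four spheres $S_{12},S_{13},S_{24},S_{34}$ I would restrict the $S^1$-action to the sphere and read off its weight using Proposition \ref{HirzeWeights}(iii): the tangent weight at either endpoint equals the weight of the effective $S^1$-action on the sphere, yielding $|a|$ for $S_{12}$, $|b|$ for $S_{13}$ and $S_{24}$, and $|a+qb|$ for $S_{34}$. The non-zeroness conditions make each weight positive, so by Proposition \ref{spherearc} each $S_{ij}$ is an exceptional sphere whose quotient arc connects $p_i^\ast$ and $p_j^\ast$. These four arcs glue into the square graph of Figure \ref{fig:HirzabTWG}, and the connectivity data is correctly recorded.

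Finally, I would compute the sign at each isolated fixed point using the tangent-space decompositions from Proposition \ref{HirzeWeights}(iii):
\begin{align*}
T_{p_1}\Hir(q;a,b)&\cong V^0_{a}\oplus V^0_{b}, &s_1&=\sign(ab),\\
T_{p_2}\Hir(q;a,b)&\cong V^0_{-a}\oplus V^0_{b}, &s_2&=\sign(-ab)=-s_1,\\
T_{p_3}\Hir(q;a,b)&\cong V^0_{-b}\oplus V^0_{a+qb}, &s_3&=\sign(-b(a+qb))=-\sign(ab+qb^2),\\
T_{p_4}\Hir(q;a,b)&\cong V^0_{-b}\oplus V^0_{-a-qb}, &s_4&=\sign(b(a+qb))=\sign(ab+qb^2).
\end{align*}
Hence $s_2=-s_1$ and $s_3=-s_4$, in agreement with the figure.

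The main obstacle is the combinatorial verification that no extra fixed locus or exceptional sphere is created by accidental weight-class coincidences among the coordinates; the rank-$\le 1$ constraint defining $\Hir(q)$ keeps this finite bookkeeping manageable, and the conditions $\gcd(a,b)=1$ and $a+qb\neq 0$ are designed precisely so that any such coincidence either cannot satisfy the rank constraint or lies already on one of the four edges $S_{ij}$. Once this is in hand, the remaining sign and weight computations are immediate from Proposition \ref{HirzeWeights}.
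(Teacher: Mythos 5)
Your proposal is correct and follows essentially the same route as the paper: Corollary \ref{ExceptionalHirTWG} is read off directly from Proposition \ref{HirzeWeights}, and your sign computations at $p_1,\dots,p_4$ and edge weights for $S_{12},S_{13},S_{24},S_{34}$ match the paper's exactly. The only difference is that you additionally (and correctly, if somewhat sketchily) verify that no further fixed points, fixed surfaces, or exceptional spheres occur — a point the paper leaves implicit — which is a welcome tightening rather than a change of method.
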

Compare \cite[Example 2.10]{Jang_Hirzebruch}.\par
\begin{prop}\label{FixedHirTWG}
    For $q\geq 0$, the tangential weight graph of $\Hir(q;1,0)$ is the one given in Figure \ref{fig:HirzqTWG}.
        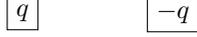
\begin{figure}
            \centering
            \begin{tikzpicture}[node distance={30mm}, main/.style = {draw, circle}]
\node[draw] (1) at (0,-3.6) {$q$}; 
\node[draw] (4) at (2,-3.6)  {$-q$};

\end{tikzpicture}
            \caption{Tangential weight graph for $\Hir(q;1,0)$}
            \label{fig:HirzqTWG}
        \end{figure}
\end{prop}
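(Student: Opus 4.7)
The plan is to explicitly identify the fixed locus, rule out exceptional spheres, compute the complex normal bundles via the $\mathbb{CP}^1$-bundle structure on $\Hir(q)$, and then convert the Chern numbers into tangential weight labels by carefully tracking orientations.

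First, from the action formula for $\Hir(q;1,0)$ I will verify directly that a point is fixed if and only if either $(x_1, x_2) = 0$ (cutting out $S_{24}$) or $x_3 = \cdots = x_{q+4} = 0$ (cutting out $S_{13}$), and at every other point the $S^1$-stabilizer is trivial; in particular there are no exceptional spheres. So the tangential weight graph consists of exactly two square vertices joined by no edges, corresponding to the fixed $2$-spheres $S_{13}$ and $S_{24}$ from Proposition \ref{HirzeWeights}.

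Next, I will compute the normal bundles of the two fixed spheres using the bundle projection $\pi:\Hir(q)\to\mathbb{CP}^1$ of Proposition \ref{HirzeBundle}. Both $S_{13}$ and $S_{24}$ are smooth sections of $\pi$, corresponding in the trivializations $\varphi_1,\varphi_2$ to the fiber points $[1:0]$ and $[0:1]$ respectively. The transition $\psi_z([\alpha:\beta]) = [\alpha:\beta z^q]$ from the proof of Proposition \ref{HirzeBundle} acts on the fiber normal directions by multiplication by $z^q$ at $S_{13}$ and by $z^{-q}$ at $S_{24}$, which (by comparison with the standard transition for $\mathcal{O}(-1)$) identifies the complex normal bundle of $S_{13}$ with $\mathcal{O}_{\mathbb{CP}^1}(-q)$ (first Chern number $-q$) and of $S_{24}$ with $\mathcal{O}_{\mathbb{CP}^1}(q)$ (first Chern number $+q$). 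Moreover, the $S^1$-action acts on each fiber of $\pi$ by the standard weight-$(\pm 1)$ rotation fixing $S_{13}$ and $S_{24}$, so the orbit space is $M^\ast \cong \mathbb{CP}^1 \times I$ with $S_{13}$ and $S_{24}$ as its two boundary components, and the action weight on the normal bundle is $+1$ at $S_{13}$ and $-1$ at $S_{24}$.

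Finally, I will translate these Chern numbers into tangential weight labels by tracking two orientation effects: the orientation of each fixed sphere as a component of $\partial M^\ast$ relative to its complex orientation, and the orientation of the principal $S^1$-fiber relative to the complex normal orientation. Computing the $M^\ast$-orientation induced by the complex orientation on $\Hir(q)$ and the action-induced fiber orientation shows that the boundary orientation on $S_{13}$ is opposite to its complex orientation while on $S_{24}$ it agrees; combined with the $w \cdot c_1(N)$ formula for the Euler number of $U^\perp S$ (where $w\in\{\pm 1\}$ is the weight), the labels come out to $(-1)\cdot(+1)\cdot(-q)=+q$ at $S_{13}$ and $(+1)\cdot(-1)\cdot(+q)=-q$ at $S_{24}$, as claimed. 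The hard part will be this orientation bookkeeping: three orientation conventions interact and each potential sign flip must be tracked carefully, but once set up correctly the underlying bundle computations are routine.
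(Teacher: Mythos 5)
Your approach is correct and reaches the right answer, but it takes a genuinely different route from the paper, and the difference is instructive. The paper first invokes Fintushel's Lemma 5.1, which (for a closed simply-connected $4$--manifold whose fixed set consists of exactly two surfaces and nothing else) forces the two square-vertex labels to be $e$ and $-e$ for some $e \geq 0$. This reduces the problem to computing a single Euler number \emph{up to sign}, so the proof can, as it says, ``ignore which orientation are used to calculate the Euler class''; the paper then gets $|e| = q$ by writing sections of the complementary $\mathbb{C}^\times$--bundle over a triangulation of $\mathbb{CP}^1$ and taking the degree of the resulting transition function. Your plan instead computes each label independently: you read off the normal bundle degrees of the two sections from the clutching function $\psi_z$ of the $\mathbb{CP}^1$--bundle (correctly getting $-q$ at $S_{13}$ and $+q$ at $S_{24}$, which is essentially the same computation as the paper's cocycle), and then convert each $c_1$ to a signed Euler number of $U^\perp S$ by comparing the complex orientation of $S$, its orientation as a component of $\del M^\ast$, and the weight $w = \pm 1$ of the normal $S^1$--action. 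Your final numbers match, so the plan does yield a proof in principle. However, the step you yourself flag as ``the hard part'' --- the orientation bookkeeping, including the claim about which boundary orientation agrees with which complex orientation and the precise form of the $w\cdot c_1(N)$ conversion --- is asserted rather than derived, and these are exactly the delicate sign conventions one is prone to get wrong. The paper's appeal to Lemma 5.1 is what makes that bookkeeping unnecessary; without it, you would need to carry out the orientation comparison carefully from the definition of the square-vertex label, or else invoke the lemma yourself and use the resulting $e,-e$ constraint as a consistency check on a single sign.
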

\begin{proof}
    Inspecting the $S^1$--action on $\Hir(q;1,0)$, $S_{13}$ and $S_{24}$ are fixed surfaces.
    By Proposition \ref{fixedappear}(iv), these fixed surfaces contain the only fixed or exceptional points in $\Hir(q;1,0)$. By \cite[Lemma 5.1]{FintushelSimply4Circle}, the tangential weight graph has two square vertices with labels $e$ and $-e$ for some $e\geq 0$. This means we only need to determine the Euler class of $U^\perp S_{24}$ up to sign. In other words, we can ignore which orientation are used to calculate the Euler class. \par
    Note that the $S^1$--action on $\Hir(q)$ extends to a $\mathbb{C}^\times$--action. Namely,
    \[g\cdot\begin{Bmatrix}
         x_1 & x_3 & x_4 & \cdots & x_{q+3}\\
         x_2 & x_4 & x_5 & \cdots & x_{q+4}
     \end{Bmatrix}=\begin{Bmatrix}
         x_1 & gx_3 & gx_4 & \cdots & gx_{q+3}\\
         x_2 & gx_4 & gx_5 & \cdots & gx_{q+4}
     \end{Bmatrix}\text{ for }g\in\mathbb{C}^\times.\]
    The non-principal orbits of this action are exactly the fixed points in $S_{13}$ and $S_{24}$. Let $E=\Hir(q)\setminus (S_{13}\cup S_{24})$. Because the projection $\Hir(q)\to \mathbb{CP}^1$ is $\mathbb{C}^\times$--invariant under this action, this means $\pi|_{E}:E\to \mathbb{CP}^1$ is a principal $\mathbb{C}^\times$--bundle. Then, the Euler class of $E$ will agree with the Euler class of $U^\perp S_{24}$.\par
    Triangulate $\mathbb{CP}^1$ so that there is a (closed) simplex $\sigma$ such that
    \[[0:1]\in \sigma\subseteq \mathbb{CP}^1\setminus \{[1:0]\}\]
    and $\del \sigma=\{[\alpha:\beta]:|\alpha|=|\beta|\}$
    Let $L$ be the complement of the interior of $\sigma$.
    For $[\alpha:\beta]\in L$,
    \[[\alpha:\beta]\mapsto  \begin{Bmatrix}
         \alpha^{q+1} & \alpha^{q+1} & \alpha^{q}\beta & \cdots & \alpha\beta^q\\
         \alpha^q\beta & \alpha^q\beta & \alpha^{q-1}\beta^2 & \cdots & \beta^{q+1}
     \end{Bmatrix}\]
     defines a section $s:L\to E|_{L}$. Similarly, for $[\alpha:\beta]\in \sigma$,
     \[[\alpha:\beta]\mapsto  \begin{Bmatrix}
         \alpha\beta^q & \alpha^{q+1} & \alpha^{q}\beta & \cdots & \alpha\beta^q\\
         \beta^{q+1} & \alpha^q\beta & \alpha^{q-1}\beta^2 & \cdots & \beta^{q+1}
     \end{Bmatrix}\]
     defines a section $t:\sigma\to E|_{\sigma}$
     Then, the transition function $\phi:\del \sigma\to\mathbb{C}^\times$ is
     given by
     \[[\alpha:\beta]\mapsto\frac{\beta^q}{\alpha^q}.\]
     Thus, the $2$-cochain defining the Euler class assigns $-q=\deg(\phi)$ to the simplex $\sigma$. Any other  $2$-simplex $\sigma'$ in the triangulation is assigned $0$ because it would be contained in $L$. This means that $-q$ is the Euler number for the bundle.
\end{proof}

\section{Tangential weight data for flag varieties}\label{sec:TWD4flags}
Now, we will explain a general method for calculating the tangential weight data of a flag variety of $\SL(n,\mathbb{C})$ as defined in Section \ref{classgrpflg}. In Section \ref{sec:calculations4fibers}, we apply this specifically to flag varieties listed in Proposition \ref{3dflagcases}.
\subsection{Invariant flags and their tangent spaces}

Let $V$ be an $n$--dimensional $\mathbb{C}$--vector space with a $\mathbb{C}$--linear $S^1$--action.
By \cite[Prop.\ 8.1]{Bröcker1985}, there are distinct integers $w_1,\hdots w_k\in\mathbb{Z}$ (called weights) with multiplicities $\delta_1,\hdots, \delta_k\geq 1$ so that $$V\cong (V_{w_1}^0)^{\delta_1}\oplus\cdots\oplus (V_{w_k}^0)^{\delta_k}$$. This is known as the weight decomposition of $V$ and vector in $V$ whose $\mathbb{C}$--span is $S^1$--invariant is called a weight vector. Note in the case $w=0$, then $V_0^0$ should denote the \textit{complex} trivial representation $(V=\mathbb{C},\rho:S^1\to\mathbb{C}^\times)$ with $\rho(g)=1$. 
\begin{defn}
    Let $F^\bullet=(F^{d_1},\hdots,F^{d_\ell})\in\Flag_{d_1,\hdots,d_\ell}(V)$ be a flag in $V$ such that each $F^{d_j}$ is $S^1$--invariant. Then, a weight flag basis is a sequence of vectors $v_1,\hdots,v_n\in V$ that has a sequence of weights $\omega_1,\hdots,\omega_n\in\{ w_1,\hdots, w_k\}$ such that $g\cdot v_i=g^{\omega_i}\cdot v_i$ for $i\in \{1,\hdots,n\}$ and $F^{d_j}=\langle v_1,\hdots, v_{d_j}\rangle$ for each $j\in\{1,\hdots,\ell\}$.
\end{defn}
Any $S^1$--invariant flag admits a weight flag basis, by inductively finding a weight decomposition for each subspace in the flag. \par

Let $\mathcal{F}=\Flag_{d_1,\hdots,d_{\ell}}(\mathbb{C}^n)$ be the flag variety of a fixed signature $(d_1,\hdots,d_\ell)$. Let $P_{d_1,\hdots,d_\ell}$ be the standard parabolic subgroup of $\GL(n,\mathbb{C})$ which stabilizes the standard flag $E^\bullet_{\vartheta}$ of signature $(d_1,\hdots,d_\ell)$ and let $\mathfrak{p}\subseteq\mathbb{C}^{n\times n}$ be the Lie algebra of $P$, which has block triangular form with block sizes $d_1,\hdots,d_\ell$. This means $\mathfrak{p}$ is spanned by $\left\{E_{ij}\right\}_{(i,j)\notin\mathcal{I}}$, where $E_{ij}$ is the $n\times n$ matrix whose only non-zero entry is $1$ in the $i$--th row and $j$--th column and
\[\mathcal{I}= \mathcal{I}_{d_1,\ldots,d_\ell}=\mathbb{N}^2\cap\left(\bigcup_{r=1}^{\ell}(d_r,n]\times [1,d_r]\right).\]
Note that $P=\GL(n,\mathbb{C})\cap\mathfrak{p}$. Let $\mathfrak{u}$ be the span of $\left\{E_{ij}\right\}_{(i,j)\in\mathcal{I}}$ in $\mathbb{C}^{n\times n}$. \par
Consider the flag variety $\mathcal{F}=\Flag_{d_1,\hdots,d_\ell}(V)$ and
    let $F^\bullet\in \mathcal{F}$ be an $S^1$--invariant flag with weight flag basis $v_1,\hdots,v_n$ and corresponding weights $\omega_1,\hdots, \omega_n$.
\begin{defn}\label{affinechart}
    The affine chart of $\mathcal{F}$ relative to the flag basis $v_1,\hdots,v_n$ is the map $\varphi_{v_1,\hdots,v_n}:\mathfrak{u}\to \mathcal{F}$ so that $u\mapsto (I_n+u)\cdot F^\bullet$ where $I_n+u\in\GL(n,\mathbb{C})$ acts on $\mathcal{F}=\Flag_{d_1,\hdots,d_\ell}(V)$ through the isomorphism $\GL(V)\cong \GL(n,\mathbb{C})$ induced by the basis $v_1,\hdots,v_n$. \par
    Explicitly, an element 
    \[u=\sum_{(i,j)\in\mathcal{I}}u_{ij}E_{ij}\] is mapped to the flag $(U^{d_1},\hdots, U^{d_\ell})$ where $U^{d_r}$ is spanned by the vectors
    \[v_j+\sum_{d_r<i\leq n}u_{ij}v_i\text{ for }1\leq j\leq d_r.\]
\end{defn}

An affine chart will give us a way to model the tangent space of a flag variety.

\begin{prop}\label{FlagTangentModel}
    The following hold:
    \begin{enumerate}
        \item The map $\varphi_{v_1,\dots,v_n}:\mathfrak{u}\to\mathcal{F}$ is $S^1$--equivariant if $\mathfrak{u}$ is given the linear $S^1$--action $g\cdot E_{ij}=g^{\omega_i-\omega_j}E_{ij}$ for $(i,j)\in \mathcal{I}$.
        \item The tangent space $T_{F^\bullet}\mathcal{F}$ is isomorphic to $\bigoplus_{(i,j)\in\mathcal{I}} V_{\omega_i-\omega_j}^0$ as an $S^1$--representation.
    \end{enumerate}
\end{prop}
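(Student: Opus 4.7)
The plan is to prove (1) by a direct matrix computation in the basis $v_1,\hdots,v_n$, using the $S^1$--invariance of $F^\bullet$, and then to deduce (2) from (1) by showing that the differential $d\varphi_{v_1,\hdots,v_n}|_0$ is an $S^1$--equivariant linear isomorphism between $\mathfrak{u}$ and $T_{F^\bullet}\mathcal{F}$. Both steps are essentially standard facts about flag varieties; the only real care is in matching conventions between the basis, the flag, and the block structure defining $\mathcal{I}$.

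For (1), the fact that each $v_i$ is a weight vector of weight $\omega_i$ means that, with respect to the chosen basis, the action of $g\in S^1$ on $V$ is represented by the diagonal matrix $D_g=\diag(g^{\omega_1},\hdots,g^{\omega_n})$. Since $F^\bullet$ is $S^1$--invariant, $D_g\cdot F^\bullet=F^\bullet$, and so
\[
g\cdot\varphi_{v_1,\hdots,v_n}(u)=D_g(I_n+u)\cdot F^\bullet=\left(D_g(I_n+u)D_g^{-1}\right)\cdot(D_g\cdot F^\bullet)=(I_n+D_guD_g^{-1})\cdot F^\bullet.
\]
An entry-wise computation gives $D_gE_{ij}D_g^{-1}=g^{\omega_i-\omega_j}E_{ij}$, so conjugation by $D_g$ on $\mathfrak{u}$ is precisely the $S^1$--action described in the statement, and thus $g\cdot\varphi_{v_1,\hdots,v_n}(u)=\varphi_{v_1,\hdots,v_n}(g\cdot u)$.

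For (2), identify $\mathcal{F}$ with the homogeneous space $\GL(V)/\Stab(F^\bullet)$. The basis $v_1,\hdots,v_n$ makes $F^\bullet$ look like the standard flag of signature $(d_1,\hdots,d_\ell)$, so through the isomorphism $\GL(V)\cong\GL(n,\mathbb{C})$, $\Stab(F^\bullet)$ is precisely $P$ with Lie algebra $\mathfrak{p}$. Consequently, $T_{F^\bullet}\mathcal{F}\cong \mathfrak{gl}(V)/\mathfrak{p}\cong\mathbb{C}^{n\times n}/\mathfrak{p}$, and since $\mathbb{C}^{n\times n}=\mathfrak{p}\oplus\mathfrak{u}$ as vector spaces, the composition $\mathfrak{u}\hookrightarrow\mathbb{C}^{n\times n}\twoheadrightarrow\mathbb{C}^{n\times n}/\mathfrak{p}$ is a linear isomorphism. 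Differentiating the curve $t\mapsto (I_n+tu)\cdot F^\bullet$ at $t=0$ identifies this isomorphism with $d\varphi_{v_1,\hdots,v_n}|_0\colon\mathfrak{u}\to T_{F^\bullet}\mathcal{F}$, which by part (1) is $S^1$--equivariant. Reading off the weight decomposition of $\mathfrak{u}$ from the action $g\cdot E_{ij}=g^{\omega_i-\omega_j}E_{ij}$ then yields $T_{F^\bullet}\mathcal{F}\cong\bigoplus_{(i,j)\in\mathcal{I}}V^0_{\omega_i-\omega_j}$ as $S^1$--representations. The only bookkeeping point to watch is that a priori one might confuse the weight on $E_{ij}$ with $\omega_j-\omega_i$ instead of $\omega_i-\omega_j$; the sign convention is pinned down unambiguously by the conjugation computation in paragraph two.
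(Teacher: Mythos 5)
Your proof is correct and follows essentially the same route as the paper: part (1) is the same computation, just packaged as conjugation of $I_n+u$ by the diagonal matrix $D_g$ rather than the paper's explicit renormalization of the spanning vectors of each $U^{d_r}$ by $g^{-\omega_j}$, and part (2) deduces the weight decomposition from the equivariance and bijectivity of $d\varphi_{v_1,\hdots,v_n}|_0$ exactly as the paper does (your justification via $\mathfrak{gl}(V)/\mathfrak{p}\cong\mathfrak{u}$ is a slightly more explicit account of why the differential is an isomorphism, which the paper takes for granted since $\varphi$ parametrizes an open Schubert cell).
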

\begin{proof}
    Take an arbitrary $u=\sum_{(i,j)\in\mathcal{I}}u_{i,j}E_{ij}$ in $\mathfrak{u}$. Then, by definition, $\varphi_{v_1,\hdots,v_n}(u)=U^\bullet=(U^{d_1},\hdots,U^{d_{\ell}})$ where $U^{d_r}$ is the subspace of $V$ spanned by 
    \[v_j+\sum_{d_r<i\leq n}u_{ij}v_i\text{ for $1\leq j\leq d_r$}.\]
 Then, $g\cdot U^\bullet$ is spanned by the vectors.
\[v_j'=g\cdot v_j+g\cdot \left(\sum_{d_r<i\leq n}u_{i,j}v_i\right)=g^{\omega_j}v_j+\sum_{d_r<i\leq n}g^{\omega_{i}}u_{i,j}v_i\text{ for $1\leq j\leq d_r$}.\]
In other words, $v_1',\hdots,v_{n}'$ is a flag basis for $g\cdot U^\bullet$. Then, $g^{-\omega_1}v_1',\hdots,g^{-\omega_n}v'_n$ is also a flag basis for $g\cdot U^\bullet$. Then, $g\cdot U^{d_r}$ is spanned by the vectors
\[g^{-\omega_j}v_j'=v_j+\sum_{d_r<i\leq n}g^{\omega_{i}-\omega_j}u_{i,j}v_i\text{ for $1\leq j\leq d_r$},\]
which means 
\[g\cdot U^\bullet=\varphi_{v_1,\hdots,v_n}\left(\sum_{(i,j)\in\mathcal{I}}g^{\omega_i-\omega_j}u_{ij}E_{ij}\right).\]
So if $\mathfrak{u}$ is given the linear $S^1$--action with $g\cdot E_{ij}=g^{\omega_i-\omega_j}E_{ij}$, then $\varphi_{v_1,\hdots,v_n}:\mathfrak{u}\to\mathcal{F}$ is $S^1$--equivariant. 

Then, the differential of $\varphi_{v_1,\hdots, v_n}$ at $0$ is a $S^1$--equivariant linear isomorphism between $\mathfrak{u}$ and $T_{F^\bullet}\mathcal{F}$. The described action on $\mathfrak{u}$ means $E_{ij}$ is a weight vector of weight $\omega_i-\omega_j$ and thus there is also an $S^1$--equivariant linear isomorphism between $\mathfrak{u}$ and $\bigoplus_{(i,j)\in\mathcal{I}}V_{\omega_i-\omega_j}^0$.
\end{proof}
Now, we will use this affine chart to give a way to explicitly identify exceptional spheres inside flag varieties.
\begin{prop}\label{ExceptInFlags}
    For $(i,j)\in\mathcal{I}$, let $q_1,\hdots,q_n$ be the sequence of basis vectors $v_1,\hdots,v_n$ with $v_i$ and $v_j$ swapped, i.e.
    \[(q_1,\hdots,q_n)= (v_1,\hdots,v_{j-1},v_i,v_{j+1},\hdots,v_{i-1},v_j,v_{i+1},\hdots, v_n).\]
    Consider the flag $Q^{\bullet}$ with flag basis $q_1,\hdots,q_n$, and the corresponding affine chart of $\mathcal{F}$ relative to the flag basis $\varphi_{q_1,\hdots,q_n}$. Then, the following hold:
    \begin{enumerate}[label=\normalfont{(\roman*)}]
        \item $\varphi_{v_1,\hdots,v_n}(\lambda E_{ij})=\varphi_{q_1,\hdots,q_n}(\lambda^{-1}E_{ij})$ for $\lambda\in\mathbb{C}^\times$.
        \item Endow $\mathbb{CP}^1$ with the $S^1$--action given by
        \[g\cdot[z_0:z_1]=[g^{\omega_i-\omega_j}z_0:z_1]\text{ for }g\in S^1.\]
        Then, the map $\psi:\mathbb{CP}^1\to \mathcal{F}$ given by
        \[\psi([\alpha:\beta])=\begin{cases}
            \varphi_{v_1,\hdots,v_n}(\lambda E_{ij}) & [\alpha:\beta]=[\lambda:1]\\
            Q^{\bullet} & [\alpha:\beta]=[1:0]
        \end{cases}\]
        is an $S^1$--equivariant embedding.
    \end{enumerate}
\end{prop}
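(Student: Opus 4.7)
The plan is to verify (i) by a direct matrix calculation of both affine charts and then derive (ii) by combining (i) with Proposition \ref{FlagTangentModel}(1) together with the standard fact that an affine chart on a flag variety is an open embedding.

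For part (i), I would compare the two flags component-by-component. The left-hand side $\varphi_{v_1,\ldots,v_n}(\lambda E_{ij})$ alters $F^\bullet$ only in those components with $j \leq d_r < i$, where the new subspace $U^{d_r}$ is obtained by replacing $v_j$ by $v_j + \lambda v_i$ in the flag basis. For the right-hand side $\varphi_{q_1,\ldots,q_n}(\lambda^{-1} E_{ij})$, I would first observe that $Q^{d_r} = F^{d_r}$ when $d_r < j$ or $d_r \geq i$, and $Q^{d_r} = \langle v_1,\ldots,v_{j-1},v_i,v_{j+1},\ldots,v_{d_r}\rangle$ when $j \leq d_r < i$. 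The matrix $I_n + \lambda^{-1}E_{ij}$ written in the basis $q_\bullet$ fixes every $q_k$ except $q_j = v_i$, which is sent to $v_i + \lambda^{-1} v_j$. Rescaling this modified vector by $\lambda$ inside the span produces $\lambda v_i + v_j$, matching the left-hand side in the nontrivial range of $r$, while the untouched components trivially agree.

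For part (ii), equivariance on the open set $\{[\lambda:1] : \lambda \in \mathbb{C}\}$ is immediate from Proposition \ref{FlagTangentModel}(1): the $S^1$-action on $\mathfrak{u}$ restricted to the line $\mathbb{C}\cdot E_{ij}$ has weight $\omega_i - \omega_j$, which is exactly the chosen $\mathbb{CP}^1$-action on the $z_0$ coordinate. The point $[1:0]$ is a fixed point of that circle action, and $Q^\bullet$ is manifestly $S^1$-invariant since $q_1,\ldots,q_n$ is a permutation of the weight basis $v_\bullet$, so equivariance extends across it. Smoothness and the embedding property follow by atlas-ing $\mathbb{CP}^1$ by its two standard charts: on $\{[\alpha:\beta] : \beta \neq 0\}$, $\psi$ is the restriction of $\varphi_{v_1,\ldots,v_n}$ to the line $\mathbb{C}\cdot E_{ij}$, while on $\{[\alpha:\beta] : \alpha \neq 0\}$, part (i) identifies $\psi$ with the restriction of $\varphi_{q_1,\ldots,q_n}$ to the same line. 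Both affine charts are open embeddings of the flag variety, and the restriction of an open embedding to a linear subspace is still a smooth embedding.

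For global injectivity I would check that $Q^\bullet$ does not lie in the image of $\varphi_{v_1,\ldots,v_n}$: the latter consists only of flags transverse to the complementary subspaces $\langle v_{d_r+1},\ldots,v_n\rangle$, whereas $Q^{d_r}$ contains $v_i \in \langle v_{d_r+1},\ldots,v_n\rangle$ for any $r$ with $j \leq d_r < i$. Therefore the two chart images overlap only on $\mathbb{C}^\times \cdot E_{ij}$, where (i) ensures consistency, and combined injectivity on each chart gives global injectivity of $\psi$.

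The only real obstacle is the bookkeeping in (i): keeping straight that $E_{ij}$ means different matrices in the two bases and that the swap of $v_i, v_j$ forces a compensating scaling between $\lambda$ and $\lambda^{-1}$. Once that identity is established, (ii) is essentially a packaging result, combining Proposition \ref{FlagTangentModel} with the elementary geometry of the two affine charts.
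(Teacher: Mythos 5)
Your proposal is correct and follows essentially the same route as the paper: part (i) by direct comparison of the two charts using the rescaling $\langle v_j+\lambda v_i\rangle=\langle v_i+\lambda^{-1}v_j\rangle$, and part (ii) by covering $\mathbb{CP}^1$ with the two standard charts, on each of which $\psi$ restricts an affine chart (an open embedding) to the line $\mathbb{C}E_{ij}$, with equivariance supplied by Proposition \ref{FlagTangentModel}. Your injectivity check (that $Q^\bullet$ fails the transversality condition defining the image of $\varphi_{v_1,\hdots,v_n}$) is a minor variant of the paper's observation that $F^\bullet\neq Q^\bullet$, and both arguments close the same gap.
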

\begin{proof}
    First, we prove (i).
    For $\lambda\in\mathbb{C}$, let $F^\bullet_\lambda=\varphi_{v_1,\hdots,v_n}(\lambda E_{ij})$. In particular, we have $F_0^\bullet=F^\bullet$. For $\lambda\neq 0$, the flag $F^\bullet_\lambda$ is, by definition, the flag $(F^{d_1},\hdots,F^{d_l})$ where 
    \[F^{d_r}=\begin{cases}
        \langle v_1,\hdots,v_{d_r}\rangle & d_r<j\\
        \langle v_1,\hdots, v_{j-1},v_{j}+\lambda v_i,v_{j+1},\hdots,v_{d_r}\rangle & d_r\geq j
    \end{cases}\]
    \[F^{d_r}=\begin{cases}
        \langle v_1,\hdots,v_{d_r}\rangle & d_r<j\\
        \langle v_1,\hdots, v_{j-1},v_{i}+\lambda^{-1} v_j,v_{j+1},\hdots,v_{d_r}\rangle & d_r\geq j
    \end{cases}\]
    On the other hand, let denote $Q_{\lambda^{-1}}^\bullet=\varphi_{q_1,\hdots,q_n}(\lambda^{-1}E_{ij})$ for $\lambda\in\mathbb{C}^\times$. Then,
    \[Q_{\lambda^{-1}}^{d_r}=\begin{cases}
        \langle q_1,\hdots,q_{d_r}\rangle & d_r<j\\
        \langle q_1,\hdots, q_{j-1},q_{j}+\lambda^{-1} q_i,q_{j+1},\hdots,q_{d_r}\rangle & d_r\geq j
    \end{cases}\]
    If we rewrite this in terms of $v_1,\hdots,v_n$, we see that 
    \[Q_{\lambda^{-1}}^{d_r}=\begin{cases}
        \langle v_1,\hdots,v_{d_r}\rangle & d_r<j\\
        \langle v_1,\hdots, v_{j-1},v_{i}+\lambda^{-1} v_j,v_{j+1},\hdots,v_{i-1},v_j,v_{i+1}\hdots,v_{d_r}\rangle & d_r\geq j
    \end{cases}\]
    We note that
    \[\langle v_{i}+\lambda^{-1}v_j,v_j\rangle=\langle v_i,v_j\rangle=\langle v_i+\lambda^{-1}v_j,v_{i}\rangle\]
    and thus for $j\geq d_r$, $Q^{d_r}_{\lambda^{-1}}$ can also be thought of as the subspace spanned by
    \[v_1,\hdots,v_{j-1},v_{i}+\lambda^{-1}v_j,v_{j+1},\hdots,v_{d_r}.\]
    In other words, $F^{d_r}_{\lambda}$ and $Q^{d_r}_{\lambda^{-1}}$ are spanned by the same vectors. We conclude that
    \[\varphi_{v_1,\hdots,v_n}(\lambda E_{ij})=F^\bullet_{\lambda}=Q^{\bullet}_{\lambda^{-1}}=\varphi_{q_1,\hdots,q_n}(\lambda^{-1}E_{ij}).\]
    This completes the proof of (i).
    
    To show (ii), it is sufficient to show that $\psi:\mathbb{CP}^1\to\mathcal{F}$ is an $S^1$--equivariant, injective immersion. Then, compactness of $\mathbb{CP}^1$ will imply that $\psi$ is an embedding. We will show this by restricting to the two affine charts $\chi_0,\chi_{\infty}:\mathbb{C}\to\mathbb{CP}^1$ given by $\chi_0(\lambda)=[\lambda:1]$ and $\chi_{\infty}(\lambda)=[1:\lambda]$ for $\lambda\in\mathbb{C}$.
    
    First, consider the chart $\chi_0$. Then,
    \begin{align*}
        \psi(\chi_0(\lambda))&=\psi([\lambda:1])=\varphi_{v_1,\hdots,v_n}(\lambda E_{ij}).
    \end{align*}
    Note that $\varphi_{v_1,\hdots,v_n}$ is a parameterization of an open Schubert cell and hence, in particular, an embedding. This means that $\psi\circ\chi_0$ is an injective immersion. Furthermore, for $g\in S^1$ and $\lambda\in \mathbb{C}$, we can use Proposition \ref{FlagTangentModel} to conclude that
    \begin{align*}
        g\cdot \psi(\chi_0(\lambda))&=g\cdot \varphi_{v_1,\hdots,v_n}(\lambda E_{ij})\\
        &=\varphi_{v_1,\hdots,v_n}(g^{\omega_i-\omega_j}\lambda E_{ij})\\
        &=\psi(\chi_0(g^{\omega_i-\omega_j}\lambda))\\
        &=\psi([g^{\omega_i-\omega_j}\lambda:1]) \\
        &=\psi(g\cdot [\lambda:1])\\
        &=\psi(g\cdot\chi_0(\lambda))
    \end{align*}
    and thus the map $\psi$ is $S^1$--equivariant on the chart $\chi_0$. 

    Note that part (i) implies that
    \[\psi(\chi_\infty(\lambda))=\psi([1:\lambda])=\varphi_{q_1,\hdots,q_n}(\lambda E_{ij}).\]
    Thus, we can use the same reasoning as before to conclude that $\psi$ restricted to the chart $\psi_\infty$ is a $S^1$--equivariant, injective, immersion. 

    Being $S^1$--equivariant or an immersion is a local property so this implies that $\psi$ is $S^1$--equivariant immersion. Injectivity is not a local condition but because each chart excludes a single point ($[1:0]$ for $\chi_0$ and $[0:1]$ for $\chi_\infty$), we only need to note that
    \[\psi([0:1])=F^\bullet\neq Q^\bullet=\psi([1:0])\]
    to conclude that $\psi$ is, in fact, injective. This completes the proof of (ii).
\end{proof}
The information that we glean from Proposition \ref{FlagTangentModel} and Proposition \ref{ExceptInFlags} can be represented visually in a matrix. 
\begin{defn}\label{DiffMatrix}
    Let $V$ be an $n$--dimensional complex vector space with a linear $\SOrth(2)$--action and $F^\bullet\in\Flag_{d_1,\hdots,d_\ell}$ be an $\SOrth(2)$--invariant flag. We define a $\SOrth(2)$--difference matrix for $F^\bullet$ as follows:\\
    Choose a weight flag basis $v_1,\hdots,v_n$ for $F^\bullet$. The columns and rows of the difference matrix are labeled by $v_1,\hdots,v_n$. If $(i,j)\notin\mathcal{I}$, then the entry of the $i$--th row and $j$--th column is $\ast$. If $(i,j)\in\mathcal{I}$, then the entry of the $i$--th row and $j$--th column is $\omega_i-\omega_j$. \par
    Now assume that $-I_2\in\SOrth(2)$ acts trivially on $\mathbb{P}(V)$. Then, a $\PSOrth(2)$--difference matrix for $F^\bullet$ is as above except the entry in $i$--th row and $j$--column is labeled by $\frac{\omega_i-\omega_j}{2}$ if $(i,j)\in\mathcal{I}$.
\end{defn}
How to interpret a difference matrix with respect to the above propositions will be demonstrated in the next example and utilized throughout Section \ref{sec:calculations4fibers}.
\begin{xmpl}
    Endow $V=\mathbb{C}^3$ with the linear $\SOrth(2)$--action so that
    \[R_\theta\cdot v=\begin{bmatrix}
        e^{2\theta} & & \\ & 1 & \\ & & e^{-2\theta}
    \end{bmatrix}\]
    Then, if $v_{1},v_2,v_{3}$ denotes the standard ordered basis for $V=\mathbb{C}^3$, then they form a weight basis with weights $\omega_1=2,\omega_2=0,\omega_3=-2$. Let $F^\bullet$ be the flag of signature $(1,2)$ with flag basis $v_{2},v_{0},v_{-2}$ so that $F^\bullet$ is an $\SOrth(2)$--invariant flag, i.e. \\
    \[F^1=\langle f_2\rangle,\ \ F^2=\langle f_2,f_0\rangle\]
    Then, the $\SOrth(2)$--difference matrix for $F^\bullet$ is
    \begin{center}
        \begin{tabular}{c|ccc}
             $F^\bullet$ & $v_{2}$ & $v_{0}$ & $v_{-2}$  \\ \hline
             $v_{2}$ & $\ast$ & $\ast$  & $\ast$\\
             $v_{0}$ & $-2$ & $\ast$  & $\ast$\\
             $v_{-2}$ & $-4$ & $-2$  & $\ast$
        \end{tabular}
    \end{center}
    This means that through the identification $S^1\cong \SOrth(2)$, $T_{F^\bullet}\Flag_{1,2}(V)$ is isomorphic to $V^0_{-2}\oplus V_{-2}^0\oplus V_{-4}^0$. The entry in row $v_{0}$ and column $v_2$ corresponds to the tangent direction of flags, for $\lambda\in\mathbb{C}$, of the form
    \[F^\bullet_{\lambda}=\varphi_{v_2,v_0,v_{-2}}\left(\begin{bmatrix}
       0 & 0 & 0\\ \lambda & 0 & 0\\ 0 & 0 & 0
    \end{bmatrix}\right)=\begin{bmatrix}
        1 & 0 & 0 \\ \lambda & 1 & 0\\ 0 & 0 & 1
    \end{bmatrix}\cdot F^\bullet=\left(\langle v_2+\lambda v_0\rangle,\langle v_2,v_0\rangle\right).\]
    The $-2$ entry records that the tangent space of $\{F^\bullet_\lambda:\lambda\in\mathbb{C}\}$ at $F^\bullet$ is isomorphic as an $\SOrth(2)$--module to $V_{-2}^0$. As $\lambda\to\infty$, 
    \[F_\lambda^\bullet\to Q^\bullet=(\langle v_0\rangle,\langle v_0,v_2\rangle),\]
    which is exactly the flag with flag basis $v_0,v_2,v_{-2}$, i.e.\ where $v_2$ and $v_0$ have been swapped.\\
    Now, the action of $\SOrth(2)$ on $\Flag_{1,2}(V)$ is not effective; the weights $\omega_1=2,\omega_2=0,\omega_3=-2$ share a common factor of $2$. Thus, the matrix $\big[\begin{smallmatrix}
        -1 & 0 \\ 0 & -1
    \end{smallmatrix}\big]$ acts trivially and there is an induced $\PSOrth(2)$--action on $\Flag_{1,2}(V)$. This has the effect of halving the order of each stabilizer and thus halving the weights for each tangent space. This is why the $\PSOrth(2)$--difference matrix for $F^\bullet$ is
    \begin{center}
        \begin{tabular}{c|ccc}
             $F^\bullet$ & $v_{2}$ & $v_{0}$ & $v_{-2}$  \\ \hline
             $v_{2}$ & $\ast$ & $\ast$  & $\ast$\\
             $v_{0}$ & $-1$ & $\ast$  & $\ast$\\
             $v_{-2}$ & $-2$ & $-1$  & $\ast$
        \end{tabular}
    \end{center}
    Thus, the difference matrices provide a concise summary of the weights of a $\SOrth(2)$ (or $\PSOrth(2)$) action on the flag variety.
\end{xmpl}

\subsection{Fixed and exceptional points in the fiber}

Now, we will describe the fundamental observation that allows us to relate the the fixed and exceptional points of the fiber to the fixed and exceptional points of the flag variety.
\begin{lem}\label{CheapTrick}
    Let $\Omega_\rho^I\subsetneq\mathcal{F}_D$ be a Kapovich-Leeb-Porti domain for an $\iota$--Fuchsian representation $\rho=\iota\circ\phi:\pi_1(S_g)\to G$ and let $\bar{p}:\mathcal{F}_\eta\to\overline{\mathbb{H}^2}$ be the compactified $\SL(2,\mathbb{R})$--equivariant projection map described in Remark \ref{compactproject}. Let $M$ be the fiber of $\bar{p}$ above the origin $O$ in $\mathbb{H}^2$.
    \begin{enumerate}[label=\normalfont{(\roman*)}]
        \item If the stabilizer of $x\in\mathcal{F}_\eta$ in $\SOrth(2)$ has order greater than $2$, then $x\in M$. Thus, $\Fix(M,m,\SOrth(2))=\Fix(\mathcal{F}_\eta,m,\SOrth(2))$ for $m\neq 1,2$.
        \item Assume $\iota:\SL(2,\mathbb{R})\to G$ induces a representation $\bar{\iota}:\PSL(2,\mathbb{R})\to\text{P}G$ so that the equivariant $\bar{p}:\mathcal{F}_\eta\to\overline{\mathbb{H}^2}$ is $\bar{\iota}$--equivariant. Then, if the stabilizer of $x\in\mathcal{F}_\eta$ in $\PSOrth(2)$ is non-trivial, then $x\in M$. Thus, $\Fix(M,m,\PSOrth(2))=\Fix(\mathcal{F}_\eta,m,\PSOrth(2))$ for $m\neq 1$.
        \item If $x\in M$ is an isolated fixed point for the action by $\SOrth(2)$, then the sign of $T_xM$ equals the sign of $T_x\mathcal{F}_\eta$.
    \end{enumerate}
\end{lem}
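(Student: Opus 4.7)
The overall strategy for parts (i) and (ii) is to transport the stabilizer condition on $x \in \mathcal{F}_\eta$ to a stabilizer condition on $\bar p(x) \in \overline{\mathbb{H}^2}$ using the $\SOrth(2)$-equivariance (resp.\ $\PSOrth(2)$-equivariance) of $\bar p$ from Remark \ref{compactproject}. First I would analyze the $\SOrth(2)$-action on $\overline{\mathbb{H}^2}$: inside $\mathbb{H}^2$ the action fixes only $O$ and is otherwise free, while on $\del \mathbb{H}^2 = \mathbb{RP}^1$ the kernel $\{\pm I\}$ of $\SOrth(2) \to \PSL(2,\mathbb{R})$ acts trivially and every other element rotates $\mathbb{RP}^1$ by a non-zero angle, so has no fixed boundary points. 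Thus the only point of $\overline{\mathbb{H}^2}$ with $\SOrth(2)$-stabilizer of order greater than $2$ is $O$ itself, and similarly the only point with non-trivial $\PSOrth(2)$-stabilizer is $O$. Equivariance of $\bar p$ gives $\Stab_{\SOrth(2)}(\bar p(x)) \supseteq \Stab_{\SOrth(2)}(x)$, so the hypothesis of (i) forces $\bar p(x) = O$ and hence $x \in \bar p^{-1}(O) = M$; part (ii) is analogous. The claimed equalities of fixed sets then follow immediately, since the reverse inclusions $\Fix(M,m,\SOrth(2)) \subseteq \Fix(\mathcal{F}_\eta,m,\SOrth(2))$ are tautological.

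For (iii), let $x \in M$ be an isolated $\SOrth(2)$-fixed point, so $\Stab_{\SOrth(2)}(x) = \SOrth(2)$ and $T_x M$ is a real $4$-dimensional $\SOrth(2)$-invariant subspace of $T_x\mathcal{F}_\eta$. The $\SOrth(2)$-equivariant differential $dp_x : T_x\mathcal{F}_\eta \to T_O \mathbb{H}^2$ is surjective with kernel $T_x M$. By compactness of $\SOrth(2)$, I can pick an $\SOrth(2)$-invariant complement $W \subset T_x\mathcal{F}_\eta$ to $T_x M$, so that $dp_x|_W : W \to T_O \mathbb{H}^2$ is an equivariant isomorphism and $T_x\mathcal{F}_\eta = T_x M \oplus W$. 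Since $\mathcal{F}_\eta$ is a complex $3$-manifold, $T_x\mathcal{F}_\eta$ splits as a direct sum of three complex $1$-dimensional weight spaces; after relabeling I may write $T_x\mathcal{F}_\eta = V_a^0 \oplus V_b^0 \oplus V_c^0$ with $T_x M = V_a^0 \oplus V_b^0$ and $W = V_c^0$.

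The sign of $x$ in $\mathcal{F}_\eta$ (with respect to the complex orientation on $T_x \mathcal{F}_\eta$) equals $\sign(abc)$, because each weight-$w$ complex line contributes a factor $\sign(w)$ when comparing the rotation orientation $v, g \cdot v$ to the complex orientation. The fiber orientation on $T_x M$ is defined by the bundle-theoretic convention that (fiber orientation)\,$\wedge$\,(pulled-back base orientation) recovers the complex orientation on $T_x\mathcal{F}_\eta$. Using $T_x\mathcal{F}_\eta = T_x M \oplus V_c^0$, this reduces the sign comparison to asking whether the standard orientation on $T_O\mathbb{H}^2$, pulled back to $V_c^0$ through $dp_x|_W$, agrees with the complex orientation on $V_c^0$. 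Both $T_O \mathbb{H}^2$ (with its standard orientation) and $V_c^0$ (with its complex orientation) are oriented weight-$c$ real $2$-dimensional $\SOrth(2)$-representations, so the equivariant isomorphism $dp_x|_W$ is orientation-preserving exactly when the weight $c$ of the isotropy representation $T_O\mathbb{H}^2$ under the chosen identification $\SOrth(2) \cong S^1$ is positive. Under the standard convention this holds, so $dp_x|_W$ is orientation-preserving, the fiber orientation on $T_x M$ agrees with the complex orientation on $V_a^0 \oplus V_b^0$, and the sign of $x$ in $M$ equals $\sign(ab)$. Combined with $c > 0$ this yields $\sign(abc) = \sign(ab)\sign(c) = \sign(ab)$, matching the sign of $x$ in $\mathcal{F}_\eta$.

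The main obstacle is the orientation bookkeeping in (iii): one must carefully coordinate the complex orientation on $T_x\mathcal{F}_\eta$, the bundle-induced fiber orientation on $T_x M$, the standard orientation on $T_O\mathbb{H}^2$, and the rotation orientations on the individual weight spaces, and then justify in a specific convention that the weight of $T_O \mathbb{H}^2$ as an $\SOrth(2)$-representation is positive. Parts (i) and (ii), by contrast, reduce to routine analysis of the $\SOrth(2)$-action on $\overline{\mathbb{H}^2}$.
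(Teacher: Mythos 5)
Your proposal is correct and follows essentially the same route as the paper: parts (i) and (ii) push the stabilizer condition through the equivariant map $\bar p$ and use that only $O\in\overline{\mathbb{H}^2}$ is fixed by an element of $\SOrth(2)$ of order greater than $2$ (resp.\ a non-trivial element of $\PSOrth(2)$), and part (iii) rests on the equivariant splitting $T_x\mathcal{F}_\eta\cong T_xM\oplus T_O\mathbb{H}^2$ together with the positivity of the weight of $T_O\mathbb{H}^2$ (which the paper records as $V_2^0$). Your treatment of (iii) just makes explicit the orientation bookkeeping that the paper compresses into one line.
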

\begin{proof}
For (i), let $g\in\Stab_{\SOrth(2)}(x)$ be an element of order greater than $2$ and consider the projection $\bar{p}(x)\in\overline{\mathbb{H}}{}^2$. 
    Then,
    \[g\cdot x=x\Rightarrow g\cdot \bar{p}(x)=\bar{p}(x).\]
    However, $g$, as an element in $\SOrth(2)$ whose order is greater than $2$, has only one fixed point in $\overline{\mathbb{H}^2}$, viz. $O$. Therefore, $p(x)=O$ and $x\in M=p^{-1}(O)$. \par
    Assume we have an induced representation $\bar{\iota}:\PSL(2,\mathbb{C})\to \text{P}G$. Then, if $x\in\mathcal{F}_D$ has non-trivial stabilizer in $\PSOrth(2)$, then its stabilizer in $\SOrth(2)$ must have order greater than $2$ and thus (ii) follows. \par
    For (iii), we note that the equivariance of $p$ gives the exact sequence of $S^1$--representation
    \[0\to T_xM\to T_x\mathcal{F}_\eta\xrightarrow{dp}T_O\mathbb{H}^2\to O.\]
    Therefore, we get the isomorphism of oriented $S^1$--representations
    \[T_x\mathcal{F}_\eta\cong T_xM\oplus T_O\mathbb{H}^2\cong T_xM\oplus V_2^0.\]
    Because the weight for $T_O\mathbb{H}^2$ is positive, the sign of $T_xM$ and $T_x\mathcal{F}_\eta$ must agree.
\end{proof}
Note that if the fixed and exceptional points of $\mathcal{F}_\eta$ only lies on exceptional spheres, then the definition of the tangential weight graph still makes sense for $\mathcal{F}_\eta$. This gives us the following consequence:
\begin{prop}\label{SameTWG}
Continuing the notation from Lemma \ref{CheapTrick}
    \begin{enumerate}[label=\normalfont{(\roman*)}]
        \item If the $\iota(\SOrth(2))$--action on $\mathcal{F}_\eta$ is effective with tangential weight graph $\mathcal{G}$ and $\mathcal{G}$ does not have any square vertices or edges labeled with weight $2$, then $\mathcal{G}$ is also the tangential weight graph for $M$.
        \item Assume that $\iota:\SL(2,\mathbb{R})\to G$ induces a representation $\bar{\iota}:\PSL(2,\mathbb{R})\to PG$. If the $\bar{\iota}(\PSOrth(2))$--action on $\mathcal{F}_D$ is effective with tangential weight graph $\mathcal{G}$ and $\mathcal{G}$ have any square vertices, then $\mathcal{G}$ is also the tangential weight graph for $M$.
    \end{enumerate}
\end{prop}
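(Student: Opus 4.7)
The plan is to show that, under the hypotheses of the proposition, the fixed-and-exceptional locus of $\mathcal{F}_\eta$ and of $M$ coincide as $S^1$-equivariant subsets, so that the labeling data defining $\mathcal{G}$ transfers verbatim. First, I would verify that the fixed/exceptional locus of $\mathcal{F}_\eta$ lies inside $M$. For case (i), the absence of square vertices in $\mathcal{G}$ forces every $\iota(\SOrth(2))$-fixed point of $\mathcal{F}_\eta$ to be isolated, and the absence of edges of weight $2$ means every exceptional sphere in $\mathcal{F}_\eta$ has generic stabilizer $H_m$ with $m\geq 3$; in either situation every fixed or exceptional point of $\mathcal{F}_\eta$ has stabilizer of order strictly greater than $2$, so Lemma \ref{CheapTrick}(i) places it in $M$. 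Case (ii) is parallel: the absence of square vertices means every such point has non-trivial $\PSOrth(2)$-stabilizer and thus lies in $M$ by Lemma \ref{CheapTrick}(ii). Because each exceptional sphere is closed and $S^1$-invariant, and its dense set of non-endpoint orbits already lies in $M$, the entire sphere lies in $M$.

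Next, I would observe that the reverse inclusion is automatic: since the action on $M$ is the restriction of the ambient action on $\mathcal{F}_\eta$, any stabilizer occurring in $M$ also occurs in $\mathcal{F}_\eta$. Combined with the previous step, the fixed/exceptional loci, the isolated fixed points, and the exceptional spheres of $M$ and of $\mathcal{F}_\eta$ coincide. Hence the abstract labeled graphs underlying $\mathcal{G}$ (for $\mathcal{F}_\eta$) and the tangential weight graph of $M$ have the same vertices and edges.

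Finally, I would match labels. The weight on each edge is the order of the generic stabilizer along the corresponding exceptional sphere, which depends only on the sphere itself and not on the ambient manifold; thus edge labels agree. For a round vertex corresponding to an isolated fixed point $x$, the sign is determined by the oriented $\SOrth(2)$-representation structure on the tangent space. The $\SOrth(2)$-equivariant projection $p$ gives an exact sequence
\[
0 \longrightarrow T_xM \longrightarrow T_x\mathcal{F}_\eta \xrightarrow{\,dp\,} T_O\mathbb{H}^2 \longrightarrow 0,
\]
and since $T_O\mathbb{H}^2\cong V_2^0$ has positive weight, Lemma \ref{CheapTrick}(iii) gives $\mathrm{sgn}(T_xM)=\mathrm{sgn}(T_x\mathcal{F}_\eta)$. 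Thus round-vertex labels agree as well, and $\mathcal{G}$ is the tangential weight graph of $M$.

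The main obstacle is essentially only bookkeeping once Lemma \ref{CheapTrick} is in hand; the one place that requires a small argument is checking that each exceptional sphere of $\mathcal{F}_\eta$ lies in $M$ \emph{in its entirety} (not just at its generic points), and this follows because $M$ is closed and contains the dense set of points of stabilizer order $\geq 3$ on the sphere. The only conceptual hypothesis used is the exclusion of square vertices and weight-$2$ edges, which is precisely what guarantees Lemma \ref{CheapTrick} applies to every piece of the fixed/exceptional locus.
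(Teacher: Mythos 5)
Your proof is correct and follows essentially the same route as the paper's (very terse) argument: show that every fixed or exceptional point of $\mathcal{F}_\eta$ has stabilizer large enough for Lemma \ref{CheapTrick} to place it in $M$, note the reverse inclusion is automatic, and transfer the labels via Lemma \ref{CheapTrick}(iii) and the intrinsic nature of edge weights. One small remark: your phrasing for case (ii) suggests that excluding square vertices is what guarantees non-trivial $\PSOrth(2)$-stabilizer, but that is automatic for any fixed or exceptional point; the real role of that hypothesis is to ensure the tangential weight graph of the (higher-dimensional) ambient $\mathcal{F}_\eta$ makes sense at all, as the paper notes just before the proposition -- this does not affect the correctness of your argument.
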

\begin{proof}
    For (i), assume that the $\iota(\SOrth(2))$--action on $\mathcal{F}_\eta$ is effective with tangential weight graph $\mathcal{G}$ and no square vertices or edges label with weight $2$. This means, for any fixed or exceptional point in $\mathcal{F}_\eta$, Proposition \ref{CheapTrick}(i) applies. Therefore, the tangential weight graph of $M$ has the same vertices, edges, and weights labeling edges. Furthermore, Proposition (iii) implies the signs labeling the round vertices are the same. \par
    The proof of (ii) is similar.
\end{proof}
The propositions \ref{ExceptInFlags} and  \ref{SameTWG} give a uniform approach to determine the fixed and exceptional points for the fibers of domains of discontinuities in flag varieties. Namely, difference matrices provide a compact framework to calculate the tangential weight graph for $\mathcal{F}_D$ and then Proposition \ref{SameTWG} relates this to the tangential weight graph for $M$. We will implement this strategy explicitly in the following section.
\section{Topology of fibers for $\dim_{\mathbb{C}}\Omega=3$}\label{sec:calculations4fibers}
Now that we have developed the necessary tools, we are ready to examine each of the domains of discontinuity listed in Proposition \ref{fullcases} and determine the topology and structure group action in each case. For each choice of $G$ and $\mathcal{F}_\eta$, we will consider the appropriate irreducible and reducible representation $\iota:\SL(2,\mathbb{R})\to G$ and describe the induced $\SOrth(2)$--action on $\mathcal{F}_\eta$. Then, by applying the techniques described in Section \ref{sec:TWD4flags}, we can determine the tangential weight graph of the fiber $M_\rho^I$ in each case. Finally, by comparing these tangential weight graphs to the tangential weight graphs for Hirzebruch surfaces, we can apply Theorem \ref{SmoothClassification} to determine $M_\rho^I$, up to equivariant diffeomorphism.
\subsection{For $G=\SL(3,\mathbb{C})$, $\Omega\subseteq\Flag(\mathbb{C}^3)$}
Let $G=\SL(3,\mathbb{C})$ and let $\mathcal{F}_\eta=\Flag(\mathbb{C}^3)$. Recall from Section \ref{handledomains} that if $\iota_{\bldmth{f},\bldmth{i}}=\rho_3$, then any $\iota_{\bldmth{f},\bldmth{i}}$--Fuchsian representation $\rho_{\bldmth{f},\bldmth{i}}:\Gamma\to\SL(3,\mathbb{C})$ is Borel Anosov and admits a domain of discontinuity
\begin{align*}
    \Omega_{\bldmth{f},\bldmth{i}}=\{F^\bullet\in\Flag(\mathbb{C}^3):F^1\neq \xi_{\bldmth{f},\bldmth{i}}^1(t)\text{ and }F^2\neq \xi^2_{\bldmth{f},\bldmth{i}}(t)\text{ for all }t\in\mathbb{RP}^1\},
\end{align*}
where $\xi_{\bldmth{f},\bldmth{i}}:\mathbb{RP}^1\to\Flag(\mathbb{C}^3)$ is the limit curve. We denote the fiber above the origin $O$ of the $\SL(2)$--equivariant projection $p_{\bldmth{f},\bldmth{i}}:\Omega_{\bldmth{f},\bldmth{i}}\to\mathbb{H}^2$ by $M_{\bldmth{f},\bldmth{i}}$, that is
$$ M_{\bldmth{f},\bldmth{i}} := p_{\bldmth{f},\bldmth{i}}^{-1}(O).$$

Similarly if $\iota_{\bldmth{f},\bldmth{r}}=\rho_2\oplus\rho_1$, then any $\iota_{\bldmth{f},\bldmth{r}}$--Fuchsian representation $\rho_{\bldmth{f},\bldmth{r}}:\Gamma\to\SL(3,\mathbb{C})$ is Borel Anosov and admits a domain of discontinuity
\begin{align*}
\Omega_{\bldmth{f},\bldmth{r}}=\{F^\bullet\in\Flag(\mathbb{C}^3):F^1\neq \xi_{\bldmth{f},\bldmth{r}}^1(t)\text{ and }F^2\neq \xi^2_{\bldmth{f},\bldmth{r}}(t)\text{ for all }t\in\mathbb{RP}^1\}
\end{align*}
where $\xi_{\bldmth{f},\bldmth{r}}:\mathbb{RP}^1\to\Flag(\mathbb{C}^3)$ is the limit curve. We denote the fiber above the origin of the $\SL(2)$--equivariant projection $p_{\bldmth{f},\bldmth{r}}:\Omega_{\bldmth{f},\bldmth{r}}\to\mathbb{H}^2$ by $M_{\bldmth{f},\bldmth{r}}$, that is
$$ M_{\bldmth{f},\bldmth{r}} := p_{\bldmth{f},\bldmth{r}}^{-1}(O).$$

\subsubsection{The irreducible case}

Now, we will determine the topology and structure group action for $M_{\bldmth{f},\bldmth{i}}$. \par
Let $V=\mathbb{C}^{(2)}[X,Y]$ be $\mathbb{C}$--span of $$X^2,XY,Y^2$$ in $\mathbb{C}[X,Y]$. Then, we can represent $\iota_{\bldmth{f},\bldmth{i}}:\SL(2,\mathbb{C})\to\SL(3,\mathbb{C})$ by
\[\iota\left(\begin{bmatrix}
    a & b \\ c & d
\end{bmatrix}\right)\cdot f(X,Y)=f(dX-bY,-cX+aY).\]
By restricting $\iota_{\bldmth{f},\bldmth{i}}$ to $\SOrth(2)$, we get a linear $\SOrth(2)$--action on $V$. $V$ has weight vectors $f_2=(X-iY)^2, f_0=X^2+Y^2,f_{-2}=(X+iY)^2$ with weights 
\begin{align*}
    \iota_{\bldmth{f},\bldmth{i}}(R_\theta)\cdot f_2&=e^{2i\theta}f_2 \\
    \iota_{\bldmth{f},\bldmth{i}}(R_\theta)\cdot f_0&=f_0 \\
    \iota_{\bldmth{f},\bldmth{i}}(R_\theta)\cdot f_{-2}&=e^{-2i\theta}f_{-2}
\end{align*}
Note that because $\iota_{\bldmth{f},\bldmth{i}}(\{\pm I_{2}\})\subseteq\{\pm I_{3}\}$, there is an induced $\PSL(2,\mathbb{C})$--action on $\Flag(\mathbb{C}^3)$ with $g\cdot x=\iota_{\bldmth{f},\bldmth{i}}(\tilde{g})\cdot x$ for $g\in\PSL(2,\mathbb{C})$, $g=\pm\tilde{g}$ and $x\in\Flag(\mathbb{C}^3)$. Furthermore, $\Omega_{\bldmth{f},\bldmth{i}}$ will be $\PSL(2,\mathbb{R})$--invariant and $p_{\bldmth{f},\bldmth{i}}:\Omega_{\bldmth{f},\bldmth{i}}\to\mathbb{H}^2$ will be $\PSL(2,\mathbb{R})$--equivariant.  By Lemma \ref{CheapTrick}, $$\Fix(M_{\bldmth{f},\bldmth{i}},m,\PSOrth(2))=\Fix(\Flag(\mathbb{C}^3),m,\PSOrth(2))$$ for all $m\neq 1$.  Therefore,
\begin{align*}
\Fix(M_{\bldmth{f},\bldmth{i}},0,\PSOrth(2))=\{F_{2,0}^\bullet,F_{2,-2}^\bullet,F_{0,2}^\bullet,F_{0,-2}^\bullet,F_{-2,2}^\bullet,F_{-2,0}^\bullet\},
\end{align*}
where $F_{i,j}^\bullet$ is the flag with basis $f_i,f_j$, i.e.\
\[F^\bullet_{i,j}=(\langle f_i\rangle), \langle f_i,f_j\rangle)\text{ for $i,j\in\{-2,0,2\}$}.\]

We are now ready to see the following result.
\begin{prop}\label{Case1TWG}
    The tangential weight graph for the $\PSOrth(2)$--action on $\Flag(\mathbb{C}^3)$ with $\iota_{\bldmth{f},\bldmth{i}}=\rho_3$ looks as in Figure \ref{fig:case1TWG}.
    The tangential weight graph for the $\PSOrth(2)$--action on $\Flag(\mathbb{C}^3)$ with $\iota_{\bldmth{f},\bldmth{i}}=\rho_3$ looks as in Figure \ref{fig:case1TWG}.
\begin{figure}
    \centering
    \begin{tikzpicture}[node distance={30mm}, main/.style = {draw, circle}]
\node[main] (1) at (0,0) {$+$}; 
\node[main] (2) at (1.5,0)  {$-$}; 
\node[main] (3) at (3,0) {$+$}; 
\node[main] (4) at (4.5,0)  {$-$}; 
\node[main] (5) at (6,0) {$+$}; 
\node[main] (6) at (7.5,0)  {$-$}; 
\draw[] (1) -- node[midway, above,] {$2$} (2);
\draw[-] (3) -- node[midway, above,] {$2$} (4);
\draw[-] (5) -- node[midway, above,] {$2$} (6);

\node[] at (0,0.6) {$F_{2,-2}^\bullet$}; 
\node[] at (1.5,0.6) {$F_{-2,2}^\bullet$}; 
\node[] at (3,0.6) {$F_{-2,0}^\bullet$}; 
\node[] at (4.5,0.6) {$F_{2,0}^\bullet$}; 
\node[] at (6,0.6) {$F_{0,2}^\bullet$}; 
\node[] at (7.5,0.6) {$F_{0,-2}^\bullet$}; 
\end{tikzpicture}
    \caption{Caption}
    \label{fig:case1TWG}
\end{figure}
\end{prop}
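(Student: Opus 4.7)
The strategy is to directly compute the $\PSOrth(2)$--difference matrix at each of the six candidate fixed flags $F^\bullet_{i,j}$ and read off the required data: the signs at the round vertices come from the product of the weights of the tangent representation, and the exceptional spheres (edges) come from invariant $\mathbb{CP}^1$'s provided by Proposition \ref{ExceptInFlags}.

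The first step is to justify that $\{F^\bullet_{i,j} : i,j \in \{-2,0,2\},\ i \neq j\}$ really is the entire $\PSOrth(2)$--fixed set in $\Flag(\mathbb{C}^3)$. Because $V = \mathbb{C}^{(2)}[X,Y]$ decomposes as $\langle f_2\rangle \oplus \langle f_0\rangle \oplus \langle f_{-2}\rangle$ with three distinct weights, any $\PSOrth(2)$--invariant line must be one of $\langle f_i\rangle$ and any $\PSOrth(2)$--invariant plane is a sum of two of these. This gives exactly six invariant full flags and no higher--dimensional invariant family, so the fixed set is discrete and there are no square vertices. In particular the hypothesis of Proposition \ref{SameTWG}(ii) is in effect for the subsequent identification of this TWG with that of the fiber.

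The second step is the weight computation at each vertex. For the flag $F^\bullet_{i,j}$ with weight flag basis $f_i, f_j, f_k$ (where $\{i,j,k\} = \{-2,0,2\}$), Proposition \ref{FlagTangentModel} together with Definition \ref{DiffMatrix} gives a $\PSOrth(2)$--difference matrix with entries $(j-i)/2,\ (k-i)/2,\ (k-j)/2$ below the diagonal. I will tabulate these for all six flags (for example, $F^\bullet_{2,-2}$ has weights $-2,-1,1$; $F^\bullet_{-2,2}$ has weights $2,1,-1$; and so on). The sign at an isolated fixed point is $(-1)^{\#\{\text{negative weights}\}}$, which follows from comparing the rotation orientation with the natural orientation one complex summand at a time. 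Reading off this parity in each of the six cases produces the alternation $+,-,+,-,+,-$ shown in Figure \ref{fig:case1TWG}.

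The third step is to identify the edges. By Proposition \ref{ExceptInFlags}, for every off--diagonal entry $(a,b) \in \mathcal{I}$ of the difference matrix there is an $\SOrth(2)$--equivariant embedded $\mathbb{CP}^1$ with weight equal to the $(\PSOrth(2))$--difference $(\omega_a - \omega_b)/2$, joining $F^\bullet_{i,j}$ to the flag obtained by swapping $f_a, f_b$. Such a $\mathbb{CP}^1$ is genuinely an exceptional sphere (in the sense of Proposition \ref{spherearc}) only when $|(\omega_a - \omega_b)/2| \geq 2$, i.e.\ when the swap exchanges $f_2$ with $f_{-2}$; the other two swaps at each vertex produce weight--$1$ orbits with trivial interior stabilizer and so do not contribute edges. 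This yields exactly three disjoint weight--$2$ spheres, pairing $F^\bullet_{2,-2}$ with $F^\bullet_{-2,2}$, $F^\bullet_{-2,0}$ with $F^\bullet_{2,0}$, and $F^\bullet_{0,2}$ with $F^\bullet_{0,-2}$, matching the edge set of Figure \ref{fig:case1TWG}.

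There is no substantial obstacle here: the argument is mostly bookkeeping of weights across six similar fixed points. The only subtlety worth stating carefully is the distinction between an invariant $\mathbb{CP}^1$ whose weight equals $1$ (not an exceptional sphere) versus one whose weight equals $2$ (a genuine exceptional sphere); this must be justified by the definition in Proposition \ref{spherearc} so that the reader does not mistakenly include three extra edges. With that distinction handled cleanly, assembling the six sign labels and three weight--$2$ edges yields the graph of Figure \ref{fig:case1TWG}.
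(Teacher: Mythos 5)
Your proposal is correct and follows essentially the same approach as the paper: compute the $\PSOrth(2)$--difference matrix at each of the six fixed flags to read off signs, and use Proposition \ref{ExceptInFlags} together with the weight threshold $\geq 2$ to identify the three exceptional spheres. The only cosmetic differences are that you state the sign rule abstractly as $(-1)^{\#\{\text{negative weights}\}}$ instead of listing the six matrices, and you spell out why the fixed set is discrete, which the paper asserts just before the proposition statement.
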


\begin{proof}
    To determine tangential weight graph, we determine the difference matrix for the action on $\PSOrth(2)$ at each $F_{i,j}^\bullet\in F$, see Definition \ref{DiffMatrix}.
    
    For $F_{2,-2}^\bullet$ and $F_{-2,2}^\bullet$, the $\PSOrth(2)$--difference matrices are
    \begin{align*}
        \begin{tabular}{c|ccc}
             $F_{2,-2}^\bullet$ & $f_2$ & $f_{-2}$ & $f_{0}$  \\ \hline
             $f_2$ & $\ast$ & $\ast$  & $\ast$\\
             $f_{-2}$ & $-2$ & $\ast$  & $\ast$\\
             $f_{0}$ & $-1$ & $1$  & $\ast$
        \end{tabular} & & \begin{tabular}{c|ccc}
             $F_{-2,2}^\bullet$ & $f_{-2}$ & $f_2$ & $f_{0}$  \\ \hline
             $f_{-2}$ & $\ast$ & $\ast$  & $\ast$\\
             $f_2$ & $2$ & $\ast$  & $\ast$\\
             $f_{0}$ & $1$ & $-1$  & $\ast$
        \end{tabular}
    \end{align*}
    By Proposition \ref{FlagTangentModel}, the sign of $F_{2,-2}^\bullet$ is $(+)$ and the sign of $F_{-2,2}^\bullet$ is $(-)$ and there is a weight $2$ $\mathbb{CP}^1$ connecting $F_{2,-2}^\bullet$ and $F_{-2,2}^\bullet$. \\
    For $F_{-2,0}^\bullet$ and $F_{2,0}^\bullet$, the $\PSOrth(2)$--difference matrices are
    \begin{align*}
        \begin{tabular}{c|ccc}
             $F_{-2,0}^\bullet$ & $f_{-2}$ & $f_{0}$ & $f_{2}$  \\ \hline
             $f_{-2}$ & $\ast$ & $\ast$  & $\ast$\\
             $f_{0}$ & $1$ & $\ast$  & $\ast$\\
             $f_{2}$ & $2$ & $1$  & $\ast$
        \end{tabular} & & \begin{tabular}{c|ccc}
             $F_{2,0}^\bullet$ & $f_{2}$ & $f_{0}$ & $f_{-0}$  \\ \hline
             $f_{2}$ & $\ast$ & $\ast$  & $\ast$\\
             $f_{0}$ & $-1$ & $\ast$  & $\ast$\\
             $f_{-2}$ & $-2$ & $-1$  & $\ast$
        \end{tabular}
    \end{align*}
    Thus, the sign of $F_{-2,0}^\bullet$ is $(+)$ and the sign of $F_{2,0}^\bullet$ is $(-)$ and there is a weight $2$ $\mathbb{CP}^1$ connecting $F_{-2,0}^\bullet$ and $F_{2,0}^\bullet$. \\
    For $F_{0,2}^\bullet$ and $F_{0,-2}^\bullet$, the $\PSOrth(2)$--difference matrices are
    \begin{align*}
        \begin{tabular}{c|ccc}
             $F_{0,2}^\bullet$ & $f_{0}$ & $f_{2}$ & $f_{-2}$  \\ \hline
             $f_{0}$ & $\ast$ & $\ast$  & $\ast$\\
             $f_{2}$ & $1$ & $\ast$  & $\ast$\\
             $f_{-2}$ & $-1$ & $-2$  & $\ast$
        \end{tabular} & & \begin{tabular}{c|ccc}
             $F_{0,-2}^\bullet$ & $f_{0}$ & $f_{-2}$ & $f_{2}$  \\ \hline
             $f_{0}$ & $\ast$ & $\ast$  & $\ast$\\
             $f_{-2}$ & $-1$ & $\ast$  & $\ast$\\
             $f_{2}$ & $1$ & $2$  & $\ast$
        \end{tabular}
    \end{align*}
    Thus, the sign of $F_{0,2}^\bullet$ is $(+)$ and the sign of $F_{0,-2}^\bullet$ is $(-)$ and there is a weight $2$ $\mathbb{CP}^1$ connecting $F_{0,2}^\bullet$ and $F_{0,-2}^\bullet$. \\
    This determines the stated tangential weight graph. By Proposition \ref{SameTWG}, this is also the tangential weight graph for $M_{\bldmth{f},\bldmth{i}}$.
\end{proof}
Recall from Example \ref{ConnSumExample} that there is a smooth $S^1$--action on $(S^2\times S^2)\#(S^2\times S^2)$ with this tangential weight graph.
By Theorem \ref{SmoothClassification}, we get the following result.
\begin{corr}\label{fiberflgirr}
    For $\iota_{\bldmth{f},\bldmth{i}}=\rho_3$ and $\mathcal{F}_\eta=\Flag(\mathbb{C}^3)$,
    \begin{enumerate}
        \item[(i)] The $\PSL(2,\mathbb{R})$--equivariant fiber bundle projection $p_{\bldmth{f},\bldmth{i}}:\Omega_{\bldmth{f},\bldmth{i}}\to\mathbb{H}^2$ has fiber $M_{\bldmth{f},\bldmth{i}}$ which is diffeomorphic to $(S^2\times S^2)\#(S^2\times S^2)$. The structure group is $\PSOrth(2)$ and the action is as described in Figure \ref{fig:case1TWG}.
        \item[(ii)] The manifold $\mathcal{W}_{\bldmth{f},\bldmth{i}}$ is diffeomorphic to the bundle associated to the $\PSOrth(2)$--principal bundle over $S_g$ of Euler class $g-1$ and fiber $(S^2\times S^2)\#(S^2\times S^2)$ with the described $\PSOrth(2)$--action.
    \end{enumerate}
\end{corr}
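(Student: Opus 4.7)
The plan is to apply the classification Theorem \ref{SmoothClassification} directly to the fiber $M_{\bldmth{f},\bldmth{i}}$, using the tangential weight graph computed in Proposition \ref{Case1TWG} together with the explicit model $S^1$--action on $(S^2\times S^2)\#(S^2\times S^2)$ from Example \ref{ConnSumExample}. First, I would verify that $M_{\bldmth{f},\bldmth{i}}$ satisfies the hypotheses of Theorem \ref{SmoothClassification}: since $\dim_{\mathbb{C}}\Flag(\mathbb{C}^3)=3$ and the limit curve is a real one-dimensional subset, the codimension argument from Section \ref{handledomains} shows that $\Omega_{\bldmth{f},\bldmth{i}}$ is simply-connected; together with the diffeomorphism $\Omega_{\bldmth{f},\bldmth{i}}\cong\mathbb{H}^2\times M_{\bldmth{f},\bldmth{i}}$ from Proposition \ref{AMTWEquiv}, this forces $M_{\bldmth{f},\bldmth{i}}$ to be a closed simply-connected $4$--manifold. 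Absence of cycles in the orbit data is clear from the tangential weight graph in Figure \ref{fig:case1TWG}, which consists of three disjoint edges.

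Next, I would compare the tangential weight graph of $M_{\bldmth{f},\bldmth{i}}$ with that of the equivariant connected sum constructed in Example \ref{ConnSumExample}. Example \ref{ConnSumExample} exhibits a smooth $S^1$--action on $(S^2\times S^2)\#(S^2\times S^2)$ whose tangential weight graph consists of three disjoint edges, each labeled with weight $2$ and whose endpoints carry signs $(+)$ and $(-)$. This is precisely the graph of Figure \ref{fig:case1TWG}, after the obvious bijection sending the three weight-$2$ exceptional spheres of $M_{\bldmth{f},\bldmth{i}}$ to the three components of the fixed set of the model action. Once this bijection is written down, Theorem \ref{SmoothClassification} produces an orientation-preserving $S^1$--equivariant diffeomorphism between $M_{\bldmth{f},\bldmth{i}}$ and $(S^2\times S^2)\#(S^2\times S^2)$, which gives part (i).

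For part (ii), I would appeal to the fiber bundle description of $\mathcal{W}_\rho^I$ from the general result recalled after Proposition \ref{AMTWEquiv}: the quotient $\mathcal{W}_{\bldmth{f},\bldmth{i}}=\Gamma\setminus\Omega_{\bldmth{f},\bldmth{i}}$ is $\SOrth(2)$--equivariantly diffeomorphic to the bundle associated to a principal $\SOrth(2)$--bundle over $S_g$ of Euler class $g-1$, with fiber $M_{\bldmth{f},\bldmth{i}}$. Since $\iota_{\bldmth{f},\bldmth{i}}=\rho_3$ descends through $\PSL(2,\mathbb{R})$, the action on the fiber factors through $\PSOrth(2)$, so the structure group reduces to $\PSOrth(2)$; combining this with the conclusion of (i) immediately yields (ii).

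The only nontrivial point is verifying that the bijection of graphs I propose really preserves both the edge labels and the signs at the round vertices. The weights on both sides are computed via difference matrices (see Definition \ref{DiffMatrix} and Proposition \ref{FlagTangentModel}) from the $\PSOrth(2)$--weights of the flag basis $f_2,f_0,f_{-2}$ in $V=\mathbb{C}^{(2)}[X,Y]$, and Proposition \ref{Case1TWG} already does this bookkeeping carefully; the model $S^1$--action on $(S^2\times S^2)\#(S^2\times S^2)$ has its weights read off directly from the coordinate formulas in Example \ref{ConnSumExample}. Neither side has square vertices or edges of weight $1$, so Proposition \ref{SameTWG}(ii) applies to transfer the tangential weight graph of $\Flag(\mathbb{C}^3)$ to the fiber $M_{\bldmth{f},\bldmth{i}}$. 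Once these consistency checks are in place, there is no remaining obstacle.
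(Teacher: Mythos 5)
Your proposal is correct and takes essentially the same route as the paper: Proposition~\ref{Case1TWG} already establishes the tangential weight graph of $M_{\bldmth{f},\bldmth{i}}$ (invoking Proposition~\ref{SameTWG} internally), Example~\ref{ConnSumExample} supplies the model action on $(S^2\times S^2)\#(S^2\times S^2)$ with the identical graph, and Theorem~\ref{SmoothClassification} gives the equivariant diffeomorphism, after which the associated-bundle description yields part~(ii). Two small imprecisions that do not affect the conclusion: the relevant hypothesis of Proposition~\ref{SameTWG}(ii) is the absence of square vertices (weight-$1$ edges play no role in that statement), and the simply-connectedness of $\Omega_{\bldmth{f},\bldmth{i}}$ rests on the real codimension of the entire thickened limit set $K_\rho^I$, not merely of the one-dimensional limit curve.
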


\subsubsection{The reducible case}
Now, we will determine the topology and structure group action for $M_{\bldmth{f},\bldmth{r}}$. This case will require a slightly different argument compared to the other cases. \par
Let $V$ be the $\mathbb{C}$--span of $X,Y,Z$ in $\mathbb{C}[X,Y,Z]$. Then, we represent $\iota_{\bldmth{f},\bldmth{r}}=\rho_2\oplus \rho_1:\SL(2,\mathbb{C})\to\SL(3,\mathbb{C})$ by
\[\iota\left(\begin{bmatrix}
    a & b \\ c & d
\end{bmatrix}\right)\cdot f(X,Y,Z)=f(dX-cY,aY-bX,Z)\]
By restricting $\iota$ to $\SOrth(2)$, we get a linear $\SOrth(2)$--action on $V$. Then, $V$ has weight vectors $f_1=X-iY$, $f_0= Z$, $f_{-1}=X+iY$ with weights
\begin{align*}
    \iota_{\bldmth{f},\bldmth{r}}(R_\theta)\cdot f_1&=e^{i\theta}f_1\\
    \iota_{\bldmth{f},\bldmth{r}}(R_\theta)\cdot f_0&=f_0\\
    \iota_{\bldmth{f},\bldmth{r}}(R_\theta)\cdot f_{-1}&=e^{-i\theta}f_{-1}
\end{align*}
By Lemma \ref{CheapTrick},
\begin{align*}
    \Fix(M,0,\SOrth(2))=\{F_{1,0}^\bullet,F_{1,-1}^\bullet,F_{0,1}^\bullet,F_{0,-1}^\bullet,F_{-1,1}^\bullet,F_{-1,0}^\bullet\},
\end{align*}
where
\[F^\bullet_{i,j}=(\langle f_i\rangle), \langle f_i,f_j\rangle)\text{ for $i,j\in\{1,0,-1\}$}.\]
Because the weights for $V$ with $\iota_{\bldmth{f},\bldmth{r}}=\rho_2\oplus \rho_1$ are half the weights for $V$ with $\iota_{\bldmth{f},\bldmth{i}}=\rho_3$, the $\SOrth(2)$--difference matrix for $\Flag(\mathbb{C}^3)$ with $\iota_{\bldmth{f},\bldmth{r}}=$ is the same as the $\PSOrth(2)$--difference matrix for $\Flag(\mathbb{C}^3)$ with $\iota_{\bldmth{f},\bldmth{r}}=\rho_3$. Therefore, the tangential weight graph of the $\SOrth(2)$--action on $\Flag(\mathbb{C}^3)$ for $\iota_{\bldmth{f},\bldmth{r}}$ is as in Figure \ref{fig:fakecase2TWG}.
\begin{figure}
    \centering
    \begin{center}
        \begin{tikzpicture}[node distance={30mm}, main/.style = {draw, circle}]
\node[main] (1) at (0,0) {$+$}; 
\node[main] (2) at (1.5,0)  {$-$}; 
\node[main] (3) at (3,0) {$+$}; 
\node[main] (4) at (4.5,0)  {$-$}; 
\node[main] (5) at (6,0) {$+$}; 
\node[main] (6) at (7.5,0)  {$-$}; 
\draw[] (1) -- node[midway, above,] {$2$} (2);
\draw[-] (3) -- node[midway, above,] {$2$} (4);
\draw[-] (5) -- node[midway, above,] {$2$} (6);

\node[] at (0,0.6) {$F_{1,-1}^\bullet$}; 
\node[] at (1.5,0.6) {$F_{-1,1}^\bullet$}; 
\node[] at (3,0.6) {$F_{-1,0}^\bullet$}; 
\node[] at (4.5,0.6) {$F_{1,0}^\bullet$}; 
\node[] at (6,0.6) {$F_{0,1}^\bullet$}; 
\node[] at (7.5,0.6) {$F_{0,-1}^\bullet$}; 
\end{tikzpicture}
    \end{center}
    \caption{Tangential weight graph for $\Flag(\mathbb{C}^3)$ for $\iota_{\bldmth{f},\bldmth{r}}=\rho_2\oplus\rho_1$}
    \label{fig:fakecase2TWG}
\end{figure}
Note however that this tangential weight graph specifically has edges with weight $2$. This means we cannot directly apply Proposition \ref{SameTWG}. In fact, the tangential weight graph for $M_{\bldmth{f},\bldmth{r}}$ is not the same as we will now prove.
\begin{prop}\label{Case2TWG}
    The following hold:
    \begin{enumerate}[label=\normalfont{(\roman*)}]
        \item The intersection of the fiber $M_{\bldmth{f},\bldmth{r}}$ and $\Fix(\Flag(\mathbb{C}^3),2,\SOrth(2))$ is transverse. The points of intersection are precisely $\{F_{1,0}^\bullet,F_{1,-1}^\bullet,F_{0,1}^\bullet,F_{0,-1}^\bullet,F_{-1,1}^\bullet,F_{-1,0}^\bullet\}$.
        \item The tangential weight graph for $M_{\bldmth{f},\bldmth{r}}$ is as in Figure \ref{Case2TWG}.
        \begin{figure}
            \centering
            \begin{tikzpicture}[node distance={30mm}, main/.style = {draw, circle}]
\node[main] (1) at (0,0) {$+$}; 
\node[main] (2) at (1.5,0)  {$-$}; 
\node[main] (3) at (3,0) {$+$}; 
\node[main] (4) at (4.5,0)  {$-$}; 
\node[main] (5) at (6,0) {$+$}; 
\node[main] (6) at (7.5,0)  {$-$}; 

\node[] at (0,0.6) {$F_{1,-1}^\bullet$}; 
\node[] at (1.5,0.6) {$F_{-1,1}^\bullet$}; 
\node[] at (3,0.6) {$F_{-1,0}^\bullet$}; 
\node[] at (4.5,0.6) {$F_{1,0}^\bullet$}; 
\node[] at (6,0.6) {$F_{0,1}^\bullet$}; 
\node[] at (7.5,0.6) {$F_{0,-1}^\bullet$}; 
\end{tikzpicture}
            \caption{Caption}
            \label{fig:case2TWG}
        \end{figure}
    \end{enumerate}
\end{prop}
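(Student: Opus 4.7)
The starting point is to identify the three ``exceptional $\mathbb{CP}^1$'s'' in the tangential weight graph of $\Flag(\mathbb{C}^3)$ for $\iota_{\bldmth{f},\bldmth{r}}$ (Figure \ref{fig:fakecase2TWG}) explicitly, via Proposition \ref{ExceptInFlags}. Writing the weights of $\iota_{\bldmth{f},\bldmth{r}}$ on $V$ as $1,0,-1$, the element $-I_2 \in \SOrth(2)$ acts on $V$ by $\diag(-1,1,-1)$ in the basis $f_1,f_0,f_{-1}$, so its $+1$ and $-1$ eigenspaces on $V$ are $\langle f_0\rangle$ and $\langle f_1,f_{-1}\rangle$. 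The three weight-$2$ spheres are then the three obvious $\iota_{\bldmth{f},\bldmth{r}}(\SL(2,\mathbb{R}))$-invariant families of flags supported on these eigenspaces: $S_a = \{(H^1, \langle f_1,f_{-1}\rangle) : H^1 \subset \langle f_1,f_{-1}\rangle\}$ connecting $F_{1,-1}^\bullet$ and $F_{-1,1}^\bullet$; $S_b = \{(\langle f_0\rangle, \langle f_0, L\rangle) : L \subset \langle f_1,f_{-1}\rangle\}$ connecting $F_{0,1}^\bullet$ and $F_{0,-1}^\bullet$; and $S_c = \{(H^1, \langle H^1, f_0\rangle) : H^1 \subset \langle f_1,f_{-1}\rangle\}$ connecting $F_{1,0}^\bullet$ and $F_{-1,0}^\bullet$.

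The key geometric observation is that each $S_\bullet$ is a single ambient $\iota_{\bldmth{f},\bldmth{r}}(\SL(2,\mathbb{R}))$-invariant submanifold on which $\SL(2,\mathbb{R})$ acts through its standard action on $\mathbb{P}(\langle f_1,f_{-1}\rangle) \cong \mathbb{CP}^1$ (this uses that $f_0$ is $\rho_1$-fixed, so the only action is on the $\rho_2$-factor). In particular, $S_\bullet \subset \Omega_{\bldmth{f},\bldmth{r}}$ since the limit curve $\xi_{\bldmth{f},\bldmth{r}}$ lands in the real projective line $\mathbb{RP}(\langle f_1,f_{-1}\rangle) \cup \{\text{something involving } f_0\}$, which intersects each $S_\bullet$ only on its $\mathbb{RP}^1$ of real points, and this is exactly the part mapped to $\partial\mathbb{H}^2$ under $\bar p$. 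Then $\bar p|_{S_\bullet} : S_\bullet \cong \mathbb{CP}^1 \to \overline{\mathbb{H}^2}$ is the $\SL(2,\mathbb{R})$-equivariant ``folding'' map identifying $\mathbb{CP}^1$ with $\mathbb{H}^2 \cup_{\mathbb{RP}^1} \overline{\mathbb{H}^2}$, so $(\bar p|_{S_\bullet})^{-1}(O)$ consists of exactly the two $\SOrth(2)$-fixed poles of $S_\bullet$, which are its two endpoints.

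For transversality at one such endpoint, say $F_{1,0}^\bullet \in S_c$, I compute the $\SOrth(2)$-difference matrix for $F_{1,0}^\bullet$, giving $T_{F_{1,0}^\bullet}\Flag(\mathbb{C}^3) \cong V_{-1}^0 \oplus V_{-2}^0 \oplus V_{-1}^0$ as an $\SOrth(2)$-module, where (by Proposition \ref{ExceptInFlags}) the $V_{-2}^0$ summand is exactly $T_{F_{1,0}^\bullet}S_c$. The differential $d\bar p_{F_{1,0}^\bullet}$ maps equivariantly to $T_O\mathbb{H}^2$, which carries weight $\pm 2$; since $\bar p|_{S_c}$ is nonconstant, this differential is nonzero on $V_{-2}^0$, hence it is an isomorphism $V_{-2}^0 \xrightarrow{\sim} T_O\mathbb{H}^2$ (by Schur across distinct weights) and $T_{F_{1,0}^\bullet}M_{\bldmth{f},\bldmth{r}} = \ker d\bar p = V_{-1}^0 \oplus V_{-1}^0$. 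This gives transversality and proves (i). The same calculation works verbatim at the other five fixed points.

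For (ii), transversality in (i) together with Lemma \ref{CheapTrick}(i) (applied to the order-$2$ trivial stabilizer) shows that $M_{\bldmth{f},\bldmth{r}}$ meets none of the three exceptional spheres except at the six endpoints, so no edge of Figure \ref{fig:fakecase2TWG} survives in the tangential weight graph of $M_{\bldmth{f},\bldmth{r}}$; the six round vertices remain and Lemma \ref{CheapTrick}(iii) assigns them the same signs as in the ambient tangential weight graph, giving Figure \ref{fig:case2TWG}. The main subtle point is verifying that each $S_\bullet$ actually folds rather than embeds into $\overline{\mathbb{H}^2}$, which I expect to require an explicit examination of which points of $S_\bullet$ lie on $\xi_{\bldmth{f},\bldmth{r}}$; this should drop out of the isotypic decomposition once one writes the limit curve in coordinates adapted to $V = \rho_2 \oplus \rho_1$.
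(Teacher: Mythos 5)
Your argument takes a genuinely different route from the paper's. The paper's proof is purely infinitesimal: at an \emph{arbitrary} intersection point $x$, it uses the $\pm 1$--eigenspace decomposition of $T_x\Flag(\mathbb{C}^3)$ under $-I \in H_2 \subset \SOrth(2)$, noting that both $T_x\Fix(\Flag(\mathbb{C}^3),2,\SOrth(2))$ and $T_x^\perp M_{\bldmth{f},\bldmth{r}}$ lie inside the one--complex--dimensional $+1$--eigenspace and therefore coincide. This gives transversality at every point of intersection, hence finiteness, and then $\SOrth(2)$--invariance forces the intersection to consist only of the six $\SOrth(2)$--fixed flags. You instead identify the three exceptional spheres $S_a, S_b, S_c$ concretely, observe that $\SL(2,\mathbb{R})$ acts on each through the M\"obius action on $\mathbb{P}(\langle f_1,f_{-1}\rangle)$, argue that the equivariant compactified projection $\bar p|_{S_\bullet}\colon \mathbb{CP}^1 \to \overline{\mathbb{H}^2}$ folds so that $S_\bullet \cap M_{\bldmth{f},\bldmth{r}}$ is the two poles, and only then verify transversality by a Schur--lemma weight count at the six fixed flags. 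Your approach buys more geometric information (you actually exhibit the spheres); the paper's is shorter and applies at points where the full $\SOrth(2)$ does not act.

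The gap you flag is real and load-bearing. Because your transversality computation is carried out only at the six $\SOrth(2)$--fixed points, it says nothing about possible intersection points elsewhere along $S_\bullet$; so without the folding claim you cannot conclude that the intersection is \emph{precisely} those six points. You suggest closing this by unwinding the limit curve, but there is a cleaner, purely group--theoretic route: if $\bar p(x)=O$ then equivariance forces $\Stab_{\SL(2,\mathbb{R})}(x) \subseteq \Stab_{\SL(2,\mathbb{R})}(O)=\SOrth(2)$; for $x$ in an open hemisphere of $S_\bullet$ the stabilizer is a conjugate of $\SOrth(2)$ and must then \emph{equal} $\SOrth(2)$, so $x$ is a pole, while for $x$ on the equatorial $\mathbb{RP}^1$ the stabilizer is a Borel, which cannot sit inside the compact $\SOrth(2)$. (Or simply adopt the paper's $-I$--eigenspace count at an arbitrary $x\in S_\bullet\cap M_{\bldmth{f},\bldmth{r}}$, which avoids the folding claim entirely.) Two smaller issues: the sentence ``$S_\bullet \subset \Omega_{\bldmth{f},\bldmth{r}}$'' is literally false --- the real circle of $S_\bullet$ lies in the thickening $K_\rho^I$, as the rest of your paragraph makes clear you know --- and the justification ``since $\bar p|_{S_c}$ is nonconstant, the differential is nonzero on $V_{-2}^0$'' should instead appeal to surjectivity of $dp$: a nonconstant map can have vanishing differential at a point, whereas surjectivity of $dp$ together with Schur forces $V_{-2}^0$ to map isomorphically onto $T_O\mathbb{H}^2$, which is all you need.
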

\begin{proof}
    Take an arbitrary $x\in M_{\bldmth{f},\bldmth{r}}\cap\Fix(\Flag(\mathbb{C}^3),2,\SOrth(2))$. By Proposition \ref{FlagTangentModel}, the tangent space $T_{x}\Flag(\mathbb{C}^3)$ under the action of $-I\in\SOrth(2)$ has eigenspaces $E_{1}$, $E_{-1}$ with $\dim_{\mathbb{C}}E_{1}=1$ and $\dim_{\mathbb{C}} E_{-1}=2$. Note that $T_x\Fix(\Flag(\mathbb{C}^3),2,\SOrth(2))$ is fixed by $-I$. Furthermore, if we fix an $\SOrth(2)$--invariant Hermitian metric on $\Flag(\mathbb{C}^3)$, then the equivariance of $\rho$ implies that $T^\perp_xM_{\bldmth{f},\bldmth{r}}$ is also fixed by $-I$. Thus, $T_x\Fix(\Flag(\mathbb{C}^3),2,\SOrth(2))=T_x^\perp M_{\bldmth{f},\bldmth{r}}=E_{1}$. In particular, $M_{\bldmth{f},\bldmth{r}}$ and $\Fix(\Flag(\mathbb{C}^3),2,\SOrth(2))$ intersect transversely at $x$.\\
    Therefore, the fiber $M_{\bldmth{f},\bldmth{r}}$ and $\Fix(\Flag(\mathbb{C}^3),2,\SOrth(2))$ intersect transversely and thus at finitely many points including $F_{1,0}^\bullet,F_{1,-1}^\bullet,F_{0,1}^\bullet,F_{0,-1}^\bullet,F_{-1,1}^\bullet,F_{-1,0}^\bullet$. Because the intersection is also $\SOrth(2)$--invariant, it cannot contain any other points; otherwise, it would contain a $1$--dimensional orbit.\\
    The sign calculations still apply by Lemma \ref{CheapTrick}. Thus, the tangential weight graph for $M_{\bldmth{f},\bldmth{r}}$ must be as described.
\end{proof}
\begin{prop}\label{Case2Action}
    There is a smooth $S^1$--action on $(S^2\times S^2)\# (S^2\times S^2)$ which has the same tangential weight graph as in  Figure \ref{fig:case2TWG}
\end{prop}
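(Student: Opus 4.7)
The plan is to realize the desired $S^1$--action as an equivariant connected sum of two standard $S^1$--actions on $\mathbb{CP}^1\times\mathbb{CP}^1$, in the spirit of the construction in Example \ref{ConnSumExample}. The key observation is that we need a simply-connected action on $(S^2\times S^2)\#(S^2\times S^2)$ with exactly six isolated fixed points, no exceptional spheres, no fixed surfaces, and signs totalling three pluses and three minuses (since $\chi=6$ and $\sigma=0$, with the count of fixed points following from Proposition \ref{fixedappear}(iv)).

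First, I would consider $M_1 = \mathbb{CP}^1\times\mathbb{CP}^1 = \Hir(0;1,1)$ with the action
\[
g\cdot([z_0{:}z_1],[w_0{:}w_1]) = ([z_0{:}gz_1],[w_0{:}gw_1]).
\]
By Corollary \ref{ExceptionalHirTWG} applied with $q=0$, $a=b=1$ (or by direct calculation), this action has four isolated fixed points, no fixed surfaces, and no exceptional orbits; the fixed points $([1{:}0],[1{:}0])$, $([1{:}0],[0{:}1])$, $([0{:}1],[1{:}0])$, $([0{:}1],[0{:}1])$ have tangent representations $V_1^0\oplus V_1^0$, $V_1^0\oplus V_{-1}^0$, $V_{-1}^0\oplus V_1^0$, $V_{-1}^0\oplus V_{-1}^0$, giving signs $+,-,-,+$. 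Let $M_2$ be a second copy of the same setup.

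Next, I would perform the equivariant connected sum of $M_1$ and $M_2$ at a $(+)$ fixed point $x_1\in M_1$ and a $(-)$ fixed point $x_2\in M_2$. By Proposition \ref{BredonNeighbor}, small $S^1$--invariant ball neighborhoods of $x_i$ have boundaries equivariantly diffeomorphic to $S(T_{x_i}M_i)$. These two boundary $3$--spheres are equivariantly diffeomorphic as unoriented $S^1$--spaces (both are $S^3$ with the standard free $S^1$--action by the Hopf fibration, up to sign), and the sign mismatch between $x_1$ and $x_2$ guarantees that any such equivariant diffeomorphism between them is orientation-reversing, exactly as in Example \ref{ConnSumExample}. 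Gluing $M_1\setminus\mathring{B}(x_1)$ to $M_2\setminus\mathring{B}(x_2)$ along such a map produces a smooth, simply-connected, oriented $4$--manifold $M$ carrying a smooth $S^1$--action.

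The underlying smooth manifold of $M$ is $M_1\#M_2 = (S^2\times S^2)\#(S^2\times S^2)$. The fixed point set of the induced $S^1$--action consists precisely of the six remaining fixed points: the three from $M_1\setminus\{x_1\}$ have signs $\{-,-,+\}$, and the three from $M_2\setminus\{x_2\}$ have signs $\{+,-,+\}$. The total sign multiset is therefore three pluses and three minuses, which matches the multiset of labels in Figure \ref{fig:case2TWG} (the graph has no edges and no square vertices, so the labeled graph is entirely determined by this multiset). Since neither factor had exceptional orbits and the gluing is performed in the free part of the action, no exceptional spheres or fixed surfaces are introduced. The main subtlety, as in Lemma \ref{NeighborhoodData} and Example \ref{ConnSumExample}, is the compatibility of the equivariant gluing map; this is ensured by the equivariant tubular neighborhood theorem together with the observation that the underlying unoriented representations at $x_1$ and $x_2$ agree.
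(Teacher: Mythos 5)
Your proof is correct and follows essentially the same route as the paper: take the weight-$(1,1)$ diagonal action on $\mathbb{CP}^1\times\mathbb{CP}^1$ (four isolated fixed points, two of each sign, no exceptional orbits) and form the equivariant connected sum of two copies at a $(+)$ and a $(-)$ fixed point, exactly as in Example \ref{ConnSumExample}. The sign bookkeeping and the orientation-reversing identification of the linking spheres are handled correctly, so there is nothing to add.
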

\begin{proof}
    We resume our identification of $S^1=\{g\in\mathbb{C}:|g|=1\}$. Consider the $S^1$--action on $\mathbb{CP}^1\times\mathbb{CP}^1$ given by
    \[g\cdot ([z_0:z_1],[w_0:w_1])=([z_0:gz_1],[w_0:gw_1]).\]
    Following the same reasoning as in Example \ref{ConnSumExample}, the tangential weight graph is as in Figure \ref{fig:11actionTWG}
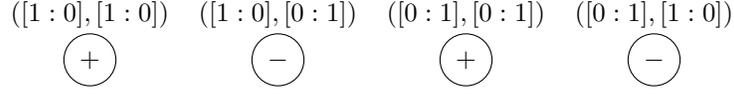
\begin{figure}
    \centering
    \begin{tikzpicture}[node distance={30mm}, main/.style = {draw, circle}]
\node[main] (1) at (0,0) {$+$}; 
\node[main] (2) at (2.5,0)  {$-$}; 
\node[main] (3) at (5,0) {$+$}; 
\node[main] (4) at (7.5,0)  {$-$}; 

\node[] at (0,0.6) {$([1:0],[1:0])$}; 
\node[] at (2.5,0.6) {$([1:0],[0:1])$}; 
\node[] at (5,0.6) {$([0:1],[0:1])$}; 
\node[] at (7.5,0.6) {$([0:1],[1:0])$};  
\end{tikzpicture}
    \caption{Tangential weight graph for an $S^1$--action on $\mathbb{CP}^1\times\mathbb{CP}^1$}
    \label{fig:11actionTWG}
\end{figure}
Applying the same connected sum construction as in Example \ref{ConnSumExample}, we get an action on $(S^2\times S^2)\#(S^2\times S^2)$ with the described tangential weight graph. 
\end{proof}
Then, again by Theorem \ref{SmoothClassification}, we are able to conclude the topology and structure group action of $M_{\bldmth{f},\bldmth{r}}$ as follows.
\begin{corr}\label{fiberflgred}
    For $\iota_{\bldmth{f},\bldmth{r}}=\rho_2\oplus\rho_1$ and $\mathcal{F}_\eta=\Flag(\mathbb{C}^3)$,
    \begin{enumerate}[label=\normalfont{(\roman*)}]
        \item The $\SL(2,\mathbb{R})$--equivariant fiber bundle projection $p_{\bldmth{f},\bldmth{r}}:\Omega_{\bldmth{f},\bldmth{r}}\to\mathbb{H}^2$ has fiber $M_{\bldmth{f},\bldmth{r}}$ which is diffeomorphic to $(S^2\times S^2)\#(S^2\times S^2)$. The structure group is $\SOrth(2)$ and the action is as described in Figure \ref{fig:case2TWG}.
        \item The manifold $\mathcal{W}_{\bldmth{f},\bldmth{r}}$ is diffeomorphic to the bundle associated to the $\SOrth(2)$--principal bundle over $S_g$ of Euler class $2g-2$ and fiber $(S^2\times S^2)\#(S^2\times S^2)$ with the described $\SOrth(2)$--action.
    \end{enumerate}
\end{corr}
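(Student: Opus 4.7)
The plan is to combine Proposition \ref{Case2TWG}, Proposition \ref{Case2Action}, and Theorem \ref{SmoothClassification} for part (i), and then invoke the associated-bundle description from \cite{alessandrini2023fiberbundlesassociatedanosov} for part (ii).

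For part (i), I first need to verify that the smooth classification theorem applies to the $\SOrth(2)$--action on $M_{\bldmth{f},\bldmth{r}}$. Proposition \ref{AMTWEquiv} guarantees $M_{\bldmth{f},\bldmth{r}}$ is a closed smooth $4$--manifold, and it is simply-connected because $\Flag(\mathbb{C}^3)$ is simply-connected, $K_\rho^I$ has real codimension at least $3$ (since $\dim_{\mathbb{C}}\Flag(\mathbb{C}^3)=3$ and the limit curve is $1$--dimensional), so $\Omega_{\bldmth{f},\bldmth{r}}\simeq \mathbb{H}^2\times M_{\bldmth{f},\bldmth{r}}$ is simply-connected. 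I also need effectiveness of the $\SOrth(2)$--action and the no-cycles condition, both of which follow by inspection of the tangential weight graph in Figure \ref{fig:case2TWG}: the graph has six isolated round vertices with no edges, so it trivially contains no polyhedral cycles, and the weights $\pm 1$ appearing in the difference matrices show effectiveness.

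With these hypotheses in place, Proposition \ref{Case2TWG} identifies the tangential weight graph of $M_{\bldmth{f},\bldmth{r}}$ with the graph depicted in Figure \ref{fig:case2TWG}, and Proposition \ref{Case2Action} exhibits a smooth $S^1$--action on $(S^2\times S^2)\#(S^2\times S^2)$ realizing precisely the same labeled graph. Theorem \ref{SmoothClassification} then gives an orientation-preserving $S^1$--equivariant diffeomorphism between $M_{\bldmth{f},\bldmth{r}}$ and $(S^2\times S^2)\#(S^2\times S^2)$ equipped with the action of Proposition \ref{Case2Action}. This establishes part (i).

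For part (ii), I invoke the second part of the Alessandrini--Maloni--Tholozan--Wienhard theorem stated earlier in this section: the quotient manifold $\mathcal{W}_\rho^I$ is diffeomorphic to the bundle associated to a principal $S^1$--bundle $P\to S_g$ of Euler class $g-1$ with fiber $M_\rho^I$, where $S^1\cong \SOrth(2)$ acts via $\iota$. In the present reducible case, the homomorphism $\iota_{\bldmth{f},\bldmth{r}}=\rho_2\oplus\rho_1$ does not descend to $\PSL(2,\mathbb{R})$, so the structure group of the bundle is genuinely $\SOrth(2)$ rather than $\PSOrth(2)$. The key observation to reconcile the Euler number is that the $\iota_{\bldmth{f},\bldmth{r}}(\SOrth(2))$--action on $M_{\bldmth{f},\bldmth{r}}$ has kernel of order one (the weights $\pm 1$ appear in the tangent model, whereas in the irreducible case all weights were even), so passing from the $\PSOrth(2)$--action in Corollary \ref{fiberflgirr} to the $\SOrth(2)$--action here doubles the effective Euler number of the associated bundle: the associated bundle is built from a principal $\SOrth(2)$--bundle of Euler class $g-1$, but viewed through the $2$--to--$1$ covering, its classifying data corresponds to Euler class $2(g-1)=2g-2$. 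This yields the description in part (ii) and completes the proof.

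The only place that requires genuine care is the bookkeeping in the last paragraph: distinguishing whether the structure group action factors through $\PSOrth(2)$ and tracking the correct Euler class. Everything else is a direct invocation of Proposition \ref{Case2TWG}, Proposition \ref{Case2Action}, and Theorem \ref{SmoothClassification}.
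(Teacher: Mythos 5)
Part (i) of your proposal is correct and is exactly the paper's route: Proposition \ref{Case2TWG} gives the tangential weight graph of $M_{\bldmth{f},\bldmth{r}}$, Proposition \ref{Case2Action} realizes the same graph on $(S^2\times S^2)\#(S^2\times S^2)$, and Theorem \ref{SmoothClassification} produces the equivariant diffeomorphism. Your hypothesis-checking (simple connectivity via the codimension of $K_\rho^I$, effectiveness from the weight-$\pm1$ entries, and the vacuous no-cycles condition for an edgeless graph) is the right bookkeeping and is consistent with what the paper leaves implicit.

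Part (ii) contains a genuine error in the step you yourself flag as the delicate one. Your claim is that because $\iota_{\bldmth{f},\bldmth{r}}$ does \emph{not} kill $-I_2$, ``passing from the $\PSOrth(2)$--action \dots to the $\SOrth(2)$--action here doubles the effective Euler number.'' This is backwards. The cited proposition of \cite{alessandrini2023fiberbundlesassociatedanosov} hands you a principal $S^1\cong\SOrth(2)$--bundle $P$ of Euler class $g-1$ together with the $\SOrth(2)$--action on $M_\rho^I$ via $\iota$; when that action is faithful, as in the reducible case, the associated bundle is simply $P\underset{\SOrth(2)}{\times}M_{\bldmth{f},\bldmth{r}}$ with $e(P)=g-1$, and no rescaling of the Euler class occurs --- a trivial kernel changes nothing. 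The factor of $2$ enters in the \emph{opposite} situation: when the action factors through $\PSOrth(2)=\SOrth(2)/\{\pm I\}$ (the irreducible case), one may rewrite the associated bundle over the induced $\PSOrth(2)$--bundle $P/\{\pm I\}$, and by the paper's own remark on change of fiber (the quotient map $\SOrth(2)\to\PSOrth(2)$ is the map $g\mapsto g^2$ under the standard identifications) that induced bundle has Euler class $2(g-1)$. So your argument, applied consistently, would attach the Euler number $2g-2$ to the irreducible case and $g-1$ to the reducible one --- geometrically these are $T^1S=\bar\phi(\Gamma)\backslash\PSL(2,\mathbb{R})$ with $|e|=2g-2$ and its fiberwise double cover $\phi(\Gamma)\backslash\SL(2,\mathbb{R})$ with $|e|=g-1$ --- which is the reverse of the assignment you are trying to justify. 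Your derivation of the number $2g-2$ therefore does not go through as written; to prove part (ii) you should instead derive the Euler number directly from the identification $\mathcal{W}_{\bldmth{f},\bldmth{r}}\cong\bigl(\phi(\Gamma)\backslash\SL(2,\mathbb{R})\bigr)\underset{\SOrth(2)}{\times}M_{\bldmth{f},\bldmth{r}}$ and an honest computation of the Euler number of $\phi(\Gamma)\backslash\SL(2,\mathbb{R})\to S$, rather than by a kernel-counting heuristic.
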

\subsubsection{Obstruction to complex and symplectic structures}
Before we move onto the domains in other flag varieties, we remark on a consequence of the fibers $M_{\bldmth{f},\bldmth{i}}$ and $M_{\bldmth{f},r}$ being diffeomorphic to $(S^2\times S^2)\# (S^2\times S^2)$. 
\begin{prop}
    The $4$--manifold $(S^2\times S^2)\# (S^2\times S^2)$ does not admit an almost complex structure, and hence does not admit a complex or symplectic structure.
\end{prop}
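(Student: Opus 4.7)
The plan is to rule out an almost complex structure via a characteristic class obstruction, from which the non-existence of complex and symplectic structures follows immediately (any symplectic form admits a compatible almost complex structure, and a complex structure is a priori almost complex).

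I would first record the relevant invariants of $M = (S^2\times S^2)\#(S^2\times S^2)$. Since $S^2\times S^2$ has Euler characteristic $4$, signature $0$, $b_2 = 2$, intersection form the hyperbolic form $H = \left(\begin{smallmatrix} 0 & 1 \\ 1 & 0\end{smallmatrix}\right)$, and is spin, the connected sum satisfies $\chi(M) = 6$, $\sigma(M) = 0$, intersection form $H \oplus H$ on $H^2(M;\mathbb{Z}) \cong \mathbb{Z}^4$, and $w_2(M) = 0$ (the second Stiefel–Whitney class is unaffected by connected sum of simply connected spin $4$--manifolds).

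Next I would invoke Wu's criterion: if a closed oriented $4$--manifold $M$ admits an almost complex structure, then there exists $c \in H^2(M;\mathbb{Z})$ (the first Chern class of the structure) satisfying
\begin{align*}
c \equiv w_2(M) \pmod{2}, \qquad c^2 = 2\chi(M) + 3\sigma(M).
\end{align*}
For our $M$ the right-hand side is $12$ and the congruence forces $c$ to be divisible by $2$ in $H^2(M;\mathbb{Z})$ (since $w_2(M)=0$ and $M$ is simply connected, hence $H^2(M;\mathbb{Z})$ has no $2$--torsion). Writing $c = 2c'$ with $c' \in H^2(M;\mathbb{Z})$, the condition becomes $(c')^2 = 3$.

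Finally I would observe that on the intersection form $H\oplus H$, every value of the quadratic form $Q(a,b,d,e) = 2ab + 2de$ is even, so $3$ is not represented. This contradiction shows no such $c$ exists, hence $M$ admits no almost complex structure. The main (mild) subtlety is being careful about the integrality/parity of $c$: one must use that $H^2(M;\mathbb{Z})$ is torsion-free and that $w_2 = 0$ to pass from $c \equiv 0 \pmod 2$ to $c = 2c'$ in $H^2(M;\mathbb{Z})$; the rest is a short arithmetic check.
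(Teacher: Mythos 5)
Your proof is correct, but it takes a different (though closely related) route from the paper. The paper quotes Hirzebruch's congruence $\sigma(M)\equiv\chi(M)\pmod 4$ for closed almost complex $4$--manifolds and simply checks $0\not\equiv 6\pmod 4$; it needs nothing beyond the Euler characteristic and the signature. You instead use the Wu relations $c\equiv w_2(M)\pmod 2$ and $c^2=2\chi(M)+3\sigma(M)=12$, and then exploit the extra structure that $M$ is spin with even intersection form $H\oplus H$: since $w_2=0$ and $H^2(M;\mathbb{Z})$ is torsion-free, $c=2c'$, so $(c')^2=3$, which an even form cannot represent. Your divisibility step is handled correctly (merely noting that $c^2=12$ is even would not suffice, since the form is even anyway). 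The two arguments are cousins: the congruence the paper cites is itself a consequence of $c_1^2=2\chi+3\sigma$ together with the fact that $c_1$ is a characteristic vector of the intersection form, which is essentially the input you use. The paper's version is more economical and applies verbatim to any candidate $4$--manifold once $\chi$ and $\sigma$ are known; yours requires identifying $w_2$ and the intersection form of the connected sum, but in exchange it is self-contained modulo the standard Wu criterion rather than resting on a cited congruence.
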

\begin{proof}
    A classic result of Hirzebruch \cite[p.\ 777]{HirzebruchSignEulCong} says that for any almost complex manifold $M$ of dimension $4k$, we have that
    \[\sigma(M)\equiv (-1)^{2k}\chi(M)\pmod{4}\]
    where $\sigma(M)$ is the signature of the intersection form on $H_{2k}(M)$ and $\chi(M)$ is the Euler characteristic of $M$. \par
    On the other hand for $M=(S^2\times S^2)\# (S^2\times S^2)$, we have
    \[\sigma(M)=\sigma\left(S^2\times S^2\right)+\sigma\left(S^2\times S^2\right)=0+0=0\]
    and
    \[\chi(M)=\chi\left(S^2\times S^2\right)+\chi\left(S^2\times S^2\right)-2=4+4-2=6.\]
    Therefore, $M=(S^2\times S^2)\# (S^2\times S^2)$ does not admit an almost complex structure.
\end{proof}
This leads to the following corollary.
\begin{corr}
    For the domain of discontinuity $\Omega=\Omega_{\bldmth{f},\bldmth{i}}$ (resp. $\Omega_{\bldmth{f},\bldmth{r}}$),
    \begin{enumerate}[label=\normalfont{(\roman*)}]
        \item There is no holomorphic fiber bundle projection $\pi:\Omega\to\mathbb{H}^2$.
        \item There is no symplectic fiber bundle projection $\pi:\Omega\to\mathbb{H}^2$, in the sense of \cite{lalonde2006symplecticstructuresfiberbundles}.
    \end{enumerate}
\end{corr}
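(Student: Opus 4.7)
Both claims follow the same blueprint: reduce to the preceding proposition, which shows that $(S^2\times S^2)\#(S^2\times S^2)$ does not admit an almost complex structure. So suppose, for contradiction, that $\pi:\Omega\to\mathbb{H}^2$ is a (holomorphic or symplectic) fiber bundle projection with fiber $F$. Since $\mathbb{H}^2$ is contractible, $\pi$ is smoothly trivial and $\Omega$ is diffeomorphic to $\mathbb{H}^2\times F$. From the AMTW projection $p$ and Corollary \ref{fiberflgirr} (resp.\ Corollary \ref{fiberflgred}), together with the contractibility of $\mathbb{H}^2$ once more, we also have $\Omega\cong\mathbb{H}^2\times M$ with $M=(S^2\times S^2)\#(S^2\times S^2)$. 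Consequently $F\simeq M$.

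The next step is to upgrade this to a homeomorphism $F\cong M$. Poincar\'e duality applied to the closed oriented $4$-manifold $M$ gives $H_4(M;\mathbb{Z})=\mathbb{Z}$, so the homotopy equivalence $F\simeq M$ forces $H_4(F;\mathbb{Z})=\mathbb{Z}$; a connected oriented $4$-manifold without boundary satisfying this must be closed. Since $F\simeq M$, the space $F$ is also simply-connected, and Freedman's classification of closed simply-connected topological $4$-manifolds by their intersection forms then implies $F\cong M$ as topological manifolds.

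Finally, I handle the two cases in parallel. For part (i), if $\pi$ is holomorphic then $F$ is a fiber of a holomorphic submersion, hence a closed complex surface, which in particular admits an almost complex structure. For part (ii), the Lalonde-McDuff notion of symplectic fiber bundle equips $F$ with a symplectic form, and any $\omega$-compatible almost complex structure on $F$ supplies one. In either case, $F$ is a smooth $4$-manifold homeomorphic to $M$ carrying an almost complex structure. However, the obstruction of the preceding proposition is Hirzebruch's congruence $\sigma(F)\equiv\chi(F)\pmod{4}$, which depends only on topological invariants; since it fails for $M$ (where $\sigma=0$ and $\chi=6$), it also fails for $F$, yielding the contradiction. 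The main technical point, compactness of $F$, is handled by the Poincar\'e duality argument above; once that is in place the remainder is a routine invocation of standard results.
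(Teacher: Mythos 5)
Your proof is correct and is essentially the argument the paper leaves implicit: trivialize over the contractible base, compare with the AMTW fibration to identify the fiber up to homotopy with $(S^2\times S^2)\#(S^2\times S^2)$, note that a holomorphic or Lalonde--McDuff symplectic fibration forces the fiber to be almost complex, and contradict the Hirzebruch congruence. The Freedman step is unnecessary overhead, as you yourself observe at the end: since $\sigma$ and $\chi$ are homotopy invariants of closed oriented $4$--manifolds, the homotopy equivalence $F\simeq M$ (together with your Poincar\'e-duality argument that $F$ is closed) already transports the failure of $\sigma\equiv\chi\pmod 4$ to $F$.
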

Part (i) was shown, in greater generality, with complex-analytic methods, by Dumas and Sanders \cite[Thm.\ D]{dumas-sanders}.
\subsection{For $G=\SL(4,\mathbb{C})$, $\Omega\subseteq\mathbb{CP}^3$}
Let $G=\SL(4,\mathbb{C})$ and let $\mathcal{F}_\eta=\mathbb{CP}^3$. Recall from Section \ref{handledomains} if $\iota_{\bldmth{p},\bldmth{i}}=\rho_4$, then any $\iota_{\bldmth{p},\bldmth{i}}$--Fuchsian representation $\rho_{\bldmth{p},\bldmth{i}}:\Gamma\to\SL(4,\mathbb{C})$ would be $2$--Anosov and admits a domain of discontinuity
\begin{align*}
    \Omega_{\bldmth{p},\bldmth{i}}=\{F^1\in\mathbb{CP}^3:F^1\nsubseteq \xi^2_{\bldmth{p},\bldmth{i}}(t)\text{ for all }t\in\mathbb{RP}^1\},
\end{align*}
where $\xi_{\bldmth{p},\bldmth{i}}:\mathbb{RP}^1\to\Gr_2(\mathbb{C}^4)$ is the limit curve. We denote the fiber above the origin of the $\SL(2)$--equivariant projection $p_{\bldmth{p},\bldmth{i}}:\Omega_{\bldmth{p},\bldmth{i}}\to\mathbb{H}^2$ by $M_{\bldmth{p},\bldmth{i}}$. \par
Similarly if $\iota_{\bldmth{p},\bldmth{r}}=\rho_2\oplus\rho_2$, then any $\iota_{\bldmth{p},\bldmth{r}}$--Fuchsian representation $\rho_{\bldmth{p},\bldmth{r}}:\Gamma\to\SL(4,\mathbb{C})$ is $2$--Anosov and admits a domain of discontinuity
\begin{align*}
    \Omega_{\bldmth{p},\bldmth{r}}=\{F^1\in\mathbb{CP}^3:F^1\nsubseteq \xi^2_{\bldmth{p},\bldmth{r}}(t)\text{ for all }t\in\mathbb{RP}^1\},
\end{align*}
where $\xi_{\bldmth{p},\bldmth{r}}:\mathbb{RP}^1\to\Gr_2(\mathbb{C}^4)$ is the limit curve. We denote the fiber above the origin of the $\SL(2)$--equivariant projection $p_{\bldmth{p},\bldmth{i}}:\Omega_{\bldmth{p},\bldmth{i}}\to\mathbb{H}^2$ by $M_{\bldmth{p},\bldmth{i}}$. 
\subsubsection{The irreducible case}\label{irrSL4}
In this section we will determine the topology and the structure group action for $M_{\bldmth{p},\bldmth{i}}$.

Let $V=\mathbb{C}^{(3)}[X,Y]$ be the $\mathbb{C}$--span of $X^3,X^2Y,XY^2,Y^3$ in $\mathbb{C}[X,Y]$. Then, we can represent $\iota_{\bldmth{p},\bldmth{i}}=\rho_4:\SL(2,\mathbb{R})\to\SL(4,\mathbb{C})$ by
\[\iota\left(\begin{bmatrix}
    a & b \\ c & d
\end{bmatrix}\right)\cdot f(X,Y)=f(dX-bY,aY-cX).\]
By restricting $\iota_{\bldmth{p},\bldmth{r}}$ to $\SOrth(2)$, we get a linear $\SOrth(2)$=-action on $V$. The $\SOrth(2)$--module $V$ has weight vectors $f_3=(X-iY)^3$, $f_1=(X-iY^2)(X+iY)$, $f_{-1}=(X+iY)(X-iY)^2$, $f_{-3}=(X+iY)^3$
with weights
\begin{align*}
    \iota_{\bldmth{p},\bldmth{i}}(R_{\theta})\cdot f_w&=e^{wi\theta}f_w.
\end{align*}
Note that because $\iota(-I_2)=-I_4$, $\iota_{\bldmth{p},\bldmth{i}}$ gives a $\PSL(2,\mathbb{C})$--action on $\mathbb{CP}^3$ with $g\cdot x=\iota_{\bldmth{p},\bldmth{i}}(\tilde{g})\cdot x$ for $g\in\PSL(2,\mathbb{C})$, $g=\pm\tilde{g}$ and $x\in\mathbb{CP}^3$. By Lemma \ref{CheapTrick} we have that,
\[F=\Fix(M_{\bldmth{p},\bldmth{i}},0,\PSOrth(2))=\{\ell_{3},\ell_{1},\ell_{-1},\ell_{-3}\},\]
where
\[\ell_w=\langle f_w\rangle\text{ for }w\in\{3,1,-1,-3\}.\]
\begin{prop}\label{Case3TWG}
    The tangential weight graph for the $\PSOrth(2)$--action on $M_{\bldmth{p},\bldmth{i}}$ looks as in Figure \ref{fig:case3TWG}.
    \begin{figure}
        \centering
        \begin{tikzpicture}[node distance={30mm}, main/.style = {draw, circle}]
\node[main] (1) at (0,0) {$+$}; 
\node[main] (2) at (1.5,0) {$+$}; 
\node[main] (3) at (3,0)  {$-$}; 
\node[main] (4) at (4.5,0)  {$-$}; 
\draw[] (1) -- node[midway, above,] {$2$} (2);
\draw[-] (2) -- node[midway, above,] {$3$} (3);
\draw[-] (3) -- node[midway, above,] {$2$} (4);

\node[] at (0,0.6) {$\ell_1$}; 
\node[] at (1.5,0.6) {$\ell_{-3}$}; 
\node[] at (3,0.6) {$\ell_3$}; 
\node[] at (4.5,0.6) {$\ell_{-1}$}; 
\end{tikzpicture}
        \caption{Tangential weight graph for $\PSOrth(2)$--action on $M_{\bldmth{p},\bldmth{i}}$}
        \label{fig:case3TWG}
    \end{figure}
\end{prop}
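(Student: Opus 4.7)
The plan is to mirror the strategy used for Proposition \ref{Case1TWG}: compute the $\PSOrth(2)$-difference matrix at each of the four fixed points $\ell_3, \ell_1, \ell_{-1}, \ell_{-3}$, read off the signs from Proposition \ref{FlagTangentModel}, identify the exceptional spheres via Proposition \ref{ExceptInFlags}, and then transfer the data to the fiber $M_{\bldmth{p},\bldmth{i}}$ through Proposition \ref{SameTWG}(ii).

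First I would observe that for the flag variety $\mathbb{CP}^3 = \Flag_1(\mathbb{C}^4)$, the index set is $\mathcal{I} = \{(i,1) : 2 \leq i \leq 4\}$, so at each $\ell_w$ the tangent space is determined by three weight differences between $w$ and the other three weights in $\{3,1,-1,-3\}$. For instance, using the weight flag basis $(f_1, f_3, f_{-1}, f_{-3})$ at $\ell_1$, the $\PSOrth(2)$-difference matrix is
\[\begin{array}{c|cccc}
    \ell_1 & f_1 & f_3 & f_{-1} & f_{-3}\\ \hline
    f_1 & \ast & \ast & \ast & \ast\\
    f_3 & 1 & \ast & \ast & \ast\\
    f_{-1} & -1 & \ast & \ast & \ast\\
    f_{-3} & -2 & \ast & \ast & \ast
\end{array}\]
which gives the sign $(+)(-)(-) = +$. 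Analogous $3 \times 3$ computations at $\ell_3, \ell_{-1}, \ell_{-3}$ (halving the weight differences $w_i - w_j$ between elements of $\{3,1,-1,-3\}$) yield the signs $-, -, +$ respectively, matching Figure \ref{fig:case3TWG}.

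For the edges, by Proposition \ref{ExceptInFlags} each entry of absolute value $k \geq 2$ in a $\PSOrth(2)$-difference matrix corresponds to an $\iota_{\bldmth{p},\bldmth{i}}(\PSOrth(2))$-invariant $\mathbb{CP}^1$ of weight $k$ connecting the two corresponding fixed points. The pairs $(w, w') \in \{3,1,-1,-3\}^2$ with $|w-w'|/2 \geq 2$ are exactly $\{\ell_3, \ell_{-1}\}$ and $\{\ell_1, \ell_{-3}\}$ (each giving weight $2$) and $\{\ell_3, \ell_{-3}\}$ (weight $3$), which is precisely the edge set in Figure \ref{fig:case3TWG}. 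The pairs differing by $2$ in weight give $\PSOrth(2)$ orbits with trivial stabilizer and hence do not contribute edges.

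Finally, to transfer this from $\mathbb{CP}^3$ to the fiber $M_{\bldmth{p},\bldmth{i}}$, I would invoke Proposition \ref{SameTWG}(ii): since $\iota_{\bldmth{p},\bldmth{i}}$ descends to a $\PSL(2,\mathbb{R})$-representation (because $\iota_{\bldmth{p},\bldmth{i}}(-I_2) = -I_4$ acts trivially on $\mathbb{CP}^3$) and the resulting tangential weight graph has no square vertices, the tangential weight graph of $\mathbb{CP}^3$ coincides with that of $M_{\bldmth{p},\bldmth{i}}$. I do not expect any serious obstacles: the whole argument reduces to four $3 \times 3$ difference-matrix computations plus a single appeal to the already-established transfer principle. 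The only minor point requiring care is confirming that $\iota_{\bldmth{p},\bldmth{i}}$ really does induce a projective representation, which follows immediately from the explicit formula for $\rho_4$.
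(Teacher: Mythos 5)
Your proposal is correct and follows essentially the same route as the paper: compute the $\PSOrth(2)$--difference matrices at the four fixed lines $\ell_3,\ell_1,\ell_{-1},\ell_{-3}$, read off the signs and the weight--$2$ and weight--$3$ exceptional spheres via Propositions \ref{FlagTangentModel} and \ref{ExceptInFlags}, and transfer to $M_{\bldmth{p},\bldmth{i}}$ by Proposition \ref{SameTWG}(ii). The signs and edges you obtain agree with the paper's computation, and your appeal to the $\PSOrth(2)$ version of the transfer principle (needed because of the weight--$2$ edges) is exactly the step the paper uses.
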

\begin{proof}
    To determine tangential weight graph, we determine the difference matrix for the action on $\PSOrth(2)$ at each $\ell_w\in F$.
    
    The $\PSOrth(2)$--difference matrices are
    \begin{align*}
        \begin{tabular}{c|cccc}
             $\ell_1$ & $f_1$ & $f_{3}$ & $f_{-1}$ & $f_{-3}$ \\ \hline
             $f_1$ & $\ast$ & $\ast$  & $\ast$ & $\ast$\\
             $f_{3}$ & $1$ & $\ast$  & $\ast$ & $\ast$\\
             $f_{-1}$ & $-1$ & $\ast$  & $\ast$  & $\ast$\\
             $f_{-3}$ & $-2$ & $\ast$ & $\ast$ & $\ast$
        \end{tabular} & & \begin{tabular}{c|cccc}
             $\ell_{-3}$ & $f_{-3}$ & $f_{3}$ & $f_{1}$ & $f_{-1}$ \\ \hline
             $f_{-3}$ & $\ast$ & $\ast$  & $\ast$ & $\ast$\\
             $f_{3}$ & $3$ & $\ast$  & $\ast$ & $\ast$\\
             $f_{1}$ & $2$ & $\ast$  & $\ast$  & $\ast$\\
             $f_{-1}$ & $1$ & $\ast$ & $\ast$ & $\ast$
        \end{tabular} \\ \\
        \begin{tabular}{c|cccc}
             $\ell_3$ & $f_3$ & $f_{1}$ & $f_{-1}$ & $f_{-3}$ \\ \hline
             $f_3$ & $\ast$ & $\ast$  & $\ast$ & $\ast$\\
             $f_{1}$ & $-1$ & $\ast$  & $\ast$ & $\ast$\\
             $f_{-1}$ & $-2$ & $\ast$  & $\ast$  & $\ast$\\
             $f_{-3}$ & $-3$ & $\ast$ & $\ast$ & $\ast$
        \end{tabular} & & \begin{tabular}{c|cccc}
             $\ell_{-1}$ & $f_{-1}$ & $f_{3}$ & $f_{1}$ & $f_{-3}$ \\ \hline
             $f_{-1}$ & $\ast$ & $\ast$  & $\ast$ & $\ast$\\
             $f_{3}$ & $2$ & $\ast$  & $\ast$ & $\ast$\\
             $f_{1}$ & $1$ & $\ast$  & $\ast$  & $\ast$\\
             $f_{-3}$ & $-1$ & $\ast$ & $\ast$ & $\ast$
        \end{tabular}
    \end{align*}
    Thus, the sign of $\ell_1$ is ($+$) and there is a weight $2$ $\mathbb{CP}^1$ connecting $\ell_1$ and $\ell_{-3}$. The sign of $\ell_{-3}$ is ($+$) and there is also a weight $3$ $\mathbb{CP}^1$ connecting $\ell_{-3}$ and $\ell_{3}$. The sign of $\ell_{3}$ is ($-$) and there is also a weight $2$ $\mathbb{CP}^1$ connecting $\ell_{3}$ and $\ell_{-1}$.
    This determines the stated tangential weight graph for $\mathbb{CP}^3$. By Proposition \ref{SameTWG}, this also tells us the tangential weight graph for $M_{\bldmth{p},\bldmth{i}}$.
\end{proof}
We note that the tangential weight graph in Figure \ref{fig:case3TWG} is the same as the tangential weight graph determined for $\Hir(2;-1,2)$ by Corollary \ref{ExceptionalHirTWG}.
Thus, by Theorem \ref{SmoothClassification}, we get the following result.
\begin{corr}\label{fiberprjirr}
    For $\iota_{\bldmth{p},\bldmth{i}}=\rho_4$ and $\mathcal{F}_\eta=\mathbb{CP}^3$,
    \begin{enumerate}
        \item[(i)] The $\PSL(2,\mathbb{R})$--equivariant fiber bundle projection $p_{\bldmth{p},\bldmth{i}}:\Omega_{\bldmth{p},\bldmth{i}}\to\mathbb{H}^2$ has fiber $M_{\bldmth{p},\bldmth{i}}$ is equivariantly diffeomorphic to $\Hir(2;-1,2)$, identifying $S^1\cong \PSOrth(2)$. In particular, $M_{\bldmth{p},\bldmth{i}}$ is diffeomorphic to $S^2\times S^2$.
        \item[(ii)] The manifold $\mathcal{W}_{\bldmth{p},\bldmth{i}}$ is diffeomorphic to the bundle associated to the $\PSOrth(2)$--principal bundle over $S_g$ of Euler class $g-1$ and fiber $S^2\times S^2$ with the described $\PSOrth(2)$--action.
    \end{enumerate}
\end{corr}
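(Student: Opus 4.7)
The plan is to invoke the smooth classification Theorem \ref{SmoothClassification} applied to $M_{\bldmth{p},\bldmth{i}}$ and the Hirzebruch surface $\Hir(2;-1,2)$, after verifying that their tangential weight graphs agree and that both manifolds meet the hypotheses of the theorem. First, I would record that Proposition \ref{AMTWEquiv} already gives that $M_{\bldmth{p},\bldmth{i}} = p_{\bldmth{p},\bldmth{i}}^{-1}(O)$ is a closed smooth manifold, and the real-codimension argument in Section \ref{handledomains} (the complement of the thickened limit set has real codimension at least $3$ in $\mathbb{CP}^{3}$) shows $\Omega_{\bldmth{p},\bldmth{i}}$ is simply connected, hence so is the fiber $M_{\bldmth{p},\bldmth{i}} \simeq \mathbb{H}^{2} \times M_{\bldmth{p},\bldmth{i}}$. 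The $\PSOrth(2)$-action is smooth and effective (since $\iota_{\bldmth{p},\bldmth{i}}(-I_{2}) = -I_{4}$ descends to give an effective action on $\mathbb{CP}^{3}$ whose restriction to the fiber is still effective, as the exceptional spheres of weights $2$ and $3$ detected in Proposition \ref{Case3TWG} already force this).

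Next, I would verify the ``no cycles'' hypothesis: the tangential weight graph of Proposition \ref{Case3TWG} is a path on four vertices, hence contains no polyhedral cycle, so Theorem \ref{SmoothClassification} applies to $M_{\bldmth{p},\bldmth{i}}$. On the other side, I would compute or cite the tangential weight graph of $\Hir(2;-1,2)$ via Corollary \ref{ExceptionalHirTWG} with $(q,a,b) = (2,-1,2)$, noting that the hypotheses of that corollary hold: $a=-1$ and $b=2$ are non-zero and coprime, and $a+qb = -1 + 2 \cdot 2 = 3 \neq 0$. The corollary then produces the four-vertex path with edge weights $|a| = 1$ wait, let me re-examine — actually the edge weights should be $|a|$, $|b|$, $|b|$, $|a+qb|$; these are $1, 2, 2, 3$, which does not immediately match the weights $2,3,2$ of Figure \ref{fig:case3TWG}. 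However, an edge of weight $1$ corresponds to a free latitudinal circle orbit, so such an edge collapses and is not part of the tangential weight graph; after this reduction the remaining edges have weights $2, 2, 3$, arranged as a path, matching Figure \ref{fig:case3TWG}. The sign labels then follow from the explicit formulas $s_{1} = \sign(ab)$, $s_{2} = -s_{1}$, $s_{3} = -s_{4}$, $s_{4} = \sign(ab+qb^{2})$ and agree with the signs computed from the difference matrices in the proof of Proposition \ref{Case3TWG}.

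With the two tangential weight graphs identified, Theorem \ref{SmoothClassification} produces an orientation-preserving $\PSOrth(2)$-equivariant diffeomorphism $M_{\bldmth{p},\bldmth{i}} \to \Hir(2;-1,2)$, proving part (i). Since $q=2$ is even, Proposition \ref{HirzeBundle} gives $\Hir(2) \cong S^{2} \times S^{2}$, settling the final assertion of (i). For part (ii), I would apply the associated bundle description of $\mathcal{W}_{\rho}^{I}$ from \cite{alessandrini2023fiberbundlesassociatedanosov} quoted in Section \ref{handledomains}: the quotient manifold is diffeomorphic to $P \underset{S^{1}}{\times} M_{\bldmth{p},\bldmth{i}}$ where $P \to S$ is the principal $S^{1}$-bundle of Euler class $g-1$, with the $S^{1}$-action on the fiber being the one just determined. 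The main subtlety to be careful with is the bookkeeping in the preceding paragraph: one must confirm that the edge weight of $1$ on $\Hir(2;-1,2)$ genuinely corresponds to a free $S^{1}$-orbit on the invariant sphere and hence does not appear in the tangential weight graph, and that the endpoint signs on this collapsed edge are compatible with the reduced four-vertex graph of Figure \ref{fig:case3TWG}. This is the only non-formal step; everything else is an application of results already proved earlier in the paper.
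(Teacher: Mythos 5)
Your proposal is correct and follows essentially the same route as the paper: establish the tangential weight graphs of $M_{\bldmth{p},\bldmth{i}}$ (via Proposition \ref{Case3TWG}) and of $\Hir(2;-1,2)$ (via Corollary \ref{ExceptionalHirTWG}), check they agree, and invoke Theorem \ref{SmoothClassification}; then use Proposition \ref{HirzeBundle} with $q=2$ even and the associated-bundle description from \cite{alessandrini2023fiberbundlesassociatedanosov} for part (ii). The one genuine value you add is making explicit the step the paper glosses over: the figure attached to Corollary \ref{ExceptionalHirTWG} is a $4$--cycle with edge weights $|a|,|b|,|b|,|a+qb|$, and for $(q,a,b)=(2,-1,2)$ one of these is $|a|=1$, so the corresponding invariant sphere $S_{12}$ has trivial stabilizer away from its poles, is not an exceptional sphere, and contributes no edge to the tangential weight graph. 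This collapse is what turns the square into the path of Figure \ref{fig:case3TWG} (with weights along the path $|b|,|a+qb|,|b| = 2,3,2$, not ``$2,2,3$'' in path order — a minor slip in your write-up but harmless since you then compare multisets and path shape). It is also what ensures the ``without cycles'' hypothesis of Theorem \ref{SmoothClassification} is met for $\Hir(2;-1,2)$; this point is worth noticing because Corollary \ref{ExceptionalHirTWG} as literally stated would give a $4$--cycle whenever all of $|a|,|b|,|a+qb|\geq 2$, in which case the classification theorem would not apply directly.
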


\subsubsection{The reducible case}

In this section, we will determine the topology and the structure group action for $M_{\bldmth{p},\bldmth{r}}$. This is the first case where we will see fixed surfaces in the fiber. 

Let $V$ be the $\mathbb{C}$--span of $X_1,Y_1,X_2,Y_2$ inside $\mathbb{C}[X_1,Y_1,X_2,Y_2]$. We represent $\iota_{\bldmth{p},\bldmth{r}}=\rho_2\oplus\rho_2:\SL(2,\mathbb{C})\to\SL(4,\mathbb{C})$ by the linear action on $V$ given by
\[\iota_{\bldmth{p},\bldmth{r}}\left(\begin{bmatrix}
    a & b \\ c & d
\end{bmatrix}\right)\cdot f(X_1,Y_1,X_2,Y_2)=f(dX_1-cY_1,aY_1-bX_1,dX_2-cY_2,aY_2-bX_2).\]
By restricting $\iota_{\bldmth{p},\bldmth{r}}$ to $\SOrth(2)$, we get a linear $\SOrth(2)$--action on $V$. Then, $V$ has weight vectors $e_1=X_1-iY_1,e_{-1}=X_1+iY_1,f_{1}=X_2-iY_2,f_{-1}=X_2+iY_2$ with weights
\begin{align*}
    \iota_{\bldmth{p},\bldmth{r}}(R_\theta)\cdot e_1&= e^{i\theta}e_1 & \iota_{\bldmth{p},\bldmth{r}}(R_\theta)\cdot e_{-1}&=e^{-i\theta}e_{-1}\\
    \iota_{\bldmth{p},\bldmth{r}}(R_\theta)\cdot f_1&= e^{i\theta}f_1 & \iota_{\bldmth{p},\bldmth{r}}(R_\theta)\cdot f_{-1}&=e^{-i\theta}f_{-1}
\end{align*}
Then, any $\SOrth(2)$--weight vector is either a $\mathbb{C}$--linear combination of $e_1,f_1$ or a $\mathbb{C}$--linear combination of $e_{-1},f_{-1}$. Because the spans of weight vectors are exactly the fixed points of the $\SOrth(2)$--action on $\mathbb{CP}^3$, the projective lines $\langle e_{1},f_{1}\rangle$ and $\langle e_{-1},f_{-1}\rangle$ are fixed surfaces in $\mathbb{CP}^3$ and thus also in $M_{\bldmth{p},\bldmth{r}}$, by Proposition \ref{CheapTrick}. Because there are no other fixed points, there are also no exceptional spheres in $M_{\bldmth{p},\bldmth{r}}$. Thus, the only tangential weight data to calculate are the Euler classes for these fixed surfaces. \par
In order to calculate this Euler class, we recall the relationship between Chern classes and Euler classes. For any rank $1$ complex vector bundle $E$ over a manifold $B$ has an associated principal $\mathbb{C}^\times$--bundle $P$ so that $E$ is isomorphic to $P\underset{\mathbb{C}^\times}{\times}\mathbb{C}$. Then, the Euler class of $P$ agrees with the first Chern class of $E$ and is denoted by $c_1(E)$. For more thorough discussion of Chern classes, see \cite{milnorstasheff}.
\begin{prop}\label{Case4TWG}
    The tangential weight graph for the $\PSOrth(2)$--action on $M_{\bldmth{p},\bldmth{r}}$ looks as in Figure \ref{fig:case4TWG}.
    \begin{figure}
        \centering
        \begin{tikzpicture}[node distance={30mm}, main/.style = {draw, circle}]
\node[draw] (1) at (0,-3.6) {$2$}; 
\node[draw] (4) at (2,-3.6)  {$-2$};

\end{tikzpicture}
        \caption{Tangential weight graph for $M_{\bldmth{p},\bldmth{r}}$}
        \label{fig:case4TWG}
    \end{figure}
\end{prop}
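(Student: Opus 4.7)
The plan is to first identify the fixed and exceptional data of the $\PSOrth(2)$-action on $\mathbb{CP}^3$ and transfer it to the fiber $M_{\bldmth{p},\bldmth{r}}$ via Lemma \ref{CheapTrick}, and then compute the two Euler numbers directly from a short exact sequence of normal bundles. From the weight decomposition $V=\langle e_1,f_1\rangle\oplus\langle e_{-1},f_{-1}\rangle$ under $\iota_{\bldmth{p},\bldmth{r}}=\rho_2\oplus\rho_2$, the only $\PSOrth(2)$-fixed points of $\mathbb{CP}^3$ are the two disjoint projective lines $S_{\pm}=\mathbb{P}(\langle e_{\pm 1},f_{\pm 1}\rangle)$, and every other point has trivial $\PSOrth(2)$-stabilizer. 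By Lemma \ref{CheapTrick}(ii), both $S_{\pm}$ lie in $M_{\bldmth{p},\bldmth{r}}$, and these are all of its fixed/exceptional points. So the tangential weight graph consists of exactly two square vertices and no edges, and the problem reduces to computing the Euler numbers $e_{\pm}$ of the normal circle bundles $U^{\perp}S_{\pm}\to S_{\pm}$ inside $M_{\bldmth{p},\bldmth{r}}$.

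For the vertex corresponding to $S_+$, I would analyze the short exact sequence of $\PSOrth(2)$-equivariant real rank-2 bundles over $S_+\cong\mathbb{CP}^1$,
\[0\to N_{S_+,M_{\bldmth{p},\bldmth{r}}}\to N_{S_+,\mathbb{CP}^3}\xrightarrow{dp} N_{M_{\bldmth{p},\bldmth{r}},\mathbb{CP}^3}|_{S_+}\to 0.\]
The middle term is the holomorphic normal bundle of a line in $\mathbb{CP}^3$, isomorphic to $\mathcal{O}(1)\oplus\mathcal{O}(1)$, and a weight calculation along the lines of Proposition \ref{FlagTangentModel} at $[e_1]$ shows that $\PSOrth(2)$ acts on it with weight $-1$. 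The right-hand term is $p^\ast T_O\mathbb{H}^2$ pulled back along the constant map $S_+\to\{O\}$, hence the trivial complex line bundle on which $\PSOrth(2)$ acts with weight $+1$. Because the two weights have opposite signs, the bundle map $dp$ is $\mathbb{C}$-antilinear with respect to the natural holomorphic complex structures; equivalently, equipping both sides with the complex structures induced by the $\PSOrth(2)$-rotation (which agrees with the natural one on the right but is the conjugate on the left) makes $dp$ complex linear. In this form the sequence becomes a genuine holomorphic short exact sequence
\[0\to N_{S_+,M_{\bldmth{p},\bldmth{r}}}^{(S^1)}\to \mathcal{O}(-1)\oplus\mathcal{O}(-1)\to\underline{\mathbb{C}}\to 0,\]
and multiplicativity of the total Chern class gives $c_1(N_{S_+,M_{\bldmth{p},\bldmth{r}}}^{(S^1)})=-2$. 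Hence the normal circle bundle of $S_+$ has Euler number $\pm 2$, and the identical computation at $S_-$ produces the opposite sign, by the symmetry of the weight decomposition swapping $e_{\pm 1}\leftrightarrow e_{\mp 1}$ and $f_{\pm 1}\leftrightarrow f_{\mp 1}$.

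The main obstacle will be carefully reconciling the two natural complex structures on each rank-2 real summand—the one inherited from the holomorphic geometry of $\mathbb{CP}^3$ and the one induced by the $\PSOrth(2)$-rotation—and matching the resulting complex orientation on $S_\pm$ against the boundary orientation of $\partial M_{\bldmth{p},\bldmth{r}}^{\ast}$ used to define the Euler number label. This sign bookkeeping is what distinguishes the label $2$ from the label $-2$ at the two square vertices. As a fallback in case the direct sign analysis proves unwieldy, one can instead produce a $\PSOrth(2)$-equivariant diffeomorphism between tubular neighborhoods of $S_+\cup S_-$ in $M_{\bldmth{p},\bldmth{r}}$ and in the Hirzebruch model $\Hir(2;1,0)$—whose tangential weight graph is computed in Proposition \ref{FixedHirTWG} to have labels $2$ and $-2$—and then invoke Theorem \ref{SmoothClassification}.
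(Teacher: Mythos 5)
You take essentially the same route as the paper: identify the two fixed lines $S_\pm=\mathbb{P}(\langle e_{\pm1},f_{\pm1}\rangle)$, carry them into $M_{\bldmth{p},\bldmth{r}}$ via Lemma~\ref{CheapTrick}, split the normal bundle of $S_\pm$ in $\mathbb{CP}^3$ into a piece tangent to the fiber and a piece that $dp$ trivializes, and read off the Euler number from additivity of Chern classes. The only real difference is a choice of bookkeeping: the paper stays with the holomorphic complex structure and computes $c_1(T\mathbb{CP}^3|_{C_1})=c_1(TC_1)+c_1(E)+0$, giving $c_1(E)=+2$; you pass to the conjugate ($\PSOrth(2)$--rotation) complex structure so that $dp$ becomes $\mathbb{C}$--linear, and compute $c_1=-2$ from the short exact sequence with $\mathcal{O}(-1)\oplus\mathcal{O}(-1)$ in the middle. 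The two are the same calculation, differing by the sign that conjugation induces on Chern classes, and both give $|e|=2$. (One small wording nit: the sequence need not be holomorphic; all you need is a short exact sequence of complex vector bundles, which is exactly what the $\PSOrth(2)$--equivariance gives you.)

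Where the paper is cleaner, and you should adopt its device, is the sign. The paper cites \cite[Lemma 5.1]{FintushelSimply4Circle}, which says that for a simply-connected $4$--manifold whose fixed/exceptional set consists of exactly two fixed surfaces, the two square-vertex labels are $e$ and $-e$, so only $|e|$ needs to be computed. Your alternative justification --- ``by the symmetry of the weight decomposition swapping $e_{\pm1}\leftrightarrow e_{\mp1}$'' --- does not quite work as stated: that swap is realized by $\iota_{\bldmth{p},\bldmth{r}}(\operatorname{diag}(-1,1))$, which normalizes $\SOrth(2)$ but lies outside $\SL(2,\mathbb{R})$, so there is no a priori reason it preserves the equivariant projection $p$ or the fiber $M_{\bldmth{p},\bldmth{r}}$. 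The fallback you suggest is also circular: Theorem~\ref{SmoothClassification} takes a matching of tangential weight graphs as its \emph{input}, and producing an equivariant diffeomorphism of tubular neighborhoods is essentially the content of Lemma~\ref{NeighborhoodData}, which itself presupposes knowing the labels. The Fintushel lemma is exactly the tool that discharges this bookkeeping, and with it in hand your argument is complete.
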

\begin{proof}
    Note that by \cite[Lemma 5.1]{FintushelSimply4Circle}, because there are only two fixed surfaces, the tangential weight graph has two square vertices with labels $e$ and $-e$ for some $e\in\mathbb{Z}$. Therefore, we only need to determine $|e|$ to determine the tangential weight graph.

    Fix an $\PSOrth(2)$--invariant Hermitian metric on $\mathcal{F}_\eta=\mathbb{CP}^3$.
    Consider the fixed surfaces $C_1=\langle e_1,f_1\rangle$ and $C_{-1}=\langle e_{-1},f_{-1}\rangle$. Choose a weight basis $v_1,v_2,v_3,v_4$ of $V$ such that $v_1,v_2\in C_1$ and $v_3,v_4\in C_{-1}$. Then, the $\PSOrth(2)$--weight matrix for $\ell=\langle v_1\rangle$ with this basis is
    \begin{align*}
        \begin{tabular}{c|cccc}
             $\ell$ & $v_1$ & $v_2$ & $v_3$  & $v_4$\\ \hline
             $v_1$ & $\ast$ & $\ast$  & $\ast$ & $\ast$\\
             $v_2$ & $0$ & $\ast$  & $\ast$ & $\ast$\\
             $v_3$ & $-1$ & $\ast$  & $\ast$ & $\ast$\\
             $v_4$ & $-1$ & $\ast$ & $\ast$ & $\ast$
        \end{tabular}
    \end{align*}
    Using the Hermitian metric, the tangent space $T_\ell\mathbb{CP}^3$ decomposes as $T_\ell C_1\oplus T_\ell^\perp C_1$. The $\PSOrth(2)$--actions on $T_\ell C_1$ has weight $0$ because $C_1$ is a fixed surface. $T_\ell C_1^\perp$ has only weight $-1$ and $\PSOrth(2)$ acts by complex multiplication. Then, $T_\ell^\perp C_1\cap T_\ell M_{\bldmth{p},\bldmth{r}}$ is real subspace of $T_\ell^\perp M_{\bldmth{p},\bldmth{r}}$ which is $\PSOrth(2)$--invariant. Because $\PSOrth(2)$ acts on $T_\ell ^\perp C_1$ by complex multiplication, $T_\ell^\perp C_1\cap T_\ell M_{\bldmth{p},\bldmth{r}}$ is actually a complex subspace of $T_{\ell}C_1^\perp$. By varying $\ell$ over $C_1$, we conclude that $T\mathbb{CP}^3|_{C}$ decomposes as
    \[T\mathbb{CP}^3|_{C_1}=TC_1\oplus(T^\perp C_1\cap TM_{\bldmth{p},\bldmth{r}})\oplus T^\perp M_{\bldmth{p},\bldmth{r}}|_{C_1}\]
    Let $E=T^\perp C_1\cap TM_{\bldmth{p},\bldmth{r}}$. 
    Note that $p_{\bldmth{p},\bldmth{r}}:\Omega_{\bldmth{p},\bldmth{r}}\to \mathbb{H}^2$ induces the trivialization $dp_{\bldmth{p},\bldmth{r}}:T^\perp M_{\bldmth{p},\bldmth{r}}\to T_O\mathbb{H}^2$ so $T^\perp M_{\bldmth{p},\bldmth{r}}|_{C_1}$ is a trivial bundle. Therefore, we have the relationship between first Chern classes.
    \begin{align*}
        c_1(T\mathbb{CP}^3|_{C_1})&=c_1(TC_1)+ c_1(E)\\
    \end{align*}
    By capping with the fundamental class of $C_1$ with the complex orientation, we conclude that the Chern number of $E$ as a rank $1$ complex vector bundle is $2$. This Chern number will agree in absolute value with $e$. Thus, $|e|=2$ and the tangential weight graph is as described.
\end{proof}
We note that the tangential weight graph in Figure \ref{fig:case3TWG} is the same as the tangential weight graph determined for $\Hir(2;1,0)$ by Proposition \ref{FixedHirTWG}.
Thus, by Theorem \ref{SmoothClassification}, we get the following result.
\begin{corr}\label{fiberprjred}
    For $\iota_{\bldmth{p},\bldmth{r}}=\rho_2\oplus \rho_2$ and $\mathcal{F}_\eta=\mathbb{CP}^3$,
    \begin{enumerate}
        \item[(i)] The $\PSL(2,\mathbb{R})$--equivariant fiber bundle projection $p_{\bldmth{p},\bldmth{r}}:\Omega_{\bldmth{p},\bldmth{r}}\to\mathbb{H}^2$ has fiber $M_{\bldmth{p},\bldmth{r}}$ which is equivariantly diffeomorphic to $\Hir(2;1,0)$, identifying $S^1\cong \PSOrth(2)$. In particular, $M_{\bldmth{p},\bldmth{r}}$ is diffeomorphic to $S^2\times S^2$.
        \item[(ii)] The manifold $\mathcal{W}_{\bldmth{p},\bldmth{r}}$ is diffeomorphic to the bundle associated to the $\PSOrth(2)$--principal bundle over $S_g$ of Euler class $g-1$ and fiber $S^2\times S^2$ with the described $\PSOrth(2)$--action.
    \end{enumerate}
\end{corr}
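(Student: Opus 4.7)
The plan is to deduce both parts directly from results already established in the paper, with the main work being a careful invocation of the smooth $S^1$--classification theorem (Theorem \ref{SmoothClassification}).

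For part (i), I would first observe that, by Proposition \ref{Case4TWG}, the tangential weight graph of the $\PSOrth(2)$--action on the fiber $M_{\bldmth{p},\bldmth{r}}$ consists of exactly two square vertices labeled $2$ and $-2$ and no round vertices or edges. On the other hand, by Proposition \ref{FixedHirTWG}, the tangential weight graph of the algebraic $S^1$--action on the Hirzebruch surface $\Hir(2;1,0)$ consists of two square vertices labeled $2$ and $-2$. Thus there is a label-preserving isomorphism of tangential weight graphs between $M_{\bldmth{p},\bldmth{r}}$ and $\Hir(2;1,0)$. Both manifolds are closed, simply-connected, smooth $4$--manifolds (in the case of $M_{\bldmth{p},\bldmth{r}}$, simple-connectedness follows from Proposition \ref{AMTWEquiv} together with the fact that $\mathbb{CP}^3$ is simply-connected and the limit set has real codimension $\geq 3$; in the case of $\Hir(2)$, from Proposition \ref{HirzeBundle}), and both carry effective smooth $S^1$--actions whose fixed/exceptional loci contain no cycles (the weight graphs have no cycles). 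Hence Theorem \ref{SmoothClassification} applies and yields an orientation-preserving $S^1$--equivariant diffeomorphism $M_{\bldmth{p},\bldmth{r}}\cong\Hir(2;1,0)$ under the identification $S^1\cong\PSOrth(2)$. Since $q=2$ is even, Proposition \ref{HirzeBundle} gives $\Hir(2)\cong S^2\times S^2$ as smooth manifolds.

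For part (ii), I would invoke the general fiber bundle description established by Alessandrini, Maloni, Tholozan, and Wienhard (stated as Proposition \ref{AMTWEquiv} and the paragraph immediately following it in the paper), which identifies $\mathcal{W}_\rho^I$ with the bundle associated to a principal $S^1$--bundle over $S=S_g$ of Euler class $g-1$ and fiber $M_\rho^I$. Because the $\iota_{\bldmth{p},\bldmth{r}}$--action descends to $\PSL(2,\mathbb{R})$ (as noted in Section \ref{irrSL4} and again at the start of this subsection, $\iota_{\bldmth{p},\bldmth{r}}(-I_2)=I_4$), the structure group is naturally $\PSOrth(2)$ rather than $\SOrth(2)$, and the associated bundle is formed using the $\PSOrth(2)$--action on the fiber described in part (i).

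The one step requiring genuine care is verifying that the hypotheses of Theorem \ref{SmoothClassification} are all met. In particular one must check that $M_{\bldmth{p},\bldmth{r}}$ is simply-connected and that its $\PSOrth(2)$--action has no cycles in the fixed/exceptional set; the former follows from Corollary \ref{simplyM*} applied to $M_{\bldmth{p},\bldmth{r}}$ (noting that simple-connectedness of the fiber of $p_{\bldmth{p},\bldmth{r}}:\Omega_{\bldmth{p},\bldmth{r}}\to\mathbb{H}^2$ is inherited from $\Omega_{\bldmth{p},\bldmth{r}}\simeq\mathbb{H}^2\times M_{\bldmth{p},\bldmth{r}}$ being simply-connected), and the latter is immediate from Proposition \ref{Case4TWG}. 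No serious obstacle is anticipated beyond this bookkeeping.
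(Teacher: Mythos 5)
Your proposal follows essentially the same route as the paper: compute the tangential weight graph of $M_{\bldmth{p},\bldmth{r}}$ (Proposition \ref{Case4TWG}), match it to that of $\Hir(2;1,0)$ (Proposition \ref{FixedHirTWG}), and invoke Theorem \ref{SmoothClassification} together with Proposition \ref{HirzeBundle}. Two small slips should be fixed, though neither invalidates the argument.

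First, the claim that $\iota_{\bldmth{p},\bldmth{r}}(-I_2)=I_4$ is false: since $\rho_2$ is the standard representation, $\iota_{\bldmth{p},\bldmth{r}}(-I_2)=\rho_2(-I_2)\oplus\rho_2(-I_2)=-I_4$. The reason the structure group is $\PSOrth(2)$ (and hence the Euler class is $g-1$ rather than $2g-2$) is not that the representation factors through $\PSL(2,\mathbb{R})$ at the group level, but that $-I_4$ acts trivially on the projective space $\mathbb{CP}^3$, so the $\SOrth(2)$--action on $\mathcal{F}_\eta$ and hence on the fiber $M_{\bldmth{p},\bldmth{r}}$ factors through $\PSOrth(2)$; this is why the paper uses the $\PSOrth(2)$--difference matrix in Proposition \ref{Case4TWG}.

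Second, citing Corollary \ref{simplyM*} for simple-connectedness of $M_{\bldmth{p},\bldmth{r}}$ is a mis-citation: that corollary establishes that the \emph{orbit space} $M^\ast$ is simply-connected (a hypothesis used inside the proof of Theorem \ref{SmoothClassification}), not that $M$ itself is. The correct reasoning is the one you also give in passing: since the thickened limit set $K_\rho^I\subset\mathbb{CP}^3$ has real codimension at least $3$, the domain $\Omega_{\bldmth{p},\bldmth{r}}$ is simply-connected, and since $\Omega_{\bldmth{p},\bldmth{r}}\cong\mathbb{H}^2\times M_{\bldmth{p},\bldmth{r}}$ by Proposition \ref{AMTWEquiv}, so is $M_{\bldmth{p},\bldmth{r}}$; this is exactly the discussion at the opening of Section \ref{handledomains}. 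Beyond these corrections, the argument is sound.
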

\subsection{For $G=\Sp(4,\mathbb{C})$, $\Omega\subseteq\Lag(\mathbb{C}^4)$}
Let $G=\Sp(4,\mathbb{C})$ and let $\mathcal{F}_\eta=\Lag(\mathbb{C}^4)$. Recall from Section \ref{handledomains} if $\iota_{\bldmth{l},\bldmth{i}}=\rho_4$, then any $\iota_{\bldmth{l},\bldmth{i}}$--Fuchsian representation $\rho_{\bldmth{l},\bldmth{i}}:\Gamma\to\Sp(4,\mathbb{C})$ would be $\{\alpha_1\}$--Anosov (where $\{\alpha_1\}$ is the $\Sp(4, \mathbb{C})$--stabilizer of a line in $\mathbb{CP}^3$) and admits a domain of discontinuity
\begin{align*}
    \Omega_{\bldmth{l},\bldmth{i}}=\{L\in\Lag(\mathbb{C}^4):\xi^1_{\bldmth{l},\bldmth{i}}(t)\nsubseteq L\text{ for all }t\in\mathbb{RP}^1\},
\end{align*}
where $\xi_{\bldmth{l},\bldmth{i}}:\mathbb{RP}^1\to\mathbb{CP}^3$ is the limit curve. We denote the fiber above the origin of the $\SL(2)$--equivariant projection $p_{\bldmth{l},\bldmth{i}}:\Omega_{\bldmth{l},\bldmth{i}}\to\mathbb{H}^2$ by $M_{\bldmth{l},\bldmth{i}}$. \par
Similarly if $\iota_{\bldmth{l},\bldmth{r}}=\rho_2\oplus\rho_1\oplus\rho_1$, then any $\iota_{\bldmth{l},\bldmth{r}}$--Fuchsian representation $\rho_{\bldmth{l},\bldmth{r}}:\Gamma\to\Sp(4,\mathbb{C})$ is $\{\alpha_1\}$--Anosov and admits a domain of discontinuity
\begin{align*}
    \Omega_{\bldmth{l},\bldmth{r}}=\{L\in\Lag(\mathbb{C}^4):\xi^1_{\bldmth{l},\bldmth{r}}(t)\nsubseteq L\text{ for all }t\in\mathbb{RP}^1\},
\end{align*}
where $\xi_{\bldmth{l},\bldmth{r}}:\mathbb{RP}^1\to\mathbb{CP}^3$ is the limit curve. We denote the fiber above the origin of the $\SL(2)$--equivariant projection $p_{\bldmth{l},\bldmth{r}}:\Omega_{\bldmth{l},\bldmth{r}}\to\mathbb{H}^2$ by $M_{\bldmth{l},\bldmth{r}}$. 
\subsubsection{The irreducible case}
Now, we will determine the topology and structure group action for $M_{lag,irr}$. The strategy is similar to before except we will need to pay special attention to the symplectic form we use.\par 
Let $V$ be $\mathbb{C}^{(2)}[X,Y]$. We represent $\iota_{\bldmth{l},\bldmth{i}}=\rho_4$ acting on $V$ as in Section \ref{irrSL4}. \\

Using the basis $f_3=(X-iY)^3$, $f_1=(X-iY)^2(X+iY)$, $f_{-1}=(X-iY)(X+iY)^2$, $f_{-3}=(X+iY)^3$, endow $V$ with the symplectic form $\omega$
\[\omega=\begin{bmatrix}
    & & & -4 \\ & & 1 & \\ & -1 & & \\ 4 & & &
\end{bmatrix}.\]
Then, the image of $\iota_{\bldmth{l},\bldmth{i}}=\rho_4$ is contained in $\Sp(V,\omega)$. Thus, we can think of $\iota_{\bldmth{l},\bldmth{i}}$ as the principal representation into $\Sp(4,\mathbb{C})$.\\
Our goal is to find the fixed and exceptional points in $\Lag(V,\omega)$. First, let us apply our usual methods to find the fixed and exceptional points in $\Gr_2(V)$. Recall that $f_3,f_1,f_{-1},f_{-3}$ is a $\SOrth(2)$--weight basis for $V$ with weights
\[\iota_{\bldmth{l},\bldmth{i}}(R_{\theta})\cdot f_w=e^{wi\theta}f_w.\]
Because the weights are distinct, a fixed point in $\Gr_2(V)$ corresponds to a choice of $2$ basis vector from $f_3,f_1,f_{-1},f_{-3}$. Thus, we have the fixed points
\begin{align*}
    L_{w_1,w_2}&=\langle f_{w_1},f_{w_2}\rangle \text{ for }w_1,w_2\in\{3,1,-1,-3\}\text{ with }w_1>w_2
\end{align*}
By inspection, $L_{3,1}$, $L_{3,-1}$, $L_{1,-3}$, $L_{-1,-3}$ are Lagrangian but $L_{3,-3}$ and $L_{1,-1}$ are not.
\begin{prop}\label{Case5TWG}
    The tangential weight graph for the $\PSOrth(2)$--action on $M_{\bldmth{l},\bldmth{i}}$ looks as in Figure \ref{fig:case5TWG}.
    \begin{figure}
        \centering
        \begin{tikzpicture}[node distance={30mm}, main/.style = {draw, circle}]
\node[main] (1) at (0,0) {$+$}; 
\node[main] (2) at (1.5,0) {$+$}; 
\node[main] (3) at (3,0)  {$-$}; 
\node[main] (4) at (4.5,0)  {$-$}; 
\draw[] (1) -- node[midway, above,] {$3$} (2);
\draw[-] (2) -- node[midway, above,] {$2$} (3);
\draw[-] (3) -- node[midway, above,] {$3$} (4);

\node[] at (0,0.6) {$L_{3,{-1}}$}; 
\node[] at (1.5,0.6) {$L_{-1,-3}$}; 
\node[] at (3,0.6) {$L_{3,1}$}; 
\node[] at (4.5,0.6) {$L_{3,-1}$}; 
\end{tikzpicture}
        \caption{Tangential weight graph for $\PSOrth(2)$--action on $M_{\bldmth{l},\bldmth{i}}$}
        \label{fig:case5TWG}
    \end{figure}
\end{prop}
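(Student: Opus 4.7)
The plan is to mirror the strategy used for $\mathcal{F}_\eta=\Flag(\mathbb{C}^3)$ and $\mathcal{F}_\eta=\mathbb{CP}^3$: first compute the tangential weight graph of the $\PSOrth(2)$--action on $\Lag(V,\omega)$, then transfer the result to the fiber $M_{\bldmth{l},\bldmth{i}}$ via Proposition \ref{SameTWG}. Because $\iota_{\bldmth{l},\bldmth{i}}=\rho_4$ sends $\pm I_2$ to $\pm I_4$, the action descends to $\PSOrth(2)$ on $\Lag(V,\omega)$ and $p_{\bldmth{l},\bldmth{i}}:\Omega_{\bldmth{l},\bldmth{i}}\to\mathbb{H}^2$ is $\PSL(2,\mathbb{R})$--equivariant. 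Lemma \ref{CheapTrick}(ii) then implies that all non-trivially stabilized points of $\Lag(V,\omega)$ lie in $M_{\bldmth{l},\bldmth{i}}$, so it suffices to analyze the action on the ambient Lagrangian Grassmannian. The four fixed Lagrangians $L_{3,1},L_{3,-1},L_{1,-3},L_{-1,-3}$ have already been identified in the discussion preceding the proposition.

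Next I will compute the $\PSOrth(2)$--weight data at each fixed Lagrangian $L$ by using an analog of the affine chart from Definition \ref{affinechart} adapted to $\Lag(V,\omega)$. Concretely, for $L=\langle f_{w_1},f_{w_2}\rangle$ with complementary Lagrangian spanned by the remaining two weight vectors, I will parameterize a neighborhood of $L$ by quadruples of complex coefficients and then impose the single scalar equation $\omega(\,\cdot\,,\,\cdot\,)=0$ using the explicit matrix for $\omega$. This produces one linear relation among the four coefficients, cutting the chart down to the expected complex dimension $3=\dim_\mathbb{C}\Lag(\mathbb{C}^4)$ and giving a tangent space isomorphic (as an $\SOrth(2)$--representation) to the symmetric square $\operatorname{Sym}^2(L^*)$. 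Carrying out this computation at each of the four fixed Lagrangians yields tangent weight triples (under $\PSOrth(2)$) of $(-3,-2,-1)$ at $L_{3,1}$, $(-3,-1,1)$ at $L_{3,-1}$, $(-1,1,3)$ at $L_{1,-3}$, and $(1,2,3)$ at $L_{-1,-3}$. By Proposition \ref{FlagTangentModel}(ii) and the sign convention from Section \ref{sec:circleclass}, the products of these weights give the claimed signs $(-),(+),(-),(+)$.

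The exceptional edges will come from reading off which weight directions correspond to exceptional spheres. For each fixed $L$, each coordinate direction corresponding to a weight $w$ with $|w|\geq 2$ yields an $S^1$--invariant $\mathbb{CP}^1$, with its other fixed point obtained by taking the limit $\lambda\to\infty$ along that direction, as in Proposition \ref{ExceptInFlags}(ii). The main care here is that, because of the Lagrangian constraint, the weight $-2$ direction at $L_{3,1}$ (coming from the ``diagonal'' symmetric combination of the two off-diagonal coordinates) does \emph{not} correspond to the naive swap of a single basis vector; instead, the two coordinates vary simultaneously in a ratio dictated by the entries of $\omega$, and the resulting limit is $L_{-1,-3}$. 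The same phenomenon produces the symmetric weight $2$ edge from $L_{-1,-3}$ to $L_{3,1}$. The remaining weight $3$ directions at $L_{3,1},L_{-1,-3}$ are genuine single-coordinate swaps and yield the edges $L_{3,1}\!-\!L_{1,-3}$ and $L_{-1,-3}\!-\!L_{3,-1}$ of weight $3$. The weight $1$ directions at each vertex are not exceptional, so no further edges appear.

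Finally, I will apply Proposition \ref{SameTWG}(ii) (or, since there are no square vertices, the weight condition of Proposition \ref{SameTWG}(i)) to conclude that this tangential weight graph for $\Lag(V,\omega)$ coincides with the tangential weight graph for $M_{\bldmth{l},\bldmth{i}}$. Lemma \ref{CheapTrick}(iii) guarantees that the signs at the isolated fixed points do not change when passing from $\mathcal{F}_\eta$ to the fiber, completing the identification of the graph in Figure \ref{fig:case5TWG}. The main obstacle, as noted above, is handling the weight $2$ exceptional sphere: because of the Lagrangian linear relation, it is not visible from a naive basis-vector swap, and care is needed to identify its endpoints correctly; once this is done, the remainder of the argument is a direct analog of the computations in Propositions \ref{Case1TWG} and \ref{Case3TWG}.
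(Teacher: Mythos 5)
Your proposal is correct and follows essentially the same route as the paper: identify the four fixed Lagrangians, compute the tangent weights at each by cutting the $\Gr_2(V)$ affine chart with the linear Lagrangian relation (your $\operatorname{Sym}^2(L^*)$ identification gives the same triples the paper obtains from its explicit relations $4d+a=0$, etc.), read off signs from the weight products, locate the weight $3$ spheres as basis swaps and the weight $2$ sphere as the constrained $(a,d)$--direction limiting to $L_{-1,-3}$, and transfer to $M_{\bldmth{l},\bldmth{i}}$ via Lemma \ref{CheapTrick}/Proposition \ref{SameTWG}(ii). One small caveat: your parenthetical appeal to Proposition \ref{SameTWG}(i) does not apply here since the graph \emph{does} contain a weight $2$ edge, but part (ii), which you cite first, is the correct tool and suffices.
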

\begin{proof}
    Now for each fixed Lagrangian $L$, we want a model for $T_{L}\Lag(V,\omega)$. As defined in Definition \ref{affinechart}, let $\mathfrak{u}=\mathbb{C}E_{31}\oplus\mathbb{C}E_{32}\oplus\mathbb{C}E_{41}\oplus\mathbb{C}E_{42}$ and $\varphi_{i,j}:\mathfrak{u}\to \Gr_2(V)$ be the affine charts relative to the flag bases
\begin{align*}
    f_3,f_1,f_{-1},f_{-3} & \text{ if $i=3$, $j=1$}\\
    f_3,f_{-1},f_{1},f_{-3} & \text{ if $i=3$, $j=-1$}\\
    f_1,f_{-3},f_{3},f_{-1} & \text{ if $i=1$, $j=-3$}\\
    f_{-1},f_{-3},f_{3},f_{1} & \text{ if $i=-1$, $j=-3$}.
\end{align*}
Then, for $L_{3,1}$, we have:
\begin{align*}
    \varphi_{3,1}^{-1}(\Lag(V,\omega))&=\left\{\begin{bmatrix}
        0 & 0 & 0 &0  \\ 0 & 0 & 0 & 0\\ a & b & 0 & 0\\ c & d & 0 & 0
    \end{bmatrix}:\begin{bmatrix}
        1 & 0 & 0 &0  \\ 0 & 1 & 0 & 0\\ a & b & 1 & 0\\ c & d & 0 & 1
    \end{bmatrix}L_{3,1}\text{ is Lagrangian}\right\}\\
    &=\left\{\begin{bmatrix}
        0 & 0 & 0 &0  \\ 0 & 0 & 0 & 0\\ a & b & 0 & 0\\ c & d & 0 & 0
    \end{bmatrix}:\omega(f_3+af_{-1}+cf_{-3},f_1+bf_{-1}+df_{-3})=0\right\}\\
    &=\left\{\begin{bmatrix}
        0 & 0 & 0 &0  \\ 0 & 0 & 0 & 0\\ a & b & 0 & 0\\ c & d & 0 & 0
    \end{bmatrix}:4d+a=0\right\}.
\end{align*}
If we compare it to the $\PSOrth(2)$--difference matrix for $L_{3,1}$
\[\begin{tabular}{c|cccc}
             $L_{3,1}$ & $f_3$ & $f_{1}$ & $f_{-1}$ & $f_{-3}$ \\ \hline
             $f_3$ & $\ast$ & $\ast$  & $\ast$ & $\ast$\\
             $f_{1}$ & $\ast$ & $\ast$  & $\ast$ & $\ast$\\
             $f_{-1}$ & $-2$ & $-1$  & $\ast$  & $\ast$\\
             $f_{-3}$ & $-3$ & $-2$ & $\ast$ & $\ast$
        \end{tabular},\]
we see that $T_{L_{3,1}}\Lag(V,\omega)$ is isomorphic to $V^0_{-1}\oplus V^0_{-2}\oplus V_{-3}^0$. Therefore, the sign of $L_{3,1}$ is $(-)$. Inside of $\Gr_2(V)$, by Proposition \ref{ExceptInFlags}, there is a $\mathbb{CP}^1$ of weight $3$ connecting $L_{3,1}$ and $L_{1,-3}$ and a $\mathbb{CP}^1\times\mathbb{CP}^1$ of weight $2$ connecting $L_{3,1}$, $L_{3,-3}$, $L_{3,-3}$, and $L_{-1,-3}$. Thus, inside of $\Lag(V,\omega)$, there is a $\mathbb{CP}^1$ of weight $3$ connecting $L_{3,1}$ to $L_{-3,1}$ and a $\mathbb{CP}^1$ of weight $2$ connecting $L_{3,1}$ to $L_{-1,-3}$. For the other fixed Lagrangian subspaces, we have preimage and $\PSOrth(2)$--difference matrix
\begin{align*}
    \varphi_{-1,-3}^{-1}(\Lag(V,\omega))&=\left\{\begin{bmatrix}
        0 & 0 & 0 &0  \\ 0 & 0 & 0 & 0\\ a & b & 0 & 0\\ c & d & 0 & 0
    \end{bmatrix}:4a+d=0\right\}, &
    \begin{tabular}{c|cccc}
             $L_{-1,-3}$ & $f_{-1}$ & $f_{-3}$ & $f_{3}$ & $f_{1}$ \\ \hline
             $f_{-1}$ & $\ast$ & $\ast$  & $\ast$ & $\ast$\\
             $f_{-3}$ & $\ast$ & $\ast$  & $\ast$ & $\ast$\\
             $f_{3}$ & $2$ & $3$  & $\ast$  & $\ast$\\
             $f_{1}$ & $1$ & $2$ & $\ast$ & $\ast$
        \end{tabular}\\
    \varphi_{1,-3}^{-1}(\Lag(V,\omega))&=\left\{\begin{bmatrix}
        0 & 0 & 0 &0  \\ 0 & 0 & 0 & 0\\ a & b & 0 & 0\\ c & d & 0 & 0
    \end{bmatrix}:4a-d=0\right\}, &
    \begin{tabular}{c|cccc}
             $L_{1,-3}$ & $f_{1}$ & $f_{-3}$ & $f_{3}$ & $f_{-1}$ \\ \hline
             $f_{1}$ & $\ast$ & $\ast$  & $\ast$ & $\ast$\\
             $f_{-3}$ & $\ast$ & $\ast$  & $\ast$ & $\ast$\\
             $f_{3}$ & $1$ & $3$  & $\ast$  & $\ast$\\
             $f_{-1}$ & $-1$ & $1$ & $\ast$ & $\ast$
        \end{tabular} \\
    \varphi_{3,-1}^{-1}(\Lag(V,\omega))&=\left\{\begin{bmatrix}
        0 & 0 & 0 &0  \\ 0 & 0 & 0 & 0\\ a & b & 0 & 0\\ c & d & 0 & 0
    \end{bmatrix}:a+4d=0\right\}, &
    \begin{tabular}{c|cccc}
             $L_{3,-1}$ & $f_{3}$ & $f_{-1}$ & $f_{1}$ & $f_{-3}$ \\ \hline
             $f_{3}$ & $\ast$ & $\ast$  & $\ast$ & $\ast$\\
             $f_{-1}$ & $\ast$ & $\ast$  & $\ast$ & $\ast$\\
             $f_{1}$ & $-1$ & $1$  & $\ast$  & $\ast$\\
             $f_{-3}$ & $-3$ & $-1$ & $\ast$ & $\ast$
        \end{tabular}
\end{align*}
Thus, the signs of $L_{-1,-3}$ and $L_{3,-1}$ is $(+)$ and the sign of $L_{1,-3}$ is $(-)$. There is also a $\mathbb{CP}^1$ of weight $3$ connecting $L_{-1,-3}$ and $L_{3,-1}$. This gives the described tangential weight graph. 
\end{proof}
From Corollary \ref{ExceptionalHirTWG} and Theorem \ref{SmoothClassification}, we conclude
\begin{corr}\label{fiberlagirr}
    For $\iota_{\bldmth{l},\bldmth{i}}=\rho_4$ and $\mathcal{F}_\eta=\Lag(\mathbb{C}^4)$,
    \begin{enumerate}
        \item[(i)] The $\PSL(2,\mathbb{R})$--equivariant fiber bundle projection $p_{\bldmth{l},\bldmth{i}}:\Omega_{\bldmth{l},\bldmth{i}}\to\mathbb{H}^2$ has fiber $M_{\bldmth{l},\bldmth{i}}$ which is equivariantly diffeomorphic to $\Hir(1;-1,3)$, identifying $S^1\cong \PSOrth(2)$. In particular, $M_{\bldmth{l},\bldmth{i}}$ is diffeomorphic to $\mathbb{CP}{}^2\#\overline{\mathbb{CP}}{}^{2}$.
        \item[(ii)] The manifold $\mathcal{W}_{\bldmth{l},\bldmth{i}}$ is diffeomorphic to the bundle associated to the $\PSOrth(2)$--principal bundle over $S_g$ of Euler class $g-1$ and fiber $\mathbb{CP}{}^2\#\overline{\mathbb{CP}}{}^2$ with the described $\PSOrth(2)$--action.
    \end{enumerate}
\end{corr}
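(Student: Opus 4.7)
The plan is to mirror the endgame of Corollaries \ref{fiberflgirr}, \ref{fiberprjirr}, and \ref{fiberprjred}: match the tangential weight graph of Figure \ref{fig:case5TWG} (established in Proposition \ref{Case5TWG}) with the tangential weight graph of a specific algebraic $S^1$-action on a Hirzebruch surface, invoke the smooth classification Theorem \ref{SmoothClassification}, and then apply the associated-bundle description of $\mathcal{W}_\rho^I$ coming from \cite{alessandrini2023fiberbundlesassociatedanosov}.

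For the matching step, I would apply Corollary \ref{ExceptionalHirTWG} to $\Hir(1;-1,3)$, i.e.\ with $q=1$, $a=-1$, $b=3$. The generic Hirzebruch graph is a $4$-cycle with edge weights $|a|, |b|, |b|, |a+qb|$; since $|a|=1$ is a free direction and does not correspond to an exceptional sphere, that edge is absent from the tangential weight graph, leaving a path with edge weights $|b|=3$, $|a+qb|=2$, $|b|=3$. The signs read off from Corollary \ref{ExceptionalHirTWG} are $s_1=\sign(ab)=-$, $s_2=-s_1=+$, $s_4=\sign(ab+qb^2)=\sign(6)=+$, and $s_3=-s_4=-$. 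Traversing the resulting path in the order $s_1, s_3, s_4, s_2$ produces the sign sequence $(-,-,+,+)$ with edge weights $3, 2, 3$, matching Figure \ref{fig:case5TWG} vertex-for-vertex and edge-for-edge (read from $L_{1,-3}$ through $L_{3,1}$ and $L_{-1,-3}$ to $L_{3,-1}$).

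Once the labeled graphs are identified, Theorem \ref{SmoothClassification} yields an orientation-preserving, $\PSOrth(2)$-equivariant diffeomorphism between $M_{\bldmth{l},\bldmth{i}}$ and $\Hir(1;-1,3)$; the no-cycle hypothesis is immediate since the graph is a path. Because $q=1$ is odd, Proposition \ref{HirzeBundle} identifies the underlying smooth $4$-manifold as $\mathbb{CP}{}^2 \# \overline{\mathbb{CP}}{}^2$, completing part (i). For part (ii), I would combine (i) with the associated-bundle description of $\mathcal{W}_\rho^I$ already used above, noting that $\iota_{\bldmth{l},\bldmth{i}}(-I_2)$ acts trivially on $\Lag(\mathbb{C}^4)$ so the $\SOrth(2)$-action descends to a $\PSOrth(2)$-action, and the Euler class $g-1$ transfers to the resulting principal $\PSOrth(2)$-bundle.

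No novel difficulty arises: the entire argument reduces to the routine sign-and-weight comparison in the second paragraph, together with invocations of results already established earlier in the paper. The one place where care is warranted is verifying that the graph isomorphism preserves the signs at every round vertex simultaneously (not merely up to a global flip), but this is a direct computation from the formulas in Corollary \ref{ExceptionalHirTWG}.
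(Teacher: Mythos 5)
Your proposal is correct and follows the same route as the paper: the paper's proof of Corollary \ref{fiberlagirr} is the one-line ``From Corollary \ref{ExceptionalHirTWG} and Theorem \ref{SmoothClassification}, we conclude,'' and you have simply unpacked the arithmetic that this invocation relies on. Your observation that the weight-$|a|=1$ edge of the generic Hirzebruch $4$-cycle in Corollary \ref{ExceptionalHirTWG} is absent from the tangential weight graph (since edges record exceptional spheres, which have weight $\geq 2$) is exactly the reading the paper implicitly uses in the analogous identification $M_{\bldmth{p},\bldmth{i}}\cong\Hir(2;-1,2)$ for Corollary \ref{fiberprjirr}, and your sign sequence $(-,-,+,+)$ with edge weights $3,2,3$ correctly matches Figure \ref{fig:case5TWG} vertex-for-vertex.
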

\subsubsection{The reducible case}
Now, we will determine the topology and structure group action for $M_{\bldmth{l},\bldmth{r}}$. As for $M_{\bldmth{l},\bldmth{r}}$, there will be fixed surfaces that we handle similarly. \par
Let $V$ be the $\mathbb{C}$--span of $X_1$, $Y_1$, $X_2$, $Y_2$ inside $\mathbb{C}[X_1,Y_1,X_2,Y_2]$. We represent $\iota_{\bldmth{l},\bldmth{r}}=\rho_2\oplus\rho_1\oplus\rho_1$ by the linear action on $V$ given as
\[\iota_{\bldmth{l},\bldmth{r}}\left(\begin{bmatrix}
    a & b \\ c & d
\end{bmatrix}\right)\cdot f(X_1,Y_1,X_2,Y_2)=f(dX_1-cY_1,aY_1-bX_1,X_2,Y_2)\]
By restricting $\iota_{\bldmth{l},\bldmth{r}}$ to $\SOrth(2)$, we get a linear $\SOrth(2)$--action on $V$. The weight vectors of $V$ are $f_1,=X_1-iY_1$, $f_{-1}=X_{1}+iY_1$, $X_2$ and $Y_2$ with weights
\begin{align*}
    \iota_{\bldmth{l},\bldmth{r}}(R_{\theta})\cdot f_1&= e^{i\theta}f_1 & \iota_{\bldmth{l},\bldmth{r}}(R_{\theta})\cdot f_{-1}&=e^{-i\theta}f_{-1}\\
    \iota_{\bldmth{l},\bldmth{r}}(R_\theta)\cdot X_2&=X_2 & \iota_{\bldmth{l},\bldmth{r}}(R_{\theta})\cdot Y_2&=Y_2
\end{align*}
Using the basis $f_{1},f_{-1},X_2,Y_2$, equip $V$ with the symplectic form
\[\omega=\begin{bmatrix}
    & -1 & & \\ 1 & & &\\ & &  & -1\\ & & 1 &
\end{bmatrix}.\]
In other words, the symplectic forms on $\langle f_1,f_{-1}\rangle$ and $\langle X_2,Y_2\rangle$ are both identified with the standard symplectic form on $\mathbb{C}^2$. Because $\Sp(\mathbb{C},2)=\SL(2,\mathbb{C})$, we see that $\iota_{\bldmth{l},\bldmth{r}}(\SL(2,\mathbb{R}))$ preserves $\omega$. Therefore, we have $\iota_{\bldmth{l},\bldmth{r}}:\SL(2,\mathbb{R})\to\Sp(V,\omega)\cong \Sp(4,\mathbb{C})$. \par
Now, let us determine the Lagrangian subsapces of $V$ which are fixed by the action of $\SOrth(2)$.
\begin{prop}
    Let $L\in\Fix(\Lag(V,\omega),0,\SOrth(2))$ Then, 
    \begin{itemize}
        \item either $L=\langle f_1,\alpha X_2+\beta Y_2\rangle$ for some $[\alpha:\beta]\in\mathbb{CP}^1$,
        \item or $L=\langle f_{-1},\alpha X_2+\beta Y_2\rangle$ for some $[\alpha:\beta]\in\mathbb{CP}^1$.
    \end{itemize}
\end{prop}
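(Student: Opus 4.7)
The plan is to decompose $L$ according to the $\SOrth(2)$--weight space decomposition of $V$ and then impose the Lagrangian condition to eliminate the degenerate cases. Since $\SOrth(2)$ acts on $V$ diagonally in the basis $f_1,f_{-1},X_2,Y_2$ with weights $1,-1,0,0$, the weight space decomposition of $V$ is
\[
V = V_{1}\oplus V_{-1}\oplus V_0 = \langle f_1\rangle \oplus \langle f_{-1}\rangle \oplus \langle X_2,Y_2\rangle.
\]
Any $\SOrth(2)$--invariant subspace $L$ inherits this decomposition as $L = L_1\oplus L_{-1}\oplus L_0$, with $L_w = L\cap V_w$. Since $L$ is Lagrangian, $\dim_\mathbb{C} L = 2$, so we need $\dim L_1 + \dim L_{-1} + \dim L_0 = 2$, with $\dim L_{\pm 1}\in\{0,1\}$ and $\dim L_0\in\{0,1,2\}$.

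First I would enumerate the four possibilities for $(\dim L_1,\dim L_{-1},\dim L_0)$:
\begin{align*}
    \text{(a)}&\quad (1,0,1),\qquad L = \langle f_1,\alpha X_2+\beta Y_2\rangle,\\
    \text{(b)}&\quad (0,1,1),\qquad L = \langle f_{-1},\alpha X_2+\beta Y_2\rangle,\\
    \text{(c)}&\quad (1,1,0),\qquad L = \langle f_1,f_{-1}\rangle,\\
    \text{(d)}&\quad (0,0,2),\qquad L = \langle X_2,Y_2\rangle.
\end{align*}
Then I would use the chosen $\omega$ to check the Lagrangian condition in each case. Reading off the matrix of $\omega$ in the basis $f_1,f_{-1},X_2,Y_2$, we have $\omega(f_1,f_{-1})=-1$ and $\omega(X_2,Y_2)=-1$, so both (c) and (d) fail to be isotropic and can be ruled out. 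Conversely, in cases (a) and (b), $\omega$ vanishes on the generators because the subspaces $\langle f_1,f_{-1}\rangle$ and $\langle X_2,Y_2\rangle$ are symplectically orthogonal (the $\omega$--matrix is block diagonal with respect to this splitting), so $L$ is isotropic, and by the dimension count it is in fact Lagrangian.

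The entire argument is short and essentially bookkeeping; the only subtle point is ensuring that the weight space decomposition of $L$ applies to arbitrary $\SOrth(2)$--invariant subspaces, not just to those admitting an a priori weight basis. This is immediate from the fact that $\SOrth(2)$ is a compact (and hence reductive) group acting linearly on $V$: any invariant subspace is a direct sum of its intersections with the isotypic components. Once this is noted, the proof reduces to inspecting the block structure of the stated $\omega$ on the weight space decomposition.
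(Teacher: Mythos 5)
Your proof is correct and takes essentially the same approach as the paper: decompose $L$ into its intersections with the $\SOrth(2)$--weight spaces (which both you and the paper justify via reductivity of $\SOrth(2)$), then rule out the two cases where $L$ lies entirely in $\langle f_1,f_{-1}\rangle$ or $\langle X_2,Y_2\rangle$ because neither of these is isotropic. Your explicit case enumeration is slightly more systematic than the paper's phrasing but is the same argument.
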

\begin{proof}
    Let $P_1=\langle X_1,Y_1\rangle$ and $P_{2}=\langle X_2,Y_2\rangle$. Note that the weight spaces for $V$ are $\langle f_1\rangle$, $\langle f_{-1}\rangle$ and $P_2$. Because $L$ is $\SOrth(2)$--invariant it is a weight submodule. Therefore,
    \[L=(L\cap \langle f_1\rangle)\oplus (L\cap \langle f_{-1}\rangle)\oplus (L\cap P_2)\]
    Because neither $P_1=\langle f_1,f_{-1}\rangle$ nor $P_2$ are isotropic, it must be that either $L=(L\cap P_1)\oplus (L\cap P_2)$. Note that $L\cap P_1$ is an $\SOrth(2)$--invariant line in $P_1$. Because the weights for the $\SOrth(2)$--action on $P_1$ are distinct, the only $\SOrth(2)$--invariant lines are the weight spaces $\langle f_1\rangle$ and $\langle f_{-1}\rangle$. Therefore, either $L=\langle f_1\rangle\oplus L\cap P_2$ or $L=\langle f_{-1}\rangle\oplus L\cap P_2$. Finally, we note that any line in $P_2$ is $\SOrth(2)$--invariant and $\omega$--orthogonal to $P_1$. Therefore, any choice of line $\langle\alpha X_2+\beta Y_2\rangle$ works as $L\cap P_2$.
\end{proof}
Now, to determine the tangential weight graph, we just need to calculate the Euler classes of these fixed surfaces. We will need the following useful fact about Chern classes for projective varieties.
\begin{prop}[\text{\cite[Ex.\ 5.19]{Eisenbud_Harris_2016}}]
    Let $\mathbb{P}^n$ be a complex projective space and Let $\alpha$ be the Poincar\'e dual of a hyperplane in $\mathbb{P}^n$. Consider $Z_1,\hdots,Z_k$ algebraic hypersurfaces in $\mathbb{P}^n$ with respective degrees $d_1,\hdots,d_k$. Assume that the intersection
    \[X=Z_1\cap\cdots \cap Z_k\]
    is a complete intersection.
    Then, the first Chern class of the tangent bundle of $X$ is
$$\left(n+1-\sum_{i=1}^kd_i\right)\alpha|_{X}.$$
\end{prop}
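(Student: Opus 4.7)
The proof plan is to combine the Euler sequence for $\mathbb{P}^n$ with the normal bundle short exact sequence of a complete intersection, and then read off first Chern classes via the Whitney sum formula. The first step is to invoke the Euler sequence
\[0\to\mathcal{O}_{\mathbb{P}^n}\to\mathcal{O}_{\mathbb{P}^n}(1)^{\oplus(n+1)}\to T\mathbb{P}^n\to 0,\]
from which multiplicativity of the total Chern class gives $c(T\mathbb{P}^n)=(1+\alpha)^{n+1}$, and in particular $c_1(T\mathbb{P}^n)=(n+1)\alpha$.

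The second step is to identify the normal bundle of $X$. Because $X=Z_1\cap\cdots\cap Z_k$ is a complete intersection, each defining section cuts out $X$ transversely and the inclusions $X\hookrightarrow Z_i\hookrightarrow\mathbb{P}^n$ assemble into a splitting
\[N_{X/\mathbb{P}^n}\;\cong\;\bigoplus_{i=1}^k N_{Z_i/\mathbb{P}^n}\big|_X\;\cong\;\bigoplus_{i=1}^k\mathcal{O}_{\mathbb{P}^n}(d_i)\big|_X,\]
using the standard identification of the normal bundle of a smooth degree-$d$ hypersurface in $\mathbb{P}^n$ with $\mathcal{O}(d)$ restricted. Whitney then yields $c_1(N_{X/\mathbb{P}^n})=\bigl(\sum_{i=1}^k d_i\bigr)\alpha|_X$.

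Finally, I would apply the tangent-normal short exact sequence
\[0\to TX\to T\mathbb{P}^n|_X\to N_{X/\mathbb{P}^n}\to 0\]
and one further use of Whitney to conclude
\[c_1(TX)=c_1(T\mathbb{P}^n)|_X-c_1(N_{X/\mathbb{P}^n})=\Bigl(n+1-\sum_{i=1}^k d_i\Bigr)\alpha|_X,\]
which is the stated formula.

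The only real subtlety is justifying the direct-sum decomposition of the normal bundle, and this is precisely what the complete-intersection hypothesis is for: transversality of the defining equations makes the natural map $\bigoplus_{i=1}^k \mathcal{O}(-d_i)|_X\to \mathcal{I}_X/\mathcal{I}_X^2$ from the conormal side an isomorphism, which dualizes to the splitting used above. Once that is in hand, every remaining step is formal bookkeeping in $H^\ast(X;\mathbb{Z})$, so I expect no further obstacles.
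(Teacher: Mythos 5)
Your argument is correct and is the standard one: Euler sequence for $c(T\mathbb{P}^n)$, the complete-intersection splitting of the normal bundle, and Whitney's formula applied to the tangent--normal sequence. The paper does not give its own proof but simply cites this as a known result from Eisenbud--Harris, and what you have written is essentially the argument found there, so there is nothing to reconcile.
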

\begin{prop}\label{Case6TWG}
    The $\SOrth(2)$--action on $M_{\bldmth{l},\bldmth{r}}$ is effective and the tangential weight graph looks as in Figure \ref{fig:case6TWG}
    \begin{figure}
        \centering
        \begin{tikzpicture}[node distance={30mm}, main/.style = {draw, circle}]
\node[draw] (1) at (0,-3.6) {$1$}; 
\node[draw] (4) at (2,-3.6)  {$-1$};

\end{tikzpicture}
        \caption{Tangential weight graph for $\SOrth(2)$--action on $M_{\bldmth{l},\bldmth{r}}$}
        \label{fig:case6TWG}
    \end{figure}
\end{prop}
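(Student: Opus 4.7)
My plan is to mirror the strategy from the proof of Proposition \ref{Case4TWG} for the reducible $\mathbb{CP}^3$ case, since the setup is essentially parallel: by the previous proposition there are only two fixed surfaces $C_{\pm 1}$ and no other fixed or exceptional orbits. By \cite[Lemma 5.1]{FintushelSimply4Circle} the tangential weight graph of $M_{\bldmth{l},\bldmth{r}}$ therefore consists of two square vertices carrying labels $e$ and $-e$ for some $e \in \mathbb{Z}_{\ge 0}$, and it suffices to determine $|e|$. Fix an $\SOrth(2)$--invariant Hermitian metric on $\Lag(V,\omega)$ and decompose
\[
T\Lag(V,\omega)\big|_{C_1} \;=\; TC_1 \,\oplus\, E \,\oplus\, T^{\perp}M_{\bldmth{l},\bldmth{r}}\big|_{C_1},
\]
where $E = T^{\perp}C_1 \cap TM_{\bldmth{l},\bldmth{r}}$ is the rank one complex bundle whose Euler number agrees in absolute value with $e$. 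Since $p_{\bldmth{l},\bldmth{r}}$ trivializes $T^{\perp}M_{\bldmth{l},\bldmth{r}}$ along $C_1$, the equation $c_1(E) = c_1(T\Lag(V,\omega)|_{C_1}) - c_1(TC_1)$ reduces the problem to two Chern class computations on $\mathbb{CP}^1$.

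The first Chern class of $T\Lag(V,\omega)$ can be read off from the Plücker embedding: under $L \mapsto L^1 \wedge L^2$, the Lagrangian Grassmannian $\Lag(V,\omega)$ is realized as the complete intersection, inside $\mathbb{P}(\Lambda^2 V) \cong \mathbb{P}^5$, of the Plücker quadric with the hyperplane defined by $\omega \in (\Lambda^2 V)^*$. Applying the Chern-class formula for complete intersections stated just before Proposition \ref{Case6TWG}, with $n = 5$ and degrees $(1,2)$, gives
\[
c_1(T\Lag(V,\omega)) \;=\; (5 + 1 - 1 - 2)\,\alpha\big|_{\Lag(V,\omega)} \;=\; 3\,\alpha\big|_{\Lag(V,\omega)},
\]
where $\alpha$ is the Poincaré dual of a hyperplane in $\mathbb{P}^5$. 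The surface $C_1$ parameterizes Lagrangian planes of the form $\langle f_1, \alpha X_2 + \beta Y_2\rangle$, whose Plücker image is $\alpha(f_1 \wedge X_2) + \beta (f_1 \wedge Y_2)$, so $C_1$ is a projective line in $\mathbb{P}^5$. Hence $\alpha|_{C_1}$ has degree $1$, yielding $c_1(T\Lag(V,\omega)|_{C_1})[C_1] = 3$. Combined with $c_1(TC_1)[C_1] = 2$, this gives $c_1(E)[C_1] = 1$ and therefore $|e| = 1$.

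It remains to verify effectiveness of the $\SOrth(2)$--action on $M_{\bldmth{l},\bldmth{r}}$. The action on $V$ has weights $1,-1,0,0$, and a direct affine-chart computation at $L_0 = \langle f_1, X_2 \rangle$, using the $\SOrth(2)$--difference matrix together with the single linear equation cutting out $\Lag(V,\omega)$ from $\Gr_2(V)$, shows that $T_{L_0}\Lag(V,\omega)$ decomposes as $V_{-2}^0 \oplus V_{-1}^0 \oplus V_0^0$. The $V_{-1}^0$--summand lies tangent to $M_{\bldmth{l},\bldmth{r}}$ (the remaining $V_0^0$--direction is tangent to $C_1$, and the $V_{-2}^0$--direction is killed by $dp_{\bldmth{l},\bldmth{r}}$, consistent with $T_O\mathbb{H}^2$ carrying the weight $-2$ isotropy representation). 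The presence of an odd weight shows that $-I_2 \in \SOrth(2)$ acts nontrivially, so the kernel of the $\SOrth(2)$--action on $M_{\bldmth{l},\bldmth{r}}$ is trivial.

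The step I expect to require the most care is the Chern-class computation, specifically identifying $C_1$ as a projective line under Plücker and matching it to the formula from \cite{Eisenbud_Harris_2016}; the alternative would be an explicit transition-function calculation for the unit circle bundle of $E$ over $C_1$ in two affine charts, which is doable but noticeably more tedious. A secondary subtlety is orientation-matching: the sign of $e$ is only determined up to the choice of orientation on $C_1$, but since only $|e|$ enters the tangential weight graph via Proposition \ref{FixedHirTWG}, this ambiguity is absorbed automatically.
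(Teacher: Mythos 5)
Your proposal is correct and follows essentially the same route as the paper's proof: the two-square-vertex structure from Fintushel's Lemma 5.1, the splitting $T\Lag(V,\omega)|_{C_1}=TC_1\oplus E\oplus T^\perp M_{\bldmth{l},\bldmth{r}}|_{C_1}$ with $T^\perp M_{\bldmth{l},\bldmth{r}}$ trivialized by $dp_{\bldmth{l},\bldmth{r}}$, the complete-intersection Chern class computation $c_1(T\Lag(V,\omega))=3\alpha$ via the Plücker quadric and the hyperplane cut by $\omega$, the identification of $C_1$ as a Plücker line giving $|e|=1$, and effectiveness read off from the weight $-1$ appearing in the difference matrix along the fixed surface. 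No gaps.
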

\begin{proof}
    Fix a Hermitian metric on $\Lag(V,\omega)$.
    Let $L\in C=\{\langle f_1,\alpha X_2+\beta Y_2\rangle:[\alpha:\beta]\in\mathbb{CP}^1\}$. Choose some $v_2$ such that $v_1,v_2$ is a basis for $\langle X_2,Y_2\rangle$. Then, using the basis $f_1,v_1,f_{-1},v_2$, $L$ has the $\SOrth(2)$--difference matrix
    \[\begin{tabular}{c|cccc}
             $L$ & $f_{1}$ & $v_1$ & $f_{-1}$ & $v_2$ \\ \hline
             $f_{1}$ & $\ast$ & $\ast$  & $\ast$ & $\ast$\\
             $v_1$ & $\ast$ & $\ast$  & $\ast$ & $\ast$\\
             $f_{-1}$ & $-2$ & $-1$  & $\ast$  & $\ast$\\
             $v_2$ & $-1$ & $0$ & $\ast$ & $\ast$
        \end{tabular}\]
        The $-2$ entry corresponds to $T_L^\perp M_{\bldmth{l},\bldmth{r}}\cong T_O\mathbb{H}^2$. Thus, the $\SOrth(2)$--action on $T_{L}M_{\bldmth{l},\bldmth{r}}$ has weights $-1$ and $0$ for each $L\in C$. Thus, we conclude that the $\SOrth(2)$--action on $M_{\bldmth{l},\bldmth{r}}$ is effective. Because $M_{\bldmth{l},\bldmth{r}}$ only contains two fixed surfaces, its tangential weight graph must have two square vertices with labels $e$ and $-e$ for some $e\in\mathbb{Z}$.
        As in Proposition \ref{Case4TWG}, the fact that the weights are only either $-1$ or $0$ means $TM_{\bldmth{l},\bldmth{r}}$ is a complex subbundle of $T\Lag(V,\omega)|_{M_{\bldmth{l},\bldmth{r}}}$.
        The absolute value of $e$ will agree with the absolute value of the Chern number of $T^\perp C\cap TM_{\bldmth{l},\bldmth{r}}$.\\
        First, we consider $\Gr_2(V)$ as a variety in $\mathbb{P}(\wedge^2V)$ with the Pl\"ucker relation
        \[x_{12}x_{34}-x_{13}x_{24}+x_{14}x_{23}=0\]
        where $x_{ij}$ is the homogeneous coordinate corresponding to $x_i\wedge x_j$ where $x_1,x_2,x_3,x_4$ is our original basis $f_1$, $f_{-1}$, $X_2$, $Y_2$. Then, $\Lag(V,\omega)$ is a subvariety of $\Gr_2(V)$ produced by adding the relation
        \[x_{12}+x_{34}=0\]
        Furthermore, $C$ is a projective line in $\mathbb{P}(\wedge^2V)$ spanned by $f_1\wedge X_2$ and $f_1\wedge Y_2$. Therefore, we have an inclusion of subvarieties
        \[C\subseteq \Lag(V,\omega)\subseteq \Gr_2(V)\subseteq \mathbb{P}(\wedge^2 V).\]
        Note that since $\Lag(V,\omega)$ is a subvariety defined by system of equations with a degree $2$ relation and a degree $1$ relation, its tangent bundle has first Chern class 
        \[(6-2-1)\alpha|_{\Lag(V,\omega)}=3\alpha|_{\Lag(V,\omega)}\]
        where $\alpha$ is the Poincare dual of a hyperplane, and the generator of $H^2(\mathbb{P}(\wedge^2V),\mathbb{Z})$. Also, the isomorphism $H^2(\mathbb{P}(\wedge^2V),\mathbb{Z})\cong H^2(C,\mathbb{Z})$ means $\alpha|_C$ generates $H^2(C,\mathbb{Z})$. Thus,
        \begin{align*}
            c_1(T^\perp C)&=c_1(T\Lag(V,\omega)|_C)-c_1(TC)=3\alpha|_C-2\alpha|_C=\alpha|_C
        \end{align*}
        Because $T^\perp M_{\bldmth{l},\bldmth{r}}$ is a trivial subbundle of $T\Lag(V,\omega)|_{M_{\bldmth{l},\bldmth{r}}}$, we conclude that
        \[c_1(T^\perp C\cap TM_{\bldmth{l},\bldmth{r}})=3\alpha|_C.\]
        Therefore, the absolute value of the Euler class of $T^\perp C\cap TM_{\bldmth{l},\bldmth{r}}$ is $1$. This gives the described tangential weight graph.
\end{proof}
Note that this tangential weight graph agrees with the one described for $\Hir(1;1,0)$ in Proposition \ref{FixedHirTWG}. Thus, by Theorem \ref{SmoothClassification}, we conclude the following.
\begin{corr}\label{fiberlagred}
    For $\iota_{\bldmth{l},\bldmth{r}}=\rho_2\oplus \rho_1\oplus \rho_1$ and $\mathcal{F}_\eta=\Lag(\mathbb{C}^4)$,
    \begin{enumerate}
        \item[(i)] The $\SL(2,\mathbb{R})$--equivariant fiber bundle projection $p_{\bldmth{l},\bldmth{r}}:\Omega_{\bldmth{l},\bldmth{r}}\to\mathbb{H}^2$ has fiber $M_{\bldmth{l},\bldmth{r}}$ is equivariantly diffeomorphic to $\Hir(1;1,0)$, identifying $S^1\cong \SOrth(2)$. In particular, $M_{\bldmth{l},\bldmth{r}}$ is diffeomorphic to $\mathbb{CP}{}^2\#\overline{\mathbb{CP}}{}^2$.
        \item[ii] The manifold $\mathcal{W}_{\bldmth{l},\bldmth{r}}$ is diffeomorphic to the bundle associated to the $\SOrth(2)$--principal bundle over $S_g$ of Euler class $2g-2$ and fiber $\mathbb{CP}{}^2\#\overline{\mathbb{CP}}{}^2$ with the described $\SOrth(2)$--action.
    \end{enumerate}
\end{corr}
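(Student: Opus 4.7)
The plan is to deduce part (i) by directly matching the tangential weight graph of the fiber $M_{\bldmth{l},\bldmth{r}}$, which was computed in Proposition \ref{Case6TWG}, against the tangential weight graph of the Hirzebruch surface $\Hir(1;1,0)$ computed in Proposition \ref{FixedHirTWG}, and then invoke the smooth classification Theorem \ref{SmoothClassification}. First I would verify that $M_{\bldmth{l},\bldmth{r}}$ meets the hypotheses of Theorem \ref{SmoothClassification}: it is a closed simply-connected $4$--manifold by Proposition \ref{AMTWEquiv} combined with the fact that $\Lag(\mathbb{C}^4)$ is simply-connected and the thickening $K_{\bldmth{l},\bldmth{r}}^I$ has real codimension at least $3$ (so $\Omega_{\bldmth{l},\bldmth{r}}\simeq \mathbb{H}^2\times M_{\bldmth{l},\bldmth{r}}$ is simply-connected); the $\SOrth(2)$--action is effective (also by Proposition \ref{Case6TWG}); and the weight graph consists of two isolated square vertices, hence has no cycles.

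Once the hypotheses are checked, both tangential weight graphs consist of two square vertices labeled $1$ and $-1$ with no edges and no round vertices, so they are isomorphic as labeled graphs. Theorem \ref{SmoothClassification} then produces an orientation-preserving $\SOrth(2)$--equivariant diffeomorphism $\varphi: M_{\bldmth{l},\bldmth{r}}\to \Hir(1;1,0)$ (under the identification $S^1\cong\SOrth(2)$). Since $\Hir(1)\cong \mathbb{CP}{}^2\#\overline{\mathbb{CP}}{}^2$ by Proposition \ref{HirzeBundle} (the case $q=1$ gives the non-trivial $\mathbb{CP}^1$--bundle over $\mathbb{CP}^1$), this gives the stated diffeomorphism type and completes part (i).

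For part (ii), I would apply the general result of Alessandrini--Maloni--Tholozan--Wienhard recalled in the paragraph preceding Section \ref{handledomains}: the quotient manifold $\mathcal{W}_{\bldmth{l},\bldmth{r}}$ is diffeomorphic to the associated bundle $P\underset{S^1}{\times}M_{\bldmth{l},\bldmth{r}}$, where $P\to S$ is a principal $S^1$--bundle of Euler class $g-1$ and $S^1$ acts on $M_{\bldmth{l},\bldmth{r}}$ via the composition $S^1\cong\SOrth(2)\xrightarrow{\iota}\iota(\SOrth(2))$. The subtlety here compared to the irreducible cases is that $\iota_{\bldmth{l},\bldmth{r}}$ does not descend to $\PSL(2,\mathbb{R})$: since the weights of $\iota_{\bldmth{l},\bldmth{r}}$ include odd integers (weights $\pm 1$ from the $\rho_2$ block), the center $\{\pm I_2\}$ does not act trivially, so the relevant structure group is genuinely $\SOrth(2)$ and not $\PSOrth(2)$.

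The main thing to be careful about is the Euler class of the principal bundle underlying $\mathcal{W}_{\bldmth{l},\bldmth{r}}$. Under the double cover $\SOrth(2)\to \PSOrth(2)$, a principal $\PSOrth(2)$--bundle of Euler class $g-1$ pulls back to a principal $\SOrth(2)$--bundle of Euler class $2(g-1)=2g-2$. Equivalently, since the $\SOrth(2)$--action on $M_{\bldmth{l},\bldmth{r}}$ factors through a double cover to produce the relevant $S^1$--bundle characteristic, the associated bundle formula gives Euler class $2g-2$, which is what is asserted. Apart from bookkeeping of this factor of two, the argument is a direct concatenation of Theorem \ref{SmoothClassification}, Proposition \ref{Case6TWG}, Proposition \ref{FixedHirTWG}, Proposition \ref{HirzeBundle}, and the Alessandrini--Maloni--Tholozan--Wienhard fibration theorem, with essentially no additional computation needed.
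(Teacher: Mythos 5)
Part (i) of your proposal is exactly the paper's argument: the paper's entire proof is the observation that the tangential weight graph from Proposition \ref{Case6TWG} (two square vertices labeled $1$ and $-1$, no edges) coincides with that of $\Hir(1;1,0)$ from Proposition \ref{FixedHirTWG}, followed by an appeal to Theorem \ref{SmoothClassification}; your additional verification of the hypotheses (closedness and simple-connectedness of $M_{\bldmth{l},\bldmth{r}}$, effectiveness, absence of cycles) is correct and worth making explicit.

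For part (ii) your justification of the Euler class has the doubling going in the wrong direction. The paper's version of the Alessandrini--Maloni--Tholozan--Wienhard fibration theorem already asserts that $\mathcal{W}_\rho^I\cong P\underset{S^1}{\times}M_\rho^I$ where $P$ is the principal $S^1\cong\SOrth(2)$--bundle of Euler class $g-1$ (namely $\phi(\Gamma)\backslash\SL(2,\mathbb{R})$, the fiberwise double cover of $T^1S$). Under the covering $\SOrth(2)\to\PSOrth(2)$ it is the \emph{quotient} $P/\{\pm I\}\cong T^1S$, a $\PSOrth(2)$--bundle, that has Euler class $2(g-1)=2g-2$ (extension of structure group multiplies the Euler class by the degree of the covering, as in the paper's discussion of associated bundles; lifting divides it). There is no natural ``pullback'' of a $\PSOrth(2)$--bundle of Euler class $g-1$ to an $\SOrth(2)$--bundle of Euler class $2g-2$: letting $\SOrth(2)$ act on a $\PSOrth(2)$--bundle through the covering map is not free, so it does not produce a principal $\SOrth(2)$--bundle. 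Consequently, if one applies the quoted fibration theorem at face value, the genuinely $\SOrth(2)$--effective reducible case should give the $\SOrth(2)$--bundle of Euler class $g-1$ with no doubling, and the factor $2g-2$ is what appears in the irreducible cases when one rewrites the associated bundle over the $\PSOrth(2)$--quotient. The paper states part (ii) without proof, so you should either locate the convention under which the asserted $2g-2$ is consistent with the fibration theorem or rederive the Euler class directly from $\mathcal{W}_{\bldmth{l},\bldmth{r}}=\phi(\Gamma)\backslash\SL(2,\mathbb{R})\underset{\SOrth(2)}{\times}M_{\bldmth{l},\bldmth{r}}$; as written, this step of your argument does not go through.
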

\subsection{Summary}
We conclude by summarizing the results of Corollaries \ref{fiberflgirr}, \ref{fiberflgred}, \ref{fiberprjirr}, \ref{fiberprjred}, \ref{fiberlagirr}, \ref{fiberlagred} in this final theorem.
\begin{thm}
    For $(G,\mathcal{F}_\eta,\iota)$ listed as entries in the table from Proposition \ref{fullcases}, the corresponding domain of disconinuity $\Omega_\rho ^I$ has a fiber $M_\rho ^I$ with an effective structure group $Q$ and tangetial weight graph for the $Q$--action as follows:
    \begin{enumerate}
        \item For $G=\SL(3,\mathbb{C})$, $\mathcal{F}_\eta=\Flag(\mathbb{C}^3)$, then $M_\rho^I$ is diffeomorphic to $(S^2\times S^2)\#(S^2\times S^2)$. Furthermore,
        \vspace{-0.3pt}
        \begin{itemize}
            \item if $\iota=\rho_3$, then $Q=\PSOrth(2)$ and the tangential weigh graph is
            \vspace{3pt}
            \begin{center}
                \scalebox{0.9}{\begin{tikzpicture}[node distance={30mm}, main/.style = {draw, circle}] 

\node[main] (1) at (0,0) {$+$}; 
\node[main] (2) at (1.5,0) {$-$}; 
\node[main] (3) at (3,0) {$+$}; 
\node[main] (4) at (4.5,0) {$-$}; 
\node[main] (5) at (6,0) {$+$}; 
\node[main] (6) at (7.5,0) {$-$}; 
\draw[] (1) -- node[midway, above,] {$2$} (2);
\draw[] (3) -- node[midway, above,] {$2$} (4);
\draw[] (5) -- node[midway, above,] {$2$} (6);
\end{tikzpicture}},
            \end{center}
            \vspace{3pt}
        \item if $\iota=\rho_2\oplus\rho_1$, then $Q=\SOrth(2)$ and the tangential weigh graph is 
        \vspace{3pt}
        \begin{center}
        {\scalebox{0.9}{\begin{tikzpicture}[node distance={30mm}, main/.style = {draw, circle}] 

\node[main] (1) at (0,0) {$+$}; 
\node[main] (2) at (1.5,0) {$-$}; 
\node[main] (3) at (3,0) {$+$}; 
\node[main] (4) at (4.5,0) {$-$}; 
\node[main] (5) at (6,0) {$+$}; 
\node[main] (6) at (7.5,0) {$-$}; 
\end{tikzpicture}}.}
\end{center}
\vspace{3pt}
        \end{itemize}
        \item For $G=\SL(4,\mathbb{C})$, $\mathcal{F}_\eta=\mathbb{CP}^3$, then $M_\rho^I$ is diffeomorphic to $S^2\times S^2$. Furthermore,
        \vspace{-0.3pt}
        \begin{itemize}
            \item if $\iota=\rho_3$, then $Q=\PSOrth(2)$ and the tangential weigh graph is 
            \vspace{3pt}
            \begin{center}
                
            \scalebox{0.9}{\begin{tikzpicture}[node distance={30mm}, main/.style = {draw, circle}] 

\node[main] (1) at (0,-2.4) {$+$}; 
\node[main] (2) at (2.5,-2.4) {$+$};
\node[main] (3) at (5,-2.4) {$-$};
\node[main] (4) at (7.5,-2.4) {$-$};
\draw[] (1) -- node[midway, above,] {$2$} (2);
\draw[] (2) -- node[midway, above,] {$3$} (3);
\draw[] (3) -- node[midway, above,] {$2$} (4);
\end{tikzpicture}},
\end{center}
\vspace{3pt}
        \item if $\iota=\rho_2\oplus\rho_2$, then $Q=\PSOrth(2)$ and the tangential weigh graph is 
        \vspace{3pt}
        \begin{center}
            
        \scalebox{0.9}{\begin{tikzpicture}[node distance={30mm}, main/.style = {draw, circle}] 

\node[draw] (1) at (2.5,-3.6) {$2$}; 
\node[draw] (4) at (5,-3.6) {$-2$};
\end{tikzpicture}}.
\end{center}
\vspace{3pt}
        \end{itemize}
        \item For $G=\Sp(4,\mathbb{C})$, $\mathcal{F}_\eta=\Lag(\mathbb{C}^4)$, then $M_\rho^I$ is diffeomorphic to $\mathbb{CP}{}^2\times \overline{\mathbb{CP}}{}^2$. Furthermore,
        \vspace{-0.3pt}
        \begin{itemize}
            \item if $\iota=\rho_3$, then $Q=\PSOrth(2)$ and the tangential weigh graph is 
            \vspace{3pt}
            \begin{center}
                
            \scalebox{0.9}{\begin{tikzpicture}[node distance={30mm}, main/.style = {draw, circle}] 

\node[main] (1) at (0,-2.4) {$+$}; 
\node[main] (2) at (2.5,-2.4) {$+$};
\node[main] (3) at (5,-2.4) {$-$};
\node[main] (4) at (7.5,-2.4) {$-$};
\draw[] (1) -- node[midway, above,] {$3$} (2);
\draw[] (2) -- node[midway, above,] {$2$} (3);
\draw[] (3) -- node[midway, above,] {$3$} (4);
\end{tikzpicture}},
\end{center}
\vspace{3pt}
        \item if $\iota=\rho_2\oplus\rho_1\oplus\rho_1$, then $Q=\SOrth(2)$ and the tangential weigh graph is 
        \vspace{3pt}
        \begin{center}
            
        \scalebox{0.9}{\begin{tikzpicture}[node distance={30mm}, main/.style = {draw, circle}] 

\node[draw] (1) at (2.5,-3.6) {$1$}; 
\node[draw] (4) at (5,-3.6) {$-1$};
\end{tikzpicture}}.
\end{center}
\vspace{3pt}
        \end{itemize}
    \end{enumerate}
\end{thm}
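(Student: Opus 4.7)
The proof proceeds by treating each of the six cases $(G,\mathcal{F}_\eta,\iota)$ individually and in each case verifying the claimed diffeomorphism type and tangential weight graph via the classification theorem (Theorem~\ref{SmoothClassification}). The common strategy, described in Section~\ref{sec:TWD4flags}, is to first determine the $\SOrth(2)$- or $\PSOrth(2)$-action on the ambient flag variety $\mathcal{F}_\eta$ induced by $\iota$, then transfer that information to the fiber $M_\rho^I$ via Lemma~\ref{CheapTrick} and Proposition~\ref{SameTWG}, and finally compare the resulting tangential weight graph with that of a Hirzebruch surface (or an equivariant connected sum of such), using Corollary~\ref{ExceptionalHirTWG} and Proposition~\ref{FixedHirTWG}.

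Concretely, the plan is as follows. In each case I fix a model $V$ for the standard representation of $G$ on which $\iota$ acts and select a basis of $V$ consisting of weight vectors for $\iota(\SOrth(2))$. For the flag variety in question I then enumerate the $\SOrth(2)$-fixed points and, for each fixed point, write down its difference matrix as in Definition~\ref{DiffMatrix}. By Proposition~\ref{FlagTangentModel}, the difference matrix gives both the local representation of $\SOrth(2)$ on the tangent space (hence the sign of an isolated fixed point) and, through Proposition~\ref{ExceptInFlags}, the exceptional spheres connecting pairs of fixed points together with their weights. Whenever a fixed surface appears, I compute the Euler number of the associated principal bundle using the adjunction-type computation of Chern classes carried out in Propositions~\ref{Case4TWG} and~\ref{Case6TWG}.

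With the tangential weight graph of $\mathcal{F}_\eta$ in hand, I transfer it to the fiber: when no weight-$2$ edges and no square vertices appear, Proposition~\ref{SameTWG}(i) (or (ii) in the $\PSOrth(2)$-effective case) applies directly. The genuinely delicate step is the reducible $\Flag(\mathbb{C}^3)$ case, where weight-$2$ edges do appear in the graph of $\mathcal{F}_\eta$; there Proposition~\ref{SameTWG} does not apply. I handle this by the transversality argument of Proposition~\ref{Case2TWG}: the involution $-I\in\SOrth(2)$ splits $T_x\mathcal{F}_\eta$ into $(\pm 1)$-eigenspaces of dimensions $1$ and $2$, the $(+1)$-eigenspace being both $T_x^\perp M_\rho^I$ and $T_x\Fix(\mathcal{F}_\eta,2,\SOrth(2))$, so that $M_\rho^I\cap\Fix(\mathcal{F}_\eta,2,\SOrth(2))$ is a finite $\SOrth(2)$-invariant set of isolated fixed points of $M_\rho^I$ and no exceptional spheres of weight $2$ survive in the fiber. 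This is the main obstacle in the argument and is the reason the resulting graph in that case consists only of isolated round vertices.

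Finally, in each case I observe that the tangential weight graph obtained is isomorphic (as a labeled graph) to the one computed in Section~\ref{sec:circleclass} for a specific model action on a Hirzebruch surface or its equivariant connected sum: for cases (1) the connected sum construction of Example~\ref{ConnSumExample} and its variant in Proposition~\ref{Case2Action}; for the irreducible cases in (2) and (3), the actions $\Hir(2;-1,2)$ and $\Hir(1;-1,3)$ of Corollary~\ref{ExceptionalHirTWG}; and for the reducible cases with fixed surfaces, the actions $\Hir(2;1,0)$ and $\Hir(1;1,0)$ of Proposition~\ref{FixedHirTWG}. Theorem~\ref{SmoothClassification} then produces the orientation-preserving $S^1$-equivariant diffeomorphism between $M_\rho^I$ and the corresponding model, which determines both the diffeomorphism type and the effective structure group action and thus proves the theorem.
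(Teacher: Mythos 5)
Your proposal is correct and describes essentially the same proof as the paper: the theorem is a summary of Corollaries \ref{fiberflgirr}, \ref{fiberflgred}, \ref{fiberprjirr}, \ref{fiberprjred}, \ref{fiberlagirr}, \ref{fiberlagred}, each of which is established exactly as you outline---difference matrices and Propositions \ref{FlagTangentModel}, \ref{ExceptInFlags} to compute the tangential weight graph of $\mathcal{F}_\eta$, transfer to the fiber via Lemma \ref{CheapTrick} and Proposition \ref{SameTWG} (with the transversality argument of Proposition \ref{Case2TWG} in the one case with weight-$2$ edges), Chern-class computations for the fixed-surface Euler numbers, and comparison with the Hirzebruch actions $\Hir(2;-1,2)$, $\Hir(1;-1,3)$, $\Hir(2;1,0)$, $\Hir(1;1,0)$ and the connected sum of Example \ref{ConnSumExample} and Proposition \ref{Case2Action}, followed by Theorem \ref{SmoothClassification}. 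You have correctly identified the delicate step in the reducible $\Flag(\mathbb{C}^3)$ case and its resolution.
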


\printbibliography
\end{document}